\newcommand*\cbottomrule[1]{\cmidrule[\heavyrulewidth]{#1}}
\newcommand*\ctoprule[1]{\addlinespace\cmidrule[\heavyrulewidth]{#1}}
\newcommand{\bea}{\begin{eqnarray}} 
\newcommand{\eea}{\end{eqnarray}} 
\newcommand{\bee}{\begin{eqnarray*}} 
\newcommand{\eee}{\end{eqnarray*}} 
\newcommand{\al}{\begin{align*}} 
\newcommand{\eal}{\end{align*}} 
\newcommand{\be}{\begin{equation}} 
\newcommand{\ee}{\end{equation}} 
\newcommand{\eq}[1]{(\ref{#1})} 
\newcommand{\bem}{\begin{pmatrix}} 
\newcommand{\eem}{\end{pmatrix}} 
\newcommand{\pwrm}[3]{#1\lvert[-\tfrac{#2}{#3}]} 
\newcommand{\pwr}[3]{#1\lvert[\tfrac{#2}{#3}]}
\def\a{\alpha} 
\def\c{\gamma} 
\def\d{\delta}
\def\f{\phi}
\def\inf{\infty}
\def\l{\lambda} 
\def\m{\mu}
\def\pa{\partial}        
\def\r{\rho}                  
\def\s{\sigma}            
\def\t{\tau} 
\def\th{\theta}
\def\D{\Delta}
\def\O{\Omega}
\newcolumntype{R}{ >{$}r <{$}}
\newcolumntype{C}{ >{$}c <{$}}
\newcolumntype{L}{ >{$}l <{$}}
\newcolumntype{F}{>{\centering\arraybackslash}m{1.5cm}}
\def\ll{\ell}
\newcommand{\gt}[1]{\mathfrak{#1}}
\newcommand{\comment}[1]{}
\newcommand{\RR}{{\mathbb R}}
\newcommand{\CC}{{\mathbb C}}
\newcommand{\ZZ}{{\mathbb Z}}
\newcommand{\ZZp}{\ZZ^+}
\newcommand{\QQ}{{\mathbb Q}}
\newcommand{\HH}{{\mathbb H}}
\newcommand{\JJ}{{\mathbb J}}
\newcommand{\lab}{{\langle}}    
\newcommand{\rab}{{\rangle}}    
\newcommand{\End}{\operatorname{End}}
\newcommand{\Id}{\operatorname{Id}}
\newcommand{\Supp}{\operatorname{Supp}}
\newcommand{\sgn}{\operatorname{sgn}}
\newcommand{\ex}{\operatorname{e}} 
\newcommand{\Ex}{\operatorname{Ex}}
\newcommand{\wk}{{\rm wk}}
\newcommand{\sk}{{\rm sk}}
\newcommand{\new}{{\rm new}}
\newcommand{\reg}{{\rm reg}}
\renewcommand{\top}{{\rm top}}
\newcommand{\opt}{{\rm opt}}
\newcommand{\Dp}{\mathbb{D}^+}
\newcommand{\E}{\mathcal{E}}
\newcommand{\W}{\mathcal{W}}
\newcommand{\xmod}{{\rm \;mod\;}}
\newcommand{\SZ}{\operatorname{\textsl{SZ}}} 
\newcommand{\Th}{\Theta}
\newcommand{\SL}{\operatorname{\textsl{SL}}}      
\newcommand{\mpt}{\widetilde{\SL}_2}      
\newcommand{\GL}{{\textsl{GL}}}      
\newcommand{\G}{\Gamma}	
\newcommand{\g}{\gamma}	
\newtheorem{thm}{Theorem}[subsection]
\newtheorem*{thm*}{Theorem}
\newtheorem{cor}[thm]{Corollary}
\newtheorem{lem}[thm]{Lemma}
\newtheorem{prop}[thm]{Proposition}
\theoremstyle{definition}
\theoremstyle{remark}
\newtheorem*{rmk}{Remark}
\theoremstyle{theorem}
\newtheorem{quest}{Question}
\numberwithin{equation}{subsection}
\begin{document}

\setstretch{1.4}

\title{
\vspace{-35pt}
\textsc{\huge{ {O}ptimal {M}ock {J}acobi {T}heta {F}unctions}}
    }

\renewcommand{\thefootnote}{\fnsymbol{footnote}} 
\footnotetext{\emph{MSC2010:} 11F11, 11F27, 11F37, 11F50.}     


\renewcommand{\thefootnote}{\arabic{footnote}} 

\author[1]{Miranda C. N. Cheng\thanks{mcheng@uva.nl}}
\author[2]{John F. R. Duncan\thanks{john.duncan@emory.edu}}

\affil[1]{Institute of Physics and Korteweg-de Vries Institute for Mathematics\\
University of Amsterdam, Amsterdam, the Netherlands\footnote{On leave from CNRS, France.}}
\affil[2]{Department of Mathematics and Computer Science\\
Emory University, Atlanta, GA 30322, USA}

\date{} 

\maketitle

\abstract{
We classify the optimal mock Jacobi forms of weight one with rational coefficients. The space they span is thirty-four-dimensional, and admits a distinguished basis parameterized by genus zero groups of isometries of the hyperbolic plane. We show that their Fourier coefficients can be expressed explicitly in terms of singular moduli, and obtain positivity conditions which distinguish the optimal mock Jacobi forms that appear in umbral moonshine. We find that all of Ramanujan's mock theta functions can be expressed simply in terms of the optimal mock Jacobi forms with rational coefficients.
}

\clearpage

\tableofcontents

\clearpage

\section{Introduction}\label{sec:intro}

Modular forms are a cornerstone of modern mathematics, playing crucial roles in numerous subfields of number theory, geometry, topology and physics. 
Wiles' proof of Fermat's last theorem, 
Deligne's proof of the Weil conjectures, 
the construction of topological modular forms by Hopkins--Mahowald--Miller, 
and the mirror symmetry conjectures of Kontsevich and Strominger--Yau--Zaslow are each examples of 
profound developments that are intimately connected to the modular form theory.

In this century a broader theory of mock modular forms has been initiated. Among other things, this furnishes an abstract setting in which both classical modular forms and the mock theta functions of Ramanujan may be studied side by side.
In this work we obtain a ``genus zero'' classification of mock modular forms with rational coefficients satisfying a growth condition, and thereby give a new perspective on some of the most well-known and important examples.

\subsection{Questions}\label{sec:intro:que}

Ramanujan first communicated 
the notion of a mock theta function in his ``last letter
to Hardy,'' shortly before succumbing to illness in 1920, but 
left us little more 
than a collection of examples. Some further examples 
came to light in 1976, when Andrews recovered the ``lost notebook,'' 
but it was not until Zwegers' breakthrough doctoral thesis of 2002 that we were able to situate the mock theta functions within a structured theory. 

Thanks to Zwegers' results \cite{zwegers}---and the contributions of others, including Bringmann--Ono, Bruinier--Funke, and Zagier, to name a few---we can now say that Ramanujan's mock theta functions are, up to minor modifications, examples of (weakly holomorphic\footnote{There seem to be few examples of mock modular forms that are bounded near cusps, so the qualifier ``weakly holomorphic'' is usually omitted.}) mock modular forms of weight $\frac12$ whose shadows are unary theta functions. The new mock modular theory 
allows for easy proofs of some classical conjectures, and has already found deep 
applications in diverse areas of mathematics and physics. 
These developments, and some of the recent applications, are exposited in more detail in 
\cite{zagier_mock,Ono_unearthing,Folsom_what,MR3242661}.

As is typical, 
the new results have revealed new questions.
For example, why do Ramanujan's mock theta functions have integral coefficients? It is manifest that they do from the way he wrote them down, but results of Bruinier--Ono  
\cite{MR2726107} 
demonstrate that, for a generic mock modular form, infinitely many of its coefficients are transcendental.
\begin{quest}\label{que:intro:rational}
{What is the theoretical reason that Ramanujan's mock theta functions are 
rational?}
\end{quest}

A second question arises from the theory of umbral moonshine \cite{UM,MUM}, which attaches mock modular forms of weight $\frac12$  to the automorphisms of the Niemeier lattices. It was observed in \cite{MUM} that about half of Ramanujan's examples are recovered in this way. 
\begin{quest}\label{que:intro:moonshine}
{What abstract property, if any, distinguishes the mock theta functions that appear in moonshine?}
\end{quest}

The original motivation for the present work was yet another question, also originating in moonshine. It was noted in \cite{MUM} 
that the integers $m> 1$ such that the modular curve $X_0(m)$ has genus zero are exactly the $m$ for which there exists a self-dual even positive-definite lattice of rank $24$---i.e., a {\em Niemeier lattice}---containing the root system of type $A_{m-1}$. Extending this, 
the characteristic polynomials of Coxeter elements for irreducible simply laced root systems 
were used to attach
a genus zero subgroup of $\SL_2(\RR)$ 
to each of the Niemeier lattices (with roots) in \S2.3 of \cite{MUM}. 
Taking the vector-valued mock modular form attached to the identity automorphism of each such lattice in \S\S4-5 of \cite{MUM} we obtain a correspondence between mock modular forms (for the full modular group) and  subgroups of $\SL_2(\RR)$.
\begin{quest}\label{que:intro:genuszero}
Is there a conceptual explanation for this association of mock modular forms to genus zero groups?
\end{quest}

In this work we answer, in part, each of the above three questions, on rationality, moonshine, and genus zero groups. We achieve this through an analysis of 
vector-valued mock modular forms of weight $\frac12$ that 
have
minimal possible growth in their coefficients. 
We call this growth condition optimality.
By applying a Waldspurger type formula obtained by Skoruppa we are able to use modular $L$-functions to control the mock modular forms we consider. 
This leads to our first main result: a classification which attaches optimal mock modular forms to genus zero subgroups of $\SL_2(\RR)$. Subsequently we 
use Skoruppa's formula together with the Gross--Zagier formula and a theorem of Bruinier--Ono to prove our second main result: that any optimal mock modular form with rational coefficients must lie in the rational span of our ``genus zero'' examples.  

We also establish constructive relationships between the optimal mock modular forms in our classification and their corresponding genus zero groups, and formulate criteria that distinguish the mock modular forms of \cite{MUM} within the larger class that is obtained herein. Finally, we observe that all the mock theta functions of Ramanujan admit simple expressions in terms of the mock modular forms appearing in our classification.

Next, in \S\ref{sec:intro:res}, we present our main results in more detail.
As we elaborate in \S\ref{sec:intro:met}, our first main result answers Question \ref{que:intro:genuszero}, and our two main results taken together answer Question \ref{que:intro:rational}.
We discuss the relationship to moonshine, and our answer to Question \ref{que:intro:moonshine}, in \S\ref{sec:intro:moon}, and describe constructive relationships between genus zero groups and mock modular forms in \S\ref{sec:intro:sing}.

\subsection{Main Results}\label{sec:intro:res}

As our title indicates, we employ the language of Jacobi forms in this work. This reflects a restriction
to vector-valued forms of a certain kind. 
Precisely, a non-mock example of the mock modular forms we focus on would be a holomorphic function $h(\tau)=(h_r(\tau))_{r\xmod 2m}$, for $\tau$ in the upper half-plane, such that $\phi:= \sum_{r\xmod 2m} h_r\theta_{m,r}$ satisfies
\begin{gather}\label{eqn:intro:jacwt1indm}
\f(\t, z+\l \t +\m)
e^{2\pi i m (\l^2\t+2\l z)}
=	\f\left(\frac{a\t+b}{c\t+d},\frac{z}{c\t+d}\right)
\frac{e^{-2\pi i m \frac{cz^2}{c\tau+d}}}{c\t+d} 
=\f(\tau,z),
\end{gather}
for $\lambda,\mu\in \ZZ$ and $\left(\begin{smallmatrix}a&b\\c&d\end{smallmatrix}\right)\in \SL_2(\ZZ)$, when 
$\tau,z\in \CC$ with $\Im(\tau)>0$. The $\th_{m,r}$ here are the theta functions naturally attached to the even positive-definite lattices of rank $1$. 
(See \S\ref{sec:notn} for an explicit definition.)
Optimality 
is the condition that
\begin{gather}\label{eqn:intro:opt}
h_r(\tau)=O(q^{-\frac1{4m}})
\end{gather} 
as $\Im(\t)\to \infty$, for each $r\xmod 2m$, where $q=e^{2\pi i \tau}$. 

If we require both (\ref{eqn:intro:jacwt1indm}) and (\ref{eqn:intro:opt}), then $\f$ is an {\em optimal weak Jacobi form} of weight $1$ and index $m$, which is necessarily zero according to a result of Dabholkar--Murthy--Zagier (cf. \S\ref{sec:class:defn}). So we must relax one of these two conditions, in order to meet non-trivial examples.
In this work we leave 
(\ref{eqn:intro:opt}) unchanged, but relax the modularity condition (\ref{eqn:intro:jacwt1indm}) by requiring that 
it hold only after $\f$ is replaced by $\hat\f:=\sum_{r\xmod 2m} \hat{h}_r\th_{m,r}$, where 
\begin{gather}\label{eqn:intro:comph}
\hat{h}(\tau):=h(\tau)+e^{-\frac{\pi i}4}\frac{1}{\sqrt{4m}}
\int_{-\bar \tau}^\inf (\t'+\t)^{-\frac12} \overline{g(-\overline{\t'})} {\rm d}\t'
\end{gather}
for some cuspidal modular form $g(\tau)=(g_r(\tau))_{r\xmod 2m}$ with weight $\frac32$ (which is unique if it exists). 
In the language of mock modular forms, $\hat{h}$ is called the completion of $h$, and $g$ is called its shadow. 

With the above definition, a holomorphic function $\f=\sum_r h_r\th_{m,r}$ 
such that (\ref{eqn:intro:jacwt1indm}) is satisfied with $\hat\f:=\sum_r \hat{h}_r\th_{m,r}$ in place of $\f$ 
is called a {\em mock Jacobi form} of weight 1 and index $m$, following \cite{Dabholkar:2012nd}, except that we usually also require a growth condition on the {\em theta-coefficients} $h_r$. Say that $\f$ is a {\em weak} mock Jacobi form if 
$h_r(\tau)=O(q^{-\frac{r^2}{4m}})$ as $\Im(\tau)\to\infty$ for all $r$.
We say that $\f$ is {\em optimal} if the stronger\footnote{Our use of the term optimal is inspired by \cite{Dabholkar:2012nd}, but their definition is weaker than ours.} condition (\ref{eqn:intro:opt}) is satisfied.

We may, in principle, consider mock Jacobi forms $\f$ of weight 1 
such that the $h_r$ are bounded as $\Im(\tau)\to \infty$, but we will see in \S\ref{sec:class:defn} (cf. Proposition \ref{prop_uniqueness1}) that there are no non-zero examples. So we must allow some growth in the $h_r$ near cusps. 
It follows from (\ref{eqn:intro:jacwt1indm}) for $\hat\f$ that $\f$ has a Fourier expansion in integer powers of $q$. The theta functions $\th_{m,r}$ admit Fourier expansions in integer powers of $q^{\frac1{4m}}$, so the optimality condition (\ref{eqn:intro:opt}) is the minimal possible weakening of boundedness that we can consider.

If $\f=\sum_rh_r\th_{m,r}$ is a weak mock Jacobi form of weight 1 as above then $\s:=\sum_r \overline{g_r}\th_{m,r}$, called the {\em shadow} of $\f$, is a skew-holomorphic Jacobi form of weight 2 in the sense of Skoruppa, satisfying
\begin{gather}\label{eqn:intro:skjacwt2indm}
\s(\t, z+\l \t +\m)
e^{2\pi i m (\l^2\t+2\l z)}
=	\s\left(\frac{a\t+b}{c\t+d},\frac{z}{c\t+d}\right)
\frac{e^{-2\pi i m \frac{cz^2}{c\tau+d}}}{(c\bar\t+d)|c\t+d|} 
=\s(\tau,z),
\end{gather}
for $\lambda,\mu\in \ZZ$ and $\left(\begin{smallmatrix}a&b\\c&d\end{smallmatrix}\right)\in \SL_2(\ZZ)$.
It develops that there are non-zero skew-holomorphic Jacobi forms $\s=\sum_r \overline{g_r}\th_{m,r}$ 
such that the coefficient of $q^{\frac{D}{4m}}$ in the Fourier expansion of each $g_r$ vanishes unless $D$ is a perfect square.
We call these {\em theta type} skew-holomorphic Jacobi forms, and we say that a mock Jacobi form $\f$ is a {\em mock Jacobi theta function} if its shadow has theta type. This terminology is motivated, in part, 
by Zagier's modern definition (cf. \S5 of \cite{zagier_mock}) of a mock theta function: namely, a mock modular form of weight $\frac12$ (for some subgroup of the modular group) whose shadow is a unary theta series. It is also motivated 
by the empirical observation (cf. \S\ref{sec:mockthetafunctions}) that all of the mock theta functions of Ramanujan admit simple expressions in terms of theta-coefficients of mock Jacobi theta functions.

As we have mentioned, there are no non-vanishing 
weak Jacobi forms of weight 1 that are optimal. Moreover, a vanishing proportion of the Fourier coefficients of a theta type skew-holomorphic Jacobi form are non-zero. So heuristically we may regard the optimal mock Jacobi theta functions of weight 1 as the optimal mock Jacobi forms of weight 1 that are ``as close as possible'' to being weak Jacobi forms.

Our first main result 
classifies the mock Jacobi theta functions of weight 1 that are optimal by producing a basis for the space they comprise which is indexed by genus zero subgroups of $\SL_2(\RR)$. 
Our second main result identifies the optimal mock Jacobi theta functions as the span of the optimal mock Jacobi forms that have rational Fourier coefficients.

To formulate these results precisely define $\JJ^\opt_{1,m}$ to be the space of optimal mock Jacobi forms of weight $1$ and index $m$, 
and define $\JJ_{1,m}^\top$ to be the subspace composed of optimal mock Jacobi theta functions. 
The group $O_m:=\{a\xmod 2m\mid a^2=1\xmod 4m\}$ acts naturally on $\JJ^\opt_{1,m}$, 
according to the rule $\f\cdot a:=\sum_r h_r\th_{m,ra}$ for $\f=\sum_rh_r\th_{m,r}$, and this action preserves the subspace $\JJ^\top_{1,m}$. Thus it is natural to consider the eigenspace decomposition
\begin{gather}
\JJ_{1,m}^\top=\bigoplus_{\a\in \widehat{O}_m}\JJ_{1,m}^{\top,\a}
\end{gather}
where $\widehat{O}_m$ is the group of irreducible characters of $O_m$. 

The genus zero connection manifests a natural association of subgroups of $\SL_2(\RR)$ to elements of $\widehat{O}_m$. To describe this, note that $O_m$ is in natural correspondence with the {\em exact} divisors of $m$, for if $n$ divides $m$ and is coprime to $m/n$ then there is a unique $a=a(n)$ in $O_m$ such that $a=-1\xmod 2n$ and $a=1\xmod 2m/n$, and all $a\in O_m$ arise in this way. 
Thus, given $\a\in \widehat{O}_m$, we may consider 
the subgroup of $\SL_2(\RR)$ generated by the Atkin--Lehner involutions of $\Gamma_0(m)$ associated to exact divisors of $m$ corresponding to elements in the kernel of $\a$. 
Explicitly, we set
\begin{gather}\label{eqn:intro:Gamma0malpha}
	\Gamma_0(m)+\ker(\a):=
	\left\{
		\frac1{\sqrt{n}}\begin{pmatrix}
		an&b\\cm&dn
		\end{pmatrix}
		\mid
		adn-bcm/n=1,\;
		a(n)\in\ker(\a)
	\right\}
\end{gather}
where $a,b,c,d$ are assumed to be integers, $n$ is assumed to be an exact divisor of $m$, and $n\mapsto a(n)$ is the map just described. 
The orbits of $\Gamma_0(m)+\ker(\a)$ on the upper half-plane naturally constitute a Riemann surface, which admits a natural compactification. We say that $\Gamma_0(m)+\ker(\a)$ is {\em genus zero} if the corresponding compact Riemann surface is a sphere, and we say that $\Gamma_0(m)+\ker(\a)$ is {\em non-Fricke} if $a(m)=-1$ does not belong to $\ker(\a)$.

We are now ready to state our two main results.
\begin{thm}\label{thm:intro-firstmainthm}
Let $m$ be a positive integer and $\a\in \widehat{O}_m$. If 
$\Gamma_0(m)+\ker(\a)$ is non-Fricke and genus zero then $\dim \mathbb{J}_{1,m}^{\top,\a}=1$. Otherwise, $\dim \mathbb{J}_{1,m}^{\top,\a}=0$.
\end{thm}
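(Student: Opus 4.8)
The plan is to reduce the computation of $\dim \JJ_{1,m}^{\top,\a}$ to a dichotomy governed by the principal parts of optimal forms and by the structure of their shadows, and then to read off the two conditions (non-Fricke, genus zero) from this data. First I would pin down the shape of the principal part of an arbitrary $\f=\sum_r h_r\th_{m,r}$ in $\JJ_{1,m}^{\opt}$. Since $\hat\f$ has a Fourier expansion in integral powers of $q$, the component $h_r$ expands in powers $q^{n-r^2/4m}$ with $n\in\ZZ$, and the optimality bound $h_r=O(q^{-\frac1{4m}})$ permits a polar term only when there is an integer in $[\tfrac{r^2-1}{4m},\tfrac{r^2}{4m})$; a short check shows this happens exactly when $r^2\equiv 1 \pmod{4m}$, i.e. when $r\in O_m$, in which case the polar term is a single multiple of $q^{-\frac1{4m}}$. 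Thus the principal part of $\f$ is encoded by a vector $(c_r)_{r\in O_m}$, and Proposition \ref{prop_uniqueness1} (no non-zero $\f$ with bounded $h_r$) shows that $\f$ is determined by this vector. Because $O_m$ permutes the coordinates $c_r$ by its regular representation, each character eigenspace of principal parts is at most one-dimensional, giving $\dim \JJ_{1,m}^{\opt,\a}\le 1$ and a fortiori $\dim \JJ_{1,m}^{\top,\a}\le 1$ for every $\a$.

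Second I would analyse the shadow. The shadow $g=(g_r)$ of $\f$ is a cuspidal vector-valued form of weight $\tfrac32$, and for this weight the Weil-representation symmetry forces $g_{-r}=-g_r$; hence $g$ is odd under the action of $a(m)=-1\in O_m$ and lies entirely in the non-Fricke eigenspaces. Consequently, if $\a$ is Fricke then any $\f\in\JJ_{1,m}^{\top,\a}$ has $\a$-isotypic---hence necessarily vanishing---shadow, so $\f$ is a genuine weak Jacobi form of weight $1$; being optimal it vanishes by the Dabholkar--Murthy--Zagier result (cf. \S\ref{sec:class:defn}), and $\dim \JJ_{1,m}^{\top,\a}=0$. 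For non-Fricke $\a$ the corresponding weight-$\tfrac32$ unary theta series is non-zero, and the theta-type condition says precisely that the shadow of $\f$ is a multiple of it.

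It remains to treat the non-Fricke case, where the whole content is whether the one admissible principal part is actually realised by an optimal form whose shadow is of theta type. Here I would invoke Skoruppa's Waldspurger-type formula. Under Skoruppa's isomorphism the skew-holomorphic Jacobi forms of weight $2$ and index $m$ in the $\a$-eigenspace split into a theta (Eisenstein-type) part and a cuspidal part, the latter identified Hecke-equivariantly with the space $S_2(\Gamma_0(m)+\ker(\a))$ of weight-two cusp forms, whose dimension is the genus of $\Gamma_0(m)+\ker(\a)$. When this genus is zero, every weight-$\tfrac32$ cusp form of eigentype $\a$ is automatically of theta type; constructing a harmonic completion with the prescribed optimal principal part---via a Maass--Poincar\'e series, or the surjectivity of $\xi_{1/2}$ onto weight-$\tfrac32$ cusp forms---then produces a non-zero $\f\in\JJ_{1,m}^{\top,\a}$, so $\dim=1$. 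When the genus is positive, Skoruppa's formula expresses the cuspidal part of the shadow of the candidate form in terms of the Fourier coefficients (equivalently central $L$-values) of the forms in $S_2(\Gamma_0(m)+\ker(\a))$; showing this cuspidal part is forced to be non-zero makes the shadow fail the theta-type condition, so no non-zero optimal form of eigentype $\a$ can be of theta type, and $\dim=0$.

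The main obstacle is this last nonvanishing statement: demonstrating that positive genus genuinely obstructs theta type, i.e. that the pairing sending a principal part to the cuspidal component of its shadow is non-degenerate on $S_2(\Gamma_0(m)+\ker(\a))$. This is exactly where Skoruppa's formula, and the nondegeneracy of the relevant periods and $L$-values, must do the work; the subsidiary bookkeeping---matching the Atkin--Lehner signs cut out by $\ker(\a)$ with the group $\Gamma_0(m)+\ker(\a)$ and identifying its genus with the dimension of the cuspidal space---is comparatively routine but must be carried out carefully to land exactly on the stated criterion.
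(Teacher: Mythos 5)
Your overall architecture tracks the paper's quite closely: the upper bound via uniqueness of the principal part (Proposition \ref{prop_uniqueness1}), the exclusion of Fricke $\a$ by a parity argument, existence in the genus zero case via Maass--Poincar\'e series combined with the vanishing of $P^{\sk,\a}_{2,m}$, and the use of Skoruppa's Waldspurger-type formula to translate the theta-type condition into statements about central $L$-values. (Your Fricke exclusion runs through the shadow --- oddness of the weight-$\tfrac32$ components, then the Dabholkar--Murthy--Zagier vanishing of optimal weak Jacobi forms of weight $1$ --- whereas the paper applies the antisymmetry $C_\f(D,r)=-C_\f(D,-r)$ of (\ref{eqn:jac:ez-Foucoeffsym}) directly to $\f$; both are valid.)

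The genuine gap sits exactly at the step you flag as ``the main obstacle'' and then delegate to ``the nondegeneracy of the relevant periods and $L$-values'': the claim that positive genus forces some newform in the $\a$-eigenspace to have non-vanishing central $L$-value. This does not follow from Skoruppa's formula, nor from any generic non-vanishing theorem --- individual weight $2$ newforms routinely satisfy $L(f,1)=0$ (any newform attached to an elliptic curve of positive rank), so non-degeneracy cannot be argued form-by-form. The paper's resolution (Lemma \ref{lem:class:proof-S2maLf1}) is geometric rather than analytic: writing $L(f,1)=2\pi\int_0^\infty f(iy)\,{\rm d}y$, the hypothesis that $L(f,1)=0$ for \emph{every} $f\in S_2(\Gamma_0(m))^\a$ says that the degree-zero divisor $(\infty)-(0)$ pairs to zero with all holomorphic $1$-forms on $X_\Gamma$ for $\Gamma=\Gamma_0(m)+\ker(\a)$; by Abel's theorem it is then principal, and a meromorphic function on $X_\Gamma$ with a single simple pole forces genus zero. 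The non-Fricke hypothesis enters precisely here, to guarantee that $(\infty)-(0)$ is not already trivial on $X_\Gamma$. Without this argument, or a substitute for it, your dichotomy does not close: from a theta-type shadow you can conclude (as in Lemma \ref{vanishing_period_maxsym}) that all the relevant central $L$-values vanish, but you cannot conclude that positive genus is incompatible with that vanishing. A secondary point you dismiss as bookkeeping but which is genuinely needed before Abel's theorem applies: the genus counts all of $S_2(\Gamma_0(m))^\a$, oldforms included, so the $L$-value vanishing obtained at each newform level $m'\mid m$ must be propagated through the degeneracy maps and the Hecke-like operators $V_\ell$, which is the content of the decomposition argument in Proposition \ref{prop:class:proof-genuszero}.
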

\begin{thm}\label{thm:intro-rat}
Let $m$ be a positive integer, $\a\in \widehat{O}_m$ and $\f\in \JJ^{\opt,\a}_{1,m}$. If $\f\in\JJ^{\top,\a}_{1,m}$ then there exists $C\in \CC$ such that all the Fourier coefficients of $C\f$ are rational integers. Otherwise, $\f$ has transcendental Fourier coefficients. 
\end{thm}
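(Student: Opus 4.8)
The plan is to reduce the rationality question to an arithmetic statement about the shadow $g$ and its associated $L$-values, using Skoruppa's Waldspurger-type formula to transport information between the mock Jacobi form $\f$ and the space of skew-holomorphic Jacobi forms of weight $2$. First I would treat the theta-type case. When $\f\in\JJ^{\top,\a}_{1,m}$, Theorem \ref{thm:intro-firstmainthm} tells us the relevant eigenspace is at most one-dimensional, so up to scaling $\f$ is determined uniquely. The shadow $\s=\sum_r\overline{g_r}\th_{m,r}$ then has theta type, meaning the Fourier coefficients of the $g_r$ are supported on perfect-square discriminants $D$. The strategy is to show that the coefficients of $\f$ on square discriminants, which govern the ``mock'' (non-holomorphic-completion) behavior, are forced to be algebraic, and in fact rational after a single normalization, because each such coefficient is expressible via singular moduli (as promised in the abstract). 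Concretely, I would invoke the explicit formula expressing Fourier coefficients of optimal mock Jacobi theta functions in terms of singular moduli, whose values generate abelian extensions of imaginary quadratic fields but whose appropriate traces (sums over a full Galois orbit / Heegner points of a given discriminant) are rational. Taking $C$ to clear denominators and the residual algebraic factor yields integrality of $C\f$.

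For the converse — that a form $\f\in\JJ^{\opt,\a}_{1,m}$ which is \emph{not} of theta type must have transcendental coefficients — the plan is to apply the theorem of Bruinier--Ono cited in \cite{MR2726107}, which states that a mock modular form whose shadow is a cusp form of ``generic'' type has infinitely many transcendental coefficients. The key step is to verify the hypothesis of their theorem: namely that the shadow $g$, failing theta type, has a Fourier coefficient attached to a non-square (fundamental, or fundamental-times-square) discriminant $D<0$ that is non-vanishing. Here Skoruppa's formula is essential: it relates the square of such a coefficient of $g$ to a central critical $L$-value $L(f,1)$ of the corresponding weight-$2$ newform $f$ (via the Shimura--Shintani--Waldspurger correspondence), and the Gross--Zagier formula relates that same $L$-value to the height of a Heegner point. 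The transcendence then follows because the completion $\hat h$ involves a period that is genuinely transcendental whenever the holomorphic projection picks up a non-square-supported coefficient of $g$. So the dichotomy is exactly the vanishing-versus-nonvanishing of $g$ on non-square discriminants, which is precisely the distinction between theta type and non-theta type.

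The main obstacle I anticipate is the rationality (not merely algebraicity) in the theta-type direction. Singular moduli are algebraic integers lying in ring class fields, so it is easy to see the coefficients are algebraic; the delicate point is that the \emph{particular} combination appearing as a Fourier coefficient of $\f$ is fixed by the full Galois group and hence rational. I expect this to require a careful Galois-equivariance argument showing that the genus zero group $\Gamma_0(m)+\ker(\a)$ and its Atkin--Lehner structure organize the Heegner points into complete Galois orbits, so that the relevant sums are traces and therefore land in $\QQ$. A second, more technical obstacle is ensuring the normalizing constant $C$ can be chosen so that $C\f$ is not merely rational but has rational \emph{integer} coefficients; this amounts to controlling denominators uniformly across all discriminants, which I would handle by exhibiting $\f$ (up to scale) as an explicit integral linear combination built from the canonical generators indexed by the genus zero group, as furnished by the proof of Theorem \ref{thm:intro-firstmainthm}.
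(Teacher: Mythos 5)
Your overall architecture resembles the paper's, but there are two substantive problems. For the theta-type direction, the singular-moduli route is not what the paper does, and it cannot deliver the full statement on its own: expressing coefficients as Galois traces of singular moduli yields rationality (this is essentially the content of Theorem \ref{thm:hauptmodul_Psi} and of \cite{Bruinier201738}, as the paper itself remarks in \S\ref{sec:intro:res}), but not integrality --- ``taking $C$ to clear denominators'' of infinitely many rational numbers requires a uniform bound on those denominators, which the singular-moduli expression does not provide. The paper's proof is far more direct: after normalizing $C_\f(1,1)=-2$, the one-dimensionality from Theorem \ref{thm:intro-firstmainthm} forces $\f=\f^{(\ell)}$ for some lambency $\ell$, and each $\f^{(\ell)}$ is exhibited in \S\ref{sec:desc:mjt} by an explicit construction (umbral forms, projections $P^\a$, Eichler--Zagier averages, the forms $2\mathcal{Q}_m$ of \cite{Dabholkar:2012nd}) with manifestly integral coefficients. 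Your final sentence about exhibiting $\f$ as an explicit integral combination of the canonical generators is exactly the paper's argument, so the singular-moduli detour is an unnecessary weakening rather than an alternative proof.

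The genuine gap is in the transcendence direction. Theorem 5.5 of \cite{MR2726107} does not say ``generic shadow implies transcendental coefficients''; it says the coefficients $C_\f(Dn^2,rn)$ are transcendental for some $n$ provided no multiple of the twisted Heegner divisor $Z_{D,r}(\f)$ is principal, i.e.\ provided $Z_{D,r}(\f)\neq 0$ in $J_0(m)$. Your sketch never produces a discriminant $D$ for which this holds, and your proposed mechanism (``the completion involves a period that is genuinely transcendental'') is not an argument. The chain that actually works is: failure of theta type gives, via the Bruinier--Funke pairing and Proposition \ref{prop:class:tht-orthog}, a newform $\varphi'\in P^{\sk}_{2,m'}$ with $C_{\varphi'}(1,1)\neq 0$, hence by Skoruppa's formula at $D=1$ a newform $f'$ with $L(f',1)\neq 0$; then Bump--Friedberg--Hoffstein \cite{MR1074487} supplies a negative fundamental discriminant $D$ coprime to $2m$ with $L(f'\otimes D,s)$ having a \emph{simple zero} at $s=1$; only then does Gross--Zagier --- which concerns the derivative $L'(f'\otimes D,1)$, not the central value you cite --- show the twisted Heegner point is nonzero in the $f'$-isotypic part of $J_0(m')$, after which a comparison of quadratic-form classes pushes the nonvanishing back up to $J_0(m)$. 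Without the Bump--Friedberg--Hoffstein nonvanishing input there is no discriminant at which Gross--Zagier applies, and the proof does not close.
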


Using Theorem \ref{thm:intro-firstmainthm} we may canonically (up to scale) attach a non-zero optimal mock Jacobi theta function to each genus zero group of the form $\Gamma_0(m)+\ker(\a)$. It develops (cf. Corollary \ref{cor:class:proof-dimJtop}) that there are $34$ such groups, and Theorem \ref{thm:intro-firstmainthm} confirms that the corresponding optimal mock Jacobi theta functions furnish a basis for $\JJ^\top_{1,*}:=\bigoplus_m\JJ^\top_{1,m}$.

From Theorem \ref{thm:intro-rat} it follows (cf. Corollay \ref{cor:class:rpm-theta}) that any optimal mock Jacobi form with rational coefficients is a rational linear combination of optimal mock Jacobi theta functions. So in particular, 
the optimal mock Jacobi theta functions may be characterised as the optimal mock Jacobi forms that are linear combinations of optimal mock Jacobi forms with integer coefficients.

Before proceeding to describe our methods we comment on interesting related work \cite{Bruinier201738} which appeared shortly after the first version of the present article was released. Part (1) of Theorem 4.7 in loc. cit. may be regarded as a rationality result for Fourier coefficients of mock Jacobi theta functions that is, in some sense, more general than our Theorem \ref{thm:intro-rat}. However, Theorem \ref{thm:intro-rat} does not follow from the results of \cite{Bruinier201738}. For example, by applying loc. cit. to a mock Jacobi theta function $\phi \in \JJ^{\top,\a}_{1,m}$ we can conclude (after verifying consistency of some definitions) that $C\phi$ has rational Fourier coefficients for some $C\in\CC$, but the methods of loc. cit. do not imply that $C$ can be chosen so that these coefficients are rational integers.
Another difference is that Theorem \ref{thm:intro-rat} implies the existence of transcendental Fourier coefficients for $\f\in \JJ^{\opt,\a}_{1,m}$ whenever its shadow is not of theta type. 
As mentioned in loc. cit., it is a conjecture of Bruinier--Ono \cite{MR2726107} that if the shadow of some mock Jacobi form of weight $1$ is not of theta type then almost all of the Fourier coefficients of $\f$ are transcendental. 
Taken together, Theorems \ref{thm:intro-firstmainthm} and \ref{thm:intro-rat} enable the construction of an infinite family of mock modular forms that are inherently transcendental (without the requirement of prior choices of cusp forms).

\subsection{Methods}\label{sec:intro:met}

A key step in the proof of Theorem \ref{thm:intro-firstmainthm} is the application of the Waldspurger type formula (\ref{eqn:jac:szlifts-Lfns}) obtained by Skoruppa (following earlier 
work by Gross--Kohnen--Zagier), which connects Fourier coefficients of skew-holomorphic Jacobi forms to central critical values of modular $L$-functions. Using this formula we
show in Lemma \ref{vanishing_period_maxsym} that the existence of a non-zero optimal mock Jacobi theta function $\f$ implies the vanishing of $L(f,1)$ for all weight $2$ newforms $f$ in a certain family determined by $\f$. This vanishing condition turns out to be so stringent that it cannot be satisfied unless the corresponding family is empty. 
This is the origin of the genus zero coincidence, 
for an application 
of Abel's theorem to Lemma \ref{vanishing_period_maxsym} shows
(cf. Proposition \ref{prop:class:proof-genuszero}) 
that if $\JJ^{\top,\a}_{1,m}\neq \{0\}$ then $\Gamma_0(m)+\ker(\a)$ has genus zero. 

An upper bound $\dim\JJ^{\top,\a}_{1,m}\leq 1$ follows from the result mentioned above (cf. Proposition \ref{prop_uniqueness1}), that a mock Jacobi form of weight 1 whose theta-coefficients are bounded near cusps must vanish. Then the proof of Theorem \ref{thm:intro-firstmainthm} is completed by explicit construction of sufficiently many non-zero examples. 
We achieve this (cf. \S\ref{sec:desc:mjt}) by applying some methods considered earlier in the context of umbral moonshine \cite{UM,MUM,umrec} and string theory \cite{Dabholkar:2012nd}.

All the mock modular forms of umbral moonshine are optimal mock Jacobi theta functions, so Theorem \ref{thm:intro-firstmainthm} answers Question \ref{que:intro:genuszero} by demonstrating that optimal mock Jacobi theta functions are classified by genus zero groups. Our proof of Theorem \ref{thm:intro-firstmainthm} shows that, in number theoretic terms,
it is Skoruppa's Waldspurger type formula, and the particular manifestation of the Shimura correspondence that underlies it, which is responsible for this genus zero property.

Our answer to Question \ref{que:intro:rational} begins with the observation 
that all the mock Jacobi theta functions we employ for the proof of Theorem \ref{thm:intro-firstmainthm} have integral Fourier coefficients. This proves 
the first part of Theorem \ref{thm:intro-rat}. 
For the second part, and our answer to the rationality question, we employ a result of Bruinier--Ono (cf. Theorem 5.5. in \cite{MR2726107}), which connects the algebraicity of Fourier coefficients of mock Jacobi forms to properties of twisted Heegner divisors. Applying Skoruppa's 
formula to an optimal mock Jacobi form whose shadow is not of theta type we obtain a modular $L$-function with non-vanishing central critical value. Then the Gross--Zagier formula \cite{MR833192}, and a result of Bump--Friedburg--Hoffstein \cite{MR1074487}, leads to the existence of an associated twisted Heegner divisor that fails the conditions of the Bruinier--Ono theorem.

In light of 
the empirical observation (cf. \S\ref{sec:mockthetafunctions}) that Ramanujan's mock theta functions are, essentially, the theta-coefficients of optimal mock Jacobi theta functions, Theorem 
\ref{thm:intro-rat} answers Question \ref{que:intro:rational} by demonstrating that optimality and the theta type condition are exactly the properties that underly the rationality of Ramanujan's examples. 
Comparing with Theorem \ref{thm:intro-firstmainthm} we see that, although he did not explain this to us, Ramanujan was 
accessing special geometric features of modular curves when writing his last letter to Hardy.

\subsection{Moonshine}\label{sec:intro:moon}

We now address Question \ref{que:intro:moonshine} and the connection to moonshine. For this it is convenient to have a compact notation for groups like $\Gamma_0(m)+\ker(\a)$. Given $K<O_m$ define $\Gamma_0(m)+K<\SL_2(\RR)$ by replacing $\ker(\a)$ with $K$ in (\ref{eqn:intro:Gamma0malpha}).
Following a tradition initiated in \cite{MR554399}, we use $m+n,n',\dots$ as a shorthand for 
$\Gamma_0(m)+K$ 
when $K=\{1,a(n),a(n'),\dots\}$. As before, the action of such a $\Gamma_0(m)+K$ on the upper half-plane naturally determines a compact Riemann surface. 
We say that $\Gamma_0(m)+K$ is {\em genus zero} if this surface is a sphere, and we say that $\Gamma_0(m)+K$ is {\em non-Fricke} if $-1\notin K$. 

The 39 non-Fricke genus zero groups of the form $\Gamma_0(m)+K$ with $K<O_m$ are given in Table \ref{tab:intro-pm}, described in terms of their symbols $\ell=m+n,n',\dots$. We call these symbols {\em lambencies} following \cite{UM,MUM}, and write $\gt{L}_1$ for the set that they comprise. 
In a second corollary to Theorem \ref{thm:intro-firstmainthm} (cf. Corollary \ref{cor:class:proof-genK}) 
we naturally attach an optimal mock Jacobi theta function 
$\f^{(\ell)}$
with integer Fourier coefficients to each lambency $\ell\in \gt{L}_1$. If $\ell$ 
is one of the $34$ {\em maximal} lambencies, corresponding to a group of the form $\Gamma_0(m)+\ker(\a)$, then
$\f^{(\ell)}=\sum_r H^{(\ell)}_r\th_{m,r}$
is the unique (cf. Corollary \ref{cor:class:proof-dimJtop}) optimal mock Jacobi theta function 
in $\JJ^{\top,\a}_{1,m}$ such that 
\begin{gather}\label{eqn:intro-Hell1growth}
H^{(\ell)}_1(\t)=-2q^{-\frac1{4m}}+O(1)
\end{gather}
as $\Im(\t)\to\infty$. If $\ell$ corresponds to one of the remaining $5$ non-Fricke genus zero groups then 
$\f^{(\ell)}=\sum_r H^{(\ell)}_r\th_{m,r}$ 
also satisfies (\ref{eqn:intro-Hell1growth}), 
while the behavior of the $H^{(\ell)}_r$ for $r\neq \pm 1\xmod 2m$ is controlled by $K$ (cf. (\ref{eqn:class:proof-JJ1mtopK})). As a result, $\f^{(\ell)}$ is related to the optimal mock Jacobi forms of maximal lambencies by averaging. 
For example, $\f^{(\ell)}=\frac12\left(\f^{(\ell_1)}+\f^{(\ell_2)}\right)$ for $\ell_1=6+2$, $\ell_2=6+3$ and $\ell=6$.

\begin{table}[ht]
\begin{small}
\begin{center}

\begin{tabular}{ccccccccccc}
\toprule
\multicolumn{1}{c|}{$X$}&$A_1^{24}$&$A_2^{12}$&$A_3^8$&$A_4^6$&$A_5^4D_4$&&$D_4^6$\\
	\cmidrule{1-8}
\multicolumn{1}{c|}{$\ll$}&	2&	3&	4&	5&	6&	6+2&	6+3\\
	\cmidrule{1-8}
\multicolumn{1}{c|}{$T^{(\ell)}$}&			$\frac{1^{24}}{2^{24}}$&	$\frac{1^{12}}{3^{12}}$&	$\frac{1^{8}}{4^{8}}$&	$\frac{1^{6}}{5^{6}}$&	$\frac{1^{5}3^1}{2^16^{5}}$&	${\frac{1^42^4}{3^46^4}}$&$ \frac{1^6 3^6}{2^6 6^6 }$\vspace{2pt}\\
\bottomrule
\ctoprule{1-8}
\multicolumn{1}{c|}{$X$}&$A_6^4$&$A_7^2D_5^2$&$A_8^3$&$A_{9}^2D_6$&&$D_6^4$&$A_{11}D_7E_6$\\
	\cmidrule{1-8}
\multicolumn{1}{c|}{$\ll$}&	7&	8& 9&	10&	10+2&	10+5&	12\\
	\cmidrule{1-8}
\multicolumn{1}{c|}{$T^{(\ell)}$}&$\frac{1^{4}}{7^{4}}$&$\frac{1^{4}4^2}{2^{2} 8^4}$&$\frac{1^{3}}{9^{3}}$&$ \frac{1^{3} 5^1} {2^1 10^{3}}$& $\frac{1^2 2^2}{5^2 10^2}$&$\frac{1^4 5^4}{2^4 10^4 }$&$\frac{1^{3} 4^1 6^2}{2^{2} 3^112^3 }$\vspace{2pt}\\
\bottomrule
\ctoprule{1-8}
\multicolumn{1}{c|}{$X$}&&$E_6^4$&$A_{12}^2$&$D_{8}^3$&&$A_{15}D_9$&$A_{17}E_7$\\
	\cmidrule{1-8}
\multicolumn{1}{c|}{$\ll$}& 12+3&	12+4&	13&	14+7&	15+5&	16&	18\\
	\cmidrule{1-8}
\multicolumn{1}{c|}{$T^{(\ell)}$}&$\frac{1^2 3^2}{4^2 12^2}$&	$\frac{1^{4} 4^4 6^4}{2^{4} 3^4 12^4}$&	$\frac{1^2}{13^{2}}$&	$\frac{1^3 7^3}{2^3 14^3 }$&	$\frac{1^2 5^2}{3^2 15^2}$&$\frac{1^{2}8^1}{2^116^{2}}$ & $\frac{1^{2} 6^1 9^1}{2^1 3^1 18^{2}}$\vspace{2pt}\\
\bottomrule
\ctoprule{1-8}
\multicolumn{1}{c|}{$X$}&&$D_{10}E_7^2$&&&$D_{12}^2$&&$A_{24}$\\
	\cmidrule{1-8}
\multicolumn{1}{c|}{$\ll$}	&18+2&	18+9 &	20+4&	21+3&	22+11&	24+8&	25	\\
	\cmidrule{1-8}
\multicolumn{1}{c|}{$T^{(\ell)}$}&$\frac{1^12^1}{9^118^1}$&$\frac{1^3 6^2 9^3}{2^3 3^2 18^3 }$&$\frac{1^24^210^2}{2^25^220^2}$&$\frac{1^13^1}{7^121^1}$ &$\frac{1^2 11^2}{2^2 22^2 }$&$\frac{1^26^18^212^1}{2^13^24^124^2}$&$\frac{1^1}{25^1}$\vspace{2pt}\\
\bottomrule
\ctoprule{1-8}
\multicolumn{1}{c|}{$X$}&&$D_{16}E_8$&&$E_8^3$\\
	\cmidrule{1-8}
\multicolumn{1}{c|}{$\ll$}	&28+7&	30+15 &	30+3,5,15&	30+6,10,15&	33+11&	36+4&	42+6,14,21	\\
	\cmidrule{1-8}
\multicolumn{1}{c|}{$T^{(\ell)}$}&$\frac{1^17^1}{4^128^1}$&$\frac{1^2 6^1 10^1  15^2} {2^2 3^1 5^1  30^2 }$&$\frac{1^13^15^115^1}{2^16^110^130^1}$&$\frac{1^{3} 6^3 10^3 15^3}{2^{3} 3^3 5^3 30^3}$&$\frac{1^111^1}{3^133^1}$&$\frac{1^14^118^1}{2^19^136^1}$ &$\frac{1^26^214^221^2}{2^23^27^242^2}$\vspace{2pt}\\
\bottomrule
\ctoprule{1-5}
\multicolumn{1}{c|}{$X$}&$D_{24}$&\\
	\cmidrule{1-5}
\multicolumn{1}{c|}{$\ll$}	&46+23&	60+12,15,20&	70+10,14,35&	78+6,26,39	
\\
	\cmidrule{1-5}
\multicolumn{1}{c|}{$T^{(\ell)}$}&$\frac{1^1 23^1} {2^1 46^1 }$&$\frac{1^112^115^120^1}{3^14^15^160^1}$&$\frac{1^110^114^135^1}{2^15^17^170^1}$&$\frac{1^16^126^139^1}{2^13^113^178^1}$\vspace{2pt}\\
\cbottomrule{1-5}
\end{tabular}
\caption{Lambencies, root systems and principal moduli.}\label{tab:intro-pm}
\end{center}
\end{small}
\vspace{-20pt}
\end{table}

A key significance of this construction 
is that 
it recovers all the mock modular forms (for the full modular group) appearing in umbral moonshine \cite{UM,MUM}. Precisely, if $H^X=(H^X_r)$ is the vector-valued mock modular form attached to one of the Niemeier root systems $X$ in \cite{MUM}, then $H^X_r=H^{(\ell)}_r$ for all $r$, for some lambency 
$\ell\in\gt{L}_1$. 
The 23 Niemeier root systems and their corresponding lambencies are also given in Table \ref{tab:intro-pm}. 

Write $\gt{L}_1^+$ for the set of 23 lambencies that have root systems attached in Table \ref{tab:intro-pm}. 
Notably, $\gt{L}_1^+$ does not exhaust $\gt{L}_1$. 
In \S\ref{sec:class:rpm} we verify (cf. Proposition \ref{prop:class:rpm-posphi}) that a lambency $\ell\in\gt{L}_1$ 
belongs to $\gt{L}_1^+$ 
if and only if the coefficient of $q^{-\frac{D}{4m}}$ in $H^{(\ell)}_r$ is positive when $D$ is negative, for $0<r<m$.
This positivity condition is natural from the point of view of moonshine, where the coefficients of the $H^{(\ell)}_r$, for $\ell\in \gt{L}_1^+$, are interpreted as dimensions of representations of a certain finite group $G^{(\ell)}$. Is there any such interpretation for the lambencies not in $\gt{L}_1^+$? 

The first few Fourier coefficients of the $H^{(\ell)}_r$ for $\ell$ not in $\gt{L}_1^+$ are displayed in 
Tables \ref{tab:desc:mjt-6+2}-\ref{tab:desc:mjt-78+6,26,39}, in \S\ref{sec:desc:mjt}. 
Although we do not pursue this in detail here, inspection of these tables indicates that there is regularity in the failure of the corresponding lambencies 
to satisfy the positivity condition formulated above, and suggests that an extension of the umbral moonshine conjectures 
to all the lambencies in $\gt{L}_1$ probably exists. 

Viewed from this perspective, 
the answer to Question \ref{que:intro:moonshine} is: probably yes. 
Since essentially all of Ramanujan's mock theta functions are visible in the $H^{(\ell)}_r$ for $\ell\in \gt{L}_1$ according to \S\ref{sec:mockthetafunctions}, 
essentially all of Ramanujan's mock theta functions appear in moonshine, subject to a suitable extension of the umbral moonshine conjectures to the 16 new lambencies appearing here. 
We save further consideration of this extension problem for future work.

\subsection{Singular Moduli}\label{sec:intro:sing}

As mentioned in \S\ref{sec:intro:met}, the proof of Theorem \ref{thm:intro-firstmainthm} involves the identification of a non-zero mock Jacobi form in $\JJ^{\top,\a}_{1,m}$ for each $\a\in \widehat{O}_m$ such that $\Gamma_0(m)+\ker(\a)$ is non-Fricke and genus zero. While there are enough ad hoc methods available to obtain all the forms we need (cf. \ref{sec:desc:mjt}), it is natural to ask for a more uniform construction. 

The result (cf. Corollary \ref{cor:class:rpm-theta}) that any optimal mock Jacobi form that is not a mock Jacobi theta function has transcendental coefficients 
exemplifies the fact that mock modular forms are, generally speaking, 
difficult to construct.
Nonetheless, 
using the generalized Borcherds product construction introduced by Bruinier--Ono in \cite{MR2726107}, we are able to show (cf. Theorem \ref{thm:hauptmodul_Psi}) that if $\ell\in \gt{L}_1$ then the Fourier coefficients of 
$\f^{(\ell)}$ can be expressed explicitly in terms of singular moduli for the corresponding genus zero group. 
As we will explain momentarily, these singular moduli are directly computable. In a sense, Theorem \ref{thm:hauptmodul_Psi} enables us to reconstruct the mock Jacobi form $\f^{(\ell)}$ from a suitably chosen eta product.

To explain this, choose a lambency $\ell\in \gt{L}_1$ and let $\Gamma_0(m)+K$ be the corresponding genus zero group. The aforementioned 
eta product $T^{(\ell)}$ for $\ell$ is given in Table \ref{tab:intro-pm}, where we use a formal product $n_1^{d_1}\cdots n_l^{d_l}$ as shorthand for $\eta(n_1\tau)^{d_1}\cdots \eta(n_l\tau)^{d_l}$. 
One significance of $T^{(\ell)}$ is that it is a {\em principal modulus} (a.k.a. {\em Hauptmodul}) for $\Gamma_0(m)+K$ (cf. Lemma \ref{lem:class:pm-chrTell}), meaning that $T^{(\ell)}$ is a 
$\Gamma_0(m)+K$-invariant holomorphic function on the upper half-plane that
descends to generator for the function field of 
the corresponding Riemann surface.

A second significance of $T^{(\ell)}$ is that it directly recovers the shadow of $\f^{(\ell)}$. Explicitly, 
if $\s^{(\ell)}=\sum_r\overline{g_r}\th_{m,r}$ is the shadow of $\f^{(\ell)}$, and $C_{\s^{(\ell)}}(D,r)$ is the coefficient of $q^{\frac{D}{4m}}$ in $g_r$, then from Lemma \ref{lem:class:pm-sigmaellprops} and Proposition \ref{prop:class:pm-xiphisig} it follows that $\sum_{d|n}C_{\s^{(\ell)}}\left(\frac{n^2}{d^2},\frac{n}{d}\right)$ is the coefficient of $q^n$ in the logarithmic derivative of $T^{(\ell)}$.

As we have claimed, a more direct relationship holds between $T^{(\ell)}$ and $\f^{(\ell)}$. To describe this write $C^{(\ell)}(D,r)$ for the coefficient of $q^{-\frac{D}{4m}}$ in $H^{(\ell)}_r$, and say that $D\in\ZZ$ is a {\em fundamental discriminant} if it is the discriminant of a number field of degree at most $2$.
Then for $D$ a negative fundamental discriminant and $r\xmod 2m$ such that $D=r^2\xmod 4m$ we consider the formal product 
\begin{gather}\label{eqn:intro:sing-Psiell}
	\Psi^{(\ell)}_{D,r}(\tau):=\prod_{n>0}\prod_{b\xmod D}
	\left(
	1-e^{ \frac{2\pi  i b}{D}}
	q^n\right)^{\left(\frac D b\right)C^{(\ell)}(Dn^2,rn)},
\end{gather}
where 
$\left(\frac D b\right)$ is the Kronecker symbol (cf. \S\ref{sec:notn}). 

Applying Proposition 5.2 and Theorem 5.3 of \cite{MR2726107} we obtain that the product (\ref{eqn:intro:sing-Psiell}) converges for $\Im(\t)$ sufficiently large, admits an analytic continuation to a meromorphic function on the upper half-plane, and transforms with a character under the action of $\Gamma_0(m)$. Assuming that $D\neq-3$ when $m\in\{7,13,21\}$ we verify that the 
action extends to $\Gamma_0(m)+K$, and that the 
extended character is trivial. Thus we obtain a meromorphic function on the compact Riemann surface defined by $\Gamma_0(m)+K$. The results of \cite{MR2726107} also lead to an explicit expression (\ref{eqn:class:pm-ZellDr}) for the divisor of this function in terms of roots of quadratic forms (i.e. Heegner points), and from this we obtain the result of Theorem \ref{thm:hauptmodul_Psi}: that $\Psi^{(\ell)}_{D,r}$, with the aforementioned restrictions on $D$, is an explicitly computable rational function in $T^{(\ell)}$.

Theorem \ref{thm:hauptmodul_Psi} is a generalization of the main result of \cite{MR3357517}, which covers the cases that $\ell\in\{2,3,4,5,7,9,13,25\}$. Examples of expressions for the $\Psi^{(\ell)}_{D,r}$ as rational functions in $T^{(\ell)}$, and expressions for coefficients of the $\f^{(\ell)}$ as weighted sums of singular moduli, can be found therein. 

\subsection{Overview}\label{sec:intro:struc}

We now describe the structure of the remainder of the article. In \S\ref{sec:notn} we present a guide to the most commonly used notation. 
The discussion begins 
in \S\ref{sec:jac} with recall of the main facts from the theory of Jacobi forms that we utilize. We discuss holomorphic and skew-holomorphic Jacobi forms in \S\ref{sec:jac:hol}, mock Jacobi forms in \S\ref{sec:Mock Jacobi Forms}, certain theta lifts for Jacobi forms in \S\ref{sec:jac:szlifts}, and Eichler--Zagier operators on Jacobi forms in \S\ref{sec:jac:ez}.

The proofs of our results are presented in \S\ref{sec:class}. After discussing optimality and the notion of mock Jacobi theta function in \S\ref{sec:class:defn}, we prove certain technical results on theta type skew-holomorphic Jacobi forms in \S\ref{sec:class:theta}. These results are used in \S\ref{sec:class:proof} to prove our first main result, Theorem \ref{thm:intro-firstmainthm}. Then, we prove our second main result, Theorem \ref{thm:intro-rat}, in \S\ref{sec:class:rat}. We develop the relationship between optimal mock Jacobi theta functions and genus zero groups in \S\ref{sec:class:pm}, and discuss the connection to umbral moonshine, including positivity conditions, in \S\ref{sec:class:rpm}.

The article concludes with an appendix. In \S\ref{sec:desc:mjt} we describe how to construct the optimal mock Jacobi theta functions that have not already appeared in umbral moonshine, and tabulate their low order Fourier coefficients. In \S\ref{sec:mockthetafunctions} we give explicit expressions for Ramanujan's mock theta functions, and those found later by Andrews and Gordon--McIntosh, in terms of theta-coefficients of the optimal mock Jacobi theta functions that are classified in this work.

\break

\section{Notation Guide}\label{sec:notn}

\begin{footnotesize}

\begin{list}{}{
	\itemsep -1pt
	\labelwidth 23ex
	\leftmargin 13ex	
	}

\item
[$\sqrt{\cdot}$]
The principal branch of the square root function, satisfying $\sqrt{e^{i\theta}}=e^{i\theta/2}$ when $-\pi<\theta\leq\pi$.

\item
[$\XBox(\,\cdot\,)$]
For $n$ a non-zero integer, set $\XBox(n):=n/f^2$ where $f$ is the largest integer such that $f^2|n$.

\item
[$\left(\frac{\,\cdot\,}{\,\cdot\,}\right)$]
The Kronecker symbol. Let $D$ be a non-zero integer congruent to $0$ or $1$ modulo $4$. Then $\left(\frac{D}{\cdot}\right)$ 
is totally multiplicative, satisfies $\left(\frac{D}{-1}\right)=\sgn(D)$, and vanishes on primes dividing $D$. If $p$ is prime and $p\not |D$ then $\left(\frac{D}{p}\right)= 1$ if $D$ is a square modulo $4p$, and $\left(\frac{D}{p}\right)=-1$ otherwise.

\item
[$\langle\cdot\,,\cdot\rangle$]
The Petersson inner product on Jacobi forms or modular forms. See (\ref{eqn:jac:hol-PetIP}).

\item
[$\langle\cdot\,,\cdot\rangle^\reg$]
The regularized Petersson inner product. See (\ref{eqn:jac:szlifts-Fs}).

\item
[$\{\cdot\,,\cdot\}$]
The Bruinier--Funke pairing on $\JJ^\wk_{k,m}\times J^\sk_{3-k,m}$. See (\ref{eqn:jac:mock-BFpairing}).

\item
[$(n,n',\dots)$]
The greatest common divisor of integers $n,n',\dots$.

\item
[$n*n'$]
We set $n*n':=nn'/(n,n')^2$ for integers $n,n'$, not both zero.

\item
[$m+n,n',\dots$]
A shorthand for $\Gamma_0(m)+K$ where $K=\{1,a(n),a(n'),\dots\}$.

\item
[$|_{k}$]
The weight $k$ action of $\mpt(\RR)$, for $k\in\frac12\ZZ$. Given a holomorphic function $f:\HH\to\CC$ and $(\gamma,\upsilon)\in\mpt(\RR)$, define $(f|_{k}(\gamma,\upsilon))(\tau):=f(\gamma\tau)\upsilon(\tau)^{-2k}$. If $k\in \ZZ$ the action factors through $\SL_2(\RR)$ and we simply write $f|_k\gamma$.

\item
[$a(n)$]
For $n\in \Ex_m$ write $a(n)$ for the unique $a\in O_m$ such that $a=-1\xmod 2n$ and $a=1\xmod 2m/n$. The assignment $n\mapsto a(n)$ defines an isomorphism of groups $\Ex_m\xrightarrow{\sim} O_m$.

\item[$\a_Q$]
A Heegner point. The image under the natural map $\HH\to X_0(m)$ of the unique solution to $Q(x,1)$ that lies in $\HH$, for $Q\in \mathcal{Q}(m,D,r)$ with $D<0$.

\item
[$c_f(n)$]
The coefficient of $q^n$ in the Fourier expansion of $f$, for $f$ in $M_k(\Gamma)$.

\item
[$C^{(\ell)}(D,r)$]
A shorthand for $C_{\f^{(\ell)}}(D,r)$.

\item
[$C_\f(D,r)$]
A Fourier coefficient of a 
Jacobi form $\f$. See (\ref{eqn:jac:hol-DFoucffhol}), (\ref{eqn:jac:hol-DFoucffskw}) and (\ref{eqn:jac:mock-Fou}).

\item
[$\chi_D(\,\cdot\,)$]
The generalized genus character, as defined in Proposition 1 of \cite{MR909238}. Let $m$ be a positive integer, let $D$ be a fundamental discriminant (i.e. $1$ or the discriminant of some quadratic number field) that is a square modulo $4m$, and let $D_1$ be a multiple of $D$ such that $D_1/D$ is also a square modulo $4m$. Then for $Q\in \mathcal{Q}(m,D_1,r_1)$ with $D_1=r_1^2\xmod 4m$ and $Q(x,y)=Ax^2+Bxy+Cy^2$, define $\chi_{D}(Q)=0$ unless $(A/m,B,C,D)=1$. If this condition holds then set $\chi_{D}(Q)=\left(\frac{D}{d}\right)$ where $d$ is any integer coprime to $D$ that is represented by a quadratic form $(A/n)x^2+Bxy+Cny^2$ for some $n|m$.

\item
[${\cal D}^{k-1}$]
The linear operator on smooth functions $\th:\HH\times\CC\to \CC$
defined by setting ${\cal D}^{k-1}\th(\tau):=(2\pi i)^{1-k}\partial_z^{k-1}\th(\tau,z)|_{z=0}$.
This operator maps $\Th_m$ to $M_{k-\frac12}(\Gamma(4m))$ when $k\in \{1,2\}$. 

\item
[$\Dp(\f)$] The positive discriminant part of a 
Jacobi form $\f$. See (\ref{eqn:jac:hol-polarpart}) and (\ref{eqn:jac:mock-polarpart}).

\item
[$\ex(\,\cdot\,)$]
We set $\ex(x):=e^{2\pi i x}$.

\item
[$\E_m$]
The space of elliptic forms of index $m$. See \S\ref{sec:jac:hol}.

\item
[$\E_m^\a$]
The $\a$-eigenspace for the action of $O_m$ on $\E_m$. Given $\a\in \widehat{O}_m$ define $\E^\a_m$ to be the span of the $\f\in \E_m$ such that $\f\cdot a=\a(a)\f$ for all $a\in O_m$. 
Given $S<\E_m$ define $S^\a:=S\cap\E_m^\a$.

\item
[$\Ex_m$]
The group of exact divisors of $m$. That is, the positive integers $n$ such that 
$n|m$ and $(n,m/n)=1$, with group operation $n*n'$. 

\item
[$\eta(\tau)$]
The Dedekind eta function, $\eta(\t):=q^{\frac1{24}}\prod_{n>0}(1-q^n)$ for $\t\in\HH$.

\item
[$f\otimes D$]
The {$D$-th twist} of a cusp form $f\in S_k(\Gamma_0(m))$, defined by setting $c_{f\otimes D}(n):=c_f(n)\left(\frac{D}{n}\right)$.

\item
[$\mathcal{F}$]
We set 
$\mathcal{F}:=\{\tau\in \HH\mid |\Re(\tau)|\leq \frac 12,\,|\tau|\geq 1\}$.

\item
[$\mathcal{F}_t$]
We set $\mathcal{F}_t:=\{\tau\in \mathcal{F}\mid \Im(\t)\leq t\}$.

\item
[$g_r$, $g$]
The complex conjugates of the theta-coefficients of a skew-holomorphic Jacobi form $\phi\in J^\sk_{k,m}$, satisfying $\phi=\sum_{r\xmod 2m}\overline{g_r}\theta_{m,r}$ and $g=(g_r)_{r\xmod 2m}$. See \S\ref{sec:jac:hol}. 
We regard $g(\tau)=(g_r(\tau))$ as a column vector, so that $\phi=\overline{g}^t\theta_m$.

\item
[$g_r^*$, $g^*$]
Non-holomoprhic Eichler integrals of $g_r$, $g$. See (\ref{def:Eichler_integral}).

\item
[$\Gamma(N)$]
The kernel of the natural map $\SL_2(\ZZ)\to \SL_2(\ZZ/N\ZZ)$.

\item
[$\Gamma_0(m)$]
The preimage of upper triangular matrices under the natural map $\SL_2(\ZZ)\to \SL_2(\ZZ/m\ZZ)$.

\item
[$\Gamma_0(m)+K$] 
Given $K<O_m$ let $\Gamma_0(m)+K$ be the group 
generated by $\Gamma_0(m)$ and the 
$W_n$ for $a(n)\in K$.

\item[$\Gamma_0(m)_Q$]
The stabilizer in $\Gamma_0(m)$ of some $Q\in \mathcal{Q}(m,D,r)$.

\item
[$\Gamma_1(N)$]
The preimage of the matrices $\left(\begin{smallmatrix}1&*\\0&1\end{smallmatrix}\right)$ under the natural map $\SL_2(\ZZ)\to \SL_2(\ZZ/N\ZZ)$.

\item
[$h_r$, $h$] 
The theta-coefficients of an elliptic form $\phi\in \E_m$, satisfying $\phi=
\sum_{r\xmod 2m}h_r\theta_{m,r}$ and $h=(h_r)_{r\xmod 2m}$.
See (\ref{eqn:jac:thtdec}). 
Regard 
$h(\tau)$ as a column vector, so that $\phi=h^t\theta_m$.

\item
[$\hat{h}_r$, $\hat{h}$]
Completions of $h_r$ and $h=(h_r)$. See (\ref{eqn:jac:mck-hhat}).

\item
[$H^{(\ell)}_r$, $H^{(\ell)}$]
The theta-coefficients of $\f^{(\ell)}$. See \S\ref{sec:desc}.

\item
[$\HH$]
The upper half-plane, composed of the $\tau\in \CC$ such that $\Im(\tau)>0$.

\item
[$\iota_{m',t}^m$]
The degeneracy map $S_2(\Gamma_0(m'))\to S_2(\Gamma_0(m))$ defined by setting $(\iota_{m',t}^mf)(\tau):=f(t\tau)$, for $m'$ a divisor of $m$, and $t$ a divisor of $\tfrac{m}{m'}$.

\item
[$j(\gamma,\tau)$]
Given $\gamma\in\Gamma_1(4)$ define $j(\gamma,\tau):=\theta_{1,0}^0(\gamma\tau)/\theta_{1,0}^0(\tau)$. 
Then $j(\gamma,\tau)^2=(c\tau+d)$ when 
$\gamma=\left(\begin{smallmatrix}a&b\\c&d\end{smallmatrix}\right)$ (cf. e.g. \cite{MR0332663}). The assignment $\gamma\mapsto (\gamma,j(\gamma,\tau))$ defines an embedding of groups $\Gamma_1(4)\to\mpt(\ZZ)$, and we use this to identify any $\Gamma<\Gamma_1(4)$ as a subgroup of $\mpt(\ZZ)$.

\item
[$J_0(m)$]
The Jacobian of $X_0(m)$.

\item
[$J_{k,m}$]
{The space of holomorphic Jacobi forms of weight $k$ and index $m$.} See \S\ref{sec:jac:hol}.

\item
[$J^\sk_{k,m}$]
{The space of skew-holomorphic Jacobi forms of weight $k$ and index $m$.} See \S\ref{sec:jac:hol}.

\item
[$J^\wk_{k,m}$]
{The space of weak holomorphic Jacobi forms of weight $k$ and index $m$.} See \S\ref{sec:jac:hol}.

\item
[$\JJ_{k,m}^\opt$]
The space of optimal mock Jacobi forms of weight $k$ and index $m$. See (\ref{eqn:class:defn-JJopt}). 

\item
[$\JJ_{k,m}^{\opt,\a}$]
For $\a\in \widehat{O}_m$ we set $\JJ_{k,m}^{\opt,\a}:=\JJ_{k,m}^\opt\cap\E_m^\a$.

\item
[$\JJ_{k,m}^\top$]
The space of optimal mock Jacobi theta functions of weight $k$ and index $m$. See (\ref{eqn:class:defn-JJtop}). 

\item
[$\JJ_{k,m}^{\top,\a}$]
For $\a\in \widehat{O}_m$ we set $\JJ_{k,m}^{\top,\a}:=\JJ_{k,m}^\top\cap\E_m^\a$.

\item
[$\JJ^{\top|K}_{1,m}$]
A certain subspace of $\JJ^\top_{k,m}$ determined by a choice of $K<O_m$. See (\ref{eqn:class:proof-JJ1mtopK}).

\item
[$\JJ_{k,m}^\wk$]
The space of weak mock Jacobi forms of weight $1$ and index $m$. See \S\ref{sec:Mock Jacobi Forms}.

\item
[$\JJ_{k,m}^{\wk,\a}$]
For $\a\in \widehat{O}_m$ we set $\JJ_{k,m}^{\wk,\a}:=\JJ_{k,m}^\wk\cap\E_m^\a$.

\item
[$\ell$]
A lambency. That is, a symbol $\ell=m+n,n',\dots$ belonging to $\gt{L}_1$. See \S\ref{sec:class:proof}.

\item
[$L(f,s)$]
The $L$-function naturally attached to a cusp form $f\in S_k(\Gamma_0(m))$, defined by analytic continuation of the Dirichlet series $\sum_{n>0}c_f(n)n^{-s}$. 

\item
[$\gt{L}_1$]
The set of symbols $\ell=m+n,n',\dots$ such that $\JJ^{\top|K}_{1,m}$ is non-vanishing, where $K=\{1,a(n),a(n'),\dots\}$. See \S\ref{sec:class:proof}. 

\item
[$\gt{L}_1^+$]
The subset of $\gt{L}_1$ consisting of lambencies that have Niemeier root systems attached to them in Table \ref{tab:intro-pm}. See \S\ref{sec:class:proof}, and Propositions \ref{prop:class:rpm-possig} and \ref{prop:class:rpm-posphi}.

\item
[$M_k(\Gamma)$]
The space of modular forms of weight $k$ for $\Gamma$, for $k\in\frac12\ZZ$ and $\Gamma$ a discrete subgroup of $\SL_2(\RR)$. 
For $k\in \ZZ$ assume $\Gamma$ is commensurable with $\SL_2(\ZZ)$ and define $M_k(\Gamma)$ to be the space of holomorphic functions $f:\HH\to\CC$ such that $f|_{k}\gamma=f$ for all $\gamma\in \Gamma$, and $f|_{k}\gamma=O(1)$ as $\Im(\tau)\to\infty$ for all $\gamma\in\SL_2(\ZZ)$. 
For $k\in \ZZ+\frac12$ assume $\Gamma<\Gamma_1(4)$ and define $M_k(\Gamma)$ by requiring $f|_{k}(\gamma,j(\gamma,\tau))=f$ for all $\gamma\in \Gamma$, and $f|_{k}(\gamma,\upsilon)=O(1)$ as $\Im(\tau)\to\infty$ for all $(\gamma,\upsilon)\in\mpt(\ZZ)$.

\item
[$M_k(\Gamma_0(m))^\a$]
The space of $f\in M_k(\Gamma_0(m))$ such that $f|W_n= \a(a(n))f$ for $n\in\Ex_m$. Cf. (\ref{eqn:jac:szlifts-SZalpha}).

\item
[$O_m$]
The automorphisms of $\ZZ/2m\ZZ$ that preserve the $\QQ/\ZZ$-valued quadratic form $x\mapsto \frac{x^2}{4m}$. 

\item
[$\widehat{O}_m$]
The Pontryagin dual of $O_m$, defined by
$\widehat{O}_m:=\hom(O_m,\CC^\times)$.

\item
[$\Omega_m(n)$]
The Omega matrix associated to $n|m$. See (\ref{def:OmegaMatrices}).

\item
[$P^\a$]
A projection operator on modular forms or weak mock Jacobi forms. See (\ref{eqn:class:proof-Palpha}) and (\ref{eqn:class:proof-Palphaphi}).

\item
[$P^\sk_{k,m}$]
{The subspace of $S^\sk_{k,m}$ spanned by Hecke eigenforms that do not belong to $T^\sk_{k,m}$. See \S\ref{sec:jac:szlifts}.}

\item
[$\f^{(\ell)}$]
For $\ell=m+n,n',\ldots \in \gt{L}_1$, the unique element of $\JJ_{1,m}^{\top|K}$ such that $C_{\f^{(\ell)}}(1,1)=-2$, where $K=\{1,a(n),a(n'),\dots\}$.

\item
[$\Psi^{(\ell)}_{D,r}$]
A Borcherds product attached to $\ell=m+n,n',\ldots\in \gt{L}_1$, for $D$ a negative fundamental discriminant and $r\xmod 2m$ such that $D=r^2\xmod 4m$. See (\ref{eqn:class:pm-Psiell}). 

\item
[$q$]
We set $q:=\ex(\tau)$ for $\tau\in \HH$.

\item[$\mathcal{Q}(m,D,r)$]
The set of integral binary quadratic forms $Q(x,y)=Ax^2+Bxy+Cy^2$ such that $A=0\xmod m$ and $D=B^2-4AC$, and $B=r\xmod 2m$. The group $\Gamma_0(m)$ acts naturally on $\mathcal{Q}(m,D,r)$, according to $\left(Q|\left(\begin{smallmatrix} a&b\\c&d\end{smallmatrix}\right)\right)(x,y):=Q(ax+by,cx+dy)$. 

\item[$\widehat{Q}(\tau)$]
Given $Q(x,y)=Ax^2+Bxy+Cy^2$ set $\widehat{Q}(\tau):=A|\tau|^2+B\Re(\tau)+C$. 

\item
[$\varrho_m$]
The unitary representation $\varrho_m:\mpt(\ZZ)\to\GL_{2m}(\CC)$ generated by the assignments $\varrho_m(\widetilde{S})={\cal S}$ and $\varrho_m(\widetilde{T})={\cal T}$. (This is a Weil representation. Cf. e.g. \S3 of \cite{MR2512363}.)

\item
[$S_k(\Gamma)$]
The space of cuspidal modular forms of weight $k$ for $\Gamma$, for $k\in\frac12\ZZ$ and $\Gamma<\SL_2(\RR)$. For $k\in \ZZ$ these are the $f\in M_k(\Gamma)$ such that $f|_k\gamma=O(q)$ as $\Im(\tau)\to \infty$, for all $\gamma\in \SL_2(\ZZ)$. For $k\in \ZZ+\frac12$ replace $\gamma\in \SL_2(\ZZ)$ with $(\gamma,\upsilon)\in \mpt(\ZZ)$.

\item
[$S_{k,m}$]
{The space of cuspidal holomorphic Jacobi forms of weight $k$ and index $m$.} See \S\ref{sec:jac:hol}

\item
[$S^\sk_{k,m}$]
{The space of cuspidal skew-holomorphic Jacobi forms of weight $k$ and index $m$.} See \S\ref{sec:jac:hol}.

\item
[${\cal S}$, ${\cal T}$]
The unitary matrices ${\cal S}=({\cal S}_{rr'})_{r,r'\xmod 2m}$ and ${\cal T}=({\cal T}_{rr'})_{r,r'\xmod 2m}$, for a fixed positive integer $m$, given by ${\cal S}_{rr'}:=\frac1{\sqrt{2m}}\ex\left(-\frac{1}{8}-\frac{rr'}{2m}\right)$ and ${\cal T}_{rr'}:=\ex\left(\frac{r^2}{4m}\right)\delta_{r,r'}$.

\item
[$\mathcal{S}_{D,r}$]
A theta lift for holomorphic or skew-holomorphic Jacobi forms.
See (\ref{eqn:jac:szlifts-SDr}).

\item
[$\mathcal{S}^\reg_{D,r}$]
A regularised theta lift for weak mock Jacobi forms. 
See (\ref{eqn:jac:szlifts-SDrreg}).

\item
[$\mpt(\RR)$]
The metaplectic double cover of $\SL_2(\RR)$, whose elements are the pairs $(\gamma,\upsilon)$, where $\gamma=\left(\begin{smallmatrix}a&b\\c&d\end{smallmatrix}\right)\in\SL_2(\RR)$, and $\upsilon:\HH\to\CC$ is a holomorphic function satisfying $\upsilon(\tau)^2=(c\tau+d)$. The multiplication is $(\gamma,\upsilon)(\gamma',\upsilon')=(\gamma\gamma',(\upsilon\circ\gamma')\upsilon')$.

\item
[$\mpt(\ZZ)$]
The preimage of $\SL_2(\ZZ)$ under the natural projection $\mpt(\RR)\to \SL_2(\RR)$. The elements $\widetilde{T}:=\left(\left(\begin{smallmatrix}1&1\\0&1\end{smallmatrix}\right),1\right)$ and $\widetilde{S}:=\left(\left(\begin{smallmatrix}0&-1\\1&0\end{smallmatrix}\right),\sqrt{\tau}\right)$ comprise a generating set.

\item
[$\SZ$]
The Skoruppa--Zagier map $S_{k,m}\oplus S^\sk_{k,m}\to M_{2k-2}(\Gamma_0(m))$. 
See \S\ref{sec:jac:szlifts}.

\item
[$\s^{(\ell)}$]
A theta type skew-holomorphic Jacobi form naturally attached to $T^{(\ell)}$, for $\ell\in \gt{L}_1$. 
See (\ref{eqn:class:pm-sigmaell}), Lemma \ref{lem:class:pm-sigmaellprops} and Proposition \ref{prop:class:pm-xiphisig}.

\item
[$t_{k,m}$]
Set $t_{k,m}(\tau,z):=\sum_{r\xmod 2m} \overline{\theta^{k-1}_{m,r}(\tau)}\theta_{m,r}(\tau,z)$ for $k\in \{1,2\}$. We have $t_{k,m}\in T^\sk_{k,m}$.

\item
[$T^{(\ell)}$]
For $\ell=m+n,n',\ldots\in \gt{L}_1$, a certain principal modulus for the group $\Gamma_0(m)+K$, where $K=\{1,a(n),a(n'),\dots\}$. See Table \ref{tab:intro-pm} and 
Lemma \ref{lem:class:pm-chrTell}.

\item
[$T_n$]
Hecke operators, for modular forms and Jacobi forms. See (\ref{eqn:jac:szlifts-Heckemod}) and (\ref{eqn:jac:hol-phiTn}).

\item
[$T^\prime_{k-\frac12}(\Gamma)$]
A certain subspace of $M_{k-\frac12}(\Gamma)$, for $\Gamma=\Gamma_1(4m)$ or $\Gamma=\Gamma(4m)$. See (\ref{eqn:class:theta-TGamma14m}) and (\ref{eqn:class:theta-TGamma4m}).

\item
[$T^\sk_{k,m}$]
The space of theta type skew-holomorphic Jacobi forms of weight $k$ and index $m$. See (\ref{eqn:jac:hol-Tskkm}).

\item
[$T^{\prime\sk}_{k,m}$]
A certain subspace of $J^\sk_{k,m}$. See (\ref{eqn:class:theta-Tprimeskkm}).

\item
[$\theta_{m,r}$, $\theta_m$]
The theta series $\theta_{m,r}(\tau,z):=\sum_{\ell=r\xmod 2m}q^{\ll^2/4m}y^\ell$, for $\tau\in \HH$ and $z\in \CC$. We regard $\theta_m(\tau,z):=(\theta_{m,r}(\tau,z))_{r\xmod 2m}$ as a column vector.

\item
[$\theta^{k-1}_{m,r}$]
The thetanullwert $\theta_{m,r}^{k-1}:=
\sum_{\ell=r\xmod 2m}\ell^{k-1}q^{\ll^2/4m}$. An element of $M_{k-\frac12}(\Gamma(4m))$ when $k=1$ or $k=2$, cuspidal if $k=2$.

\item
[$\Theta_m$]
The $2m$-dimensional $\mpt(\ZZ)$-module spanned by the theta series $\theta_{m,r}$. See \S\ref{sec:jac:hol}.

\item
[$\Theta_m^{\new,\a}$]
A certain sub $\mpt(\ZZ)$-module of $\Theta_m$. See Theorem \ref{thm:jac:EZ-Thmdec}.

\item
[$\Theta_{D,r}(\cdot\,,\cdot\,,\cdot)$]
A kernel function for $\mathcal{S}_{D,r}$ or $\mathcal{S}_{D,r}^\reg$. See (\ref{eqn:jac:szlifts-ThetaC}), (\ref{eqn:jac:szlifts-ThetaCreg}) and (\ref{eqn:class:proof-Theta}).

\item
[$U_d$, $V_{\ell}$]
Hecke-like operators for Jacobi forms. See (\ref{eqn:jac:szlifts-Ud}) and (\ref{eqn:jac:szlifts-Vell}).

\item
[$W_n$]
An Atkin--Lehner involution. 
For $n\in \Ex_m$, the coset of $\Gamma_0(m)$ in $\SL_2(\RR)$ composed of the matrices $\frac1{\sqrt{n}}\left(\begin{smallmatrix}an&b\\cm&dn\end{smallmatrix}\right)$ such that $a,b,c,d\in\ZZ$ and $adn-bcm/n=1$. Given $f\in M_k(\Gamma_0(m))$ define $f|W_n:=f|_kw_n$ for any $w_n\in W_n$. We have $W_nW_{n'}=W_{n*n'}$ in $\SL_2(\RR)$.

\item
[$\W_m(n)$] 
An Eichler--Zagier operator on $\E_m$, for $n|m$. See (\ref{eqn:jac:ez-phiWmn}).

\item[$X_\Gamma$]
The Riemann surface $X_\Gamma:=\Gamma\backslash\HH\cup\QQ\cup\{\infty\}$, for $\Gamma<\SL_2(\RR)$ commensurable with $\SL_2(\ZZ)$. 

\item[$X_0(m)$]
Shorthand for $X_\Gamma$ when $\Gamma=\Gamma_0(m)$.

\item
[$\xi$]
The shadow map $\JJ_{k,m}^\wk\to S^\sk_{3-k,m}$. See (\ref{eqn:jac:mock-xi}).

\item
[$y$]
We set $y:=\ex(z)$ for $z\in\CC$.

\item
[$Z_{D,r}(\,\cdots)$]
A twisted Heegner divisor. See (\ref{eqn:jac:szlifts-ZDrDprimerprime}). 

\end{list}

\end{footnotesize}

\break
 
\section{Jacobi Forms}\label{sec:jac}

In this
section we recall 
foundational
properties of the various kinds of Jacobi forms which 
are utilized in this work. 
In \S\ref{sec:jac:hol}
we recall the 
holomorphic Jacobi forms of Eichler--Zagier \cite{eichler_zagier}, and skew-holomorphic Jacobi forms of Skoruppa \cite{MR1096975,MR1074485}. In \S\ref{sec:Mock Jacobi Forms} we discuss weak mock Jacobi forms, following \cite{Dabholkar:2012nd}. 
In \S\ref{sec:jac:szlifts} we recall the lifting maps from holomorphic and skew-holomorphic Jacobi forms to modular forms with level, which were analysed by Skoruppa--Zagier \cite{MR958592} in the holomorphic case (cf. also \cite{MR909238}), and by Skoruppa \cite{MR1072974,MR1074485,MR1116103} in the skew-holomorphic case (cf. also \cite{MR1096975}). We also discuss lifting maps for weak mock Jacobi forms, which were studied by Bruinier--Ono in \cite{MR2726107}.
Finally, 
in \S\ref{sec:jac:ez} we review 
the Eichler--Zagier operators on Jacobi forms, 
and their relationship to Atkin--Lehner involutions. 

\subsection{Holomorphic and Skew-Holomorphic Jacobi Forms}\label{sec:jac:hol}

We first define {elliptic} forms, generalizing slightly the treatment in \cite{Dabholkar:2012nd}. For $m$ an integer define the index $m$ {\em elliptic action} of the group $\ZZ^2$ on functions $\phi:\HH\times\CC\to \CC$ by setting
\begin{gather} \label{elliptic}
(\f\lvert_{m} (\l,\m) )(\t,z) := \ex( m\l^2 \t + 2m\l z) \, \f(\t, z+\l \t +\m)
\end{gather}
for $(\l,\m)\in \ZZ^2$. 
Say that a smooth function $\phi:\HH\times\CC\to\CC$ is an {\em elliptic form} of index $m$ if $z\mapsto\phi(\t,z)$ is holomorphic\footnote{The elliptic forms of \cite{Dabholkar:2012nd} are assumed to be holomorphic also in $\tau$, but we wish to allow for the possibility that $\tau\mapsto \phi(\tau,z)$ is real-analytic on $\HH$, for fixed $z\in \CC$.} for every $\tau\in \HH$, and if $\phi\lvert_m(\l,\m)=\phi$ for all $(\l,\m)\in\ZZ^2$. 
Write $\E_m$ for the space of elliptic forms of index $m$. 

Observe that any elliptic form $\phi\in\E_m$ admits a {\em theta-decomposition}
\begin{gather}\label{eqn:jac:thtdec}
\phi(\tau,z)=\sum_{r\xmod 2m} h_r(\tau)\theta_{m,r}(\tau,z),
\end{gather}
for some $2m$ smooth functions $h_r:\HH\to \CC$, 
called the {\em theta-coefficients} of $\phi$. 
Indeed, since $(\phi|_m(0,1))(\tau,z)=\phi(\tau,z+1)$ we have $\phi(\tau,z)=\sum_{\ll\in\ZZ} c_\ll(\tau)y^\ll$ for some $c_\ll:\HH\to\CC$. Then the identity $\phi|_m(1,0)=\phi$ (cf. \S\ref{sec:notn})
implies that $c_r(\tau)q^{-r^2/4m}$ depends only on $r\xmod 2m$. So 
the $2m$ functions $h_r(\tau)=c_r(\tau)q^{-r^2/4m}$ are the theta-coefficients of $\phi$.

It will be convenient to regard the $h_r$ and $\th_{m,r}$ in (\ref{eqn:jac:thtdec}) as defining $2m$-vector-valued functions $h:=(h_r)_{r\xmod 2m}$ and $\th_m:=(\th_{m,r})_{r\xmod 2m}$. 
Then the theta-decomposition (\ref{eqn:jac:thtdec}) may be more succinctly written 
$\phi=h^t\th_m$,
where the superscript $t$ denotes matrix transpose.

It follows from the Poisson summation formula that the vector-valued function $\th_{m}(\tau,z)=(\th_{m,r}(\tau,z))$ satisfies
\begin{gather}\label{transf_theta}
\theta_m\left(-\frac1\tau,\frac{z}\tau\right)\frac{1}{\sqrt{\tau}}\ex\left(-\frac{mz^2}{\tau}\right)
=
{\cal S}\theta_m(\tau,z),\quad
\theta_{m}(\tau+1,z)
={\cal T}\theta_m(\tau,z),
\end{gather}
where 
${\cal S}=({\cal S}_{rr'})$ and ${\cal T}=({\cal T}_{rr'})$ are as in \S\ref{sec:notn}. 
(Cf. e.g. \S5 of \cite{eichler_zagier}.) This suggests that we will obtain elliptic forms $\f=h^t\theta_m\in\E_m$ with good transformation properties under the action of the modular group $\SL_2(\ZZ)$ by requiring suitable conditions on $h(-1/\tau)$ and $h(\tau+1)$. 
Roughly speaking, holomorphic and skew-holomorphic Jacobi forms correspond to the cases that $h$ is a (vector-valued) holomorphic or anti-holomorphic modular form, respectively. 

To formulate these notions precisely, define the weight $k$ {\em modular}, and {\em skew-modular} actions of $\SL_2(\ZZ)$ on $\E_m$, for $k$ and $m$ integers, by setting 
\begin{align} 
\label{modular}
(\f\lvert_{k,m}\g )(\t,z) &:= 
\f\left(\frac{a\t+b}{c\t+d},\frac{z}{c\t+d}\right)
\frac1{(c\t+d)^{{k}}} 
\ex\left(- \frac{c mz^2}{c\t+d}\right), 
\\
\label{skewmodular}
(\f\lvert^\sk_{k,m}\g )(\t,z) &:= 
\f\left(\frac{a\t+b}{c\t+d},\frac{z}{c\t+d}\right)
\frac1{(c\bar\t+d)^{{k}}}\frac{c\bar\t+d}{|c\t+d|}
\ex\left(- \frac{c mz^2}{c\t+d}\right),
\end{align}
for $\f\in\E_m$ and $\g=\left(\begin{smallmatrix} a&b\\c&d\end{smallmatrix}\right) \in \SL_2(\ZZ)$. 
Say that an elliptic form $\phi\in\E_m$ is a {\em weak holomorphic Jacobi form} of weight $k$ and index $m$ for
$\SL_2(\ZZ)\ltimes \ZZ^2$ if it satisfies the following conditions.  
First, its theta-coefficients are holomorphic functions on $\HH$; second, it
is invariant for the weight $k$ modular action (\ref{modular}), so that
$\f\lvert_{k,m}\g=\f$ for all $\g \in \SL_2(\ZZ)$; 
finally, the function $\tau\mapsto \phi(\tau,z)$ remains bounded as $\Im(\tau)\to \infty$, for every $z\in \CC$.

It follows from the holomorphicity condition and the {translation invariance} $\phi|_{k,m}\left(\begin{smallmatrix}1&1\\0&1\end{smallmatrix}\right)=\phi$ that a weak holomorphic Jacobi form admits a {\em Fourier expansion} of the form
\begin{gather}\label{eqn:jac:hol-DFoucffhol}
\f(\t,z) = \sum_{\substack{D,\ll \in \ZZ\\D=\ll^2\xmod 4m}} C_\f(D,\ll) q^{-D/4m}q^{\ll^2/4m} y^\ll,
\end{gather}
for some  
$2m$ functions $D\mapsto C_{\f}(D,r)$. 
Indeed, these Fourier coefficients are related to the theta-coefficients by 
\begin{gather}\label{eqn:jac:hol-thtcoeffFoucoeff}
h_r(\tau)=\sum_{\substack{D\in\ZZ\\D=r^2\xmod 4m}}C_\phi(D,r)q^{-D/4m}.
\end{gather}
Note that $C_\f(D,\ell)=0$ when $\ell^2-D<0$ according to the growth condition on $\t\mapsto\f(\t,z)$.

With (\ref{eqn:jac:hol-thtcoeffFoucoeff}) in mind, a weak holomorphic Jacobi form $\f$ is called a 
{\em holomorphic Jacobi form}, or a {\em cuspidal holomorphic Jacobi form},  when the Fourier coefficients satisfy 
$C_\f(D,r)=0$ for $D > 0$, or $C_{\f}(D,r)=0$ for $D \ge0$, respectively. 
We denote the space of weak holomorphic Jacobi forms of weight $k$ and index $m$ by $J^\wk_{k,m}$, write $J_{k,m}$ for the subspace of holomorphic Jacobi forms, and write $S_{k,m}$ for the subspace of cuspidal holomorphic Jacobi forms.

If $\phi\in J^\wk_{k,m}$ 
then the $C_\f(D,\ll)$ with $D>0$ measure the failure of $\phi$ to lie in $J_{k,m}$. 
We call these $C_\f(D,r)$ the {\em positive (discriminant) coefficients} of $\phi$, and we define the {\em positive (discriminant) part} of $\phi$ by setting
\begin{gather}\label{eqn:jac:hol-polarpart}
	\Dp(\phi)(\t,z):=
	\sum_{\substack{D,\ll \in \ZZ\\D=\ll^2\xmod 4m\\D>0}} C_\f(D,\ll) q^{-D/4m}q^{\ll^2/4m} y^\ll. 
\end{gather}

We now turn to the closely related skew-holomorphic Jacobi forms. 
An elliptic form $\f\in\E_m$ is called 
a {\em weak skew-holomorphic Jacobi form}
if it meets the following conditions. 
First, its theta-coefficients are anti-holomorphic functions on $\HH$; second, it is invariant for the weight $k$ 
skew-modular action (\ref{skewmodular}), so that $\f\lvert_{k,m}^\sk\g =\f$ for all $\g \in \SL_2(\ZZ)$; finally, $\tau \mapsto \f(\t,z)$ remains bounded as $\Im(\tau)\to \infty$ for fixed $z\in \CC$. Thus a weak skew-holomorphic Jacobi form admits a Fourier expansion of the form
\begin{gather}\label{eqn:jac:hol-DFoucffskw}
 \f(\t,z) = 
 \sum_{\substack{D,\ll\in \ZZ\\  D=\ll^2  \xmod 4m} }C_\f(D,\ll)\, 
 \bar q^{D/4m} q^{\ll^2/4m} y^\ll,
\end{gather}
for some $2m$ functions $D\mapsto C_\f(D,r)$, and we recover its theta-coefficients by writing
\begin{gather}\label{eqn:jac:hol-thtcoeffFoucoeff-skew}
h_r(\tau)=\sum_{\substack{D\in\ZZ\\D=r^2\xmod 4m}}C_\f(D,r)\bar q^{D/4m}.
\end{gather}

A weak skew-holomorphic Jacobi form $\f$ is called a {\em skew-holomorphic Jacobi form}, or a {\em cuspidal skew-holomorphic Jacobi form}, when the Fourier coefficients satisfy $C_\f(D,r)=0$ for $D < 0$, or $C_{\f}(D,r)=0$ for $D \le0$, respectively. 
We denote the space of skew-holomorphic Jacobi forms of weight $k$ and index $m$ by $J^\sk_{k,m}$, and we write $S^\sk_{k,m}$ for the subspace of cuspidal skew-holomorphic Jacobi forms. 

As we have alluded to above, the identities (\ref{transf_theta}) indicate a close relationship between Jacobi forms and modular forms of half integral weight. To make this explicit, 
define the weight $1/2$ action of $\mpt(\ZZ)$ 
on $\E_m$
by setting
\begin{gather}\label{eqn:jac:hol-wthlfindmaction}
	\left(\f|_{\frac12,m}(\gamma,\upsilon)\right)(\tau,z):=\f\left(\frac{a\tau+b}{c\tau+d},\frac{z}{c\tau+d}\right)\frac1{\upsilon(\tau)}\ex\left(-\frac{cmz^2}{c\tau+d}\right)
\end{gather}
for $\f\in \E_m$ when $\gamma=\left(\begin{smallmatrix}a&b\\c&d\end{smallmatrix}\right)$.
Then (\ref{transf_theta}) implies that the $\theta_{m,r}$ span a module $\Theta_m$ 
for $\mpt(\ZZ)$ under this action. 

Recall (cf. \S\ref{sec:notn}) that the assignment $\gamma\mapsto (\gamma,j(\gamma,\tau))$ defines an embedding of groups $\Gamma_1(4)\to \mpt(\ZZ)$, which we may use 
to identify any $\Gamma<\Gamma_1(4)$ as a subgroup of $\mpt(\ZZ)$. 
With this understanding, we may conclude from, e.g., Lemma 1.2 in \cite{Sko_Thesis}, that the $\mpt(\ZZ)$-module 
$\Theta_m$
becomes trivial when restricted to the normal subgroup $\Gamma(4m)<\mpt(\ZZ)$. 
This implies that 
the theta-coefficients of a holomorphic 
Jacobi form 
of weight $k$ and index $m$ belong to $M_{k-\frac12}(\Gamma(4m))$, and similarly in the skew-holomorphic case.
(It also implies that $\th^{k-1}_{m,r}\in M_{k-\frac12}(\Gamma(4m))$ for $k\in \{1,2\}$.) 
Observe that $M_{k-\frac12}(\Gamma(4m))$ is naturally an $\mpt(\ZZ)$-module by virtue of the fact that $\Gamma(4m)$ is normal in $\mpt(\ZZ)$. Comparing (\ref{modular}) and (\ref{skewmodular}) with (\ref{eqn:jac:hol-wthlfindmaction}) 
we see that the assignment $\phi=\sum_r h_r\theta_{m,r}\mapsto \sum_r h_r\otimes \theta_{m,r}$ defines an isomorphism
from $J_{k,m}$ onto the space of $\mpt(\ZZ)$-invariants in $M_{k-\frac12}(\Gamma(4m))\otimes \Theta_m$, and similarly for $J^\sk_{k,m}$, 
\begin{gather}
	\label{eqn:jac:hol-jacmetinv}
	J_{k,m}\oplus J^\sk_{k,m} \xrightarrow{\sim} \left( \left(M_{k-\frac12}(\Gamma(4m))\oplus \overline{M_{k-\frac12}(\Gamma(4m))}\right)\otimes \Theta_m \right)^{\mpt(\ZZ)}. 
\end{gather}
Equivalently, if $\f\in J_{k,m}$ and $\f=h^t\th_m$, then $h$ is a vector-valued modular form of weight $1/2$, in the sense that we have $h|_{k-\frac12}(\gamma,\upsilon)=\overline{\varrho_m((\gamma,\upsilon))}h$ for $(\gamma,\upsilon)\in \mpt(\ZZ)$, whereas for $\f\in J^\sk_{k,m}$ with $\f=h^t\th_m$, upon setting $g:=\overline{h}$ we obtain $g|_{k-\frac12}(\gamma,\upsilon)=\varrho_m((\gamma,\upsilon))g$.

Define the {\em Petersson inner product} of a pair $\f,\f'\in J_{k,m}$, or $\f,\f'\in J^\sk_{k,m}$, with at least one of $\f$ or $\f'$ cuspidal, by setting 
\begin{gather}\label{eqn:jac:hol-PetIP}
	\langle\f,\f'\rangle:=\frac{1}{\sqrt{2m}}\sum_{r\xmod 2m}\int_{\mathcal{F}}h_{r}(\tau)\overline{h'_{r}(\tau)}
	v^{k-\frac52}{\rm d}u{\rm d}v
\end{gather}
where the $h_{r}$ and $h'_r$ are the theta-coefficients of $\f$ and $\f'$, respectively, 
and $\t=u+iv$.
(Cf. Theorem 5.3 in \cite{eichler_zagier}, and p.75 of \cite{MR1072974} or p.511 of \cite{MR1074485}.)
The Petersson inner product is non-degenerate on $S_{k,m}\oplus S^\sk_{k,m}$.

It will be important in what follows to distinguish the skew-holomorphic Jacobi forms $\f\in J^\sk_{k,m}$ whose Fourier coefficients $C_\f(D,r)$ are supported on the $D$ 
that are perfect squares. 
These are the skew-holomorphic Jacobi forms that are called {trivial} in \S3 of \cite{MR1116103} (cf. also \S1 of \cite{MR1074485}). In this work, we say that such skew-holomorphic Jacobi forms are of {\em theta type}, and write $T^{\sk}_{k,m}$ for the subspace of $J^\sk_{k,m}$ that they comprise. 
\begin{gather}\label{eqn:jac:hol-Tskkm}
	T^{\sk}_{k,m}:=\left\{\phi\in J^\sk_{k,m}\mid C_\phi(D,r)\neq 0\Rightarrow \XBox(D)=1 \right\} 
\end{gather}
The prototypical example of a skew-holomorphic Jacobi form of theta type in $T^{\sk}_{k,m}$, for $k=1$ or $k=2$, is 
the function $t_{k,m}$ defined in \S\ref{sec:notn}.

We will 
present a characterization of the theta type skew-holomorphic Jacobi forms in \S\ref{sec:class:defn}, where
it will develop (cf. Proposition \ref{prop:class:tht-tht}) that any such form has theta-coefficients that are linear combinations of thetanullwerte. (In particular, $T^{\sk}_{k,m}=\{0\}$ unless $k\in \{1,2\}$.) This result motivates our choice of terminology.

\subsection{Mock Jacobi Forms}\label{sec:Mock Jacobi Forms}

We now discuss 
weak mock Jacobi forms, which are our main object of study. 
We 
follow the treatment of \cite{Dabholkar:2012nd} quite closely. The closely related notion of harmonic Maass--Jacobi form was introduced by Bringmann--Richter in \cite{MR2680205}.

To begin, let $w\in \frac12\ZZ$, and suppose to be given a holomorphic function $g:\HH\to \CC$ such that $g(\tau)=O(1)$ as $\Im(\t)\to \infty$, or $g(\t)=0$ as $\Im(\t)\to \infty$ if $w\leq 1$. 
Define the {\em non-holomorphic weight $w$ Eichler integral} of $g$, denoted $g^*(\t)$, by setting\footnote{Note that our $g^*$ is $-(4\pi)^{w-1}$ times the non-holomorphic Eichler integral defined in \S7 of \cite{Dabholkar:2012nd}. Our choice of scaling is motivated by Proposition \ref{prop:jac:hol-lampol}.}
\be\label{def:Eichler_integral}
g^\ast (\t):= -2^{w-1}\ex(\tfrac{w-1}{4}) \int_{-\bar \tau}^\inf (\t'+\t)^{-w} \overline{g(-\overline{\t'})} {\rm d}\t',
\ee
so that
\begin{gather}\label{eqn:jac:mck-partialgstargbar}
-2i\Im(\t)^w\frac{\pa}{\pa \bar \tau} g^\ast(\t) =  
\overline{g(\t)}. 
\end{gather}
We follow tradition by suppressing the weight $w$ from notation. In applications, $g$ will be a 
cusp form of weight $5/2-k$ for $\Gamma(4m)$, for some $k$ and $m$, and then $w=k-\frac12$. 

Next, for integers $k$ and $m$, say that 
an elliptic form $\phi\in\E_m$ is a {\em weak mock 
Jacobi form} of weight $k$ and index $m$ if the following is true. First, $\tau\mapsto \phi(\t,z)$ 
remains bounded as $\Im(\tau)\to \infty$ for every fixed $z\in\CC$; second, the theta-coefficients of $\phi$ are holomorphic; finally, there exists 
a cuspidal skew-holomorphic Jacobi form $\s=\sum_r \overline{g_r}\theta_{m,r}\in S^\sk_{3-k,m}$,
such that if we let
\begin{gather}\label{eqn:jac:mck-hhat}
\hat{h}_r(\t):=
	h_r(\t)+\frac1{\sqrt{2m}}g_r^*(\t)
\end{gather} 
for $w=k-\frac12$ (cf. (\ref{def:Eichler_integral})),
where 
$\f=\sum_r h_r\theta_{m,r}$, 
then 
$\hat\phi:=\sum_r\hat{h}_r\th_{m,r}$ is invariant for the weight $k$ 
modular action $\lvert_{k,m}$ of $\SL_2(\ZZ)$ on $\E_m$ (cf. (\ref{modular})), so that $\hat\phi\lvert_{k,m}\gamma=\hat\phi$ for all $\g\in\SL_2(\ZZ)$. 

It follows from this that $\phi$ itself 
must satisfy translation invariance $\phi(\tau+1,z)=\phi(\tau,z)$, so a weak mock Jacobi form $\phi$ admits a Fourier expansion 
\begin{gather}\label{eqn:jac:mock-Fou}
	\phi(\tau,z)=\sum_{\substack{D,\ll\in\ZZ\\D=\ll^2\xmod 4m}}C_\phi(D,\ll)q^{-D/4m}q^{\ll^2/4m}y^\ll,
\end{gather}
with $C_\f(D,\ell)=0$ whenever $\ell^2-D<0$,
just as in the case of weak holomorphic Jacobi forms (cf. (\ref{eqn:jac:hol-DFoucffhol})).
Using the above notation we call $\s$ 
the {\em shadow} of $\phi$ (it is uniquely determined by the conditions it must satisfy), and we call $\hat{\phi}$ 
the {\em completion} of $\phi$. Note that the completion of a weak mock Jacobi form is a harmonic Maass--Jacobi form in the sense of Bringmann--Ricther \cite{MR2680205}.

We remark that one may consider mock Jacobi forms with non-cuspidal, or even weakly-holomorphic shadow. Cf. \S3 of \cite{BruFun_TwoGmtThtLfts} and \S7.1 of \cite{Dabholkar:2012nd}. However, our interest in this work is in mock Jacobi forms whose shadows are ``as close as possible'' to being trivial, or more precisely, are of theta type (cf. (\ref{eqn:jac:hol-Tskkm}), \S\ref{sec:class:defn}). 
Moreover, our focus is on mock Jacobi forms of weight $1$. 
It will develop in \S\ref{sec:class:theta} that any theta type skew-holomorphic Jacobi form of weight $2$ is cuspidal (cf. Lemma \ref{lem:class:tht-TS}), so it is general enough for our purposes to consider mock Jacobi forms with cuspidal shadow.

We denote the space of weak mock Jacobi forms of weight $k$ and index $m$ by ${\mathbb J}^\wk_{k,m}$, although it is typical elsewhere in the literature to omit the superscript $^\wk$ since there seem to be very few non-zero examples of $\f\in\JJ_{k,m}^\wk$ with $C_\f(D,r)=0$ for $D>0$. (Cf. Proposition \ref{prop_uniqueness1} for a concrete result along these lines.) With this notation, the 
{\em shadow map} is
\begin{gather}\label{eqn:jac:mock-xi}
\begin{split}
\xi :{\mathbb J}^\wk_{k,m} &\to  S^\sk_{3-k,m}\\
\phi&\mapsto\s
\end{split}
\end{gather}
for $\phi \in {\mathbb J}^\wk_{k,m}$ with $\s\in S^\sk_{3-k,m}$ as above. This is closely related to the differential operator (15) of \cite{MR2680205}.

Just as in the case that $\phi$ is a weak holomorphic Jacobi form, we call the $C_\f(D,\ll)$ in (\ref{eqn:jac:mock-Fou}) with $D>0$ the {\em positive (discriminant) coefficients} of $\f$, for $\f\in\JJ^\wk_{k,m}$ (cf. (\ref{eqn:jac:hol-thtcoeffFoucoeff})), and we define the {\em positive (discriminant) part} of $\phi$ by setting
\begin{gather}\label{eqn:jac:mock-polarpart}
	\Dp(\phi)(\t,z):=
	\sum_{\substack{D,\ll \in \ZZ\\D=\ll^2\xmod 4m\\D>0}} C_\f(D,\ll) q^{-D/4m}q^{\ll^2/4m} y^\ll.
\end{gather}

There is no direct analogue of the Petersson inner product for mock Jacobi forms. However, following Brunier--Funke (cf. (3.9) of \cite{BruFun_TwoGmtThtLfts}) we may consider the pairing\footnote{Note that our $\{\cdot\,,\cdot\}$ is sequilinear, antilinear in the right-hand slot, whereas the similarly denoted pairing in \cite{BruFun_TwoGmtThtLfts} is $\CC$-bilinear.}
\begin{gather}\label{eqn:jac:mock-BFpairing}
\{\cdot\,,\cdot\}:\JJ^\wk_{k,m}\times J_{3-k,m}^\sk\to\CC
\end{gather} 
defined by setting
$\{\f,\f'\}:=\langle\xi(\f),\f'\rangle$, where $\langle\cdot\,,\cdot\rangle$ is the Petersson inner product (\ref{eqn:jac:hol-PetIP}) on $S^\sk_{3-k,m}\times J^\sk_{3-k,m}$. 
With this definition, Proposition 3.5 of \cite{BruFun_TwoGmtThtLfts} translates into the following statement. 
\begin{prop}[Bruinier--Funke]\label{prop:jac:hol-lampol}
Let $\f\in \JJ^\wk_{k,m}$ and $\f'\in J^\sk_{3-k,m}$. Then we have
\begin{gather}\label{pairing_Petersson2}
	\{\f,\f'\}=
	\sum_{r\xmod 2m}\sum_{D\geq 0}C_\phi(D,r)\overline{C_{\f'}(D,r)}.
\end{gather}
\end{prop}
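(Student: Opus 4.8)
The plan is to unwind the definition of the pairing and reduce the whole statement to the evaluation of a single Petersson integral via Stokes' theorem, exactly in the manner of Bruinier--Funke (of which this is the Jacobi-form translation). By definition $\{\f,\f'\}=\langle\xi(\f),\f'\rangle=\langle\s,\f'\rangle$, where $\s=\sum_r\overline{g_r}\th_{m,r}\in S^\sk_{3-k,m}$ is the shadow of $\f$; since $\s$ is cuspidal the Petersson integral converges. Writing $\f'=\sum_r\overline{g'_r}\th_{m,r}$ with $g'_r$ holomorphic, the weight $3-k$ instance of the definition (\ref{eqn:jac:hol-PetIP}) gives
\begin{gather*}
\langle\s,\f'\rangle=\frac1{\sqrt{2m}}\sum_{r\xmod 2m}\int_{\mathcal F}\overline{g_r(\tau)}\,g'_r(\tau)\,v^{\frac12-k}\,\diff u\,\diff v,
\end{gather*}
with $\tau=u+iv$. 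The device that makes the integral tractable is the defining property (\ref{eqn:jac:mck-partialgstargbar}) of the non-holomorphic Eichler integral, namely $\overline{g_r}=-2i\,v^{k-\frac12}\frac{\pa}{\pa\bar\tau}g_r^*$ (here $w=k-\frac12$). Substituting this, the powers of $v$ cancel and, because $g'_r$ is holomorphic, the integrand becomes the total $\bar\partial$-derivative $\frac{\pa}{\pa\bar\tau}\!\left(g_r^*\,g'_r\right)$.

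Next I would pass from the bare Eichler integral to the modular completion. Since $h_r$ is holomorphic, (\ref{eqn:jac:mck-hhat}) gives $\frac{\pa}{\pa\bar\tau}\hat h_r=\frac1{\sqrt{2m}}\frac{\pa}{\pa\bar\tau}g_r^*$, whence $\sum_r\frac{\pa}{\pa\bar\tau}(g_r^*g'_r)=\sqrt{2m}\,\frac{\pa}{\pa\bar\tau}(\hat h^t g')$. This replacement is the crucial point: whereas $g^{*t}g'$ is not $\SL_2(\ZZ)$-invariant, the completed contraction $\hat h^t g'$ is. Indeed $\hat\phi$ is invariant for $\lvert_{k,m}$, so $\hat h$ transforms with weight $k-\frac12$ for $\overline{\varrho_m}$, while $g'$ transforms with weight $\frac52-k$ for $\varrho_m$; unitarity of $\varrho_m$ then makes $\hat h^t g'$ a scalar weight-$2$ form, so that $\hat h^t g'\,\diff\tau$ is an $\SL_2(\ZZ)$-invariant $1$-form. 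Rewriting $\frac{\pa}{\pa\bar\tau}(\cdots)\,\diff u\,\diff v$ as $\tfrac1{2i}\diff(\hat h^t g'\,\diff\tau)$ and applying Stokes' theorem on the truncated domain $\mathcal F_t$ yields
\begin{gather*}
\langle\s,\f'\rangle=-\lim_{t\to\infty}\int_{\pa\mathcal F_t}\hat h^t g'\,\diff\tau.
\end{gather*}

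The boundary is then analysed in the usual way. The two vertical sides are identified by $T$ and the lower arc is preserved by $S$, so invariance of $\hat h^t g'\,\diff\tau$ makes these contributions cancel in pairs, leaving only the horizontal segment at height $t$. Combining its orientation with the overall sign, $\langle\s,\f'\rangle=\lim_{t\to\infty}\int_{-1/2}^{1/2}(\hat h^t g')(u+it)\,\diff u$, which extracts the $u$-constant term of $\hat h^t g'$. Inserting the Fourier expansions $h_r=\sum_D C_\phi(D,r)q^{-D/4m}$ and $g'_r=\sum_{D\geq 0}\overline{C_{\f'}(D,r)}q^{D/4m}$ (the latter supported on $D\geq 0$ because $\f'$ is skew-holomorphic), and using that $D\equiv D'\equiv r^2\pmod{4m}$ forces the exponents of $q$ in the product to be integral, the $u$-integral of $h^t g'$ selects the diagonal $D'=D$ and produces, independently of $t$, the claimed sum $\sum_r\sum_{D\geq 0}C_\phi(D,r)\overline{C_{\f'}(D,r)}$.

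The step I expect to demand the most care is the vanishing, as $t\to\infty$, of the contribution of the Eichler-integral part $\tfrac1{\sqrt{2m}}\sum_r g_r^* g'_r$ to the top-edge integral. Here I would use that the shadow $g_r$ is a cusp form, so its non-holomorphic Eichler integral $g_r^*$ decays exponentially as $v\to\infty$ (from the incomplete-gamma expansion of $g_r^*$), while $g'_r$ stays bounded; the product is therefore exponentially small, which both justifies the interchange of limit and integral and confirms that only the holomorphic part $h^t g'$ survives. The remaining points—verifying the half-integral weight/multiplier bookkeeping that makes $\hat h^t g'\,\diff\tau$ invariant, and tracking the orientation and sign in Stokes' theorem—are routine.
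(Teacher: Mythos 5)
Your proof is correct and follows essentially the same route as the paper's: both arguments integrate the $1$-form $\alpha=\sum_r\hat h_r g'_r\,\diff\tau$, use the identity (\ref{eqn:jac:mck-partialgstargbar}) to identify $-\diff\alpha$ with the Petersson integrand, and apply Stokes' theorem on $\mathcal F_t$ together with the $\SL_2(\ZZ)$-invariance of $\alpha$ to reduce to the constant term along the top edge. Your extra care with the decay of the Eichler-integral contribution is exactly what the paper's $O(e^{-\varepsilon t})$ term encodes.
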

\begin{proof}
If $\xi(\f)=\s=\sum_r\overline{g_r}\th_{m,r}$ and $\f'=\sum_r\overline{g'_r}\th_{m,r}$ then by definition (\ref{eqn:jac:hol-PetIP}) we have 
\begin{gather}
\{\f,\f'\}=\frac1{\sqrt{2m}}\sum_{r\xmod 2m}\int_{\mathcal{F}}\overline{g}_rg_r' v^{\frac12-k}{\rm d}u{\rm d}v.
\end{gather}
Following the proof of Proposition 3.5 in \cite{BruFun_TwoGmtThtLfts} we consider the holomorphic $1$-form $\a=\sum_r\hat{h}_rg_r'{\rm d}\t$ where $\hat{h}_r=h_r+\frac{1}{\sqrt{2m}}g_r^*$ (cf. (\ref{eqn:jac:mck-hhat})). We compute ${\rm d}\a=-\frac{1}{\sqrt{2m}}\sum_r\overline{g_r}g_r'v^{\frac12-k}{\rm d}u{\rm d}v$ using (\ref{eqn:jac:mck-partialgstargbar}) and conclude that $\{\f,\f'\}=-\int_{\mathcal{F}}{\rm d}\a$. 
So $\{\f,\f'\}=\lim_{t\to\infty}\int_{\partial\mathcal{F}_t}(-\a)$ by Stokes' theorem. Noting that $\alpha$ is $\SL_2(\ZZ)$-invariant we obtain 
\begin{gather}
\begin{split}
\int_{\partial\mathcal{F}_t}(-\a)&=\sum_{r\xmod 2m}\int_{-\frac12}^{\frac12} \hat{h}_r(u+it)g'_r(u+it){\rm d}u\\
	 &= \sum_{r\xmod 2m}\sum_{D\geq 0}C_\f(D,r)\overline{C_{\f'}(D,r)}+O(e^{-\varepsilon t})
\end{split}
\end{gather}
for some $\varepsilon>0$. The required identity (\ref{pairing_Petersson2}) follows.
\end{proof}
See Proposition 2 
of \cite{MR2805582} for a closely related result, where certain harmonic Maass--Jacobi forms take on the role played by weak mock Jacobi forms here.

\subsection{Theta Lifts}\label{sec:jac:szlifts}

In this section we recall certain lifting maps from Jacobi forms to modular forms 
which will play important roles in the proofs of our main results. Such maps first appeared in \S5 of \cite{eichler_zagier} for the case of holomorphic Jacobi forms of index one. 
Shimura's correspondence \cite{MR0332663}, and related works of Niwa \cite{MR0364106,MR0562506} and Kohnen \cite{MR575942,MR660784} serve as important antecedents.
Later, lifts for holomorphic Jacobi forms of arbitrary index were analyzed by Skoruppa--Zagier in \cite{MR958592}, and Skorrupa studied the skew-holomorphic case in \cite{MR1072974,MR1074485,MR1116103} (cf. also \cite{MR1096975}). 
As we will describe, the result of these analyses is a series of injective maps of Hecke algebra modules
\begin{gather}\label{eqn:jac:szlifts-injmap}
	\SZ:S_{k,m}\oplus S^\sk_{k,m} \to M_{2k-2}(\Gamma_0(m))
\end{gather}
(cf. \S\ref{sec:notn})
for $k\geq 2$ and $m\geq 1$, whose image contains all cuspidal newforms (and is entirely cuspidal if $k>2$). 
We will also explain how results of Bruinier--Ono \cite{MR2726107} furnish an extension of (\ref{eqn:jac:szlifts-injmap}) to $k=1$.

The embeddings (\ref{eqn:jac:szlifts-injmap}) are described somewhat indirectly, as linear combinations of linear maps $\mathcal{S}_{D,r}$, where $D=r^2\xmod 4m$. 
To 
motivate the $\mathcal{S}_{D,r}$, suppose that $f\in S_{2k-2}(\Gamma_0(m))$ is a newform, $f(\tau)=\sum_{n>0}c_f(n)q^n$, normalized so that $c_f(1)=1$. 
Then $f$ is an eigenform for the Hecke operators 
\begin{gather}\label{eqn:jac:szlifts-Heckemod}
	c_{f|T_n}(n')=\sum_{d|(n,n')}d^{2k-3}c_f\left(\frac{nn'}{d^2}\right),
\end{gather}
and in particular, its coefficients are its Hecke eigenvalues, $c_{f|T_n}(1)=c_f(n)$.
If we assume that $\SZ$ is an embedding of Hecke algebra modules then there is a corresponding $\phi\in S_{k,m}\oplus S^\sk_{k,m}$ having the same Hecke eigenvalues as $f$, so if $D$ and $r$ are chosen so that $C_\phi(D,r)$ is non-zero, then $\sum_{n>0}C_{\phi|T_n}(D,r)q^n$ is proportional to $f$. This reasoning suggests that the map $\SZ$---assuming it exists---should be expressible as a linear combination of the {\em Skoruppa--Zagier lifts}
\begin{gather}\label{eqn:jac:szlifts-SDr}
	\begin{split}
	\mathcal{S}_{D,r}:S_{k,m}\oplus S^\sk_{k,m}&\to M_{2k-2}(\Gamma_0(m))\\
		\phi&\mapsto \sum_{n\geq 0}C_{\phi|T_n}(D,r)q^n.
	\end{split}
\end{gather}

The constant term of $\mathcal{S}_{D,r}\phi$, here denoted $C_{\phi|T_0}(D,r)$, requires some explanation. It turns out to be zero unless $k=2$ and $\phi$ has a non-trivial component in $S^\sk_{2,m}$. Certainly $\mathcal{S}_{D,r}$ is identically zero unless $D=r^2\xmod 4m$. In order to recover the embeddings (\ref{eqn:jac:szlifts-injmap}) it suffices to consider the $\mathcal{S}_{D,r}$ for which $D$ is a fundamental discriminant (i.e., $1$ or the discriminant of some quadratic number field).
With this restriction on $D$ in place define the constant term $C_{\phi|T_0}(D,r)$ to be zero unless $k=2$ and $D=1$. If $\phi\in S^\sk_{2,m}$ and $a^2=1\xmod 4m$ 
then set
\begin{gather}\label{eqn:jac:szlifts-SDrcnst}
	C_{\phi|T_0}(1,a):=\lab\phi\cdot a,t_{2,m}\rab
\end{gather}
(cf. the Corollary of \cite{MR1074485}) where $\phi\cdot a:=\sum_rh_r\th_{m,ra}$ for $\phi=\sum_r h_r\th_{m,r}$ (cf. (\ref{eqn:jac:EZ-phidota})). 
The pairing in (\ref{eqn:jac:szlifts-SDrcnst}) is the Petersson inner product (\ref{eqn:jac:hol-PetIP}) on $S^\sk_{2,m}$, and $t_{2,m}$ is
as in \S\ref{sec:notn}.

The Hecke operators $T_n$ for holomorphic Jacobi forms were introduced in \S4 of \cite{eichler_zagier}. See \S3 of \cite{MR1116103} for the skew-holomorphic case. We obtain a completely explicit description of the $\mathcal{S}_{D,r}$, for $D$ fundamental, by noting that 
\begin{gather}\label{eqn:jac:szlifts-SDrfund}
	C_{\phi|T_n}(D,r)=\sum_{d|n}
		d^{k-2}\left(\frac{D}{d}\right)C_\phi\left(\frac{n^2}{d^2}D,\frac{n}{d}r\right)
\end{gather}
(so long as $D$ is fundamental). 
The formula (\ref{eqn:jac:szlifts-SDrfund}) makes it clear that $\mathcal{S}_{D,r}$ vanishes on $S_{k,m}$ if $D>0$, and vanishes on $S^\sk_{k,m}$ if $D<0$. Cf. \S\ref{sec:notn} for the Kronecker symbol $\left(\frac{\,\cdot\,}{\,\cdot\,}\right)$.

For completeness we give a full description of the Hecke operators $T_n$ on $J_{k,m}\oplus J^\sk_{k,m}$, but only for $(n,m)=1$; a restriction which is natural in light of (\ref{eqn:jac:szlifts-injmap}). We define $\phi|T_n$ 
for $\phi\in J_{k,m}\oplus J^\sk_{k,m}$ 
by requiring that its Fourier coefficients are given by
\begin{gather}\label{eqn:jac:hol-phiTn}
	C_{\phi|T_n}(D,r)=
	\sum_{d}
	d^{k-2}\varepsilon_D(d)C_\phi\left(\frac{n^2}{d^2}D,r'\right),
\end{gather}
where 
the sum is over divisors $d$ of $n^2$ such that $d^2|n^2D$, and such that there exists an $r'\xmod 2m$ satisfying $nr=dr'\xmod 2m(n,d)$ and $(nr)^2=(dr')^2\xmod 4m$. Note that such an $r'$ is unique mod $2m$ if it exists. The symbol $\varepsilon_D(d)$ is 
defined by setting 
$\varepsilon_D(d)=g\left(\frac{D/g^2}{d/g^2}\right)$ 
if $(d,D)=g^2$ and $D/g^2$ is a square mod $4$, and $\varepsilon_D(d)=0$ otherwise.
Cf. (5) in \S0 of \cite{MR958592}, and \S3 of \cite{MR1116103}.

These operators $T_n$ for $(n,m)=1$ 
generate commutative subalgebras of $\End(J_{k,m})$ and $\End(J^\sk_{k,m})$ which preserve the cuspidal subspaces, are self-adjoint with respect to the 
{Petersson inner product} (\ref{eqn:jac:hol-PetIP}), and satisfy the same defining relations 
\begin{gather}\label{eqn:jac:szlifts-Heckerels}
	T_n T_{n'}=\sum_{d|(n,n')}d^{2k-3}T_{nn'/d^2}
\end{gather}
(cf. Corollary 1 in \S4 of \cite{eichler_zagier})
as the usual Hecke operators (\ref{eqn:jac:szlifts-Heckemod}) acting on modular forms of weight $2k-2$ and level $m$. 
We refer to the abstract algebra generated by symbols $T_n$ for $n$ positive and coprime to $m$, subject to the relations (\ref{eqn:jac:szlifts-Heckerels}), as the {\em Hecke algebra} at weight $2k-2$ and level $m$.

We now state a theorem which summarizes the main properties of the maps $\SZ$, as established in \cite{MR958592,MR1072974,MR1074485,MR1116103}. In preparation for this, 
define $P^\sk_{2,m}$ following \cite{MR1116103} (see p.104) to be the subspace of $S^\sk_{2,m}$ spanned by Hecke eigenforms that do not belong to $T^\sk_{2,m}$. (We will verify in the sequel that $S^\sk_{2,m}=T^\sk_{2,m}\oplus P^\sk_{2,m}$ as inner product spaces. In particular, $T^\sk_{2,m}$ is a subspace of $S^\sk_{2,m}$. Cf. Proposition \ref{prop:class:tht-orthog}.)

\begin{thm}[Skoruppa--Zagier, Skoruppa]\label{thm:jac:szlifts-sz}
Let $m$ be a positive integer and let $k$ be an integer greater than $1$. Then there is some linear combination 
of the operators $\mathcal{S}_{D,r}$, for $D$ fundamental and $r^2=D \xmod 4m$, that defines an injective map 
\begin{gather}
	\SZ:S_{k,m}\oplus S^\sk_{k,m} \to M_{2k-2}(\Gamma_0(m))
\end{gather}
of modules for the Hecke algebra at weight $2k-2$ and level $m$. 
If $\phi\in S_{k,m}$ and $f=\SZ(\phi)$ then $f|W_m=(-1)^kf$, whereas if $\phi\in S^\sk_{k,m}$ then $f|W_m=(-1)^{k-1}f$.
The image of $\SZ$ contains all cuspidal newforms. 
A cuspidal Hecke eigenform $f\in M_2(\Gamma_0(m))$ such that $f|W_m=-f$ is the image under $\SZ$ of an element of $P^\sk_{2,m}$.
\end{thm}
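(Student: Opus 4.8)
The plan is to break the statement into three assertions: (i) that $f$ lies in the image of $\SZ$; (ii) that any preimage of $f$ sits inside the skew-holomorphic summand $S^\sk_{2,m}$; and (iii) that such a preimage avoids the theta type subspace, so belongs to $P^\sk_{2,m}$. I would dispatch (ii) and (iii) first, since they follow formally from the parts of the theorem already in hand together with the injectivity and Hecke-equivariance of $\SZ$, and treat (i) last, as the main obstacle.

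For (ii), suppose $f=\SZ(\phi)$ with $\phi=\phi_++\phi_-$ according to $S_{2,m}\oplus S^\sk_{2,m}$. The sign rule already established gives $\SZ(\phi_+)|W_m=\SZ(\phi_+)$ and $\SZ(\phi_-)|W_m=-\SZ(\phi_-)$, so applying $W_m$ to $f=\SZ(\phi_+)+\SZ(\phi_-)$ and comparing with the hypothesis $f|W_m=-f$ forces $\SZ(\phi_+)=0$; injectivity then yields $\phi_+=0$, that is $\phi\in S^\sk_{2,m}$.

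For (iii), I would show that $\SZ$ maps the theta type forms to Eisenstein series and the rest of $S^\sk_{2,m}$ to cusp forms. Let $\psi\in T^\sk_{2,m}$. By \eq{eqn:jac:hol-Tskkm} we have $C_\psi(D,r)=0$ unless $\XBox(D)=1$, and since $1$ is the only square among fundamental discriminants, \eq{eqn:jac:szlifts-SDrfund} gives $\mathcal{S}_{D,r}\psi=0$ for fundamental $D>1$ and $\mathcal{S}_{1,r}\psi=\sum_{n}\bigl(\sum_{e\mid n}C_\psi(e^2,er)\bigr)q^n$. Hence the Fourier coefficients of $\SZ(\psi)$ are fixed linear combinations of the divisor sums $\sum_{e\mid n}C_\psi(e^2,er)$; since the square-discriminant coefficients of a theta type form grow linearly in $e$ (they are coefficients of thetanullwerte, cf. Proposition \ref{prop:class:tht-tht}), these grow like $\sigma_1(n)$, too fast for a weight two cusp form. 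Thus the Hecke eigenforms spanning $T^\sk_{2,m}$ carry Eisenstein eigensystems, whereas those spanning $P^\sk_{2,m}$ are by definition the eigenforms not of theta type and, by Skoruppa's analysis, carry cuspidal eigensystems. As $\SZ$ is injective and Hecke-equivariant and cusp forms and Eisenstein series have disjoint systems of Hecke eigenvalues, it follows that $\SZ(T^\sk_{2,m})\subseteq\operatorname{Eis}_2(\Gamma_0(m))$ while $\SZ(P^\sk_{2,m})\subseteq S_2(\Gamma_0(m))$. Writing $\phi=\phi_T+\phi_P$ along $S^\sk_{2,m}=T^\sk_{2,m}\oplus P^\sk_{2,m}$ (Proposition \ref{prop:class:tht-orthog}) and using that $f=\SZ(\phi_T)+\SZ(\phi_P)$ is cuspidal while $M_2(\Gamma_0(m))=S_2(\Gamma_0(m))\oplus\operatorname{Eis}_2(\Gamma_0(m))$, we obtain $\SZ(\phi_T)=0$, so $\phi_T=0$ by injectivity and $\phi=\phi_P\in P^\sk_{2,m}$.

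The remaining assertion (i) is where I expect the real work to lie. When $f$ is a newform it is immediate, since the image of $\SZ$ contains all cuspidal newforms. For a general cuspidal Hecke eigenform, the theory of newforms writes $f$ as a combination of the translates $\tau\mapsto f_0(t\tau)$ of a newform $f_0$ of some level $m'\mid m$, with $t\mid m/m'$. To see that this entire oldclass lies in the image, I would start from a preimage of $f_0$ under the level $m'$ instance of $\SZ$ and transport it up to level $m$, using the compatibility of the family of maps $\SZ$ with the degeneracy maps $\iota^m_{m',t}$ on the elliptic side and the Hecke-like operators $U_d$, $V_\ell$ on the Jacobi side. Establishing this compatibility, which is equivalent to the surjectivity half of the Skoruppa--Zagier and Skoruppa correspondences and is obtained in the cited works through an Eichler--Selberg trace-formula comparison of the dimensions of corresponding Hecke eigenspaces, is the substantive obstacle; granted it, parts (ii) and (iii) complete the argument.
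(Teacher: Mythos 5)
The paper does not actually prove this theorem: it is stated as a summary of results established in the cited works of Skoruppa--Zagier and Skoruppa (\cite{MR958592,MR1072974,MR1074485,MR1116103}), so there is no in-paper argument to compare yours against. Judged on its own terms, your reduction of the final assertion to the earlier clauses is sound and closely mirrors how the paper later \emph{uses} the theorem. Step (ii), extracting $\phi\in S^\sk_{2,m}$ from the $W_m$-eigenvalue via injectivity, is correct. Step (iii), that $\SZ$ sends $T^\sk_{2,m}$ into the Eisenstein subspace and $P^\sk_{2,m}$ into $S_2(\Gamma_0(m))$, is exactly what the paper establishes afterwards in Lemma \ref{lem:class:tht-PtoS} and Proposition \ref{prop:class:tht-orthog} --- but there it is done by an explicit computation showing that $\SZ(t_{2,m}\cdot a(n))$ is proportional to $nE_2(n'\tau)-n'E_2(n\tau)$ (together with Sturm's theorem for the cuspidality of $\SZ(P^\sk_{2,m})$), rather than by a growth estimate. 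Your growth argument needs a touch more care than you give it: for a general element of $T^\sk_{2,m}$ the sums $\sum_{e\mid n}C_\psi(e^2,er)$ could a priori suffer cancellation, so one should first reduce to the explicit spanning set $(t_{2,m'}\cdot a')|U_d$ of Proposition \ref{prop:class:tht-tht} (for which the terms are nonnegative and the $e=n$ term already contributes $\asymp n$), or simply do the Eisenstein computation. Note also that Proposition \ref{prop:class:tht-orthog} appears later in the paper than the theorem; there is no circularity, since its proof does not invoke the final clause you are proving, but the dependence should be made explicit.

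The one place where your sketch understates what remains is step (i). For a non-new eigenform $f$ in the oldclass of a newform $f_0$ of level $m'\mid m$, the compatibility (\ref{eqn:jac:szlifts-Belld}) only transports a preimage of $f_0$ to the specific combinations $f_0|B_{\ell,d}=\sum_{t\mid\ell}t\,f_0(td\tau)$ with $m'\ell d^2=m$; these span a subspace of the oldclass of dimension $\#\{d:d^2\mid m/m'\}$, which is in general strictly smaller than the full oldclass. The substantive content of Skoruppa's theorem is precisely that these combinations (from the holomorphic and skew-holomorphic sides together) exhaust the relevant Atkin--Lehner eigenspaces --- in particular that every eigenform with $f|W_m=-f$ is hit --- and this requires the trace-formula comparison of eigenspace dimensions in the cited works, not merely the formula (\ref{eqn:jac:szlifts-Belld}). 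You correctly locate the hard work in the literature, but the obstacle is the eigenspace count, not the compatibility identity itself. For the way the theorem is actually applied in this paper (e.g.\ in Lemma \ref{vanishing_period_maxsym}, where $f'$ is a newform), your steps (ii) and (iii) together with the clause that all newforms lie in the image already suffice.
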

Note that the image of $\SZ$ can be described explicitly. See \cite{MR958592}.

Two 
families of {\em Hecke-like} operators on Jacobi forms were introduced in \cite{eichler_zagier}. These are the $U_d$ and 
$V_\ell$, for $d,\ell\geq 1$, mapping $J_{k,m}$ to $J_{k, md^2}$ and 
$J_{k, m\ell}$, respectively, 
and similarly for skew-holomorphic forms. They may be defined by requiring that
\begin{gather}\label{eqn:jac:szlifts-Ud}
	C_{\phi|U_d}(D,r)=\begin{cases} C_\phi(D,\frac{r}{d})&\text{ if $r=0\xmod d$}\\ 0&\text{ if $r\neq 0\xmod d$}\end{cases},\\
	\label{eqn:jac:szlifts-Vell}
	C_{\phi|V_\ell}(D,r)=\sum_{d|\left(\frac{r^2-D}{4m\ell},r,\ell\right)}d^{k-1}C_\phi\left(\frac{D}{d^2},\frac{r}{d}\right),
\end{gather}
for $\phi\in J_{k,m}\oplus J^\sk_{k,m}$. In terms of the Skoruppa--Zagier correspondence $\SZ$, the operator 
$U_d\circ V_\ell$ 
corresponds to $B_{\ell,d}$, mapping $M_k(\Gamma_0(m))$ to $M_k(\Gamma_0(m\ell d^2))$ according to $(f|B_{\ell,d})(\tau):=\sum_{t|\ell}t^{k/2}f(td\tau)$. That is, we have 
\begin{gather}\label{eqn:jac:szlifts-Belld}
\SZ(\phi|U_d\circ V_\ell)=\SZ(\phi)|B_{\ell,d}
\end{gather}
in $M_k(\Gamma_0(m\ell d^2))$, for $\phi\in S_{k,m}\oplus S^\sk_{k,m}$. (Cf. the proof of Theorem 5 in \S3 of \cite{MR958592}.)

It will be useful for us in \S\ref{sec:class:proof} that the $\mathcal{S}_{D,r}$ can be described as theta lifts. For example, if $k=2$ and $D>0$ then we have
\begin{gather}\label{eqn:jac:szlifts-SDrTheta}
	(\mathcal{S}_{D,r}\phi)(\tau')=\frac{i}{\sqrt{mD}}\lab \phi(\cdot\,,\cdot),\Theta_{D,r}(\cdot\,,\cdot\,,\tau')\rab
\end{gather}
according to the Corollary of \cite{MR1074485}, where $\lab\cdot,\cdot\rab$ is the Petersson inner product (\ref{eqn:jac:hol-PetIP}) on $S^\sk_{2,m}$, and $\Theta_{D,r}(\tau,z,\tau')$ is defined by setting
\begin{gather}\label{eqn:jac:szlifts-ThetaC}
\begin{split}	
	\Theta_{D,r}(\tau,z,\tau')
	&:=\sum_{\substack{D',r'\in \ZZ\\D'=(r')^2\xmod 4m}}C_{D,r}(D',r';\tau,\tau')\bar{q}^{D'/4m}q^{(r')^2/4m}y^{r'},
	\\
	C_{D,r}(D',r';\tau,\tau')
	&:=\sqrt{\Im(\tau)}\sum_{Q\in \mathcal{Q}(m,DD',rr')}\chi_{D}(Q)
	\frac{Q(\tau',1)}{\Im(\tau')^2}\exp\left(-\frac{\pi\Im(\tau)}{mD}\frac{\widehat{Q}(\tau')^2}{\Im(\tau')^2}\right).
\end{split}
\end{gather}

An analogous construction introduced by Bruinier--Ono \cite{MR2726107}---following earlier work of Borcherds \cite{Bor_AutFmsSngGrs}, Bruinier \cite{MR1903920}, and Bruinier--Funke \cite{BruFun_TwoGmtThtLfts}---can be applied to weak mock 
Jacobi forms of weight $1$. Specifically, for $D$ a negative fundamental discriminant such that $D=r^2\xmod 4m$ for some $r$, and for $\f\in \JJ^\wk_{1,m}$, we may consider the {\em regularized theta lift} 
\begin{gather}\label{eqn:jac:szlifts-SDrreg}
	(\mathcal{S}^\reg_{D,r}\phi)(\tau'):=\lab \f(\cdot\,,\cdot),\Theta_{D,r}(\cdot\,,\cdot\,,\tau')\rab^\reg,
\end{gather} 
where $\Theta_{D,r}(\tau,z,\tau')$ is now defined by
\begin{gather}
\begin{split}\label{eqn:jac:szlifts-ThetaCreg}
	\Theta_{D,r}(\tau,z,\tau')&:=\sum_{\substack{D',r'\in \ZZ\\D'=(r')^2\xmod 4m}}C_{D,r}(D',r';\tau,\tau'){q}^{-D'/4m}q^{(r')^2/4m}y^{r'},
	\\
	C_{D,r}(D',r';\tau,\tau')&:=\sqrt{\Im(\tau)}\sum_{Q\in \mathcal{Q}(m,DD',rr')}\chi_{D}(Q)
	\exp\left(\frac{\pi\Im(\tau)}{mD}\frac{\widehat{Q}(\tau')^2}{\Im(\tau')^2}\right).
\end{split}
\end{gather}
In (\ref{eqn:jac:szlifts-SDrreg}) we write $\lab \cdot\,,\cdot\rab^\reg$ for the {\em regularized Petersson inner product}, which is defined as follows (cf. \S6 of \cite{Bor_AutFmsSngGrs}). Given that $\f(\tau,z)=\sum_{r'\xmod 2m} h_{r'}(\tau)\th_{m,r'}(\tau,z)$ and $\Theta_{D,r}(\tau,z,\tau')=\sum_{r'\xmod 2m} \theta_{D,r,r'}(\tau,\tau')\th_{m,r'}(\tau,z)$, we consider the function 
\begin{gather}\label{eqn:jac:szlifts-Fs}
	F(s):=\lim_{t\to \infty}\int_{\mathcal{F}_t}\sum_{r'\xmod 2m} h_{r'}(\tau)\overline{\theta_{D,r,r'}(\tau,\tau')}v^{-\frac32 - s}{\rm d}u{\rm d}v,
\end{gather}
where $\tau=u+iv$. 
The limit in (\ref{eqn:jac:szlifts-Fs}) is well-defined for $\Re(s)$ sufficiently large. Analytically continue $F(s)$ to a function that is meromorphic in some domain containing the origin, and define (\ref{eqn:jac:szlifts-SDrreg}) to be the constant term in the Laurent series expansion at $s=0$. 

Note the similarity of (\ref{eqn:jac:szlifts-ThetaCreg}) to (\ref{eqn:jac:szlifts-ThetaC}). We have obtained (\ref{eqn:jac:szlifts-ThetaCreg}) by translating (5.5) of \cite{MR2726107} into the language of mock Jacobi forms. 

To describe the modularity of $\mathcal{S}_{D,r}^\reg\f$ define a (generally non-integral) divisor $Z_{D,r}(\f)$ on $X_0(m)$ by setting $Z_{D,r}(\f):=\sum_{r'\xmod 2m}\sum_{D'\geq 0} C_\f(D',r')Z_{D,r}(D',r')$ where
\begin{gather}\label{eqn:jac:szlifts-ZDrDprimerprime}
	Z_{D,r}(D',r'):=\sum_{Q\in \mathcal{Q}(m,DD',rr')/\Gamma_0(m)}\frac{\chi_D(Q)}{\#\Gamma_0(m)_Q}\a_Q.
\end{gather}
The next result follows from Proposition 5.2 and Theorem 5.3 in \cite{MR2726107}.
\begin{thm}[Bruinier--Ono]\label{thm:jac:szlifts-SDrreg}
Let $m$ be a positive integer, let $D$ be a negative fundamental discriminant and assume that $D=r^2\xmod 4m$ for some $r\xmod 2m$. If $\f\in \JJ^\wk_{1,m}$ then $\mathcal{S}_{D,r}^\reg\f$ is a $\Gamma_0(m)$-invariant function on $\HH\setminus Z_{D,r}(\f)$ with a logarithmic singularity on $-4Z_{D,r}(\f)$. For $\t\in \HH$ with $\Im(\t)$ sufficiently large we have
\begin{gather}
(\mathcal{S}_{D,r}^\reg\f)(\tau)=-4\sum_{n>0}\sum_{b\xmod D}\left(\frac{D}{b}\right)C_\f(Dn^2,rn)\log\left|1-\ex\left(n\tau+\frac b D\right)\right|.
\end{gather}
\end{thm}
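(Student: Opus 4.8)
\emph{The plan} is to set up a precise dictionary between the present mock Jacobi framework and the vector-valued harmonic weak Maass form setting of Bruinier--Ono, and then to invoke their Proposition 5.2 and Theorem 5.3 essentially verbatim. First I would use the theta-decomposition (\ref{eqn:jac:thtdec}) to pass from $\f=\sum_{r}h_r\th_{m,r}\in\JJ^\wk_{1,m}$ to the vector $h=(h_r)_{r\xmod 2m}$ of theta-coefficients. By the definition of weak mock Jacobi form in \S\ref{sec:Mock Jacobi Forms}, the completion $\hat h$, obtained from $h$ by adjoining the non-holomorphic Eichler integral $\tfrac1{\sqrt{2m}}g^*$ of its shadow (cf. (\ref{eqn:jac:mck-hhat})), transforms as a vector-valued harmonic weak Maass form of weight $\tfrac12$ under $\overline{\varrho_m}$; this is the mock counterpart of the transformation law recorded, for genuine forms, by the isomorphism (\ref{eqn:jac:hol-jacmetinv}), and the completion is designed exactly so that this law persists. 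This $\hat h$ is precisely the input to which the regularized theta lift of \cite{MR2726107} applies, for the even lattice of signature $(1,2)$ attached to $\Gamma_0(m)$ whose discriminant form gives rise to $\varrho_m$.

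The second step is to match the kernels, the regularizations, and the divisors. As the text records, the kernel $\Theta_{D,r}$ of (\ref{eqn:jac:szlifts-ThetaCreg}) is the translation of (5.5) of \cite{MR2726107} into the language of Jacobi forms, with the $D$-twist carried out by the generalized genus character $\chi_D$ of \S\ref{sec:notn}---which is exactly the character Bruinier--Ono use to define their twisted lift, and which is responsible for the Kronecker symbol $\left(\tfrac Db\right)$ and the sum over $b\xmod D$ in the final expansion. I would then verify that the regularized Petersson inner product (\ref{eqn:jac:szlifts-SDrreg}), unwound through (\ref{eqn:jac:szlifts-Fs}), coincides with the regularized theta integral of loc.\ cit.: both regularize the divergent integral over $\mathcal{F}$ by analytic continuation in an auxiliary variable $s$ and extraction of the constant term at $s=0$, in the manner of \S6 of \cite{Bor_AutFmsSngGrs}. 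Finally, the twisted Heegner divisor $Z_{D,r}(\f)$ of (\ref{eqn:jac:szlifts-ZDrDprimerprime}) must be identified with the divisor of loc.\ cit.: a sum over $\Gamma_0(m)$-classes of forms $Q\in\mathcal{Q}(m,DD',rr')$ weighted by $\chi_D(Q)/\#\Gamma_0(m)_Q$ and evaluated at the Heegner points $\a_Q$. This divisor is finite because a weak mock Jacobi form has $C_\f(D',r')=0$ for all but finitely many $D'\geq 0$, its positive-discriminant part being finite.

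With the dictionary in place the three assertions follow by citation. Proposition 5.2 of \cite{MR2726107} gives that $\mathcal{S}^\reg_{D,r}\f$ is $\Gamma_0(m)$-invariant on the complement in $\HH$ of the support of $Z_{D,r}(\f)$ and has a logarithmic singularity along $-4Z_{D,r}(\f)$, the factor $-4$ being the normalization occurring there. Theorem 5.3 of loc.\ cit.\ supplies the Fourier expansion valid once $\Im(\tau)$ exceeds the imaginary parts of all points in the divisor, which after inserting our normalizations reads
\[
	(\mathcal{S}^\reg_{D,r}\f)(\tau)=-4\sum_{n>0}\sum_{b\xmod D}\left(\tfrac Db\right)C_\f(Dn^2,rn)\log\left|1-\ex\left(n\tau+\tfrac bD\right)\right|,
\]
as claimed.

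\emph{The main obstacle} I anticipate is bookkeeping: confirming that the chosen lattice reproduces $\varrho_m$ (up to the standard duality inverting $\overline{\varrho_m}$ against the kernel) on the nose, that $\chi_D$ matches the twisting of \cite{MR2726107}, and---most delicately---that the regularized pairing (\ref{eqn:jac:szlifts-Fs}), which is written using only the holomorphic theta-coefficients $h_{r'}$ rather than the full completion $\hat h_{r'}$, nonetheless reproduces the lift of $\hat h$; this requires checking that the non-holomorphic part of $\hat h$ contributes nothing surviving the regularization for this particular kernel. Tracking the scaling factors---the $\tfrac1{\sqrt{2m}}$ in (\ref{eqn:jac:mck-hhat}), the $\sqrt{\Im(\tau)}$ in the kernel coefficients, and the overall $-4$---through the regularized integral so that they combine to give exactly the stated formula, rather than a constant multiple of it, is where the real care lies; the automorphy and the singularity structure are then formal consequences of the corresponding statements in loc.\ cit.
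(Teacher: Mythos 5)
Your proposal is correct and takes essentially the same route as the paper: the paper offers no argument beyond the sentence ``The next result follows from Proposition 5.2 and Theorem 5.3 in \cite{MR2726107},'' together with the remark that the kernel (\ref{eqn:jac:szlifts-ThetaCreg}) is obtained by translating (5.5) of loc.\ cit.\ into the language of mock Jacobi forms. Your write-up simply makes explicit the dictionary (theta-decomposition, matching of $\chi_D$, the regularization, and the divisor $Z_{D,r}(\f)$) that the paper leaves implicit, and correctly flags the bookkeeping points one would need to check in carrying it out.
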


Waldspurger established very general results \cite{MR577010,MR646366,MR1103429} relating properties of automorphic forms to $L$-functions of their theta lifts. 
We conclude this section with 
a Waldspurger type formula which relates Fourier coefficients of cuspidal holomorphic or skew-holomorphic Jacobi forms of weight $2$ 
to central critical values of twisted $L$-functions for the 
corresponding modular forms under the map $\SZ$ of Theorem \ref{thm:jac:szlifts-sz}. For holomorphic Jacobi forms this is the $k=1$ case of Corollary 1 in \S II.4 of \cite{MR909238}. As observed in \S1 of \cite{MR1074485}, the statement for skew-holomorphic Jacobi forms is obtained in a directly similar way, using the Proposition and Corollary in \S2 of \cite{MR1074485}.
\begin{thm}[Gross--Kohnen--Zagier, Skoruppa]\label{thm:jac:szlifts-Lfns}
Let $f$ be a cuspidal newform of weight $2$ for $\Gamma_0(m)$, and let $\phi\in S_{2,m}\oplus P^\sk_{2,m}$ be the preimage of $f$ under $\SZ$. If $D$ is a fundamental discriminant that is coprime to $m$, and $D=r^2\xmod 4m$ for some $r\xmod 2m$, then 
\begin{gather}\label{eqn:jac:szlifts-Lfns}
	\frac{|C_\phi(D,r)|^2}{\lab\phi,\phi\rab}=\frac{\sqrt{|D|}}{2\pi}\frac{L(f\otimes D,1)}{\lab f,f\rab}.
\end{gather}
\end{thm}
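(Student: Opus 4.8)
The plan is to deduce (\ref{eqn:jac:szlifts-Lfns}) from the corresponding results of Gross--Kohnen--Zagier and Skoruppa, and to exhibit the Rankin--Selberg mechanism that underlies them. The sign of $D$ selects the relevant piece of $\phi$: for $D<0$ only the holomorphic part $\phi\in S_{2,m}$ contributes (since $\mathcal{S}_{D,r}$ kills $S^\sk_{k,m}$ there), and the identity is the $k=1$ specialization of Corollary 1 in \S II.4 of \cite{MR909238} once the normalizations of $\mathcal{S}_{D,r}$ in (\ref{eqn:jac:szlifts-SDr}) and of the Petersson product (\ref{eqn:jac:hol-PetIP}) are matched with theirs; for $D>0$ only $\phi\in P^\sk_{2,m}$ contributes, and I would follow \cite{MR1074485}, whose Proposition and Corollary of \S 2 supply a skew-holomorphic theta kernel and an adjointness statement strictly parallel to the holomorphic ones, so that the same argument yields (\ref{eqn:jac:szlifts-Lfns}) with the identical constant.

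The core computation runs as follows. Because $\phi$ is a Hecke eigenform with the same eigenvalues as $f$, Theorem \ref{thm:jac:szlifts-sz} together with the definition (\ref{eqn:jac:szlifts-SDr}) and $c_f(1)=1$ gives $\mathcal{S}_{D,r}\phi = C_\phi(D,r)\, f$; indeed $C_{\phi|T_n}(D,r)=c_f(n)\,C_\phi(D,r)$, and the constant term is forced to vanish since $f$ is cuspidal and $\phi\in P^\sk_{2,m}$ is orthogonal to $t_{2,m}$ (cf.\ (\ref{eqn:jac:szlifts-SDrcnst})). Pairing against $f$ and representing $\mathcal{S}_{D,r}\phi$ as a theta lift of $\phi$ against the kernel $\Theta_{D,r}$ via (\ref{eqn:jac:szlifts-SDrTheta}) (in the case $D>0$; the holomorphic kernel of \cite{MR909238} plays the same role when $D<0$), a seesaw identity for the two lifts attached to $\Theta_{D,r}$ converts
\begin{align*}
C_\phi(D,r)\,\langle f,f\rangle
&= \langle \mathcal{S}_{D,r}\phi,\, f\rangle \\
&= \frac{i}{\sqrt{mD}}\, \big\langle \phi,\, \langle \Theta_{D,r}(\cdot\,,\cdot\,,\tau'),\, f(\tau')\rangle\big\rangle .
\end{align*}
The inner pairing produces the Shintani-type lift of $f$ back to index $m$ Jacobi forms, whose $(D,r)$-Fourier coefficient, after one unfolds the resulting Rankin--Selberg integral against the genus character $\chi_D$, is proportional to the central twisted value $L(f\otimes D,1)$.

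Collecting the two evaluations of $\langle \mathcal{S}_{D,r}\phi, f\rangle$ and dividing through by $\langle f,f\rangle$ produces a relation of the shape $|C_\phi(D,r)|^2 = c(D,m)\,\langle\phi,\phi\rangle\,L(f\otimes D,1)/\langle f,f\rangle$; evaluating the Gaussian integral in (\ref{eqn:jac:szlifts-ThetaC}) and invoking the functional equation of the completed twisted $L$-function pins $c(D,m)$ to $\tfrac{\sqrt{|D|}}{2\pi}$. The hypothesis that $D$ be fundamental is what makes the clean Hecke formula (\ref{eqn:jac:szlifts-SDrfund}) available and lets $\chi_D$ be a genuine genus character, while coprimality of $D$ with $m$ guarantees that the local factors of $L(f\otimes D,s)$ coincide with those of the bare twist $c_{f\otimes D}(n)=c_f(n)\left(\tfrac{D}{n}\right)$, with no Euler factors at primes dividing $m$ needing repair.

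The step I expect to be most delicate is not the existence of the formula but the bookkeeping of constants and the verification that the skew-holomorphic adaptation reproduces the \emph{same} constant $\tfrac{\sqrt{|D|}}{2\pi}$. One must track the scaling $\tfrac{i}{\sqrt{mD}}$ in (\ref{eqn:jac:szlifts-SDrTheta}), the factor $\tfrac{1}{\sqrt{2m}}$ in (\ref{eqn:jac:hol-PetIP}), and the normalization of $\chi_D$, and in the skew case reconcile the extra factor $(c\bar\tau+d)/|c\tau+d|$ of (\ref{skewmodular}) against its holomorphic counterpart in (\ref{modular}). I would fix the overall constant once and for all by checking (\ref{eqn:jac:szlifts-Lfns}) against a single explicit newform in small level, where both sides admit direct computation.
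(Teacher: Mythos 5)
Your proposal is correct and follows essentially the same route as the paper: the statement is deduced by citing the $k=1$ case of Corollary 1 in \S II.4 of \cite{MR909238} for the holomorphic piece and the Proposition and Corollary in \S 2 of \cite{MR1074485} for the skew-holomorphic piece, with the sign of $D$ selecting which summand of $S_{2,m}\oplus P^\sk_{2,m}$ contributes. The additional Rankin--Selberg and seesaw detail you supply is a faithful elaboration of the mechanism behind those cited results rather than a different argument.
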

On the left-hand side of (\ref{eqn:jac:szlifts-Lfns}) we write $\lab\cdot\,,\cdot\rab$ for the Petersson inner product on $S_{2,m}\oplus S_{2,m}^\sk$, whereas $\lab\cdot\,,\cdot\rab$ on the right-hand side of (\ref{eqn:jac:szlifts-Lfns}) denotes the Petersson inner product on $S_2(\Gamma_0(m))$. 

\subsection{Eichler--Zagier Operators}
\label{sec:jac:ez}

For $m$ a positive integer and $n$ a divisor of $m$, define the {\em Eichler--Zagier operator} $\W_m(n)$ on 
$\E_m$,
following \cite{MR958592}, by setting 
\be\label{eqn:jac:ez-phiWmn}
( \phi\lvert {\cal W}_m{(n)}) \,(\t,z) := \frac{1}{n} \sum_{a,b = 0}^{n-1} \ex\left(m\left(\tfrac{a^2}{n^2} \t + 2 \tfrac{a}{n}z +\tfrac{ab}{n^2}\right)\right) \phi\left(\t,z+\tfrac{a}{n}\t+\tfrac{b}{n}\right)
\ee
for $\phi\in\E_m$. It is elementary to check that if 
$\phi=h^t\th_m$ 
then we have
$\phi|\W_m(n)=h^t\O_m(n)\th_m$ 
where $\O_m(n)=(\O_m(n)_{r,r'})$ is the $2m\times 2m$ {\em Omega matrix} defined by setting
\begin{align}\label{def:OmegaMatrices}
\Omega_{m}(n)_{r,r'} := \begin{cases} 1 &\text{if $r=-r'\xmod 2 n$ and $r=r'\xmod {2m}/{n}$,} \\ 
0 &{\rm otherwise}.
\end{cases}
\end{align}

Observe that $\O_m(1)=\Id$. 
More generally, $\O_m(n)$ is invertible (with order at most $2$) 
so long as $n$ is an {exact divisor} of $m$. 
Indeed, 
we have 
$\O_m(n)\O_m(n')=\O_m(n\ast n')$
whenever $n,n'|m$, and at least one of $n$ or $n'$ belongs to $\Ex_m$. 
Thus $\W_m(n)\W_m(n')=\W_m(n\ast n')$ as operators on $\E_m$, when $n\in \Ex_m$ and $n'|m$. In particular, $\W_m(n)$ is an involution on $\E_m$ when $n$ is a non-trivial exact divisor of $m$.

There is another natural source of operators on $\E_m$, coming from integers that are coprime to $m$.
Indeed, the group $(\ZZ/2m\ZZ)^*$ acts naturally on $\E_m$, according to the rule 
\begin{gather}\label{eqn:jac:EZ-phidota}
	\phi\cdot a = \sum_{r\xmod 2m}h_r\th_{m,ra},
\end{gather}
where $\phi=h^t\th_m\in \E_m$ for $h=(h_r)$, and $a$ is an invertible element of $\ZZ/2m\ZZ$. If $a$ is chosen to lie in $O_m<(\ZZ/2m\ZZ)^*$, so that 
it preserves the $\QQ/\ZZ$-valued quadratic form $x\mapsto \frac{x^2}{4m}$ on $\ZZ/2m\ZZ$ (cf. \S\ref{sec:notn}), then the action (\ref{eqn:jac:EZ-phidota}) actually preserves the subspaces $J_{k,m}$ and $J^\sk_{k,m}$, as can be seen by noting the relation between this quadratic form and the entries of ${\cal S}$ and ${\cal T}$ (cf. \S\ref{sec:notn}, (\ref{transf_theta})). 

It turns out that the groups $\Ex_m$ and $O_m$, and their actions on $\E_m$ coincide. 
To be precise, 
we have
\begin{gather}\label{eqn:jac:ez-Wmndota}
	\phi|\W_m(n)=\phi\cdot a(n)
\end{gather}
for $\phi\in \E_m$, when $n\in\Ex_m$ and $a(n)$ is as in \S\ref{sec:notn}.  
In particular, the action of $\Ex_m$ on $\E_m$ by Eichler--Zagier operators preserves the spaces $J_{k,m}$ and $J^\sk_{k,m}$. Note, however, that if $\f$ is a weak holomorphic (or skew-holomorphic) Jacobi form then $\f|\W_m(n)=\phi\cdot a(n)$ also transforms like a Jacobi form, but generally fails the growth condition (as $\Im(\tau)\to \infty$) which characterizes weak Jacobi forms.

The Eichler--Zagier operators $\W_m(n)$ for $n\in \Ex_m$ 
correspond to Atkin--Lehner involutions $W_n$ under the Skoruppa--Zagier map $\SZ$ of Theorem \ref{thm:jac:szlifts-sz}. More precisely, we have\footnote{The identity (\ref{eqn:jac:ez-ALeqv}) is proven for $\phi\in S_{k,m}$ in \cite{MR958592}, but the case that $\phi\in S^\sk_{k,m}$ does not seem to have been treated elsewhere in the literature. We verify (\ref{eqn:jac:ez-ALeqv}) for $\phi\in  S^\sk_{k,m}$ in \S\ref{sec:class:proof}.}

\begin{gather}\label{eqn:jac:ez-ALeqv}
	\SZ(\phi|\W_m(n))=\SZ(\phi\cdot a(n))=\SZ(\phi)|W_n
\end{gather}
for $\phi\in S_{k,m}\oplus S^\sk_{k,m}$, and $n\in \Ex_m$. 
(Cf. Lemma \ref{lem:class:proof-ALinv}.)

Given $\a\in\widehat{O}_m$ 
define $\E_m^\a$ to be the $\a$-eigenspace for the action of $O_m$ on $\E_m$, composed of the $\f\in \E_m$ such that $\f\cdot a=\a(a)\f$ for all $a\in O_m$. Set $J_{k,m}^\a:=J_{k,m}\cap\E_m^\a$, 
and apply the same notational convention to other $O_m$-stable subspaces of $\E_m$, such as $S_{k,m}$, $J_{k,m}^\sk$, $S^\sk_{k,m}$ and $\Th_m$.
Given (\ref{eqn:jac:ez-ALeqv}) it is natural to extend this convention to $M_k(\Gamma_0(m))$, by defining $M_k(\Gamma_0(m))^\a$ to be the subspace composed of the $f\in M_k(\Gamma_0(m))$ such that $f|W_n=\alpha(a(n))f$ for $n\in \Ex_m$. Define $S_k(\Gamma_0(m))^\a$ similarly by restricting to cusp forms. 
Then Theorem \ref{thm:jac:szlifts-sz} yields injections
\begin{gather}\label{eqn:jac:szlifts-SZalpha}
	\SZ: S^\a_{k,m}\oplus S^{\sk,\a}_{k,m}\to M_{2k-2}(\Gamma_0(m))^\a
\end{gather}
for $\a\in \widehat{O}_m$.

Observe that $\theta_{m,r}(\tau,-z)=\theta_{m,-r}(\tau,z)$. Taking this together with the modular (\ref{modular}) and skew-modular (\ref{skewmodular}) actions of $-I\in \SL_2(\ZZ)$, and the action of $-1\in O_m$ on $J_{k,m}\oplus J^\sk_{k,m}$, we see that 
$J^\a_{k,m}=\{0\}$ unless $\a(-1)=(-1)^k$, and $J^{\sk,\a}_{k,m}=\{0\}$ unless $\a(-1)=(-1)^{k+1}$. 
In particular, $S^\a_{k,m}=\{0\}$ unless $\a(-1)=(-1)^k$, and $S^{\sk,\a}_{k,m}=\{0\}$ unless $\a(-1)=(-1)^{k-1}$. Even though $\JJ_{k,m}^\wk$ is not an $O_m$-stable subspace of $\E_m$, this result nonetheless extends to weak mock Jacobi forms, in the sense that we have
\begin{gather}\label{eqn:jac:ez-Foucoeffsym}
	C_\f(D,r)=(-1)^kC_\f(D,-r)
\end{gather}
for $\f\in \JJ^\wk_{k,m}$.

Skoruppa has classified the irreducible sub $\mpt(\ZZ)$-modules of $\Th_m$, and we can describe his result using the action of $O_m$ on $\Th_m$. Notice that this action 
is unitary with respect to the inner product $\lab\cdot\,,\cdot\rab$ on $\Th_m$ which is defined by requiring that $\{\th_{m,r}\}_{r\xmod 2m}$ be an orthonormal basis. Note also that the Hecke-like operator $U_d$ (cf. (\ref{eqn:jac:szlifts-Ud})) defines $\mpt(\ZZ)$-module morphisms from $\Th_m$ to $\Th_{md^2}$. Define $\Th_m^\new$ to be the $U$-new part of $\Th_m$. That is, take $\Th_m^\new$ to be the orthogonal complement in $\Th_m$, with respect to $\lab\cdot\,,\cdot\rab$, of the sum of the subspaces $\Th_{m/d^2}|U_d$, where $d$ ranges over the integers $d>1$ such that $d^2|m$. For $\a\in \widehat{O}_m$ define\footnote{Note that $\Th_m^{\new,\a}$ is denoted ${\rm Th}_m^{1,f}$ in \cite{Sko_Thesis}, where the relationship between $\a$ and $f$ is as follows. If $m=p_1^{m_1}\cdots p_k^{m_k}$ is the decomposition of $m$ into powers of distinct primes, then $f$ is the product of the $p_i$ such that $\a(p_i^{m_i})=-1$.} $\Th_m^{\new,\a}:=\Th_m^\new\cap\Th_m^\a$. The next result is Satz 1.8 in \cite{Sko_Thesis}.

\begin{thm}[Skoruppa]\label{thm:jac:EZ-Thmdec}
For each $\a\in \widehat{O}_m$ the $\mpt(\ZZ)$-module $\Th_m^{\new,\a}$ is irreducible. Two such irreducible modules $\Th_m^{\new,\a}$ and $\Th_{m'}^{\new,\a'}$ are isomorphic if and only if $m=m'$ and $\a=\a'$. The decomposition of $\Th_m$ into irreducible $\mpt(\ZZ)$-modules is given by
\begin{gather}\label{eqn:jac:EZ-Thmdec}
	\Th_m=\bigoplus_{\substack{d>0\\d^2|m}}\bigoplus_{\a\in\widehat{O}_{m/d^2}}\left.\Th_{m/d^2}^{\new,\a}\right|U_d.
\end{gather}
\end{thm}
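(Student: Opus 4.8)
The plan is to exploit the two commuting symmetries carried by $\Th_m$: the $\mpt(\ZZ)$-action through $\varrho_m$, and the $O_m$-action $\theta_{m,r}\mapsto\theta_{m,ra}$ of (\ref{eqn:jac:EZ-phidota}). First I would record that these two actions commute, which is immediate from the explicit shape of $\mathcal{S}$ and $\mathcal{T}$ in \S\ref{sec:notn}: the diagonal phase $\ex(r^2/4m)$ of $\mathcal{T}$ is invariant under $r\mapsto ra$, and the kernel $\ex(-rr'/2m)$ of $\mathcal{S}$ under $(r,r')\mapsto(ra,r'a)$, both because $a^2\equiv 1\bmod 4m$. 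Hence each eigenspace $\Th_m^\a=\{v\mid a\cdot v=\a(a)v\ \forall a\in O_m\}$ is a $\mpt(\ZZ)$-submodule, and since $O_m$ is abelian and acts unitarily we obtain an orthogonal decomposition $\Th_m=\bigoplus_{\a\in\widehat O_m}\Th_m^\a$ for the inner product making $\{\theta_{m,r}\}$ orthonormal. Intersecting with the $U$-new part produces the spaces $\Th_m^{\new,\a}$ of the statement.

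Next I would set up the oldform filtration so that (\ref{eqn:jac:EZ-Thmdec}) becomes formal once irreducibility is in hand. The maps $U_d$ of (\ref{eqn:jac:szlifts-Ud}) are injective $\mpt(\ZZ)$-morphisms $\Th_{m/d^2}\to\Th_m$, so each $\Th_{m/d^2}^{\new,\a}|U_d$ is automatically a $\mpt(\ZZ)$-submodule of $\Th_m$. Granting the irreducibility and non-isomorphism of the new pieces established below, (\ref{eqn:jac:EZ-Thmdec}) is the representation-theoretic transcription of the classical Atkin--Lehner newform/oldform decomposition: by definition $\Th_m^\new$ is the orthogonal complement of $\sum_{d>1}\Th_{m/d^2}|U_d$, and a downward induction on the number of square divisors of $m$, using the relations among the $U_d$, shows the images for distinct $d$ are in direct sum and exhaust $\Th_m$. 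I would also record the compatibility keeping the labeling consistent, namely that each $a\in O_m$ preserves the image of $U_d$ (since $U_d\theta_{m/d^2,s}$ is, up to normalization, $\theta_{m,sd}$ and $sda$ stays a multiple of $d$ modulo $2m$).

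The heart of the matter, and the main obstacle, is the irreducibility of each $\Th_m^{\new,\a}$, which I would obtain by computing the commutant $\End_{\mpt(\ZZ)}(\Th_m)$ and identifying it, on the new part, with the commutative algebra generated by $O_m$. An intertwiner $\Phi=(\Phi_{r,r'})$ commuting with $\mathcal{T}=\mathrm{diag}(\ex(r^2/4m))$ is supported on pairs with $r^2\equiv(r')^2\bmod 4m$, and commutation with the Gauss-sum matrix $\mathcal{S}$ then rigidifies the surviving coefficients. The decisive number-theoretic step is to show that on the new part the only solutions of $(r')^2\equiv r^2\bmod 4m$ that persist are $r'=ra$ with $a\in O_m$, whereas the solutions forced by a nontrivial common divisor of $r$ and $2m$ are absorbed into images of the $U_d$ and hence live in the old part. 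This identifies $\End_{\mpt(\ZZ)}(\Th_m^\new)$ with the image of $\CC[O_m]$, a commutative semisimple algebra whose minimal idempotents are the $O_m$-eigenprojections; consequently $\Th_m^\new$ is multiplicity-free, every $\a\in\widehat O_m$ occurs, and the isotypic piece for $\a$ is the irreducible module $\Th_m^{\new,\a}$. Equivalently this can be packaged as the identity $\langle\chi_{\new,\a},\chi_{\new,\a}\rangle=1$ evaluated via the explicit finite Weil-representation character.

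Finally, for the classification of isomorphism types I would argue as follows. For fixed $m$, multiplicity-freeness of $\Th_m^\new$ forces any two isomorphic irreducible constituents to coincide as submodules, so $\Th_m^{\new,\a}\cong\Th_m^{\new,\a'}$ implies $\a=\a'$. To separate different levels I would use an intrinsic invariant of the $\mpt(\ZZ)$-module structure, namely the finite level of the representation, read off from the eigenvalues $\ex(r^2/4m)$ of $\mathcal{T}$ on the new part; this recovers $m$, so $\Th_m^{\new,\a}\cong\Th_{m'}^{\new,\a'}$ forces $m=m'$ and then $\a=\a'$ by the previous step. Together the three parts yield irreducibility, the isomorphism classification, and the explicit decomposition (\ref{eqn:jac:EZ-Thmdec}).
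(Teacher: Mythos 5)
The paper does not prove this statement at all: it is quoted as Satz 1.8 of Skoruppa's thesis \cite{Sko_Thesis}, so there is no in-paper argument to measure yours against, and your proposal has to stand on its own. Its overall architecture --- commuting $O_m$- and $\mpt(\ZZ)$-actions, the eigenspace decomposition $\Th_m=\bigoplus_\a\Th_m^\a$, the oldform induction via the $U_d$, and recovery of $m$ from the orders of the $\widetilde{T}$-eigenvalues --- is sound and is in fact the strategy of Skoruppa's original proof. The scaffolding checks out: the two actions commute because $a^2=1\xmod{4m}$ preserves both the diagonal phases of $\mathcal{T}$ and the kernel of $\mathcal{S}$; the maps $U_d$ of (\ref{eqn:jac:szlifts-Ud}) are injective $\mpt(\ZZ)$-morphisms; once irreducibility and pairwise non-isomorphism of the new pieces are granted, the sum in (\ref{eqn:jac:EZ-Thmdec}) is automatically direct and downward induction with $U_dU_e=U_{de}$ closes the argument; and your level-separation works because $\theta_{m,1}$ is always orthogonal to every $\Th_{m/d^2}|U_d$ with $d>1$, so each $\Th_m^{\new,\a}$ contains its (nonzero) $\a$-projection, a $\widetilde{T}$-eigenvector of eigenvalue $\ex(\tfrac1{4m})$ of exact order $4m$. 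That last observation, which you should make explicit, is also what justifies your assertion that every $\a\in\widehat{O}_m$ actually occurs.

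The genuine gap is precisely the step you label decisive: the claim that an intertwiner commuting with $\mathcal{S}$ and $\mathcal{T}$ is, on the new part, supported only on pairs $(r,ra)$ with $a\in O_m$, i.e.\ that $\End_{\mpt(\ZZ)}(\Th_m^{\new})$ is the image of $\CC[O_m]$. Commutation with $\mathcal{T}$ only restricts the support to $r^2=(r')^2\xmod{4m}$, and this congruence genuinely has extra solutions: for $m=9$ one has $3^2=9^2\xmod{36}$ while $9\neq 3a$ for $a\in O_9=\{1,17\}$; for $m=4$ one has $0^2=4^2\xmod{16}$ with $4\neq 0$. Showing that every such extra solution is either killed by the $\mathcal{S}$-commutation relation or exactly absorbed by the oldform images (note that already for $m=4$ the new part is not spanned by basis vectors, so ``absorbed into the old part'' requires care) is the entire substance of the theorem, and it is the part of \cite{Sko_Thesis} that takes real work. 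As written, you have reduced the theorem to an unproved assertion essentially equivalent to it. The alternative packaging via $\langle\chi_{\new,\a},\chi_{\new,\a}\rangle=1$ is legitimate in principle, since $\varrho_m$ factors through a finite quotient ($\Gamma(4m)$ acts trivially), but evaluating that character sum is the same computation in another guise, and it is not carried out either.
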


In \cite{Sko_Thesis} Skoruppa has used (\ref{eqn:jac:hol-jacmetinv}) and Theorem \ref{thm:jac:EZ-Thmdec}, together with the Serre--Stark theorem \cite{MR0472707} on modular forms of weight $1/2$, 
to show that there are no non-zero holomorphic Jacobi forms of weight $1$. This result will 
be important 
for us in \S\ref{sec:class:defn}.
\begin{thm}[Skoruppa]\label{thm:jac:hol-jacwt1}
We have $J_{1,m}=\{0\}$ for all $m$.
\end{thm}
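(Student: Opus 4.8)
The plan is to follow Skoruppa and convert the vanishing of $J_{1,m}$ into a statement about $\mpt(\ZZ)$-modules, where the Serre--Stark classification and Theorem~\ref{thm:jac:EZ-Thmdec} can be brought to bear. First I would specialize the isomorphism (\ref{eqn:jac:hol-jacmetinv}) to $k=1$ to identify
\[
J_{1,m}\cong\left(M_{1/2}(\Gamma(4m))\otimes\Theta_m\right)^{\mpt(\ZZ)}.
\]
Since the Weil representation factors through a finite quotient, both tensor factors are finite-dimensional and unitary, hence semisimple. Therefore a non-zero invariant in this tensor product exists if and only if $\overline{M_{1/2}(\Gamma(4m))}$ and $\Theta_m$ share a common irreducible $\mpt(\ZZ)$-constituent, and it suffices to compare the constituents of the two sides and show that they are disjoint.

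To analyse $\Theta_m$ I would invoke Theorem~\ref{thm:jac:EZ-Thmdec}, which exhibits its constituents as the pairwise non-isomorphic modules $\Theta_{m/d^2}^{\new,\a}|U_d$ with $d^2\mid m$ and $\a\in\widehat{O}_{m/d^2}$. The decisive invariant is parity: the negation $-1\in O_m$ acts on such a constituent through $\a(-1)$, and the relation recorded just before Theorem~\ref{thm:jac:EZ-Thmdec}, namely that $J^{\a}_{k,m}=\{0\}$ unless $\a(-1)=(-1)^k$ (which for $k=1$ reflects $h_{-r}=-h_r$), shows that only the \emph{odd} constituents, those with $\a(-1)=-1$, can contribute to $J_{1,m}$. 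For the other factor I would appeal to the Serre--Stark theorem \cite{MR0472707}: every element of $M_{1/2}(\Gamma(4m))$ is a linear combination of unary theta series, all of which are built from thetanullwerte $\theta^0_{m',r}$ satisfying $\theta^0_{m',-r}=\theta^0_{m',r}$. Consequently, as an $\mpt(\ZZ)$-module, $M_{1/2}(\Gamma(4m))$, and hence its conjugate, involves only the \emph{even} theta constituents, those with $\a(-1)=+1$.

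Combining the two descriptions yields the theorem: the constituents of $\overline{M_{1/2}(\Gamma(4m))}$ all satisfy $\a(-1)=+1$, whereas a non-zero element of $J_{1,m}$ would force a shared constituent with $\a(-1)=-1$; as these are incompatible the invariant space vanishes, so $J_{1,m}=\{0\}$. I expect the main obstacle to lie in the bookkeeping of the middle step: one must check that Serre--Stark genuinely pins down $M_{1/2}(\Gamma(4m))$ as a direct sum of \emph{even} theta-type modules matching the irreducibles of Theorem~\ref{thm:jac:EZ-Thmdec} exactly, and that the eigenvalue $\a(-1)$ is preserved under passing to the conjugate module, under restriction to the eigenspaces $\E_m^\a$, and under the oldform maps $U_d$ that appear in (\ref{eqn:jac:EZ-Thmdec}). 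Once this single sign is tracked correctly through the tensor-invariant reformulation, the vanishing follows from the parity mismatch with no further analytic input.
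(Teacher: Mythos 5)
Your skeleton---reducing via (\ref{eqn:jac:hol-jacmetinv}) to a comparison of $\mpt(\ZZ)$-constituents, with Serre--Stark controlling $M_{\frac12}(\Gamma(4m))$ and Theorem \ref{thm:jac:EZ-Thmdec} controlling $\Th_m$---is precisely the route the paper attributes to Skoruppa's thesis (the paper gives no proof of its own, only the citation). The gap is in the step you yourself flag as bookkeeping, and it is fatal rather than cosmetic: the parity $\a(-1)$ is \emph{not} preserved under complex conjugation in the sense your argument needs. As an isomorphism invariant of the abstract $\mpt(\ZZ)$-module $\Th_m^{\new,\a}$, the parity is carried by the central character: $\widetilde{S}^2=(-I,i)$ acts on $\Th_m^{\new,\a}$ by the scalar $\ex(-\tfrac14)\a(-1)$ (because $\varrho_m(\widetilde{S})^2=\ex(-\tfrac14)P$ with $P\colon\th_{m,r}\mapsto\th_{m,-r}$), hence on the conjugate module by $\ex(\tfrac14)\a(-1)$. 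So if $\overline{\Th_{m'}^{\new,\a'}}\cong\Th_{m''}^{\new,\a}$, comparing central characters forces $\a(-1)=-\a'(-1)$: conjugation \emph{flips} the parity. The constituents of $\overline{M_{1/2}(\Gamma(4m))}$, were they of the form $\Th_{m''}^{\new,\a}$ at all, would therefore carry $\a(-1)=-1$, which is exactly the parity your first step requires of a contributing constituent of $\Th_m$. The two halves of your count match instead of clashing, and no contradiction results. (A sanity check: in the skew-holomorphic case the coefficient vector transforms under $\varrho_m$ rather than $\overline{\varrho_m}$, the identical parity bookkeeping applies with no conjugation in the way, and yet $J^{\sk}_{1,m}=T^{\sk}_{1,m}\neq\{0\}$; so parity alone cannot be what forces the vanishing here.)

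What actually has to be proved is that $\overline{\Th_{m'}^{\new,\a'}}$ is isomorphic to \emph{no} $\Th_{m''}^{\new,\a}$ whatsoever---conjugation reverses the signature of the Weil representation, and Theorem \ref{thm:jac:EZ-Thmdec} classifies only the unconjugated constituents among themselves. One concrete way to supply this: every $\Th_{m}^{\new,\a}$ contains the $\widetilde{T}$-eigenvector $\sum_{a\in O_m}\overline{\a(a)}\th_{m,a}$ with eigenvalue $\ex(\tfrac{1}{4m})$, while every $\widetilde{T}$-eigenvalue of $\overline{\Th_{m'}^{\new,\a'}}$ has the form $\ex(-\tfrac{r^2}{4m'})$; demanding that each of these distinguished eigenvalues occur on the other side forces $m'=m''$ and then $-1\equiv r^2\pmod{4}$, which is impossible. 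With this non-isomorphism in hand your reduction does yield $J_{1,m}=\{0\}$ (and the parity discussion becomes superfluous); without it the proof does not close.
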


\section{Mock Jacobi Theta Functions}\label{sec:class}

We introduce the notion of mock Jacobi theta function in \S\ref{sec:class:defn}. The definition involves skew-holomorphic Jacobi forms of theta type, which we consider in more detail in \S\ref{sec:class:theta}. 
Our first main result is a classification of optimal mock Jacobi theta functions in terms of genus zero subgroups of $\SL_2(\RR)$, and is established in \S\ref{sec:class:proof}. We prove our second main result in \S\ref{sec:class:rat}, and thereby classify the optimal mock Jacobi forms of weight one with rational coefficients. In \S\ref{sec:class:pm} we establish constructive relationships between optimal mock Jacobi theta functions, their shadows, and the principal moduli of the genus zero groups appearing in \S\ref{sec:class:proof}. Finally, we discuss the relationship to umbral moonshine in \S\ref{sec:class:rpm}.

\subsection{Definition}\label{sec:class:defn}

Our focus in this paper is on Jacobi forms of weight 1. 
Theorem \ref{thm:jac:hol-jacwt1} states that $J_{1,m}=\{0\}$ for all $m$, so we must consider a more general notion in order to meet non-trivial examples. Since mock Jacobi forms generalize holomorphic Jacobi forms, it is natural to ask 
if there are any non-zero (non-weak) mock Jacobi forms of weight 1.
Using Proposition \ref{prop:jac:hol-lampol} together with the aforementioned result of Skoruppa, we now verify 
that 
the answer to this question is negative.

\begin{prop}\label{prop_uniqueness1}
There are no non-zero mock Jacobi forms of weight 1. That is, if $\phi \in \JJ^\wk_{1,m}$ and $\Dp(\phi)=0$ then $\f=0$. 
\end{prop}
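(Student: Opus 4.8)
The plan is to first force the shadow of $\f$ to vanish, and then to invoke Skoruppa's vanishing theorem (Theorem \ref{thm:jac:hol-jacwt1}) for holomorphic Jacobi forms of weight $1$. Write $\s:=\xi(\f)$ for the shadow, noting that $3-k=2$ when $k=1$, so that $\s\in S^\sk_{2,m}\subset J^\sk_{3-k,m}$. The key idea is to pair $\f$ against its \emph{own} shadow via the Bruinier--Funke formula of Proposition \ref{prop:jac:hol-lampol}. Taking $\f'=\s$ in that proposition gives
\begin{gather*}
\{\f,\s\}=\sum_{r\xmod 2m}\sum_{D\geq 0}C_\f(D,r)\overline{C_\s(D,r)}.
\end{gather*}
Since $\s$ is cuspidal skew-holomorphic, its coefficients $C_\s(D,r)$ vanish unless $D>0$, whereas the hypothesis $\Dp(\f)=0$ says precisely that $C_\f(D,r)=0$ for all $D>0$. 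Hence every summand vanishes and $\{\f,\s\}=0$.

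On the other hand, by the very definition of the pairing we have $\{\f,\s\}=\lab\xi(\f),\s\rab=\lab\s,\s\rab$, the Petersson norm of $\s$. The Petersson inner product (\ref{eqn:jac:hol-PetIP}) is positive-definite on the cusp form space $S^\sk_{2,m}$, since for $\s=\sum_r\overline{g_r}\th_{m,r}$ the self-pairing equals $\tfrac1{\sqrt{2m}}\sum_r\int_{\mathcal F}|g_r|^2 v^{-1/2}\,{\rm d}u\,{\rm d}v\geq 0$, with equality only if all $g_r$ vanish. Therefore $\lab\s,\s\rab=0$ forces $\s=0$, so the shadow of $\f$ is trivial.

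With $\s=0$ the Eichler integrals $g_r^*$ vanish, so the completion satisfies $\hat\f=\f$ by (\ref{eqn:jac:mck-hhat}). Consequently $\f$ is genuinely invariant under the weight $1$ modular action $|_{1,m}$ of $\SL_2(\ZZ)$; together with the holomorphicity of its theta-coefficients and the boundedness of $\tau\mapsto\f(\tau,z)$ as $\Im(\tau)\to\infty$, this exhibits $\f$ as a weak holomorphic Jacobi form, $\f\in J^\wk_{1,m}$. The hypothesis $\Dp(\f)=0$ is exactly the condition $C_\f(D,r)=0$ for $D>0$ that upgrades a weak holomorphic Jacobi form to an honest one, so in fact $\f\in J_{1,m}$. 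Since $J_{1,m}=\{0\}$ by Theorem \ref{thm:jac:hol-jacwt1}, we conclude $\f=0$.

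The only genuinely delicate point is the first step: recognizing that pairing $\f$ against its own shadow (rather than against an arbitrary test form) collapses the Bruinier--Funke sum completely under the hypothesis $\Dp(\f)=0$, and that positive-definiteness of the Petersson norm on cusp forms then annihilates the shadow. Once the shadow is known to vanish, the rest is a direct unwinding of definitions followed by an appeal to the known triviality of $J_{1,m}$.
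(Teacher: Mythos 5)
Your proof is correct and follows essentially the same route as the paper: use the Bruinier--Funke pairing of Proposition \ref{prop:jac:hol-lampol} together with $\Dp(\f)=0$ to force $\xi(\f)=0$, then invoke Skoruppa's theorem that $J_{1,m}=\{0\}$. The only cosmetic difference is that you pair $\f$ against its own shadow and use positive-definiteness of the Petersson norm, whereas the paper pairs against an arbitrary $\varphi\in S^\sk_{2,m}$ and cites non-degeneracy; these are the same argument.
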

\begin{proof}
By hypothesis $C_\phi(D,r)=0$ for all $D>0$. So 
\eq{pairing_Petersson2} shows that $\langle\xi(\phi),\varphi\rangle= 0$ for all $\varphi \in S^\sk_{2,m}$. From the non-degeneracy of the Petersson inner product on $S^\sk_{2,m}$ we conclude that 
$\xi(\phi)=0$. In other words, $\phi$ is a holomorphic Jacobi form of weight 1 and index $m$, but such a function must vanish according to 
Theorem \ref{thm:jac:hol-jacwt1}.
\end{proof}

Proposition \ref{prop_uniqueness1} confirms that we must consider weak mock Jacobi forms of weight 1 in order to have non-zero examples. Inspired by 
\cite{Dabholkar:2012nd} and \cite{UM,MUM}, we restrict our attention to {\em optimal} mock Jacobi forms, which we define to be the $\phi\in \JJ^\wk_{k,m}$ such that $C_\phi(D,r)=0$ when $D>1$. 
\begin{gather}\label{eqn:class:defn-JJopt}
	\JJ^\opt_{k,m}:=\left\{\f\in\JJ^\wk_{k,m}\mid C_\f(D,r)=0\Leftarrow D>1\right\}.
\end{gather}
In words, the optimal mock Jacobi forms are those weak mock Jacobi forms that are ``as close as possible'' to lying in $\JJ_{k,m}$. 

Having expanded our consideration to weak mock Jacobi forms that are optimal, it is natural to look for non-zero examples with vanishing shadow. But it turns out that there are none,
for it is proven in Theorem 9.4 of \cite{Dabholkar:2012nd} that if $\phi\in J^\wk_{1,m}$ is optimal then $\phi\in J_{1,m}$, and hence $\phi=0$ according to 
Theorem \ref{thm:jac:hol-jacwt1}.
The weak mock Jacobi forms $\phi^X$ of umbral moonshine, in addition to being optimal, also have the special property that their shadows are of theta type, in the sense of \S\ref{sec:jac:hol} (cf. (\ref{eqn:jac:hol-Tskkm})). At a heuristic level we may understand this
as saying that the $\phi^X$ are ``as close as possible'' to being non-mock, 
for the theta type condition restricts so many (all, asymptotically) of the coefficients of the shadow $\xi(\phi^X)$ to be zero. 

Motivated by these considerations we say that a weak mock Jacobi form $\phi\in \JJ^\wk_{k,m}$ is a {\em mock Jacobi theta function} of weight $k$ and index $m$ if $\xi(\phi)$ belongs to $T^{\sk}_{k,m}$ (cf. (\ref{eqn:jac:hol-Tskkm})). We define $\JJ^\top_{k,m}$ to be the space of mock Jacobi theta functions of weight $k$ and index $m$ that are optimal. 
\begin{gather}\label{eqn:class:defn-JJtop}
	\JJ^\top_{k,m}:=\left\{\phi\in \JJ^\opt_{k,m}\mid \xi(\phi)\in T^{\sk}_{k,m} 
	\right\}
\end{gather}
It will develop in \S\ref{sec:mockthetafunctions} that all of Ramanujan's mock theta functions admit simple expressions in terms of the theta-coefficients of mock Jacobi theta functions. This result further motivates the terminology.

As we have observed in \S\ref{sec:jac:ez}, if $\phi\in \JJ^\wk_{k,m}$ 
then $\phi|\W_m(n)$ might not be a weak mock Jacobi form, since it may fail to remain bounded as $\Im(\tau)\to \infty$, for all fixed $z\in \CC$. However, if $\phi$ is an optimal mock Jacobi form 
then no such poles can arise, so the Eichler--Zagier operators $\W_m(n)$ 
act naturally on $\JJ^\opt_{k,m}$.
Moreover, if $\phi\in \JJ^\opt_{k,m}$ 
then it follows from (\ref{eqn:jac:ez-Wmndota}), and the definition of the shadow map (\ref{eqn:jac:mock-xi}), that 
\begin{gather}
	\xi(\f|\W_m(n))=\xi(\f)|\W_m(n)
\end{gather}
when 
$n$ is an exact divisor of $m$.
It is apparent from (\ref{eqn:jac:ez-Wmndota}) that the Fourier coefficients of $\xi(\f)|\W_m(n)$ are supported on perfect square values of $D$ if and only if the same is true of $\xi(\f)$, so the 
Eichler--Zagier operators $\W_m(n)$ 
preserve the spaces 
of optimal mock Jacobi theta functions. Thus we have direct sum decompositions
\begin{gather}\label{eqn:class:defn-JJtopkmOmdec}
	\JJ^\opt_{k,m}=\bigoplus_{\a\in\widehat{O}_m}\JJ^{\opt,\a}_{k,m},\quad
	\JJ^\top_{k,m}=\bigoplus_{\a\in\widehat{O}_m}\JJ^{\top,\a}_{k,m},
\end{gather}
for each $k$ and $m$, and the shadow map (\ref{eqn:jac:mock-xi}) restricts to 
\begin{gather}\label{eqn:class:defn-xiJJtopa}
\xi:\JJ^{\top,\a}_{k,m}\to T^{\sk,\a}_{3-k,m}
\end{gather} 
for each $\a\in \widehat{O}_m$.
Note that we have $\JJ^{\opt,\a}_{k,m}=\JJ^{\top,\a}_{k,m}=\{0\}$ unless $\a(-1)=(-1)^k$ by force of (\ref{eqn:jac:ez-Foucoeffsym}).

As mentioned in \S\ref{sec:intro}, all the mock Jacobi forms $\phi^X$ of umbral moonshine belong to $\JJ^\top_{1,*}:=\bigoplus_m \JJ^\top_{1,m}$. 
In this work we will furnish a complete description of the space $\JJ^\top_{1,*}$ (cf. \S\ref{sec:class:proof}), and characterize it as a subspace of $\JJ^\opt_{1,*}:=\bigoplus_m \JJ^\opt_{1,m}$ (cf. \S\ref{sec:class:rpm}).
As part of the preparation for this we conclude this section with a non-vanishing result for the Fourier coefficient $C_\f(1,1)$ of a non-zero 
$\f\in \JJ_{1,m}^{\opt,\a}$. 

\begin{lem}\label{lem:class:proof-Cphi11nonzero}
Suppose that $\f$ is a non-zero 
element of $\mathbb{J}_{1,m}^{\opt,\a}$ for some $m\in \ZZp$ and $\a\in\widehat{O}_m$. Then $C_\f(1,1)\neq 0$.
\end{lem}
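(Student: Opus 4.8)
The plan is to deduce this from Proposition \ref{prop_uniqueness1}, which asserts that an optimal mock Jacobi form of weight $1$ with vanishing positive discriminant part must be zero. Concretely, I will show that the assumption $C_\f(1,1)=0$ forces the entire positive discriminant part $\Dp(\f)$ to vanish, and then invoke Proposition \ref{prop_uniqueness1} to conclude $\f=0$, contradicting the hypothesis that $\f$ is non-zero.

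First I would record what optimality and the $\a$-eigenspace condition say about the positive discriminant coefficients. Since $\f\in\JJ^\opt_{1,m}$ we have $C_\f(D,r)=0$ for all $D>1$, so the only positive discriminant coefficients are the $C_\f(1,r)$, and such a coefficient can be non-zero only when $r^2=1\xmod 4m$, i.e. when $r\xmod 2m$ lies in $O_m$. Thus $\Dp(\f)$ is governed entirely by the finitely many coefficients $C_\f(1,a)$ with $a\in O_m$.

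Next I would translate the eigenspace condition into a relation among these coefficients. Writing $\f=\sum_r h_r\th_{m,r}$, the form $\f\cdot a=\sum_r h_r\th_{m,ra}$ has $r$-th theta-coefficient $h_{ra}$ (using $a^2=1\xmod 4m$), so the defining condition $\f\cdot a=\a(a)\f$ for $\f\in\E_m^\a$ is equivalent to $h_{ra}=\a(a)h_r$, and hence to $C_\f(D,ra)=\a(a)C_\f(D,r)$ for all $D$ and all $a\in O_m$; the two series have matching exponents because $(ra)^2\equiv r^2\xmod 4m$ when $a\in O_m$. Specialising to $D=1$ and $r=1$ yields $C_\f(1,a)=\a(a)C_\f(1,1)$ for every $a\in O_m$. (This is consistent with the symmetry $(\ref{eqn:jac:ez-Foucoeffsym})$, since $-1\in O_m$ and $\a(-1)=-1$ whenever $\JJ^{\opt,\a}_{1,m}\neq\{0\}$.)

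The argument then closes at once: if $C_\f(1,1)=0$ then $C_\f(1,a)=0$ for all $a\in O_m$, so every positive discriminant coefficient of $\f$ vanishes and $\Dp(\f)=0$. By Proposition \ref{prop_uniqueness1} this forces $\f=0$, contrary to assumption; hence $C_\f(1,1)\neq 0$. There is no substantial obstacle here: the only point requiring care is the bookkeeping that identifies the admissible indices $r$ for which $C_\f(1,r)$ may be non-zero with the single $O_m$-orbit $\{1\cdot a\mid a\in O_m\}=O_m$, after which the statement is a direct application of the earlier uniqueness result together with the $O_m$-equivariance of the Fourier coefficients.
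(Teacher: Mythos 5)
Your argument is correct and follows essentially the same route as the paper: optimality restricts the positive discriminant part to the coefficients $C_\f(1,a)$ with $a\in O_m$, the eigenspace condition gives $C_\f(1,a)=\a(a)C_\f(1,1)$, and Proposition \ref{prop_uniqueness1} then forces $\f=0$ if $C_\f(1,1)=0$. The only difference is that you spell out the reindexing behind $C_\f(D,ra)=\a(a)C_\f(D,r)$, which the paper takes for granted.
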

\begin{proof}
Let $\f$ be as in the statement of the lemma. Since $\f$ is optimal we have $C_\f(D,r)\neq 0$ only for $D\leq 1$. If $C_\f(1,a)\neq 0$ then we must have $a^2=1\xmod 4m$, so the possible non-zero positive discriminant Fourier coefficients $C_\f(1,a)$ are indexed  by $O_m$. 
We have $C_\f(1,a)=\a(a)C_\f(1,1)$ for $a\in O_m$, so either $C_\f(1,1)\neq 0$, or $\f$ has vanishing positive discriminant part, $\Dp(\f)=0$. In the latter case $\f=0$ according to Proposition \ref{prop_uniqueness1}. So if $\f\neq 0$ then $C_\f(1,1)\neq 0$, as we required to show.
\end{proof}

\subsection{Theta Type Jacobi Forms}\label{sec:class:theta}

Recall (\ref{eqn:jac:hol-Tskkm}) that $T^{\sk}_{k,m}$ is defined to be the subspace of $J^\sk_{k,m}$ spanned by skew-holomorphic Jacobi forms $\phi\in J^\sk_{k,m}$ such that $C_\phi(D,r)=0$ unless $D$ is a perfect square. 
We will describe the spaces $T^{\sk}_{k,m}$ explicitly in this section. For technical reasons it is convenient to introduce the (a priori) larger spaces ${T}^{\prime\sk}_{k,m}$, which allow $C_\f(D,r)\neq 0$ for those $D$ whose square-free parts are restricted to lie in some finite set (as opposed to the particular finite set $\{1\}$, cf. (\ref{eqn:jac:hol-Tskkm})). To be precise, write $\Supp^+ (C_\f(\,\cdot\,,r))$ for the set of $D\in \ZZ^+$ such that $C_\f(D,r)\neq 0$, and write $\XBox(\Supp^+ (C_\f(\,\cdot\,,r)))$ for the set of square-free parts of the positive integers in $\Supp^+ (C_\f(\,\cdot\,,r))$. Then we define
\begin{gather}\label{eqn:class:theta-Tprimeskkm}
	{T}^{\prime\sk}_{k,m}:=\left\{
		\phi\in J^{\sk}_{k,m}\mid 
		\#\XBox(\Supp^+ (C_\f(\,\cdot\,,r)))<\infty\,\forall r
		\right\}.
\end{gather}

Evidently ${T}^{\prime\sk}_{k,m}$ contains $T^{\sk}_{k,m}$. 
It turns out that ${T}^{\prime\sk}_{k,m}$ and $T^\sk_{k,m}$ actually coincide for all $k$ and $m$. 
We will show this 
by exploiting the relationship (\ref{eqn:jac:hol-jacmetinv}) between Jacobi forms and modular forms of half integral weight, 
so we introduce half integral weight counterparts $T^\prime_{k-\frac12}(\Gamma_1(4m))$ and $T^\prime_{k-\frac12}(\Gamma(4m))$ for $T^{\prime\sk}_{k,m}$, for integers $k$ and $m$, by setting
\begin{gather}
		\label{eqn:class:theta-TGamma14m}
	T^\prime_{k-\frac12}(\Gamma_1(4m)):=\left\{
		f\in M_{k-\frac12}(\Gamma_1(4m))\mid 
		\#\XBox(\Supp^+(c_f(\,\cdot\,))) <\infty
		\right\},\\
		\label{eqn:class:theta-TGamma4m}
	T^\prime_{k-\frac12}(\Gamma(4m)):=\left\{
		f\in M_{k-\frac12}(\Gamma(4m))\mid f(4m\tau)\in T^\prime_{k-\frac12}(\Gamma_1(16m^2))
		\right\}.
\end{gather}
To make the definition (\ref{eqn:class:theta-TGamma4m}) more transparent: note that it follows from the explicit description of $j(\gamma,\tau)$ in \cite{MR0332663} or \cite{MR0472707}, for example, that the assignment $f(\tau)\mapsto f(4m\tau)$ defines an injection from $M_{k-\frac12}(\Gamma(4m))$ to $M_{k-\frac12}(\Gamma_1(16m^2))$.

Recall the theta type skew-holomorphic Jacobi form $t_{k,m}\in T^\sk_{k,m}$, defined in \S\ref{sec:notn} for $k\in \{1,2\}$. Note that $T^\sk_{k,m}$ contains the image of $t_{k,m'}\cdot a'$ under $U_{d}$ whenever $m=m'd^2$, and $a'$ is an element of $O_{m'}$. 

\begin{prop}\label{prop:class:tht-tht}
If $k=1$ or $k=2$ then $T^{\prime\sk}_{k,m}$ is spanned by the $(t_{k,m'}\cdot a')|U_d$ 
such that $m=m'd^2$ and $a'\in O_{m'}$. 
If $k>2$ then $T^{\prime\sk}_{k,m}=\{0\}$. In particular, $T^{\prime\sk}_{k,m}=T^\sk_{k,m}$ for all $k$ and $m$. 
\end{prop}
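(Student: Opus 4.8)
The plan is to pass through the theta-decomposition isomorphism \eqref{eqn:jac:hol-jacmetinv} and reduce the whole statement to a structural fact about vector-valued modular forms of weight $k-\tfrac12$. Given $\phi=\sum_r\overline{g_r}\theta_{m,r}\in J^\sk_{k,m}$, the tuple $g=(g_r)$ is $\varrho_m$-equivariant with components $g_r\in M_{k-\frac12}(\Gamma(4m))$, and since $g_r(\tau)=\sum_{D}\overline{C_\phi(D,r)}\,q^{D/4m}$, the defining condition of $T^{\prime\sk}_{k,m}$ says exactly that each $g_r$ lies in $T^\prime_{k-\frac12}(\Gamma(4m))$: the rescaling $g_r(\tau)\mapsto g_r(4m\tau)$ in \eqref{eqn:class:theta-TGamma4m} converts the support condition on $D$ into the finitely-many-square-classes condition on Fourier coefficients. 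So the first and main step I would isolate is a lemma: $T^\prime_{k-\frac12}(\Gamma_1(N))$ vanishes for $k>2$, and for $k\in\{1,2\}$ is spanned by unary theta series (equivalently, by thetanullwerte). Everything else is bookkeeping around this lemma.

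For $k=1$ the lemma is immediate from the Serre--Stark theorem \cite{MR0472707}, since every element of $M_{\frac12}(\Gamma_1(N))$ is a linear combination of unary theta series, each trivially supported in a single square class. For $k\geq 2$ I would argue through the Shimura correspondence together with a Waldspurger-type identity. The space of forms supported in a fixed finite union of square classes is finite-dimensional and stable under the operators $T_{p^2}$ (which preserve each class $t\Box$), so I may diagonalize and treat a Hecke eigenform $f\in T^\prime_{k-\frac12}(\Gamma_1(N))$; a short separate check removes any Eisenstein contribution, whose coefficients cannot be confined to finitely many square classes unless they vanish, and shows the relevant $f$ is cuspidal. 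Let $F$ be its Shimura lift, of integer weight $2k-2$. By Waldspurger's formula---in the guise of Skoruppa's formula \eqref{eqn:jac:szlifts-Lfns} when $k=2$, and the Gross--Kohnen--Zagier formula \cite{MR909238} in general---the coefficients $a(t)$ for square-free $t$ coprime to $m$ control the twisted central values $L(F\otimes t,k-1)$. If $F\neq 0$ these central values are nonzero for infinitely many fundamental discriminants $t$ (nonvanishing of quadratic twists), producing infinitely many nonzero $a(t)$ in distinct square classes and contradicting membership in $T^\prime_{k-\frac12}$. Hence $F=0$, so $f$ lies in the kernel of the Shimura lift; that kernel is spanned by unary theta series, which exist only in weights $\tfrac12$ and $\tfrac32$. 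Thus $T^\prime_{k-\frac12}(\Gamma_1(N))=\{0\}$ for $k>2$, while for $k=2$ it is spanned by the weight $\tfrac32$ thetanullwerte.

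Translating back through \eqref{eqn:jac:hol-jacmetinv}, the lemma shows that the theta-coefficients of any $\phi\in T^{\prime\sk}_{k,m}$ are linear combinations of the thetanullwerte $\theta^{k-1}_{m,r}$, which in particular forces $k\in\{1,2\}$. To identify the spanning set I would invoke Skoruppa's decomposition $\Theta_m=\bigoplus_{d^2\mid m}\bigoplus_{\a}\Theta^{\new,\a}_{m/d^2}\big|U_d$ of Theorem \ref{thm:jac:EZ-Thmdec}: the $\varrho_m$-equivariant combinations of thetanullwerte at index $m'$ are organized precisely by the forms $t_{k,m'}\cdot a'$ as $a'$ ranges over $O_{m'}$ (the $\cdot\,a'$ reindexing realizing the $\widehat{O}_{m'}$-isotypic pieces), and applying $U_d$ with $m=m'd^2$ produces all of $T^{\prime\sk}_{k,m}$. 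Since each $(t_{k,m'}\cdot a')|U_d$ manifestly has Fourier support on perfect squares, it lies in $T^\sk_{k,m}$; as $T^\sk_{k,m}\subseteq T^{\prime\sk}_{k,m}$ by definition, this yields the asserted equality $T^{\prime\sk}_{k,m}=T^\sk_{k,m}$ and completes the proposition.

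The hard part is the $k=2$ (weight $\tfrac32$) case of the lemma, where finite square-class support genuinely occurs and must be classified rather than excluded. This is exactly where weight $\tfrac32$ is exceptional: it requires both the nonvanishing of infinitely many quadratic twists of the weight-two $L$-functions at their central point and the identification of the kernel of the Shimura lift with the span of unary theta series. The remaining weights are comparatively soft---weight $\tfrac12$ is handed to us by Serre--Stark, and weights $\geq\tfrac52$ are killed outright, since there are no unary theta series there and $F=0$ then forces $f=0$.
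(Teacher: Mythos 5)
Your overall architecture matches the paper's: you reduce through the theta-decomposition (\ref{eqn:jac:hol-jacmetinv}) to a statement about the spaces $T^\prime_{k-\frac12}$ of half-integral weight forms supported on finitely many square classes, you dispose of $k=1$ by Serre--Stark, and you finish with Skoruppa's decomposition of $\Theta_m$ (Theorem \ref{thm:jac:EZ-Thmdec}) to organize the spanning set $(t_{k,m'}\cdot a')|U_d$. Where you genuinely diverge is the key lemma for $k\geq 2$. The paper quotes Vign\'eras' theorem (Th\'eor\`eme 3 of \cite{MR0485739}): a form of weight $k-\tfrac12\geq\tfrac32$ whose nonzero coefficients lie in finitely many square classes is a linear combination of weight-$\tfrac32$ thetanullwerte. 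You propose to reprove this via the Shimura correspondence, Waldspurger's formula, and nonvanishing of infinitely many quadratic twists at the central point. That route is viable and has the virtue of explaining \emph{why} the result is true (theta type $\Leftrightarrow$ the arithmetic information carried by fundamental-discriminant coefficients degenerates), but it trades an elementary, self-contained input for three deep ones, and it is precisely at weight $\tfrac32$ --- the only case where anything survives --- that your version has a gap.

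The gap: you argue ``if $F\neq 0$ then infinitely many $a(t)$ with $t$ in distinct square classes are nonzero; hence $F=0$; the kernel of the Shimura lift is spanned by unary theta series.'' Both halves of this are wrong for the forms you most need to handle. The weight-$\tfrac32$ thetanullwerte $\theta^1_{m,r}$ are \emph{cuspidal} Hecke eigenforms (so your Eisenstein-removal step does not exclude them), and their Shimura lifts are nonzero \emph{Eisenstein} series of weight $2$ --- this is exactly the computation in the proof of Proposition \ref{prop:class:tht-orthog}, where $\SZ(t_{2,m}\cdot a(n))$ is proportional to $nE_2(n'\tau)-n'E_2(n\tau)$. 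So for these forms $F\neq 0$ while $a(t)=0$ for every non-square $t$: the implication ``$F\neq 0\Rightarrow$ infinitely many nonzero squarefree-indexed coefficients'' fails, because Waldspurger's proportionality $|a(t)|^2\doteq L(F\otimes t,\,\text{central})$ is a statement about \emph{cuspidal} lifts. The correct repair is to split the cuspidal eigenform $f$ according to whether its Shimura lift is cuspidal or Eisenstein: in the first case Waldspurger plus nonvanishing of twists forces $f=0$ as you say; in the second case one needs the theorem (Sturm \cite{MR660380}, which the paper invokes in Lemma \ref{lem:class:tht-PtoS}) that a weight-$\tfrac32$ cusp form associated to an Eisenstein series is a linear combination of unary theta series. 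Without that second ingredient stated explicitly, your argument does not classify the nonzero content of $T^{\prime\sk}_{2,m}$; it only shows that anything outside the theta span vanishes, while silently misdescribing what the theta span maps to. With that repair in place the rest of your write-up goes through.
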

\begin{proof}
We begin by noting
that if $\phi=\overline{g}^t\theta_m\in J^\sk_{k,m}$, with $g=(g_r)$, then $\phi\in T^{\prime\sk}_{k,m}$ if and only if $g_r\in T^\prime_{k-\frac12}(\Gamma(4m))$ for each $r$. We can put this another way if we assume that $T^\prime_{k-\frac12}(\Gamma(4m))$ is invariant for the action of $\mpt(\ZZ)$. Namely, if $T^\prime_{k-\frac12}(\Gamma(4m))$ is a sub $\mpt(\ZZ)$-module of $M_{k-\frac12}(\Gamma(4m))$ then the natural map (\ref{eqn:jac:hol-jacmetinv}) defines an isomorphism from $T^{\prime\sk}_{k,m}$ to the space of $\mpt(\ZZ)$-invariants in $\overline{T^\prime_{k-\frac12}(\Gamma(4m))}\otimes \Th_m$ upon restriction. This latter space is then, in turn, naturally isomorphic to the space of maps of $\mpt(\ZZ)$-modules from $T^\prime_{k-\frac12}(\Gamma(4m))$ to $\Th_m$, so we have
\begin{gather}\label{eqn:class:theta-Tprimeskhom}
	T^{\prime\sk}_{k,m} 
	\simeq
	\hom_{\mpt(\ZZ)}\left( T^\prime_{k-\frac12}(\Gamma(4m)), \Theta_m \right).
\end{gather}
Thus the problem of understanding $T^{\prime\sk}_{k,m}$ reduces to that of understanding the action of $\mpt(\ZZ)$ on $T^\prime_{k-\frac12}(\Gamma(4m))$.

Using the fact that $\th^{k-1}_{m,r}\in M_{k-\frac12}(\Gamma(4m))$ for $k\in \{1,2\}$ (cf. \S\S\ref{sec:notn},\ref{sec:jac:hol}), and observing that $\XBox(\Supp^+(c_f(\cdot)))=\{\XBox(t)\}$ for $f(\t)=\th^{k-1}_{m,r}(4mt\tau)$, we see that $\th^{k-1}_{m,r}(t\tau)\in T^\prime_{k-\frac12}(\Gamma(4mt))$ for $k\in \{1,2\}$ and $t\in \ZZp$. 
Actually, the thetanullwerte span the spaces $T^\prime_{k-\frac12}(\Gamma(4m))$. For the Serre--Stark theorem on modular forms of weight $1/2$ implies (cf. Corollary 3 of  \cite{MR0472707}) that $\sum_{m>0}M_{\frac12}(\Gamma_1(4m))$ is spanned by the rescaled thetanullwerte $\th^0_{m,r}(4mt\tau)$, 
and it is elementary to see from this that 
\begin{gather}\label{eqn:class:theta-TprimethetaGamma}
	\sum_{m>0}T^\prime_{k-\frac12}(\Gamma(4m))
	=
	\sum_{m>0}\Th_m^{k-1}
\end{gather}
for $k=1$. For $k>1$ we apply the result of Vign\'eras\footnote{See Theorem 2 of \cite{MR1658385} for a slight generalization of the Vign\'eras theorem, proved in an alternative way.} (cf. Th\'eor\`eme 3 of \cite{MR0485739}) that if $f$ belongs to $\sum_{m>0} M_{k-\frac12}(\Gamma_1(4m))$ and $k>1$, then either $k=2$ and $f(\t)$ is a linear combination of rescaled thetanullwerte $\th^1_{m,r}(4mt\tau)$, or the set of square-free parts of integers $n$ such that $c_f(n)$ does not vanish is infinite (i.e., $\#\XBox(\Supp^+(c_f(\,\cdot\,)))=\infty$). From this we conclude that (\ref{eqn:class:theta-TprimethetaGamma}) also holds for $k=2$, and $\sum_{m>0}T^\prime_{k-\frac12}(\Gamma(4m))=\{0\}$ for $k>2$. Thus, in particular, $T^{\prime\sk}_{k,m}=T^\sk_{k,m}=\{0\}$ for all $m$, when $k>2$.

Assume henceforth that $k\in \{1,2\}$. We will show that $T^\prime_{k-\frac12}(\Gamma(4m))$ is isomorphic to a direct sum of irreducible $\mpt(\ZZ)$-modules $\Th^{\new,\a'}_{m'}$ (cf. Theorem \ref{thm:jac:EZ-Thmdec}), by applying an argument directly similar to that of the proof of Satz 5.2 in \cite{Sko_Thesis}. This will justify the formula (\ref{eqn:class:theta-Tprimeskhom}), and ultimately enable us to identify the spaces $T^{\prime\sk}_{k,m}$ explicitly.

Define $\Th_m^{k-1,\new,\a}$ to be the image of $\Th_m^{\new,\a}$ under the operator ${\cal D}^{k-1}$ (cf. \S\ref{sec:notn}), for $\a\in \widehat{O}_m$,
and observe that ${\cal D}^{k-1}$ is a morphism of $\mpt(\ZZ)$-modules whose image is non-trivial if and only if $\a(-1)=(-1)^{k-1}$. Applying this observation and Theorem \ref{thm:jac:EZ-Thmdec} to the right hand side of (\ref{eqn:class:theta-TprimethetaGamma}) we see that 
\begin{gather}
	\sum_{m>0}T^\prime_{k-\frac12}(\Gamma(4m))
	=
	\bigoplus_{m>0}\bigoplus_{\substack{\a\in \widehat{O}_m\\\a(-1)=(-1)^{k-1}}}\Th_m^{k-1,\new,\a},
\end{gather}
where the sums are direct because the non-trivial summands $\Th_m^{k-1,\new,\a}$ are pairwise non-isomorphic, according to Theorem \ref{thm:jac:EZ-Thmdec}.
Since $\Th_m$, and hence also $\Th_m^{k-1,\new,\a}$, is trivial for the action of $\Gamma(4m)$, we actually have
\begin{gather}\label{eqn:class:theta-Tprime4moplus}
	T^\prime_{k-\frac12}(\Gamma(4m))
	=
	\bigoplus_{m'|m}\bigoplus_{\substack{\a'\in \widehat{O}_{m'}\\\a'(-1)=(-1)^{k-1}}}\Th_{m'}^{k-1,\new,\a'}.
\end{gather}
In particular, $T^\prime_{k-\frac12}(\Gamma(4m))$ is stable for the action of $\mpt(\ZZ)$, and the formula (\ref{eqn:class:theta-Tprimeskhom}) is valid.

Now we apply (\ref{eqn:class:theta-Tprime4moplus}), and Skoruppa's decomposition (\ref{eqn:jac:EZ-Thmdec}) to (\ref{eqn:class:theta-Tprimeskhom}), 
in order to deduce that
\begin{gather}\label{eqn:class:theta-Tprimeskkmbigsum}
	T^{\prime\sk}_{k,m}\simeq \bigoplus_{m'|m}\bigoplus_{\substack{\a'\in \widehat{O}_{m'}\\\a'(-1)=(-1)^{k-1}}}
		\bigoplus_{d^2|m}\bigoplus_{\substack{\a\in\widehat{O}_{m/d^2}}}
		\hom_{\mpt(\ZZ)}\left(\Th_{m'}^{\new,\a'},\Th_{m/d^2}^{\new,\a}\right).
\end{gather} 
By applying the second part of Theorem \ref{thm:jac:EZ-Thmdec} to (\ref{eqn:class:theta-Tprimeskkmbigsum}) we see that the only components $\Th_{m'}^{k-1,\new,\a'}$ of $T^\prime_{k-\frac12}(\Gamma(4m))$ that contribute to $T^{\prime\sk}_{k,m}$ are those where $\XBox(m/m')=1$ (i.e. $m/m'$ is a perfect square). So 
if we define 
\begin{gather}
T_{k-\frac12}(\Gamma(4m)):=\bigoplus_{d^2|m}\bigoplus_{\substack{\a\in\widehat{O}_{m/d^2}\\\a(-1)=(-1)^{k-1}}}
	\Th^{k-1,\new,\a}_{m/d^2},
\end{gather}
then $T^{\prime\sk}_{k,m}$ is naturally isomorphic to the space of $\mpt(\ZZ)$-module maps from $T_{k-\frac12}(\Gamma(4m))$ to $\Th_m$, 
and the dimension of $T^{\prime\sk}_{k,m}$ is the number of pairs $(d,\a)$, where $d^2|m$ and $\a\in \widehat{O}_{m/d^2}$ satisfies $\a(-1)=(-1)^{k-1}$. Finally we observe that the $(t_{k,m'}\cdot a')|U_d$ with $m=m'd^2$ and $a'\in O_{m'}$ span a subspace of $T^\sk_{k,m}$ with precisely this dimension. So $T^{\prime\sk}_{k,m}=T^{\sk}_{k,m}$ 
also for $k\in \{1,2\}$, and the proof of the proposition is complete.
\end{proof}

We have $M_{\frac12}(\Gamma(4m))=T^\prime_{\frac12}(\Gamma(4m))$ for all $m$ according to the Serre--Stark theorem, so the proof of Proposition \ref{prop:class:tht-tht} verifies that $T^\sk_{1,m}=J^\sk_{1,m}$, for all positive integers $m$. Since $T^\sk_{k,m}=\{0\}$ for $k>2$ it remains to understand the relationship between $T^\sk_{2,m}$ and $J^\sk_{2,m}$. 
\begin{lem}\label{lem:class:tht-TS}
We have $T^\sk_{2,m}<S^\sk_{2,m}$.
\end{lem}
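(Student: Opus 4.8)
The plan is to reduce cuspidality to a single vanishing condition and then to verify that condition on the explicit spanning set for $T^\sk_{2,m}$ provided by Proposition \ref{prop:class:tht-tht}. First observe that any $\phi\in T^\sk_{2,m}$ lies in $J^\sk_{2,m}$ by definition, so its Fourier coefficients already satisfy $C_\phi(D,r)=0$ for $D<0$. Comparing this with the defining condition $C_\phi(D,r)=0$ for $D\le 0$ for membership in $S^\sk_{2,m}$, we see that it suffices to prove $C_\phi(0,r)=0$ for every $r\xmod 2m$. Thus the theta type hypothesis alone does not settle the matter: the perfect square $D=0$ is permitted by (\ref{eqn:jac:hol-Tskkm}), and cuspidality is a genuinely extra condition that must be extracted from the weight-two structure.

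Next I would record the relationship between this coefficient and the theta-coefficients. Writing $\phi=\sum_r\overline{g_r}\theta_{m,r}$ as in \S\ref{sec:jac:hol} and comparing with (\ref{eqn:jac:hol-thtcoeffFoucoeff-skew}), we find that the constant term of $g_r$ equals $\overline{C_\phi(0,r)}$. Hence $\phi$ is cuspidal exactly when each $g_r$ has vanishing constant term, i.e. is a cusp form. Now invoke Proposition \ref{prop:class:tht-tht}, which for $k=2$ exhibits $T^\sk_{2,m}$ as the span of the forms $(t_{2,m'}\cdot a')|U_d$ with $m=m'd^2$ and $a'\in O_{m'}$. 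Since the condition ``$C_\phi(0,r)=0$ for all $r$'' is linear in $\phi$, it is enough to check it on these generators.

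For $\phi=t_{2,m'}$ the relevant theta-coefficients are the thetanullwerte $\theta^1_{m',r}$, which are cuspidal (cf. \S\ref{sec:notn}): the only monomial that could contribute to the constant term is $\ell=0$, and it is weighted by $\ell$, hence vanishes, so $C_{t_{2,m'}}(0,r)=0$. Applying $\cdot a'$ merely permutes the theta-coefficients (cf. (\ref{eqn:jac:EZ-phidota})), so $C_{\phi\cdot a'}(0,r)$ is a relabelling of the $C_\phi(0,\cdot)$ and remains zero; and from the explicit formula (\ref{eqn:jac:szlifts-Ud}) we have that $C_{\phi|U_d}(0,r)$ equals $C_\phi(0,r/d)$ when $d\mid r$ and vanishes otherwise, so $U_d$ too preserves the vanishing of the $D=0$ coefficients. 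Therefore every generator, and so every element of $T^\sk_{2,m}$, is cuspidal, giving $T^\sk_{2,m}<S^\sk_{2,m}$. There is no serious obstacle once Proposition \ref{prop:class:tht-tht} is in hand; the essential input is simply that the weight $3/2$ thetanullwerte $\theta^1_{m',r}$ are cusp forms, in contrast to the weight $1/2$ thetanullwerte relevant to $k=1$, which is precisely why the analogue fails there and one has $T^\sk_{1,m}=J^\sk_{1,m}$ instead.
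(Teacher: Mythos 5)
Your proof is correct and follows essentially the same route as the paper: both arguments rest on Proposition \ref{prop:class:tht-tht} together with the observation that the weight $3/2$ thetanullwerte $\theta^1_{m,r}$ have vanishing constant term (the $\ell=0$ summand is killed by the factor $\ell$), hence are cusp forms. Your version merely spells out the reduction through the explicit generators $(t_{2,m'}\cdot a')|U_d$ and the compatibility of the $D=0$ coefficients with $\cdot a'$ and $U_d$, which the paper compresses into the single statement that the theta-coefficients are linear combinations of thetanullwerte.
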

\begin{proof}
Proposition \ref{prop:class:tht-tht} implies that the (complex conjugates of the) theta-coefficients of a $\f$ in $T^\sk_{2,m}$ are linear combinations of the thetanullwerte $\theta_{m,r}^1$. The $\theta_{m,r}^1$ for fixed $m$ span the image of the $\mpt(\ZZ)$-module map $\mathcal{D}^1:\Theta_m\to M_\frac32(\Gamma(4m))$, 
so cuspidality follows from the observation that $c_f(0)=0$ when $f=\theta_{m,r}^1$. 
\end{proof}
Recall from \S\ref{sec:jac:szlifts} that $P^\sk_{2,m}$ is defined to be the subspace of $S^\sk_{2,m}$ spanned by Hecke eigenforms that do not belong to $T^\sk_{2,m}$. We will show momentarily that $S^\sk_{2,m}$ is the orthogonal direct sum of $T^\sk_{2,m}$ and $P^\sk_{2,m}$, but first we require another lemma.
\begin{lem}\label{lem:class:tht-PtoS}
The image of $P^\sk_{2,m}$ under $\SZ$ is contained in $S_2(\Gamma_0(m))$.
\end{lem}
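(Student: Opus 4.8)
The plan is to exploit the Hecke-equivariance and injectivity of $\SZ$ from Theorem \ref{thm:jac:szlifts-sz}, reducing the assertion to a statement about which Hecke eigenvalue systems can occur. Since $P^\sk_{2,m}$ is spanned by eigenforms for the operators $T_n$ with $(n,m)=1$, and $\SZ$ intertwines these with the weight-$2$, level-$m$ Hecke operators, it suffices to treat a single eigenform $\phi\in P^\sk_{2,m}$ and show that $f:=\SZ(\phi)\in M_2(\Gamma_0(m))$ is cuspidal. Writing $\lambda_p$ for the $T_p$-eigenvalue of $\phi$, I would invoke two facts: every weight-$2$ Eisenstein series for $\Gamma_0(m)$ has $T_p$-eigenvalue $1+p$ for $p\nmid m$; and the Deligne bound $|\lambda_p|\leq 2\sqrt p$ holds on cusp forms. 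As $1+p>2\sqrt p$, the eigenvalue systems occurring in $S_2(\Gamma_0(m))$ and in the Eisenstein subspace $\mathcal{E}_2(\Gamma_0(m))$ are disjoint, so the eigenform $f$ is cuspidal unless $\lambda_p=1+p$ for all $p\nmid m$, in which case $f\in\mathcal{E}_2(\Gamma_0(m))$. Thus the lemma is equivalent to the claim that every Hecke eigenform $\phi\in S^\sk_{2,m}$ with $\lambda_p=1+p$ for all $p\nmid m$ already lies in $T^{\sk}_{2,m}$.

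To attack this crux I would argue with Fourier coefficients. For $D$ fundamental and coprime to $m$, formula (\ref{eqn:jac:szlifts-SDrfund}) specialises on an eigenform to the recursion $C_\phi(p^2D,pr)=\bigl(\lambda_p-\left(\tfrac Dp\right)\bigr)C_\phi(D,r)=\bigl(1+p-\left(\tfrac Dp\right)\bigr)C_\phi(D,r)$. Hence, from (\ref{eqn:jac:szlifts-SDr}), $\mathcal{S}_{D,r}\phi=\sum_{n\geq1}\lambda_n\,C_\phi(D,r)\,q^n+C_{\phi|T_0}(D,r)$, where by (\ref{eqn:jac:szlifts-SDrcnst}) the constant term $C_{\phi|T_0}(D,r)$ vanishes for $D\neq1$ and equals $\lab\phi\cdot a,t_{2,m}\rab$ when $D=1$ and $r=a\in O_m$. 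Fixing $a$ with $C_\phi(1,a)\neq0$ and comparing $\mathcal{S}_{D,r}\phi$ (for fundamental $D\neq1$) with the scalar multiple $\tfrac{C_\phi(D,r)}{C_\phi(1,a)}\mathcal{S}_{1,a}\phi$, the two weight-$2$ forms agree in every positive Fourier degree, so their difference is a constant; since $M_2(\Gamma_0(m))$ contains no nonzero constants, that difference must vanish, forcing $C_\phi(D,r)\,\lab\phi\cdot a,t_{2,m}\rab=0$. Provided $\lab\phi\cdot a,t_{2,m}\rab\neq0$, this gives $C_\phi(D,r)=0$ for all fundamental $D\neq1$, and the recursion together with the Hecke relations (\ref{eqn:jac:szlifts-SDrfund}) then propagates the vanishing to every $D$ whose square-free part exceeds $1$, whence $\phi\in T^{\sk}_{2,m}$ by the definition (\ref{eqn:jac:hol-Tskkm}).

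The main obstacle is exactly the hypothesis that the relevant constant term $\lab\phi\cdot a,t_{2,m}\rab$ be nonzero, equivalently that the Eisenstein eigenform $\SZ(\phi)$ have nonvanishing constant term at $\infty$. When $\phi$ is a simultaneous eigenform for the full Hecke algebra (including the operators at primes dividing $m$), $f$ is an Eisenstein newform, for which the constant term at $\infty$ is nonzero (e.g. $1-p$ at prime level), and the argument closes. The genuine difficulty is the degenerate sub-case in which every coefficient $C_\phi(1,a)$ vanishes, so that the only available nonzero coefficient sits at a non-square fundamental discriminant: then $\mathcal{S}_{D,r}\phi$ is a nonzero Eisenstein-eigenvalue form with vanishing constant term at $\infty$, which can exist for composite $m$ because $\dim\mathcal{E}_2(\Gamma_0(m))=\#\{\text{cusps}\}-1$ and there are Eisenstein series vanishing at one cusp but not another. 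Ruling this out requires showing that $\mathcal{S}_{D,r}\phi$ has vanishing constant term at \emph{all} cusps for $D\neq1$ (so that, being a cusp form with Eisenstein eigenvalues, it is forced to be zero).

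To carry out that last step I would propagate the constant-term computation to the other cusps: the cusps in the Atkin--Lehner orbit of $\infty$ are handled by (\ref{eqn:jac:ez-ALeqv}), replacing $\phi$ by $\phi\cdot a(n)$ and again reading off $\lab\,\cdot\,,t_{2,m}\rab$-type pairings that vanish for $D\neq1$; the remaining cusps (not reached by the $W_n$ with $n\in\Ex_m$, which already occur for $m=p^2$) must be treated through the operators $U_d,V_\ell$ and their modular counterparts $B_{\ell,d}$ via (\ref{eqn:jac:szlifts-Belld}), expressing the missing constant terms as pairings of $\phi$ against the generators $(t_{2,m'}\cdot a')|U_d$ of $T^{\sk}_{2,m}$ supplied by Proposition \ref{prop:class:tht-tht}. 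This is where the finer structural results of Skoruppa \cite{MR1116103} enter, and I expect this cusp-by-cusp bookkeeping — rather than any of the spectral input above — to be the technical heart of the proof. An alternative, and possibly cleaner, route is a dimension count: $\SZ$ embeds the $\{1+p\}$-eigenspace of $S^\sk_{2,m}$ into $\mathcal{E}_2(\Gamma_0(m))$, and one checks using Proposition \ref{prop:class:tht-tht} that $\dim T^{\sk}_{2,m}$ already matches the dimension of this image, forcing the eigenspace to coincide with $T^{\sk}_{2,m}$ and yielding the lemma at once.
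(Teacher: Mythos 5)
Your reduction is the right one: it suffices to show that a Hecke eigenform $\phi\in S^\sk_{2,m}$ whose image under $\SZ$ is an Eisenstein series must already lie in $T^\sk_{2,m}$, and this is precisely the crux the paper faces. But your attempt to close that crux by hand has genuine gaps. First, the identity $\mathcal{S}_{D,r}\phi=\sum_{n\geq1}\lambda_n C_\phi(D,r)q^n+C_{\phi|T_0}(D,r)$ is only justified for the coefficients with $(n,m)=1$: the Hecke operators $T_n$ of (\ref{eqn:jac:hol-phiTn}) are defined (and $\phi$ is an eigenform) only for $n$ coprime to $m$, and for $n$ sharing a factor with $m$ the coefficient $C_{\phi|T_n}(D,r)$ is given by the explicit sum (\ref{eqn:jac:szlifts-SDrfund}) with no eigenvalue interpretation. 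Consequently $\mathcal{S}_{D,r}\phi-\tfrac{C_\phi(D,r)}{C_\phi(1,a)}\mathcal{S}_{1,a}\phi$ is only known to be an Eisenstein series killed by the away-from-$m$ Hecke algebra, and for composite $m$ that eigenspace is spanned by several degenerate oldforms (e.g. $E_2(\tau)-2E_2(2\tau)$, $E_2(2\tau)-2E_2(4\tau)$, $E_2(\tau)-4E_2(4\tau)$ at level $4$), so the difference need not be constant and you cannot conclude $C_\phi(D,r)=0$. Second, you yourself flag the degenerate case where every $C_\phi(1,a)$ vanishes and the cusp-by-cusp constant-term bookkeeping, and neither is actually carried out; the closing ``dimension count'' alternative presupposes knowing the dimension of the Eisenstein-eigenvalue part of $S^\sk_{2,m}$, which is essentially the statement being proved.

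The paper resolves the crux with a single external input that your argument is, in effect, trying to reprove. Writing $\phi=\overline{g}^t\th_m$, the theta-coefficients $g_r(4m\tau)$ are weight-$\tfrac32$ cusp forms \emph{associated} (in the sense of Sturm \cite{MR660380}) to the eigenform $f=\SZ(\phi)$; Theorem 1 of \cite{MR660380} states that forms of weight $\tfrac32$ associated to a non-cuspidal $f$ must be linear combinations of unary theta series, i.e. lie in $T'_{\frac32}(\Gamma(4m))$, whence $\phi\in T^\sk_{2,m}$ by Proposition \ref{prop:class:tht-tht} --- contradicting $\phi\in P^\sk_{2,m}$. If you want a self-contained argument along your lines, you would need to reproduce the content of Sturm's theorem (which is where the hard analytic/arithmetic work lives), not just the formal Hecke-eigenvalue separation.
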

\begin{proof}
Let $\f\in P^\sk_{2,m}$, set $f=\SZ(\f)$, and suppose that $f$ does not belong to $S_2(\Gamma_0(m))$. We may assume that $\f$ is a Hecke eigenform. Then $f$ is a Hecke eigenform, and an Eisenstein series. If we write $\f=\overline{g}^t\th_m$ then it follows from the definition (\ref{eqn:jac:hol-phiTn}) of the Hecke operators on Jacobi forms that the functions $g_r(4m\t)\in S_\frac32(\Gamma_1(16m^2))$ are {\em associated} to $f$ in the sense of \S2 of \cite{MR660380}. Then Theorem 1 of \cite{MR660380} implies that $g_r\in T'_\frac32(\Gamma(4m))$ for all $r$, since $f$ is not a cusp form. Thus $\f\in T^\sk_{2,m}$ according to Proposition \ref{prop:class:tht-tht}, but this contradicts the hypothesis that $\f\in P^\sk_{2,m}$. 
\end{proof}

\begin{prop}\label{prop:class:tht-orthog}
We have $S^\sk_{2,m}=T^\sk_{2,m}\oplus P^\sk_{2,m}$, where the direct sum is orthogonal with respect to the Petersson inner product.
\end{prop}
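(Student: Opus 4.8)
The plan is to realize $P^\sk_{2,m}$ as the orthogonal complement of $T^\sk_{2,m}$ inside $S^\sk_{2,m}$, which makes sense since $T^\sk_{2,m}<S^\sk_{2,m}$ by Lemma \ref{lem:class:tht-TS}. The engine is the fact, recalled in \S\ref{sec:jac:szlifts}, that the Hecke operators $T_n$ with $(n,m)=1$ form a commuting family of self-adjoint operators on $S^\sk_{2,m}$ for the Petersson product. First I would check that $T^\sk_{2,m}$ is stable under these $T_n$: from (\ref{eqn:jac:hol-phiTn}) the coefficient $C_{\f|T_n}(D,r)$ is a combination of the $C_\f(n^2D/d^2,r')$, and since $\XBox(n^2D/d^2)=\XBox(D)$, a form whose coefficients are supported on square discriminants is sent to another such form. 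Hence $T^\sk_{2,m}$ is Hecke-stable, and because the $T_n$ are self-adjoint and the Petersson product is non-degenerate on $S^\sk_{2,m}$, the orthogonal complement $P:=(T^\sk_{2,m})^\perp$ in $S^\sk_{2,m}$ is again Hecke-stable. Thus $S^\sk_{2,m}=T^\sk_{2,m}\oplus P$ is an orthogonal decomposition into Hecke-stable subspaces, each of which, being acted on by commuting self-adjoint operators, admits a basis of $T_n$-eigenforms.

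It then remains to identify $P$ with $P^\sk_{2,m}$. One inclusion is immediate: $P$ is spanned by eigenforms, each of which lies in $P$ and so is not in $T^\sk_{2,m}$ (since $P\cap T^\sk_{2,m}=\{0\}$), hence lies in $P^\sk_{2,m}$ by definition; so $P\subseteq P^\sk_{2,m}$. For the reverse inclusion I would prove that no joint eigenspace of the $T_n$ meets both $T^\sk_{2,m}$ and $P$ nontrivially. Granting this, any eigenform $\f\notin T^\sk_{2,m}$ has its (Hecke-stable) $T^\sk_{2,m}$-component equal to zero, so $\f\in P$; as these eigenforms span $P^\sk_{2,m}$ this gives $P^\sk_{2,m}\subseteq P$, and hence the desired orthogonal direct sum.

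The crux, and the main obstacle, is this non-straddling statement, which I would get by separating Hecke eigenvalue systems through the Skoruppa--Zagier lift $\SZ$. On $T^\sk_{2,m}$ only the $\mathcal{S}_{1,r}$ summands of $\SZ$ survive, for if a fundamental discriminant $D$ is not a perfect square (equivalently $D\neq 1$) then (\ref{eqn:jac:szlifts-SDrfund}) and $\XBox(n^2D/d^2)=\XBox(D)\neq 1$ force $\mathcal{S}_{D,r}$ to annihilate theta type forms. A direct computation via (\ref{eqn:jac:szlifts-SDrfund}) gives $C_{t_{2,m}|T_n}(1,1)=\sigma_1(n)$ for $n>0$, identifying $\mathcal{S}_{1,1}(t_{2,m})$ as an Eisenstein series; propagating this through the spanning set $(t_{2,m'}\cdot a')|U_d$ of Proposition \ref{prop:class:tht-tht} using the Atkin--Lehner equivariance (\ref{eqn:jac:ez-ALeqv}) and the relation (\ref{eqn:jac:szlifts-Belld}), I conclude that $\SZ(T^\sk_{2,m})$ consists of Eisenstein series. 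On the other hand, Lemma \ref{lem:class:tht-PtoS} shows that $\SZ$ carries every eigenform not in $T^\sk_{2,m}$, in particular every eigenform of $P$, into $S_2(\Gamma_0(m))$. Now if some joint eigenspace contained a nonzero $\f_T\in T^\sk_{2,m}$ and a nonzero $\f_P\in P$ with a common eigenvalue system $\lambda$, then by injectivity of $\SZ$ their images would be a nonzero Eisenstein eigenform and a nonzero cuspidal eigenform, both of system $\lambda$. This is impossible, because the Hecke eigenvalue systems of weight two Eisenstein series and cusp forms are disjoint: for $(p,m)=1$ the former has $T_p$-eigenvalue $1+p$ whereas the latter satisfies $|a_p|\le 2\sqrt p$ by Eichler--Shimura. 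This contradiction yields the non-straddling property, completing the identification $P=P^\sk_{2,m}$ and hence the orthogonal decomposition $S^\sk_{2,m}=T^\sk_{2,m}\oplus P^\sk_{2,m}$.
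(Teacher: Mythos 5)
Your proof is correct and follows essentially the same route as the paper: Hecke stability of $T^\sk_{2,m}$, the fact that both subspaces are spanned by eigenforms, and separation of eigenvalue systems because $\SZ$ sends theta type forms to Eisenstein series while Lemma \ref{lem:class:tht-PtoS} sends $P^\sk_{2,m}$ into $S_2(\Gamma_0(m))$, whose eigenvalue systems are disjoint from the Eisenstein ones. The only difference is presentational: you first form the orthogonal complement $P$ of $T^\sk_{2,m}$ and then identify it with $P^\sk_{2,m}$, whereas the paper argues the orthogonality of the two named spaces directly.
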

\begin{proof}
It follows from the prescription (\ref{eqn:jac:hol-phiTn}) that the Hecke operators $T_n$ preserve the subspace $T^\sk_{2,m}<S^\sk_{2,m}$ of theta type forms. So $T^\sk_{2,m}$ and $P^\sk_{2,m}$ are both spanned by Hecke eigenforms. 
Two Hecke eigenforms are orthogonal with respect to the Petersson inner product unless they have the same eigenvalues, but Lemma \ref{lem:class:tht-PtoS} implies that the eigenvalues of any $\phi\in P^\sk_{2,m}$ are those of a cuspidal modular form of weight $2$ for $\Gamma_0(m)$. By contrast, an explicit calculation reveals that the image of 
$t_{2,m}\cdot a(n)$ under $\SZ$ for $n\in \Ex_m$ is proportional to the Eisenstein series $nE_2(n'\tau)-n'E_2(n\tau)\in M_2(\Gamma_0(m))$, where $n'=m/n$, and $E_2(\tau):=1-24\sum_{k>0}kq^k(1-q^k)^{-1}$. The Hecke eigenvalues of a cusp form cannot coincide with those of an Eisenstein series (cf. e.g. \cite{MR927162}), so the desired result follows from 
the explicit description of $T^\sk_{2,m}$ given in
Proposition \ref{prop:class:tht-tht}. 
\end{proof}

\subsection{Classification}
\label{sec:class:proof}

In this 
section we present the proof of our first main theorem, 
after establishing some preparatory results.  
The first 
of these is a verification of the claim (\ref{eqn:jac:ez-ALeqv}) that the map $\SZ$ intertwines the actions of $\Ex_m$ on $S^\sk_{k,m}$ and $M_{2k-2}(\Gamma_0(m))$. 
\begin{lem}\label{lem:class:proof-ALinv}
Let $m$ be a positive integer and let $n\in \Ex_m$. 
We have 
\begin{gather}\label{eqn:class:proof-ALinv}
\SZ(\phi|\W_m(n))
=
\SZ(\phi)|W_n
\end{gather}
in $M_{2k-2}(\Gamma_0(m))$, for all $\phi\in S^\sk_{k,m}$.
\end{lem}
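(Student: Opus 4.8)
The plan is to factor the identity through the individual Skoruppa--Zagier lifts $\mathcal{S}_{D,r}$ out of which $\SZ$ is assembled. By Theorem \ref{thm:jac:szlifts-sz} the map $\SZ$ is a fixed linear combination $\SZ=\sum_{D,r}c_{D,r}\mathcal{S}_{D,r}$, the sum being over fundamental $D$ with $r^2=D\xmod 4m$. Since $\f\mapsto\SZ(\f|\W_m(n))$ and $\f\mapsto\SZ(\f)|W_n$ are both linear, it therefore suffices to prove the single-lift intertwining
\[
\mathcal{S}_{D,r}(\f|\W_m(n))=(\mathcal{S}_{D,r}\f)|W_n
\]
for every such pair $(D,r)$; summing against the $c_{D,r}$ then yields (\ref{eqn:class:proof-ALinv}). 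One may also reduce at the outset to $n$ a prime-power exact divisor, using the multiplicativity $\W_m(n)\W_m(n')=\W_m(n\ast n')$ and $W_nW_{n'}=W_{n\ast n'}$.

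The source (Jacobi) side is elementary. Using (\ref{eqn:jac:ez-Wmndota}) we have $\f|\W_m(n)=\f\cdot a(n)$, and the theta-coefficient reindexing gives $C_{\f\cdot a}(D',s)=C_\f(D',sa)$. Feeding this into the explicit Hecke formula (\ref{eqn:jac:szlifts-SDrfund}), together with the constant-term prescription (\ref{eqn:jac:szlifts-SDrcnst}) in the case $k=2$, $D=1$, a direct computation shows $C_{(\f\cdot a)|T_N}(D,r)=C_{\f|T_N}(D,ra)$ for all $N\ge0$. Since $a(n)^2=1$ in $O_m$ preserves the form $x\mapsto x^2/4m$ (so $(ra(n))^2=D\xmod 4m$), this gives
\[
\mathcal{S}_{D,r}(\f|\W_m(n))=\mathcal{S}_{D,r}(\f\cdot a(n))=\mathcal{S}_{D,\,r\,a(n)}(\f).
\]
Thus the required identity is reduced to the purely modular statement $(\mathcal{S}_{D,r}\f)|W_n=\mathcal{S}_{D,\,r\,a(n)}(\f)$.

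The remaining, and main, step is to prove this last equality by exploiting the realization of $\mathcal{S}_{D,r}$ as a theta lift. For $k=2$ and $D>0$ this is (\ref{eqn:jac:szlifts-SDrTheta}) with kernel $\Theta_{D,r}(\tau,z,\tau')$ as in (\ref{eqn:jac:szlifts-ThetaC}); in general one uses the analogous kernels of Skoruppa \cite{MR1074485,MR1116103}. Because the Petersson pairing is taken in the $(\tau,z)$ variables, it is enough to transform the kernel in the $\tau'$ variable: writing $|W_n$ for the weight-$(2k-2)$ Atkin--Lehner action in $\tau'$, I would establish
\[
\Theta_{D,r}(\,\cdot\,,\cdot\,,\tau')\,\big|\,W_n=\Theta_{D,\,r\,a(n)}(\,\cdot\,,\cdot\,,\tau').
\]
The content is geometric. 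A representative $w_n\in W_n$ induces an involution of the Heegner forms that carries $\mathcal{Q}(m,DD',rr')$ onto $\mathcal{Q}(m,DD',\,r\,a(n)\,r')$, i.e.\ it negates $B$ modulo $2n$ and fixes it modulo $2m/n$, thereby realizing exactly the reindexing $r\mapsto r\,a(n)$; it leaves $\widehat{Q}(\tau')^2/\Im(\tau')^2$ invariant under $\tau'\mapsto w_n\tau'$; and it fixes the generalized genus character $\chi_D(Q)$, which is Atkin--Lehner invariant by its very definition in \cite{MR909238}. Tracking these three facts against the weight-$(2k-2)$ automorphy factor gives the kernel identity. This is also the point at which the skew-holomorphic case departs from the holomorphic one of \cite{MR958592}: the different modular factor in (\ref{skewmodular}) produces the sign difference recorded in the $W_m$-eigenvalues of Theorem \ref{thm:jac:szlifts-sz}.

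I expect the main obstacle to be precisely this kernel computation: verifying the claimed action of $w_n$ on $\mathcal{Q}(m,DD',rr')$ and the genus-character invariance with the correct normalizations, and, when $k=2$, checking the constant term (where $\mathcal{S}_{1,r}\f$ may carry an Eisenstein component and $W_n$ permutes the cusps of $X_0(m)$). Everything else is bookkeeping. Once the single-lift identity holds for every $\mathcal{S}_{D,r}$, linearity in $\f$ and the decomposition $\SZ=\sum c_{D,r}\mathcal{S}_{D,r}$ give $\SZ(\f|\W_m(n))=\SZ(\f)|W_n$, which is the assertion of the lemma.
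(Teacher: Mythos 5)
Your proposal is correct and follows essentially the same route as the paper's proof: reduce to the individual lifts $\mathcal{S}_{D,r}$, use (\ref{eqn:jac:ez-Wmndota}) to identify $\mathcal{S}_{D,r}(\f|\W_m(n))$ with $\mathcal{S}_{D,\,r a(n)}(\f)$, and then establish $(\mathcal{S}_{D,r}\f)|W_n=\mathcal{S}_{D,\,r a(n)}(\f)$ by transforming the theta kernel in $\tau'$, using the bijection $Q\mapsto Q|w_n$ on quadratic forms, the invariance of $\widehat{Q}(\tau')/\Im(\tau')$, and the Atkin--Lehner invariance of $\chi_D$ (with the alternative Skoruppa kernel for $k\geq 3$). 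The kernel computation you flag as the main obstacle is precisely what the paper carries out via the identities (\ref{eqn:class:proof_Qwnids}).
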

\begin{proof}
Assume first that $k=2$, which is the case of most importance for this work. 
When restricted to $S^\sk_{k,m}$ the map $\SZ$ is a linear combination of the operators $\mathcal{S}_{D,r}$ with $D$ positive fundamental and $r\xmod 2m$ such that $D=r^2\xmod 4m$, so it suffices to check (\ref{eqn:class:proof-ALinv}) with such $\mathcal{S}_{D,r}$ in place of $\SZ$. To verify this we use the theta lift description (\ref{eqn:jac:szlifts-SDrTheta}) of $\mathcal{S}_{D,r}$.

To begin, note from Proposition 1 of \cite{MR909238} that $\chi_{D}$ is Atkin--Lehner invariant, 
in the sense that if $n\in \Ex_m$ and $w_n=\frac1{\sqrt{n}}\left(\begin{smallmatrix} an&b\\cm&dn\end{smallmatrix}\right)\in W_n$, 
then we have $\chi_{D}(Q|w_n)=\chi_{D}(Q)$ for $Q\in \mathcal{Q}(m,DD',rr')$, where $D$, $D'$, $r$ and $r'$ are as in (\ref{eqn:jac:szlifts-ThetaC}), and
\begin{gather}\label{eqn:class:proof-Qwe}
	(Q|w_n)(x,y):=\frac1nQ(anx+by,cmx+dny)
\end{gather}
for $w_n$ as above. Note also that, for $w_n$ as above, and for $D$ and $r$ such that $D=r^2\xmod 4m$, the map $Q\mapsto Q|w_n$ defines an isomorphism of sets $\mathcal{Q}(m,D,r)\xrightarrow{\sim}\mathcal{Q}(m,D,ra(n))$. 

To prove (\ref{eqn:class:proof-ALinv}) for $k=2$ we use the above mentioned properties of $Q\mapsto Q|w_n$ 
and the identities 
\begin{gather}\label{eqn:class:proof_Qwnids}
	\frac{Q(w_n\tau',1)}{\Im(w_n\tau')^2}\frac{n}{(cm\overline{\tau'}+dn)^2}=\frac{(Q|w_n)(\tau',1)}{\Im(\tau')^2},\quad
	\frac{\widehat{Q}(w_n\tau')}{\Im(w_n\tau')}=\frac{\widehat{Q|w_n}(\tau')}{\Im(\tau')},
\end{gather}
to
deduce that $nC_{D,r}(D',r';\tau,w_n\tau')(cm\overline{\tau'}+dn)^{-2}=C_{D,ra(n)}(D',r';\tau,\tau')$ for $w_n$ as in (\ref{eqn:class:proof-Qwe}), where $C_{D,r}$ is as defined in (\ref{eqn:jac:szlifts-ThetaC}).
From this 
we see that
\begin{gather}\label{eqn:class:proof-ThetaWe}
	\lab \phi(\cdot\,,\cdot),\Theta_{D,r}(\cdot\,,\cdot\,,\tau')\rab|W_n
	=	\lab \phi(\cdot\,,\cdot),\Theta_{D,ra(n)}(\cdot\,,\cdot\,,\tau')\rab,
\end{gather}
so $(\mathcal{S}_{D,r}\phi)|W_n=\mathcal{S}_{D,ra(n)}\phi$ according to (\ref{eqn:jac:szlifts-SDrTheta}). Comparing (\ref{eqn:jac:ez-Wmndota}) with the definition (\ref{eqn:jac:szlifts-SDr}) of $\mathcal{S}_{D,r}$ we have $\mathcal{S}_{D,ra(n)}\phi=\mathcal{S}_{D,r}(\phi|\W_m(n))$. Thus we have verified (\ref{eqn:class:proof-ALinv}) for $k=2$.

The case that $k\geq 3$ can be handled similarly, except that an alternative kernel function 
\begin{gather}
	\begin{split}
	\Theta_{D,r}(\tau,z,\tau')&:=\sum_{\substack{D',r'\in \ZZ\\D'=(r')^2\xmod 4m}}C_{D,r}(D',r';\tau,\tau')\bar{q}^{D'/4m}q^{(r')^2/4m}y^{r'},
	\label{eqn:class:proof-Theta}
	\\
	C_{D,r}(D',r';\tau,\tau')&:=|DD'|^{k-3/2}
	\sum_{Q\in\mathcal{Q}(m,DD',rr')}\chi_D(Q)Q(\tau',1)^{1-k},
	\end{split}
\end{gather}
is more convenient. In fact, this
is essentially the kernel used to define the $\mathcal{S}_{D,r}$ for $D<0$ in \cite{MR909238}. The precise analogue of (\ref{eqn:jac:szlifts-SDrTheta}) that relates this $\Theta_{D,r}$ to $\mathcal{S}_{D,r}\phi$ is 
\begin{gather}
(\mathcal{S}_{D,r}\phi)(\tau') = c^+_{k,m}\lab \phi(\cdot,\cdot),\Theta_{D,r}(\cdot,\cdot,-\overline{\tau'})\rab,
\end{gather}
for a certain (non-zero) constant $c^+_{k,m}$,
and is established in Proposition 1 of \cite{MR1072974}. The proof then proceeds as before, using (\ref{eqn:class:proof_Qwnids}) and the Atkin--Lehner invariance of $\chi_D$.
\end{proof}

The next lemma relates the existence of optimal mock Jacobi theta functions to a vanishing condition on special values of $L$-functions for cusp forms of weight $2$. 
In order to formulate it, 
observe that if $m'|m$ then any $\a'\in \widehat{O}_{m'}$ is naturally an element of $\widehat{O}_m$. Indeed, we have the natural map $\Ex_m\to \Ex_{m'}$ given by $n\mapsto (n,m')$. So given $\a'\in O_{m'}$ it is natural to set 
\begin{gather}\label{eqn:class:proof-alphaprimelift}
\a'(a(n)):=\a'(a'((n,m')))
\end{gather} 
for $n\in \Ex_m$, where $a$ and $a'$ denote the natural isomorphisms $\Ex_m\to O_m$ and $\Ex_{m'}\to O_{m'}$, respectively (cf. \S\S\ref{sec:notn},\ref{sec:jac:ez}).

\begin{lem}\label{vanishing_period_maxsym}
Suppose that $\JJ^{\top,\a}_{1,m}\neq\{0\}$ for some positive integer $m$, and some $\a\in \widehat{O}_m$. Then $L(f',1)=0$ for every newform $f'$ in $S_2(\Gamma_0(m'))^{\a'}$, whenever $m'$ divides $m$, and $\a'\in \widehat{O}_{m'}$ coincides with $\a$ as an element of $\widehat{O}_m$.
\end{lem}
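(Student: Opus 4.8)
The plan is to turn the orthogonality of the shadow $\sigma:=\xi(\phi)$ against $P^\sk_{2,m}$ into a vanishing statement for central $L$-values, via the Waldspurger type formula of Theorem \ref{thm:jac:szlifts-Lfns} applied at the \emph{trivial} fundamental discriminant $D=1$ (for which $f'\otimes 1=f'$, so that $L(f'\otimes 1,1)=L(f',1)$). The mechanism is that optimality of $\phi$ forces the Bruinier--Funke pairing to see only the coefficients $C_\phi(1,r)$, and the theta type hypothesis makes $\sigma$ orthogonal to exactly the eigenforms whose $L$-values the formula computes.

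First I would fix a nonzero $\phi\in\JJ^{\top,\a}_{1,m}$ and record its features: $\sigma=\xi(\phi)$ lies in $T^{\sk,\a}_{2,m}\subset S^\sk_{2,m}$ (cuspidal, by Lemma \ref{lem:class:tht-TS}); $\a(-1)=-1$ by (\ref{eqn:jac:ez-Foucoeffsym}); and $C_\phi(1,1)\neq 0$ with $C_\phi(1,a)=\a(a)C_\phi(1,1)$ for $a\in O_m$ and $C_\phi(1,r)=0$ for $r\notin O_m$, by Lemma \ref{lem:class:proof-Cphi11nonzero}. For an arbitrary cuspidal $\psi\in S^\sk_{2,m}$ I would then evaluate the pairing of Proposition \ref{prop:jac:hol-lampol}: since $\phi$ is optimal ($C_\phi(D,r)=0$ for $D>1$) and $\psi$ is cuspidal ($C_\psi(0,r)=0$), only the term $D=1$ survives, giving
\begin{gather*}
\langle\sigma,\psi\rangle=\{\phi,\psi\}=\sum_{r\xmod 2m}C_\phi(1,r)\overline{C_\psi(1,r)}=C_\phi(1,1)\sum_{a\in O_m}\a(a)\overline{C_\psi(1,a)}.
\end{gather*}
Specializing to $\psi\in P^\sk_{2,m}$ and using the orthogonal decomposition $S^\sk_{2,m}=T^\sk_{2,m}\oplus P^\sk_{2,m}$ of Proposition \ref{prop:class:tht-orthog}, the left side vanishes; as $C_\phi(1,1)\neq 0$ I obtain the relation $\sum_{a\in O_m}\a(a)\overline{C_\psi(1,a)}=0$ valid for \emph{every} $\psi\in P^\sk_{2,m}$.

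Next I would descend to level $m'$. Given a newform $f'\in S_2(\Gamma_0(m'))^{\a'}$, the matching hypothesis at $n=m$ (note $a(m)=-1$ in $O_m$ and $(m,m')=m'$) gives $\a'(-1)=\a(-1)=-1$, so $f'|W_{m'}=-f'$, and by Theorem \ref{thm:jac:szlifts-sz} its $\SZ$-preimage $\psi'$ lies in $P^{\sk,\a'}_{2,m'}$; in particular $C_{\psi'}(1,a')=\a'(a')C_{\psi'}(1,1)$ for $a'\in O_{m'}$. I would lift by the operator $V_{m/m'}$, setting $\psi:=\psi'|V_{m/m'}\in J^\sk_{2,m}$. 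A short computation with (\ref{eqn:jac:szlifts-Vell}) shows $C_\psi(0,r)=0$ and, at $D=1$ (where only the $d=1$ term survives), $C_\psi(1,a)=C_{\psi'}(1,\rho(a))$ for $a\in O_m$, with $\rho\colon O_m\to O_{m'}$ the natural reduction; moreover $\SZ(\psi)=f'|B_{m/m',1}$ is cuspidal by (\ref{eqn:jac:szlifts-Belld}), so injectivity of $\SZ$ and the fact that $\SZ(T^\sk_{2,m})$ is Eisenstein place $\psi$ in $P^\sk_{2,m}$. Feeding this $\psi$ into the relation above, and using $C_{\psi'}(1,\rho(a))=\a'(\rho(a))C_{\psi'}(1,1)$ together with $\a(a)=\a'(\rho(a))$ (precisely the matching hypothesis, via (\ref{eqn:class:proof-alphaprimelift})), yields
\begin{gather*}
0=\overline{C_{\psi'}(1,1)}\sum_{a\in O_m}\a(a)\overline{\a'(\rho(a))}=\overline{C_{\psi'}(1,1)}\,\lvert O_m\rvert,
\end{gather*}
so $C_{\psi'}(1,1)=0$. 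Theorem \ref{thm:jac:szlifts-Lfns} at level $m'$ with $D=1$ then reads $\lvert C_{\psi'}(1,1)\rvert^2/\langle\psi',\psi'\rangle=(2\pi)^{-1}L(f',1)/\langle f',f'\rangle$, forcing $L(f',1)=0$.

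I expect the main obstacle to be the bookkeeping in the descent step: confirming that $V_{m/m'}$ sends $\psi'$ into $P^\sk_{2,m}$, computing its $D=1$ coefficients exactly, and---most delicately---verifying that the reduction map $\rho$ appearing there is the very map under which $\a(a)=\a'(\rho(a))$ holds, i.e. that reduction modulo $2m'$ on $O_m$ is compatible with the gcd map $n\mapsto(n,m')$ on exact divisors under the isomorphisms $\Ex\cong O$ of \S\ref{sec:notn}. Everything else is a direct assembly of the pairing formula, the orthogonal splitting of $S^\sk_{2,m}$, and the Waldspurger formula at $D=1$; note in particular that the matching hypothesis makes the Fricke eigenvalue $-1$ automatic, so no appeal to the sign of the functional equation is needed.
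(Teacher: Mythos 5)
Your proposal is correct and follows essentially the same route as the paper's proof: evaluate the Bruinier--Funke pairing $\{\phi,\varphi\}$ using optimality to isolate the $D=1$ coefficients, use the orthogonal splitting $S^\sk_{2,m}=T^\sk_{2,m}\oplus P^\sk_{2,m}$ to make it vanish, lift the $\SZ$-preimage of $f'$ by $V_{m/m'}$ into $P^{\sk,\a}_{2,m}$ to conclude $C_{\varphi'}(1,1)=0$, and apply the Waldspurger formula at $D=1$. The only cosmetic difference is that you keep the sum over all of $O_m$ weighted by $\a$, where the paper reduces to $\ker(\a)$ via the $r\mapsto -r$ antisymmetry before specializing.
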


\begin{proof}
Let $m$ and $\a$ be as in the statement of the lemma, and suppose that $\phi$ is a non-zero element of $\JJ^{\top,\a}_{1,m}$. 
Note that, since $\phi$ is optimal, 
for 
$\varphi\in S^\sk_{2,m}$ we have
\begin{gather}\label{eqn:class-lamphioptcsp}
	\{\f,\varphi\}=
	\sum_{\substack{r\xmod 2m\\r^2=1\xmod 4m}} C_\phi(1,r)\overline{C_\varphi(1,r)}
\end{gather}
according to Proposition \ref{prop:jac:hol-lampol}. In particular, the sum in (\ref{eqn:class-lamphioptcsp}) is indexed by $O_m$.
Since $\phi$ has weight $1$ we have $C_\phi(D,r)=-C_\phi(D,-r)$ (cf. (\ref{eqn:jac:ez-Foucoeffsym})), and similarly with $\varphi$ in place of $\phi$ (cf. the text preceding (\ref{eqn:jac:ez-Foucoeffsym})), so the contributions of $r$ and $-r$ to (\ref{eqn:class-lamphioptcsp}) are the same. 
The elements of $\ker(\alpha)$ run over a set of representatives for the orbits of $\{\pm 1\}$ on $O_m$, so we can write $\{\phi,\varphi\}=2\sum_{a\in\ker(\alpha)}C_\phi(1,a)\overline{C_\varphi(1,a)}$. This in turn becomes
\begin{gather}\label{eqn:class-lamphioptcspred}
	\{\f,\varphi\}=
	2C_\phi(1,1)\sum_{a\in \ker(\alpha)} \overline{C_\varphi(1,a)}
\end{gather}
by virtue of the fact that $\phi\cdot a= \phi$ for $a\in \ker(\a)$, since $\phi\in \JJ^{\top,\a}_{1,m}$. 
Note that $C_\f(1,1)\neq 0$ according to Lemma \ref{lem:class:proof-Cphi11nonzero}.

Now suppose that $m'|m$ and $\a'\in \widehat{O}_{m'}$ coincides with $\a$ when lifted in the natural way to $\widehat{O}_m$. Explicitly, this is the requirement that 
\begin{gather}\label{eqn:class:proof-alphaprimeisalpha}
\a'(a'((n,m')))=\a(a(n))
\end{gather} 
for all $n\in \Ex_m$ (cf. (\ref{eqn:class:proof-alphaprimelift})). Let $f'\in S_2(\Gamma_0(m'))^{\a'}$ be a newform. 
Then Theorem 
\ref{thm:jac:szlifts-sz} attaches a skew-holomorphic Jacobi form $\varphi'\in P^\sk_{2,m'}$ to $f'$ having the same eigenvalues as $f'$ under the Hecke operators $T_p$ (cf. (\ref{eqn:jac:hol-phiTn})) for $(p,m')=1$. 
Applying Lemma \ref{lem:class:proof-ALinv} we see that $\varphi'$ and $f'$ have the same eigenvalues under the action of $\Ex_{m'}$. That is, $\varphi'|\W_{m'}(n)=\alpha'(a'(n'))\varphi'$ for $n'\in \Ex_{m'}$, so $\varphi'\in P^{\sk,\a'}_{2,m'}$. 

We now consider (\ref{eqn:class-lamphioptcspred}) for $\varphi=\varphi'|V_{m/m'}$ where $V_{\ell}$ is the Hecke-like operator defined in (\ref{eqn:jac:szlifts-Vell}). Upon verifying that 
$V_{m/m'}\W_{m}(n)=\W_{m'}((n,m'))V_{m/m'}$ 
as operators on $J_{2,m'}^\sk$ for all $n\in \Ex_m$, we conclude from (\ref{eqn:class:proof-alphaprimeisalpha}) that $\varphi\in S_{2,m}^{\sk,\a}$. 
Thus (\ref{eqn:class-lamphioptcspred}) is reduced to 
\begin{gather}\label{eqn:class-lamphioptcspredred}
	\{\f,\varphi\}=
\#\Ex_mC_\phi(1,1)\overline{C_\varphi(1,1)}.
\end{gather}
Using (\ref{eqn:jac:szlifts-Belld}) we obtain that the image of $\varphi$ under $\SZ$ is cuspidal, so actually $\varphi\in P_{2,m}^{\sk,\a}$.
But the shadow of $\phi$ belongs to $T^\sk_{2,m}$ by assumption, so is orthogonal to $P^\sk_{2,m}$ according to Proposition \ref{prop:class:tht-orthog}. So $\{\f,\varphi\}=\lab\xi(\f),\varphi\rab=0$. 
Lemma \ref{lem:class:proof-Cphi11nonzero} ensures that $C_\phi(1,1)$ is non-zero if $\phi$ is non-zero, so it must be the case that $C_\varphi(1,1)=0$. Using (\ref{eqn:jac:szlifts-Vell}) we verify that $C_{\varphi'}(1,1)=C_{\varphi}(1,1)$, so $C_{\varphi'}(1,1)=0$. We now apply Theorem \ref{thm:jac:szlifts-Lfns} to $\varphi'$ and $f'$, with $D=1$, and deduce that $L(f',1)=0$, as we required to show.
\end{proof}  

Our final lemma in this section is a simple non-vanishing result for central critical values of modular $L$-functions, which we use together with Lemma \ref{vanishing_period_maxsym} to establish a genus zero property for optimal mock Jacobi theta functions in Proposition \ref{prop:class:proof-genuszero}.
To prepare for this, recall (cf. \S\ref{sec:notn}) that for $K<O_m$ we write $\Gamma_0(m)+K$ for the subgroup of $\SL_2(\RR)$ 
generated by $\Gamma_0(m)$ and the $W_n$ such that $a(n)\in K$. 
The {\em genus} of the group $\Gamma=\Gamma_0(m)+K$ is, by definition, the genus of the Riemann surface $X_\Gamma$. 
The eigenspace $S_2(\Gamma_0(m))^\a$ is naturally isomorphic to the space of holomorphic $1$-forms on $X_\Gamma$, when $\Gamma=\Gamma_0(m)+\ker(\a)$. So the genus of $\Gamma_0(m)+\ker(\a)$ is the dimension of $S_2(\Gamma_0(m))^\a$.

\begin{lem}\label{lem:class:proof-S2maLf1}
Suppose that $m$ is a positive integer and $\a\in\widehat{O}_m$ satisfies $\a(-1)=-1$. If 
$\Gamma_0(m)+\ker(\a)$ is not genus zero then $L(f,1)\neq 0$ for some $f\in S_2(\Gamma_0(m))^\a$.
\end{lem}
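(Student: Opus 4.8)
The plan is to prove the contrapositive by expressing the central values $L(f,1)$ as periods of holomorphic differentials on $X:=X_\Gamma$, with $\Gamma=\Gamma_0(m)+\ker(\a)$, and then deriving a contradiction via Abel's theorem. Every holomorphic $1$-form on $X$ comes from a $\ker(\a)$-invariant weight-two cusp form for $\Gamma_0(m)$. Since $\a(-1)=-1$ forces $\a$ to be nontrivial, the Fricke operator $W_m$ preserves the space of such forms (the Atkin--Lehner involutions commute modulo $\Gamma_0(m)$) and splits it into its $(\pm1)$-eigenspaces: the $(+1)$-eigenspace consists of Fricke-invariant cusp forms, while the $(-1)$-eigenspace is exactly $S_2(\Gamma_0(m))^{\a}$, since $\a(a(m))=\a(-1)=-1$ and $O_m$ is generated by $\ker(\a)$ and $a(m)$. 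Thus each holomorphic $1$-form on $X$ is a sum of a Fricke-invariant form and a form in $S_2(\Gamma_0(m))^{\a}$. By the discussion preceding the lemma the genus of $X$ is at least $\dim S_2(\Gamma_0(m))^{\a}$, so the hypothesis that $\Gamma_0(m)+\ker(\a)$ is not genus zero guarantees that $X$ has positive genus.

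For a cusp form $f$ for $\Gamma_0(m)$ set $P(f):=\int_0^{i\infty}f(\tau)\,\diff\tau$; because $f$ vanishes at both cusps $0$ and $\infty$ the integral converges, and a standard Mellin computation identifies $P(f)$ with a fixed nonzero multiple of $L(f,1)$. First I would record that $P$ annihilates the Fricke-invariant forms: the substitution $\tau\mapsto-1/(m\tau)$ yields $P(f)=-P(f|W_m)$, so $P(f)=0$ whenever $f|W_m=f$. Next, arguing by contradiction, suppose $L(f,1)=0$ for every $f\in S_2(\Gamma_0(m))^{\a}$. Then $P$ vanishes on $S_2(\Gamma_0(m))^{\a}$ as well, and hence on the entire space of holomorphic $1$-forms on $X$.

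Now I would interpret $P$ geometrically. The image in $X$ of the imaginary axis is a path joining the cusps $[0]$ and $[\infty]$, so the functional $P$ represents the Abel--Jacobi class of the degree-zero divisor $[\infty]-[0]$ in $\operatorname{Jac}(X)$. The vanishing of $P$ forces this class to be trivial, and Abel's theorem then shows that $[\infty]-[0]$ is principal; since $X$ has positive genus this can happen only if $[0]=[\infty]$ on $X$. On the other hand, $\a(-1)=-1$ means $a(m)=-1\notin\ker(\a)$, so $W_m\notin\Gamma$; as $W_m$ is the only Atkin--Lehner involution that interchanges the cusps $0$ and $\infty$ (and, for $m>1$, no element of $\Gamma_0(m)$ does so), the cusps $[0]$ and $[\infty]$ remain distinct on $X$. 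This contradiction shows that $L(f,1)\neq0$ for some $f\in S_2(\Gamma_0(m))^{\a}$, as required.

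The routine inputs are the analytic identity between $P(f)$ and $L(f,1)$ and the Fricke relation $P(f)=-P(f|W_m)$. I expect the main obstacle to be the geometric bookkeeping: verifying carefully that $P$ computes the Abel--Jacobi image of $[\infty]-[0]$, and establishing the cusp-separation statement that $[0]\neq[\infty]$ on $X$ precisely because $W_m\notin\Gamma$. Both follow from standard descriptions of how the Atkin--Lehner involutions permute the cusps of $X_0(m)$, but they are the points at which the hypothesis $\a(-1)=-1$ enters in an essential way.
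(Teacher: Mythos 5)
Your proof is correct and follows essentially the same route as the paper's: express $L(f,1)$ as the period of the geodesic from $0$ to $\infty$, apply Abel's theorem to conclude that $(\infty)-(0)$ becomes principal on $X_\Gamma$, and use $W_m\notin\Gamma$ to see that the two cusps remain distinct, which forces the genus to be zero. One point where you are more careful than the paper deserves mention: the paper asserts just before the lemma that $S_2(\Gamma_0(m))^{\a}$ is the whole space of holomorphic $1$-forms on $X_\Gamma$, whereas in fact $S_2(\Gamma)$ is the direct sum of $S_2(\Gamma_0(m))^{\a}$ and the Fricke-invariant subspace (the forms fixed by all of $O_m$), and Abel's theorem needs the period to vanish against \emph{all} holomorphic $1$-forms on $X_\Gamma$. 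Your observation that the functional equation $P(f)=-P(f|W_m)$ annihilates the Fricke-invariant summand supplies exactly this missing step, so your write-up closes a small gap that the paper's proof leaves implicit.
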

\begin{proof}
Set $\Gamma=\Gamma_0(m)+\ker(\a)$ and suppose that $L(f,1)=0$ for every $f\in S_2(\Gamma_0(m))^\a$. 
By our hypothesis on $\a$ the Fricke involution $W_m$ does not belong to $\Gamma$, so the image of the divisor $E=(\infty)-(0)$ under the natural map $\widehat{\HH}\to X_\G$ does not vanish. 
We have
\begin{gather}\label{eqn:class:proof-Lf1}
L(f,1)=2\pi \int_0^{\infty}f(iy){\rm d}y,
\end{gather} 
so the Abel theorem implies that $E$ maps to a principal divisor of $X_\Gamma$. In other words, there is a meromorphic function on $X_\Gamma$ with a single simple pole. Such a function defines a holomorphic isomorphism from $X_\Gamma$ to the Riemann sphere, so $\Gamma$ is genus zero,
and the lemma is proved.
\end{proof}
\begin{rmk}
Kolyvagin--Logachev showed \cite{MR1036843} that if $f\in S_2(\Gamma_0(m))$ is a newform with $L(f,1)\neq 0$ then the abelian variety associated to $f$ by Shimura's construction \cite{Shi_IntThyAutFns,MR0318162} has finite Mordell--Weil group.
One may check (cf. \cite{Fer_Genus0prob}) that if $S_2(\Gamma_0(m))$ is not zero then there is an $\a\in \widehat{O}_m$ such that $\Gamma_0(m)+\ker(\a)$ is non-Fricke and not genus zero. 
So Lemma \ref{lem:class:proof-S2maLf1} shows that if $m$ is a positive integer other than
\begin{gather}
	1,\;
	2,\;
	3,\;
	4,\;
	5,\;
	6,\;
	7,\;
	8,\;
	9,\;
	10,\;
	12,\;
	13,\;
	16,\;
	18,\;
	25,
\end{gather}
then there exists a modular abelian variety with conductor $m$ that has finite Mordell--Weil group. In order words, if an integer
can occur as the conductor of a modular abelian variety, then it does occur as the conductor of a modular abelian variety with finitely many rational points.
\end{rmk}

We now use Lemmas \ref{vanishing_period_maxsym} and \ref{lem:class:proof-S2maLf1} to establish a genus zero property for optimal mock Jacobi theta functions of weight 1. To formulate it we say that $\Gamma=\Gamma_0(m)+K$ is {\em Fricke} or {\em non-Fricke} according as $W_m\subset \Gamma$ or not. 

\begin{prop}\label{prop:class:proof-genuszero}
Let $m$ be a positive integer and $\a\in\widehat{O}_m$. If $\mathbb{J}_{1,m}^{\top,\a}\neq \{0\}$ then $\Gamma_0(m)+\ker(\a)$ is non-Fricke and genus zero.
\end{prop}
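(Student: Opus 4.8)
The plan is to read off both assertions from Lemmas~\ref{vanishing_period_maxsym} and~\ref{lem:class:proof-S2maLf1}, the only genuinely new ingredient being a prime-by-prime analysis of how the period of a weight~$2$ cusp form behaves on oldclasses.

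\emph{The non-Fricke assertion is essentially free.} Since $\JJ^{\top,\a}_{1,m}\neq\{0\}$, and since (by the remark following \eqref{eqn:class:defn-xiJJtopa}, which rests on \eqref{eqn:jac:ez-Foucoeffsym}) the space $\JJ^{\top,\a}_{1,m}$ vanishes unless $\a(-1)=(-1)^1=-1$, we must have $\a(-1)=-1$. By definition $\Gamma_0(m)+\ker(\a)$ is non-Fricke exactly when $a(m)=-1$ fails to lie in $\ker(\a)$, i.e. exactly when $\a(-1)=-1$; so the hypothesis forces non-Fricke, and simultaneously places us in the regime $\a(-1)=-1$ in which Lemma~\ref{lem:class:proof-S2maLf1} applies.

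\emph{For genus zero I would argue by contradiction.} Assume $\Gamma_0(m)+\ker(\a)$ is not genus zero. By Lemma~\ref{lem:class:proof-S2maLf1} there is some $f\in S_2(\Gamma_0(m))^\a$ with $L(f,1)\neq0$. Writing $P(f):=\int_0^{\infty} f(iy)\,{\rm d}y$, so that $L(f,1)=2\pi P(f)$ by \eqref{eqn:class:proof-Lf1}, the functional $P$ is linear, and $S_2(\Gamma_0(m))^\a$ has a basis of forms that are simultaneously eigenforms for the good Hecke operators $T_p$ $(p\nmid m)$ and for all the $W_n$ with $n\in\Ex_m$ (these commute). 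Hence $P(g)\neq0$ for at least one such simultaneous eigenform $g\in S_2(\Gamma_0(m))^\a$. Each such $g$ lies in the oldclass of a unique newform $f'\in S_2(\Gamma_0(m'))$ with $m'\mid m$, say $g=\sum_{t\mid (m/m')}c_t\,f'(t\tau)$. Since $f'(t\tau)=\sum_n c_{f'}(n)q^{tn}$ gives $P\bigl(f'(t\,\cdot)\bigr)=t^{-1}P(f')$, we obtain $P(g)=\bigl(\textstyle\sum_t c_t/t\bigr)P(f')$, whence $P(f')\neq0$ and in particular $L(f',1)\neq0$. If I can show that $f'$ lies in an eigenspace $S_2(\Gamma_0(m'))^{\a'}$ whose character $\a'$ coincides with $\a$ in the sense of \eqref{eqn:class:proof-alphaprimelift}, then Lemma~\ref{vanishing_period_maxsym} forces $L(f',1)=0$, the desired contradiction.

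\emph{The heart of the matter is this eigenvalue matching, and it is where non-vanishing of the period does the work.} At a prime $p$ dividing $m'$ to the same power $p^{v}$ to which it divides $m$, the Atkin--Lehner eigenvalue is rigid under the degeneracy maps, so $g|W_{p^{v}}=\a(a(p^{v}))g$ already forces $\a(a(p^{v}))$ to equal the newform eigenvalue, which is the requirement of \eqref{eqn:class:proof-alphaprimelift} at $p$. At a prime $p\nmid m'$ with $p\,\|\,m$ the oldclass is $\langle f',f'(p\,\cdot)\rangle$, and factoring $W_p W_{m'}=W_{pm'}$ together with the rigidity $f'|W_{m'}=\varepsilon f'$, $\varepsilon\in\{\pm1\}$, gives $f'|W_p=p\,f'(p\tau)$ and $f'(p\,\cdot)|W_p=p^{-1}f'$; thus the $W_p$-eigenforms are $f'\pm p\,f'(p\tau)$ with eigenvalues $\pm1$, and their periods are $P\bigl(f'\pm p\,f'(p\,\cdot)\bigr)=(1\pm1)P(f')$. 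Consequently the $(-1)$-eigenform has vanishing period, so $P(g)\neq0$ forces $g$ to have $W_p$-eigenvalue $+1$, which is precisely the value $\a(a(p))=1$ demanded by coincidence at a prime not dividing $m'$. I expect the remaining bookkeeping---higher prime powers $p^{v}\,\|\,m$, and primes $p\mid m'$ with $v$ exceeding the conductor exponent---to be the genuinely delicate part, and I would carry it out by the same mechanism: combine $P\bigl(f'(t\,\cdot)\bigr)=t^{-1}P(f')$ with the explicit action of $W_{p^{v}}$ on the oldclass $\langle f',f'(p\,\cdot),\dots\rangle$, as in Atkin--Lehner--Li theory and the computations of \cite{Fer_Genus0prob}, to see that in every case the unique eigenform with non-vanishing period carries exactly the Atkin--Lehner eigenvalue prescribed by \eqref{eqn:class:proof-alphaprimelift}. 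This identifies $f'$ as a coinciding newform with $L(f',1)\neq0$, contradicting Lemma~\ref{vanishing_period_maxsym} and completing the proof.
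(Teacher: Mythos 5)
Your proof is correct in outline and reaches the same two pillars as the paper — Lemma \ref{vanishing_period_maxsym} for the vanishing of central $L$-values of coinciding newforms, and Lemma \ref{lem:class:proof-S2maLf1} plus Abel's theorem for the genus zero conclusion — but the route between them is genuinely different, and it leaves one step as a promissory note. The paper does not diagonalize: it applies the projector $P^\a$ of (\ref{eqn:class:proof-Palpha}) to the Atkin--Lehner decomposition (\ref{eqn:class:proof-ALdec}), uses the single identity $t(\iota_{m',t}^m f')|W_n=t'\iota_{m',t'}^m(f'|W_{n'})$ with $n'=(n,m')$, $t'=t\ast(n/n')$, and the invariance $L(t'\iota^m_{m',t'}f',1)=L(f',1)$ to get $L(f,1)=\langle\a,\a'\rangle_{O_m}\,L(f',1)$ as in (\ref{eqn:class:proof-LfalphasLfprime}); character orthogonality then kills every non-coinciding oldclass at once, with no prime-by-prime analysis. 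Your version replaces this averaging by a contradiction through a simultaneous $(T_p,W_n)$-eigenbasis, and the burden shifts to showing that non-vanishing of the period forces the Atkin--Lehner eigenvalues of $g$ to match those of its newform $f'$. Your computation in the case $p\,\|\,m$, $p\nmid m'$ is right (with the paper's normalization one indeed gets $f'|W_p=p\,f'(p\tau)$, $f'(p\,\cdot)|W_p=p^{-1}f'$, hence periods $(1\pm1)P(f')$), and the deferred general case does go through by the same pairing: for $p^{v}\|m$, $p^{u}\|m'$, $w=v-u$, the identity above gives $f'(p^{j}\,\cdot)|W_{p^{v}}=\varepsilon_p\,p^{w-2j}f'(p^{w-j}\,\cdot)$, so a $\lambda$-eigenvector $\sum_j c_jf'(p^j\,\cdot)$ satisfies $c_{w-j}p^{-(w-j)}=\lambda\varepsilon_p\,c_jp^{-j}$ and its period $P(f')\sum_jc_jp^{-j}$ vanishes whenever $\lambda=-\varepsilon_p$; combined with the factorization of the old-vector over primes this completes your eigenvalue matching. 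So nothing is wrong, but as written the key bookkeeping is asserted rather than proved, and it is worth noting that the paper's projector-plus-orthogonality argument is precisely the device that makes this bookkeeping unnecessary: what your approach buys in return is a sharper, eigenform-level statement (the unique old-vector with non-vanishing period carries the newform's Atkin--Lehner signs), which is more information than the proposition requires.
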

\begin{proof}
Suppose that $m$ and $\a$ are as in the statement of the theorem and $\phi\in \mathbb{J}_{1,m}^{\top,\a}$ is not zero.
We have observed that $\JJ^{\top,\a}_{1,m}$ vanishes unless $\a(-1)=-1$ (cf. (\ref{eqn:class:defn-JJtopkmOmdec})), so the existence of $\phi$ implies that $\Gamma_0(m)+\ker(\a)$ is non-Fricke. 
We will use Lemma \ref{vanishing_period_maxsym} to show 
that $L(f,1)=0$ for every $f\in S_2(\Gamma_0(m))^\a$. Then Lemma \ref{lem:class:proof-S2maLf1} will imply that $\Gamma_0(m)+\ker(\a)$ is genus zero, and complete the proof.

To setup for the application of Lemma \ref{vanishing_period_maxsym}, we first recall the natural decomposition
\begin{gather}\label{eqn:class:proof-ALdec}
	S_2(\Gamma_0(m))= \bigoplus_{m'|m}\bigoplus_{t|\frac{m}{m'}}\iota_{m',t}^mS_2(\Gamma_0(m'))^\new
\end{gather}
arising from the theory \cite{MR0268123} of Atkin--Lehner, 
where $S_2(\Gamma_0(m'))^\new$ is the space of newforms of level $m'$ in $S_2(\Gamma_0(m'))$, and $\iota_{m',t}^m$ is the {\em degeneracy map} $S_2(\Gamma_0(m'))\to S_2(\Gamma_0(m))$ defined by $(\iota_{m',t}^m f)(\tau):=f(t\tau)$. Applying the projection operator 
\begin{gather}\label{eqn:class:proof-Palpha}
P^\alpha f
	:=\frac{1}{\#\Ex_m}\sum_{n\in \Ex_m}\a(a(n))f|W_n
\end{gather}
to (\ref{eqn:class:proof-ALdec}) we obtain
\begin{gather}\label{eqn:class:proof-ALalphadec}
	S_2(\Gamma_0(m))^{\a}= 
	\bigoplus_{m'|m}
	\bigoplus_{\a'\in \widehat{O}_{m'}}
	\bigoplus_{t|\frac{m}{m'}}
	P^\alpha\iota_{m',t}^mS_2(\Gamma_0(m'))^{\new,\a'},
\end{gather}
where $S_2(\Gamma_0(m'))^{\new,\a'}$ is the intersection of $S_2(\Gamma_0(m'))^{\a'}$ and $S_2(\Gamma_0(m'))^\new$. 
So to see that $L(f,1)=0$ for all $f\in S_2(\Gamma_0(m))^\a$, 
it suffices to verify the case that 
$f=tP^\a\iota_{m',t}^mf'$, for $f'$ a newform in $S_2(\Gamma_0(m'))^{\a'}$, 
for some $m'|m$ and $t|\tfrac{m}{m'}$, and some $\a'\in \widehat{O}_{m'}$. 

A routine calculation confirms that $t(\iota_{m',t}^m f')|W_n=t'\iota_{m',t'}^m(f'|W_{n'})$ for $n\in \Ex_m$, where $n'=(n,m')$ and $t'=t\ast (n/n')$ (and $\ast$ is as defined in \S\ref{sec:notn}). So with $f=tP^\a\iota_{m',t}^mf'$ and $f'$ as above we have
\begin{gather}
f=
\frac{1}{\#\Ex_m}\sum_{n\in \Ex_m}\a(a(n))\a'(a'(n')) t'\iota_{m',t'}^mf',
\end{gather}
where in each summand $n'=(n,m')$ and $t'=t\ast (n/n')$. We have $L(t'\iota_{m',t'}^mf',1)=L(f',1)$ (cf. (\ref{eqn:class:proof-Lf1})), so 
\begin{gather}\label{eqn:class:proof-LfalphasLfprime}
	L(f,1)=\left(\frac1{\#\Ex_m} \sum_{n\in \Ex_m}\a(a(n))\a'(a'(n'))\right)L(f',1),
\end{gather}
where again, $n'=(n,m')$ in each summand. Now the first factor on the right-hand side of (\ref{eqn:class:proof-LfalphasLfprime}) is just the inner product of $\alpha$ and $\alpha'$ in the ring of class functions on $O_m$, where $\a'\in \widehat{O}_{m'}$ is regarded as a character on $O_m$ in the natural way (cf. (\ref{eqn:class:proof-alphaprimelift})). So $L(f,1)=0$ unless $\a$ and $\a'$ coincide as elements of $\widehat{O}_m$. But if $\a$ and $\a'$ are the same element of $\widehat{O}_m$ then $L(f,1)=0$ according to (\ref{eqn:class:proof-LfalphasLfprime}) and Lemma \ref{vanishing_period_maxsym}. So $L(f,1)=0$ for every $f\in S_2(\Gamma_0(m))^\a$. We apply Lemma \ref{lem:class:proof-S2maLf1} to obtain that $\Gamma_0(m)+\ker(\a)$ is genus zero, and the proof of the proposition is complete.
\end{proof}

We now prove our first main result. 
\begin{proof}[Proof of Theorem \ref{thm:intro-firstmainthm}]
Suppose $m\in \ZZ^+$ and $\a\in \widehat{O}_m$. We first observe that the dimension of $\mathbb{J}_{1,m}^{\top,\a}$ is bounded above by $1$. Actually, the dimension of $\mathbb{J}_{1,m}^{\opt,\a}$ is bounded above by $1$, for given $\f,\f'\in\mathbb{J}_{1,m}^{\opt,\a}$ we may consider $\f''=C_{\f'}(1,1)\f-C_\f(1,1)\f'$. If $\f$ and $\f'$ are non-zero then $C_\f(1,1)$ and $C_{\f'}(1,1)$ are non-zero according to Lemma \ref{lem:class:proof-Cphi11nonzero}, but $C_{\f''}(1,1)=0$ by construction, so $\f''=0$ by another application of Lemma \ref{lem:class:proof-Cphi11nonzero}. Thus non-zero elements of $\mathbb{J}_{1,m}^{\opt,\a}$ are collinear. In particular, non-zero elements of $\mathbb{J}^{\top,\a}_{1,m}$ are collinear.

If $\dim\mathbb{J}_{1,m}^{\top,\a}\neq 0$ then 
by Proposition \ref{prop:class:proof-genuszero} we have that $\a(-1)=-1$ and the genus of $\Gamma_0(m)+\ker(\a)$ is zero. So it remains to demonstrate that $\JJ_{1,m}^{\top,\a}$ is non-zero for such $\a$. One way to do this is to consider suitable regularized Poincar\'e series or Rademacher sums, such as are discussed in \S B.4 of \cite{umrec}, for example. This produces a non-zero element $\f\in \JJ^{\opt,\a}_{1,m}$. By our assumptions on $\a$ we have $P^{\sk,\a}_{2,m}=\{0\}$ (cf. Lemmas \ref{lem:class:tht-PtoS} and \ref{lem:class:proof-ALinv}), so the shadow of $\f$ is a skew-holomorphic Jacobi form of theta type, and $\f\in \JJ^{\top,\a}_{1,m}$. So $\JJ_{1,m}^{\top,\a}$ is not zero.

Later we will require a more concrete understanding of the optimal mock Jacobi theta functions, so we also offer the following, more concrete verification that $\JJ_{1,m}^{\top,\a}$ is non-vanishing when $\a(-1)=-1$ and $\Gamma_0(m)+\ker(\a)$ has genus zero.
Inspecting Table 3.1 of \cite{Fer_Genus0prob} we see that the genus zero groups of the form $\Gamma_0(m)+\ker(\a)$ with $\a(-1)=-1$ are
\begin{gather}
\begin{split}\label{eqn:class:proof-gzgps-mum}
	2,\;
	3,\;
	&4,\;
	5,\;
	7,\;
	8,\;
	9,\;
	13,\;
	14+7,\;
	16,\;
	25,\;
	22+11,\;
	46+23,\;
	\\
	6&+3,\;
	10+5,\;
	12+4,\;
	18+9,\;
	30+6,10,15,\;
\end{split}\\
\begin{split}\label{eqn:class:proof-gzgps-emum}
	&6+2,\;
	10+2,\;
	12+3,\;
	18+2,\;
	30+3,5,15,\;
	\\
	15+5,\;
	20+&4,\;
	21+3,\;
	24+8,\;
	28+7,\;
	33+11,\;
	36+4,\;
	60+12,15,20,\;
	\\
	&42+6,14,21,\;
	70+10,14,35,\;
	78+6,26,39,
\end{split}
\end{gather}
where $m+n,n',\ldots$ is a shorthand for $\Gamma_0(m)+\{1,a(n),a(n'),\ldots\}$. In the first two lines (\ref{eqn:class:proof-gzgps-mum}) we list the 18 of these genus zero groups that appear in umbral moonshine \cite{UM,MUM}. To each such group $\Gamma$ is associated a Niemeier root system $X$ (cf. Table \ref{tab:intro-pm}). 
If $H^X=(H^X_r)$ is the mock modular form attached to $X$ in \cite{MUM} (cf. also \S B.3 of \cite{umrec}) then $\phi^X:=\sum_{r\xmod 2m}H^X_r\th_{m,r}$ is a non-zero element of $\JJ_{1,m}^{\top,\a}$. 

The 16 groups in (\ref{eqn:class:proof-gzgps-emum}) represent distinguished mock Jacobi forms that do not have Niemeier root systems attached. However, most of them can be constructed in umbral terms. For example, if $K$ is a subgroup of $\ker(\a)$ such that $\Gamma_0(m)+K$ appears in umbral moonshine, then the associated mock Jacobi form $\f^X$ is mapped to a non-zero element of $\JJ_{1,m}^{\top,\a}$ by the projection operator
\begin{gather}\label{eqn:class:proof-Palphaphi}
	P^\a\f:=\frac{1}{\#O_m}\sum_{a\in O_m}\a(a)\f\cdot a
\end{gather} 
(cf. (\ref{eqn:class:proof-Palpha})). In this way (cf. Table \ref{tab:intro-pm}) we see that $\JJ_{1,m}^{\top,\a}\neq \{0\}$ for the five groups in the first line of (\ref{eqn:class:proof-gzgps-emum}). The eight groups in the second line of (\ref{eqn:class:proof-gzgps-emum}) are handled in an analogous way, but using umbral McKay--Thompson series $H^X_{g}$ for $g$ certain non-trivial elements $g\in G^X$. For example, optimal theta type mock Jacobi forms corresponding to the groups $15+5$, $24+8$, $33+11$, and $36+4$ can be constructed from the mock modular forms $H^X_g$, where $X=A_2^{12}$ and $g$ is in the class $5A$, $8C$, $11A$ and $6C$ of $G^X\simeq 2.M_{12}$, respectively. The details of these constructions are given in \S\ref{sec:desc:mjt}. 

For the three groups in the last line of (\ref{eqn:class:proof-gzgps-emum}) we refer to \S9.5 of \cite{Dabholkar:2012nd}, where corresponding optimal mock Jacobi forms $\mathcal{Q}_{42}$, $\mathcal{Q}_{70}$ and $\mathcal{Q}_{78}$ have appeared already. 
One can check that these mock Jacobi forms lie in the required eigenspaces for the corresponding $O_m$. This inspection completes the proof of the theorem.
\end{proof}

We conclude this section with two corollaries to Theorem \ref{thm:intro-firstmainthm}. 
To formulate the first, let $\a\in \widehat{O}_m$ such that $\Gamma_0(m)+\ker(\a)$ is non-Fricke and genus zero, and let $\ell=m+n,n',\dots$ be the corresponding symbol in (\ref{eqn:class:proof-gzgps-mum}) or (\ref{eqn:class:proof-gzgps-emum}). Lemma \ref{lem:class:proof-Cphi11nonzero} guarantees that if $\f\in \JJ^{\top,\a}_{1,m}$ and $\f\neq 0$ then $C_\f(1,1)\neq 0$. So by Theorem \ref{thm:intro-firstmainthm} there is a unique $\f^{(\ell)}\in \JJ^{\top,\a}_{1,m}$ such that $C_{\f^{(\ell)}}(1,1)=-2$. By another application of Theorem \ref{thm:intro-firstmainthm}, the $\f^{(\ell)}$ obtained in this way furnish a basis for the space of optimal weight 1 mock Jacobi theta functions. 
\begin{cor}\label{cor:class:proof-dimJtop}
The $\f^{(\ell)}$ for $\ell$ in (\ref{eqn:class:proof-gzgps-mum}) and (\ref{eqn:class:proof-gzgps-emum}) furnish a basis for $\JJ^\top_{1,*}$. In particular, $\dim \JJ_{1,*}^{\top}=34$.
\end{cor}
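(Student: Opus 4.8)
The plan is to reduce the statement to the dimension count furnished by Theorem \ref{thm:intro-firstmainthm} together with the eigenspace decomposition (\ref{eqn:class:defn-JJtopkmOmdec}). By definition $\JJ^\top_{1,*}=\bigoplus_m\JJ^\top_{1,m}$, and (\ref{eqn:class:defn-JJtopkmOmdec}) refines this to
\[
\JJ^\top_{1,*}=\bigoplus_m\bigoplus_{\a\in\widehat{O}_m}\JJ^{\top,\a}_{1,m}.
\]
Theorem \ref{thm:intro-firstmainthm} evaluates each summand: $\dim\JJ^{\top,\a}_{1,m}=1$ when $\Gamma_0(m)+\ker(\a)$ is non-Fricke and genus zero, and $\dim\JJ^{\top,\a}_{1,m}=0$ otherwise. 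Hence $\dim\JJ^\top_{1,*}$ equals the number of pairs $(m,\a)$ with $\a\in\widehat{O}_m$ for which $\Gamma_0(m)+\ker(\a)$ is non-Fricke and genus zero, and the whole corollary will follow once this number is shown to be $34$ and the $\f^{(\ell)}$ are matched with the nonzero summands.

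The next step is to enumerate these pairs. Since $O_m$ is an elementary abelian $2$-group, each $\a\in\widehat{O}_m$ takes values in $\{\pm1\}$, and the non-Fricke condition $a(m)=-1\notin\ker(\a)$ is equivalent to $\a(-1)=-1$; in particular $\ker(\a)$ then has index exactly $2$ in $O_m$, and the assignment $\a\mapsto\ker(\a)$ is injective on the non-Fricke characters (the nontrivial character of $O_m/\ker(\a)\cong\ZZ/2\ZZ$ being unique). Thus the relevant pairs $(m,\a)$ are in bijection with the genus zero groups of the form $\Gamma_0(m)+\ker(\a)$ having $\a(-1)=-1$, and these are precisely the groups listed in (\ref{eqn:class:proof-gzgps-mum}) and (\ref{eqn:class:proof-gzgps-emum}), obtained by inspection of Table 3.1 of \cite{Fer_Genus0prob} in the proof of Theorem \ref{thm:intro-firstmainthm}. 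Counting the symbols in the two displays gives $18$ and $16$ respectively, so there are $34$ such pairs and $\dim\JJ^\top_{1,*}=34$.

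It remains to see that the $\f^{(\ell)}$ form a basis. For each symbol $\ell=m+n,n',\dots$ appearing in (\ref{eqn:class:proof-gzgps-mum}) or (\ref{eqn:class:proof-gzgps-emum}), let $\a$ be the corresponding character with $\ker(\a)=\{1,a(n),a(n'),\dots\}$; Lemma \ref{lem:class:proof-Cphi11nonzero} together with the one-dimensionality of $\JJ^{\top,\a}_{1,m}$ produces a unique nonzero $\f^{(\ell)}\in\JJ^{\top,\a}_{1,m}$ normalized by $C_{\f^{(\ell)}}(1,1)=-2$, and this $\f^{(\ell)}$ spans $\JJ^{\top,\a}_{1,m}$. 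Because distinct symbols yield distinct pairs $(m,\a)$ by the injectivity noted above, the $34$ forms $\f^{(\ell)}$ lie in pairwise distinct summands of the direct sum, so they are linearly independent; and since each nonzero summand is exactly one-dimensional and is spanned by its $\f^{(\ell)}$, they jointly span $\JJ^\top_{1,*}$. Hence they furnish a basis.

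The main obstacle is the enumeration itself: confirming that exactly $34$ groups $\Gamma_0(m)+\ker(\a)$ with $\a(-1)=-1$ are genus zero. This is not a self-contained computation but rests on the full solution of the genus zero problem for groups intermediate between $\Gamma_0(m)$ and its normalizer in $\SL_2(\RR)$, as tabulated in \cite{Fer_Genus0prob}; one must also verify for each listed symbol that the indicated $K$ really is the index-two kernel of a character with $\a(-1)=-1$, so that the symbol names a pair of the required type, and that no genus zero example is omitted. Everything else in the corollary is a formal consequence of Theorem \ref{thm:intro-firstmainthm}, Lemma \ref{lem:class:proof-Cphi11nonzero}, and the direct sum decomposition (\ref{eqn:class:defn-JJtopkmOmdec}).
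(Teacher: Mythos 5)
Your proposal is correct and follows essentially the same route as the paper: the corollary is deduced from Theorem \ref{thm:intro-firstmainthm} together with the decomposition (\ref{eqn:class:defn-JJtopkmOmdec}), the normalization $C_{\f^{(\ell)}}(1,1)=-2$ via Lemma \ref{lem:class:proof-Cphi11nonzero}, and the enumeration of the $18+16=34$ non-Fricke genus zero groups $\Gamma_0(m)+\ker(\a)$ already carried out (via Table 3.1 of \cite{Fer_Genus0prob}) in the proof of Theorem \ref{thm:intro-firstmainthm}. Your extra remarks on the injectivity of $\a\mapsto\ker(\a)$ and the linear independence across distinct summands only make explicit what the paper leaves implicit.
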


According to Corollary \ref{cor:class:proof-dimJtop} 
we can canonically (up to scale) attach an optimal mock Jacobi form to each genus zero group of the form $\Gamma_0(m)+\ker(\a)$. What if $K$ is a proper subgroup of $\ker(\a)$ such that $\Gamma_0(m)+K$ also has genus zero? The prescription of \cite{MUM} (i.e. umbral moonshine) attaches optimal mock Jacobi forms to such groups. Our second corollary to Theorem \ref{thm:intro-firstmainthm} identifies these mock Jacobi forms in a general and uniform way. To formulate it, we 
define
\begin{gather}\label{eqn:class:proof-JJ1mtopK}
	\JJ_{1,m}^{\top|K}:=\left\{\f\in \JJ_{1,m}^\top\mid \f\cdot a=\f\Leftarrow a\in K\text{ and }C_\f(1,r)\neq 0\Rightarrow \pm r\in K\right\}
\end{gather}
for $K<O_m$. 
\begin{cor}\label{cor:class:proof-genK}
Let $K<O_m$. 
If $\Gamma_0(m)+K$ is non-Fricke and genus zero then we have $\dim\JJ_{1,m}^{\top|K}=1$. Otherwise $\dim \JJ_{1,m}^{\top|K}=0$.
\end{cor}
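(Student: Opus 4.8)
The plan is to reduce everything to the finite-dimensional linear algebra of positive-discriminant coefficients, exactly as in the proof of Theorem~\ref{thm:intro-firstmainthm}, and then convert the resulting genus-zero condition on the family $\Gamma_0(m)+\ker(\alpha)$ into one on $\Gamma_0(m)+K$ itself. First I would record that, since every $\f\in\JJ^{\top|K}_{1,m}$ is optimal, its positive part is captured by the vector $(C_\f(1,a))_{a\in O_m}$, and that $\f\mapsto\Dp(\f)$ is injective by Proposition~\ref{prop_uniqueness1}. The two defining conditions on $\JJ^{\top|K}_{1,m}$ translate directly: $K$-invariance says $a\mapsto C_\f(1,a)$ is constant on cosets of $K$ (using $C_{\f\cdot a}(1,s)=C_\f(1,sa)$ from (\ref{eqn:jac:EZ-phidota})), while the support condition confines it to $K\cup(-K)$; together with the weight-one antisymmetry $C_\f(1,-a)=-C_\f(1,a)$ of (\ref{eqn:jac:ez-Foucoeffsym}) this forces $C_\f(1,\cdot)=C_\f(1,1)\big(\mathbf 1_K-\mathbf 1_{-K}\big)$. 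By injectivity this already yields $\dim\JJ^{\top|K}_{1,m}\le 1$, and it shows the space vanishes whenever $K$ is Fricke (then $-K=K$ and the vector is zero).

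Next I would determine exactly when a nonzero such $\f$ exists. By (\ref{eqn:class:defn-JJtopkmOmdec}) the space $\JJ^\top_{1,m}$ is supported on eigencharacters with $\alpha(-1)=-1$, and by Theorem~\ref{thm:intro-firstmainthm} its $\alpha$-component is nonzero precisely when $\Gamma_0(m)+\ker(\alpha)$ is genus zero (necessarily non-Fricke, since $\alpha(-1)=-1$); normalising the resulting generators $\f^{(\alpha)}$ by $C_{\f^{(\alpha)}}(1,1)=1$ identifies the image of $\Dp$ with the span of the character vectors $(\alpha(a))_a$ over these ``good'' $\alpha$. A standard Fourier computation on the elementary abelian $2$-group $O_m$ gives $\mathbf 1_K-\mathbf 1_{-K}=\tfrac{2|K|}{|O_m|}\sum_{\alpha\supseteq K,\ \alpha(-1)=-1}\alpha$, a combination in which every odd character trivial on $K$ appears with nonzero coefficient. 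Since distinct characters are linearly independent, this vector lies in the span of the good characters if and only if every $\alpha$ with $K\subseteq\ker(\alpha)$ and $\alpha(-1)=-1$ is good; when this holds, the averaged form $\f=\sum_{\alpha}\f^{(\alpha)}$ furnishes the required nonzero element of $\JJ^{\top|K}_{1,m}$.

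It remains to prove the equivalence: for non-Fricke $K$, every odd $\alpha\supseteq K$ gives a genus-zero $\Gamma_0(m)+\ker(\alpha)$ if and only if $\Gamma_0(m)+K$ is itself genus zero. The forward implication is immediate, since each $\Gamma_0(m)+\ker(\alpha)$ contains $\Gamma_0(m)+K$, so its modular curve is a quotient of $X_{\Gamma_0(m)+K}$ and has genus zero when the latter does. The reverse implication is the main obstacle, because it involves the even ($\alpha(-1)=+1$) eigencharacters, which are invisible to $\JJ^{\top|K}_{1,m}$. Here I would write $g(K')=\dim S_2(\Gamma_0(m))^{K'}=\sum_{\chi\supseteq K'}\dim S_2(\Gamma_0(m))^{\chi}$ for the genus; the vanishing of all odd eigenspaces above $K$ gives $g(K)=g(K^+)$, where $K^+=\langle K,-1\rangle$ is the Fricke closure. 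The quotient map $X_{\Gamma_0(m)+K}\to X_{\Gamma_0(m)+K^+}$ is a degree-two cover realised by $W_m$, which fixes the CM point $\tau=i/\sqrt m$ and hence has a ramification point downstairs; Riemann--Hurwitz then reads $g(K)=2\,g(K^+)-1+r/2$ with $r\ge 2$ even, and combined with $g(K)=g(K^+)$ this forces $g(K^+)=g(K)=0$. Taking contrapositives settles the vanishing case and completes the dichotomy. The one point demanding care is the genus bookkeeping—tracking which eigenspaces feed into $g(K)$ versus $g(K^+)$ and checking that the Fricke fixed point survives the quotient by $K$—but once the cover and its ramification are in place, Riemann--Hurwitz closes the argument.
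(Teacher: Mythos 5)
Your proof is correct, but it diverges from the paper's at the one step that actually carries the weight of the argument. Both proofs agree on the skeleton: the reduction of $\Dp(\f)$ to the vector $(C_\f(1,a))_{a\in O_m}$, the bound $\dim\JJ^{\top|K}_{1,m}\le 1$ via Proposition \ref{prop_uniqueness1}, the vanishing in the Fricke case via $C_\f(D,-r)=-C_\f(D,r)$, the observation that a nonzero element forces every odd $\a$ with $K<\ker(\a)$ to satisfy $\dim\JJ^{\top,\a}_{1,m}=1$, and the construction of the generator by averaging the $\f^{(\a)}$ (the paper writes $\f=\tfrac12(\f_1+\f_2)$, which is your $\sum_\a\f^{(\a)}$ suitably normalised, since $\#\widehat{K}=2$ in all cases that occur). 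Where you part ways is in converting ``every odd $\a\supseteq K$ is good'' into ``$\Gamma_0(m)+K$ has genus zero'': the paper simply inspects the explicit lists (\ref{eqn:class:proof-gzgps-mum}) and (\ref{eqn:class:proof-gzgps-emum}) produced in the proof of Theorem \ref{thm:intro-firstmainthm}, finds that only $m\in\{6,10,12,18\}$ with $K=\{1\}$ and $m=30$ with $K=\{1,a(15)\}$ survive, and checks these five by hand, whereas you prove the equivalence in general by comparing $g(K)=\sum_{\chi\supseteq K}\dim S_2(\Gamma_0(m))^\chi$ with $g(K^+)$ and applying Riemann--Hurwitz to the degree-two cover $X_{\Gamma_0(m)+K}\to X_{\Gamma_0(m)+K^+}$, using the fixed point of $W_m$ at $i/\sqrt{m}$ to force $r\ge 2$ and hence $g(K)=g(K^+)\Rightarrow g(K)=0$. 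Your route is more conceptual and would survive any change to the classified list, at the cost of the extra covering-space bookkeeping (which you have done correctly: the degree is $[K^+:K]=2$ since $-I\in\Gamma_0(m)$, the ramification number is even, and the genus-zero eigenspace hypothesis kills exactly the odd summands of $g(K)-g(K^+)$); the paper's inspection is shorter given that the finite list is already in hand from Theorem \ref{thm:intro-firstmainthm}, and it yields the explicit enumeration (\ref{eqn:class:proof-gzgps-nonmax}) of the five extra lambencies as a by-product.
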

\begin{proof}
Let $K<O_m$ and set $\Gamma=\Gamma_0(m)+K$. Recall that $\Gamma$ is Fricke if and only if $-1\in K$. We have $C_\f(D,r)=-C_\f(D,-r)$ for every $\f\in \JJ^\wk_{1,m}$ according to (\ref{eqn:jac:ez-Foucoeffsym}), so if $-1\in K$ then $\JJ^{\top|K}_{1,m}=\{0\}$. If $-1\notin K$ and $K$ is a maximal subgroup of $O_m$ then $K=\ker(\a)$ for some $\a\in\widehat{O}_m$ with $\a(-1)=-1$. Then $\JJ^{\top|K}_{1,m}=\JJ^{\top,\a}_{1,m}$ by definition, and the claimed result follows from Theorem \ref{thm:intro-firstmainthm}. 

So assume that $K$ is a nonmaximal subgroup of $O_m$ that does not contain $-1$. Then the set
\begin{gather}
\widehat{K}:=\left\{\a\in \widehat{O}_m\mid \a(-1)=-1\text{ and }K<\ker(\a)\right\}
\end{gather} 
has cardinality at least $2$. If $\f$ is a nonzero element of $\JJ_{1,m}^{\top|K}$ then $P^\a\f$ 
(cf. (\ref{eqn:class:proof-Palphaphi}))
is a nonzero element of $\JJ_{1,m}^{\top,\a}$ for each $\a\in \widehat{K}$, so $\dim \JJ_{1,m}^{\top|K}\neq 0$ implies that $\Gamma_0(m)+\ker(\a)$ has genus zero for each $\a\in\widehat{K}$, according to Theorem \ref{thm:intro-firstmainthm}. Inspecting (\ref{eqn:class:proof-gzgps-mum}) and (\ref{eqn:class:proof-gzgps-emum}) we conclude that if $\dim \JJ_{1,m}^{\top|K}\neq 0$ then either $m\in \{6,10,12,18\}$ and $K=\{1\}$, or $m=30$ and $K=\{1,29\}$ (since $a(15)=29$ when $m=30$). In every case $\Gamma$ has genus zero, so it remains to show that $\dim \JJ_{1,m}^{\top|K}=1$ for such $K$.

So suppose that $K$ is as above. 
Then the cardinality of $\widehat{K}$ is exactly $2$. Say $\widehat{K}=\{\a_1,\a_2\}$. 
Invoking Theorem \ref{thm:intro-firstmainthm}, let 
$\f_i$ be the unique element of $\JJ_{1,m}^{\top,\a_i}$ such that 
$C_{\f_i}(1,1)=-2$, and set $\f=\frac{1}{2}\left(\f_1+\f_2\right)$. Then $C_{\f}(1,r)=\mp 2$ when $\pm r\in K$, and $C_{\f}(1,r)=0$ when $\pm r\notin K$. In particular, $\f$ is a non-zero element of $\JJ_{1,m}^{\top|K}$, so $\dim \JJ_{1,m}^{\top|K}\geq 1$. But any other element of $\JJ_{1,m}^{\top|K}$ is proportional to $\f$ by an application of Proposition \ref{prop_uniqueness1} (as in the beginning of the proof of Theorem \ref{thm:intro-firstmainthm}). We conclude that $\dim \JJ_{1,m}^{\top|K}=1$, and the proof of the corollary is complete.
\end{proof}

From the proof of Corollary \ref{cor:class:proof-genK} we see that, in addition to the 34 groups of (\ref{eqn:class:proof-gzgps-mum}) and (\ref{eqn:class:proof-gzgps-emum}), there are 5 further groups 
\begin{gather}\label{eqn:class:proof-gzgps-nonmax}
	6,\; 10,\; 12,\; 18,\; 30+15,
\end{gather}
such that $\JJ_{1,m}^{\top|K}$ is non-vanishing. 
Following \cite{UM,MUM} we refer to the 39 symbols of (\ref{eqn:class:proof-gzgps-mum}), (\ref{eqn:class:proof-gzgps-emum}) and (\ref{eqn:class:proof-gzgps-nonmax})---these are just the symbols that appear in Table \ref{tab:intro-pm}---as {\em lambencies}, and write $\gt{L}_1$ for the set they comprise. Corollary \ref{cor:class:proof-genK} ensures that, given a lambency $\ell=m+n,n',\dots$ in $\gt{L}_1$, we may write $\f^{(\ell)}$ for the unique element\footnote{Note that the symbol $\phi^{(\ell)}$ has a different meaning in \cite{MUM}, where it denotes a certain weak Jacobi form of weight $0$ and index $m-1$, and is only defined when $K=\{1\}$.} of $\JJ_{1,m}^{\top|K}$ such that $C_{\f^{(\ell)}}(1,1)=-2$, where $K=\{1,a(n),a(n'),\dots\}$. 

\subsection{Rationality} 
\label{sec:class:rat}

In this section we prove our second main result, and thereby establish a classification of the optimal mock Jacobi forms with rational coefficients. 

\begin{proof}[Proof of Theorem \ref{thm:intro-rat}]
Suppose that $\f\in\JJ^{\opt,\a}_{1,m}$ and $\f\neq 0$. 
After rescaling if necessary we may assume that $C_\f(1,1)=-2$ (cf. Lemma \ref{lem:class:proof-Cphi11nonzero}). Then the first statement is a corollary of our proof of Theorem \ref{thm:intro-firstmainthm}, for it tells us that if $\xi(\f)\in T^\sk_{2,m}$ then $\f$ is one of the mock Jacobi forms described explicitly in \S\ref{sec:desc:mjt}. By inspection of these descriptions, all the Fourier coefficients of $\f$ are integers. 

For the second statement we employ methods developed by Bruinier--Ono in \cite{MR2726107}. Particularly, Theorem 5.5 of loc. cit. implies that $C_\f(Dn^2,rn)$ is transcendental for some $n\in \ZZ$ if no multiple of $Z_{D,r}(\f)$ is the divisor of a rational function on $X_0(m)$. So assuming $\xi(\f)\notin T^\sk_{2,m}$ we require to find a negative fundamental discriminant $D$, admitting an $r\xmod 2m$ with $D=r^2\xmod 4m$, such that $Z_{D,r}(\f)$ does not vanish in 
$J_0(m)$.

So suppose that $\f\in\JJ^{\opt,\a}_{1,m}$ 
and 
$C_\f(1,1)=-2$ but $\xi(\f)\notin T^\sk_{2,m}$. 
Then there is a $\varphi\in P^\sk_{2,m}$ such that $\{\f,\varphi\}=\lab \xi(\f),\varphi\rab\neq 0$ (cf. Proposition \ref{prop:class:tht-orthog}), and we may assume that $\varphi\in P^{\sk,\a}_{2,m}$. 
As in the proof of Lemma \ref{vanishing_period_maxsym} we have $\{\f,\varphi\}=\#\Ex_m C_\f(1,1)\overline{C_\varphi(1,1)}$ (cf. (\ref{eqn:class-lamphioptcspredred})), so $C_\varphi(1,1)\neq 0$. On the strength of this we may assume that $\varphi=\varphi'|V_{m/m'}$ 
(cf. (\ref{eqn:jac:szlifts-Vell})) for some newform $\varphi'\in P^{\sk,\a'}_{2,m'}$, for some divisor $m'|m$, where $\a'\in\widehat{O}_{m'}$ coincides with $\a$ when lifted to $\widehat{O}_m$ in the natural way (cf. (\ref{eqn:class:proof-alphaprimelift})). This is because $P^\sk_{2,m}$ is spanned by the images of newforms under the Hecke-like operators $U_d\circ V_\ell$ (cf. \S\ref{sec:jac:szlifts}), the operators $U_d$ and $V_\ell$ commute, and $C_{\varphi'|U_d}(1,1)=0$ as soon as $d>1$ (cf. (\ref{eqn:jac:szlifts-Ud})). We have $C_{\varphi'}(1,1)=C_\varphi(1,1)$ by (\ref{eqn:jac:szlifts-Vell}), so if $f'$ is the newform corresponding to $\varphi'$ under the Skoruppa--Zagier map $\SZ$ (cf. \S\ref{sec:jac:szlifts}) then by applying Theorem \ref{thm:jac:szlifts-Lfns} with $D=1$ we obtain that $L(f',1)\neq 0$.

We now come to the choice of $D$ and $r$. Let $m_0$ be the product of the primes dividing $2m$. According to the proof of the main theorem in \cite{MR1074487} (see \S9 of loc. cit.) there exist infinitely many negative fundamental discriminants $D$ that are quadratic residues modulo $4m_0$, are coprime to $m_0$, and are such that $L(f'\otimes D,s)$ has a simple zero at $s=1$. So we may assume from now on that $D<-4$ is a fundamental discriminant coprime to $4m$, and $r$ is such that $D=r^2\xmod 4m$, and the derivative of $L(f'\otimes D,s)$ does not vanish at $s=1$. Then the Gross--Zagier formula (Theorem 6.3 in Ch. I of \cite{MR833192}) implies that $Z_{D,r}'(1,a')$ does not vanish in $J_0(m')$, for $a'\in O_{m'}$, where  
\begin{gather}\label{eqn:class:rpm-ZprimeDr1a}
	Z_{D,r}'(1,a'):=\sum_{Q\in \mathcal{Q}(m',D,ra')/\Gamma_0(m')}\frac{\chi'_D(Q)}{\#\Gamma_0(m')_Q}\a'_Q.
\end{gather}
Here $\a'_Q$ denotes the image in $X_0(m')$ of the unique root of $Q(x,1)$ in $\HH$, and $\chi'_D$ denotes the generalized genus character for quadratic forms of level $m'$ (cf. \S\ref{sec:notn}).

By our assumptions on $\f$ we have $Z_{D,r}(\f)=\sum_{a\in \ker(\a)}-4Z_{D,r}(1,a)$, where $Z_{D,r}(1,a)$ is as in (\ref{eqn:jac:szlifts-ZDrDprimerprime}). 
Define $Z'_{D,r}(\f):=\sum_{a\in \ker(\a)}-4Z_{D,r}'(1,a')$, where $a\mapsto a'$ denotes the natural map $O_m\to O_{m'}$. We claim that $Z'_{D,r}(\f)$ is the image of $Z_{D,r}(\f)$ under the natural map $J_0(m)\to J_0(m')$, and $Z'_{D,r}(\f)$ is not zero in $J_0(m')$. The latter claim holds because Theorem 7.7 of \cite{MR2726107} implies that $Z'_{D,r}(1,a')$ and $\a'(a')Z'_{D,r}(1,1)$ define the same point in the $f'$-isotypical component of $J_0(m')$. To verify the former claim we
apply the assumptions that $D$ is fundamental, odd and coprime to $4m$ to the proposition in \S I.1 of \cite{MR909238}. We obtain that the natural inclusion $\mathcal{Q}(m,D,ra)\to \mathcal{Q}(1,D,1)$ induces an isomorphism $\mathcal{Q}(m,D,ra)/\Gamma_0(m)\simeq \mathcal{Q}(1,D,1)/\Gamma_0(1)$, and similarly with $m'$ in place of $m$. So the summations in 
the definitions of $Z_{D,r}(1,a)$ and $Z_{D,r}'(1,a')$ can be taken over the same set of quadratic forms, and for each $Q$ in this set, $\a_Q'$ is the image of $\a_Q$ under $X_0(m)\to X_0(m')$, and $\chi'_D(Q)=\chi_D(Q)$. Also, $\#\Gamma_0(m)_Q=\#\Gamma_0(m')_Q=2$ since $D<-4$. 
So $Z'_{D,r}(1,a')$ is the image of $Z_{D,r}(1,a)$, and so $Z'_{D,r}(\f)$ is the image of $Z_{D,r}(\f)$. We have shown that $Z_{D,r}(\f)$ is not zero in $J_0(m)$, so the proof of the theorem is complete.
\end{proof}
We conclude this section by using Theorems \ref{thm:intro-firstmainthm} and \ref{thm:intro-rat} to show that an optimal mock Jacobi form of weight 1 with algebraic coefficients is a linear combination of the $\f^{(\ell)}$ for $\ell\in \gt{L}_1$.
To formulate the result precisely, set 
\begin{gather}
\JJ^\opt_{1,m}(R):=\left\{\f\in \JJ^\opt_{1,m}\mid C_\f(D,r)\in R \text{ for all } D,r\in\ZZ \right\}
\end{gather}
when $R$ is a subring of $\CC$, and define $\JJ^\top_{1,m}(R)$ similarly. According to \S\ref{sec:desc:mjt} we have $\f^{(\ell)}\in \JJ^\top_{1,m}(\ZZ)$ (for some $m$), for all $\ell\in \gt{L}_1$.
\begin{cor}\label{cor:class:rpm-theta}
If $F$ is 
an algebraic extension of $\QQ$ then 
$\JJ^\opt_{1,m}(F)=\JJ^\top_{1,m}(\ZZ)\otimes F$. In particular, any optimal mock Jacobi form of weight 1 with algebraic coefficients is a mock Jacobi theta function.
\end{cor}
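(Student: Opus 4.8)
The plan is to prove the two inclusions of $\JJ^\opt_{1,m}(F)=\JJ^\top_{1,m}(\ZZ)\otimes F$ separately, and then read off the concluding sentence by taking $F=\overline\QQ$. The inclusion $\supseteq$ is the routine direction: optimality is the linear constraint that $C_\f(D,r)=0$ for $D>1$, so it is stable under $F$-linear combinations, whence every element of $\JJ^\top_{1,m}(\ZZ)\otimes F$ is optimal and has Fourier coefficients (being $F$-combinations of integers) in $F$. I would dispose of this in a single sentence.

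The substance lies in the inclusion $\subseteq$, and the first step I would take is to reduce to a single $O_m$-eigenspace. Using the decomposition (\ref{eqn:class:defn-JJtopkmOmdec}) and the projectors $P^\a$ of (\ref{eqn:class:proof-Palphaphi}), I would note that every element of $O_m$ is an involution, so $O_m$ is an elementary abelian $2$-group and each character $\a\in\widehat O_m$ is $\{\pm1\}$-valued. Since $\f\mapsto\f\cdot a$ merely permutes the theta-coefficients of $\f$, the projector $P^\a$ is a $\QQ$-linear combination of such permutations; it therefore carries $\f\in\JJ^\opt_{1,m}(F)$ to an element $P^\a\f\in\JJ^{\opt,\a}_{1,m}$ whose coefficients again lie in $F$, and $\f=\sum_\a P^\a\f$. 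This reduces the problem to showing that each nonzero eigencomponent $\psi:=P^\a\f$ lies in $\JJ^\top_{1,m}(\ZZ)\otimes F$.

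For a fixed nonzero $\psi$, the crux is to apply the dichotomy of Theorem \ref{thm:intro-rat}. Because the coefficients of $\psi$ live in the algebraic extension $F$ of $\QQ$, none of them is transcendental, so the theorem forces $\xi(\psi)\in T^\sk_{2,m}$, that is, $\psi\in\JJ^{\top,\a}_{1,m}$. Theorem \ref{thm:intro-firstmainthm} then makes this eigenspace at most one-dimensional, spanned by the integral form $\f^{(\ell)}$ attached to the corresponding maximal lambency $\ell$. Writing $\psi=c\,\f^{(\ell)}$ and comparing the coefficients $C_\psi(1,1)$ and $C_{\f^{(\ell)}}(1,1)=-2$ identifies $c=-\tfrac12 C_\psi(1,1)\in F$, so $\psi\in\JJ^\top_{1,m}(\ZZ)\otimes F$. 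Summing over $\a$ gives $\f\in\JJ^\top_{1,m}(\ZZ)\otimes F$, and specializing $F=\overline\QQ$ yields the final assertion, since $\JJ^\top_{1,m}(\ZZ)\otimes\overline\QQ$ consists of mock Jacobi theta functions.

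I expect the only genuinely delicate point to be the eigenspace reduction, precisely the observation that $P^\a$ is defined over $\QQ$ (which rests on $O_m$ being an elementary abelian $2$-group, so that the characters $\a$ are $\pm1$-valued) and hence preserves $F$-rationality. This is what permits the application of Theorem \ref{thm:intro-rat}, which is formulated one eigenspace at a time; everything downstream is a direct appeal to the two main theorems.
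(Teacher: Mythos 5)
Your proposal is correct and follows essentially the same route as the paper's proof: project onto $O_m$-eigenspaces via $P^\a$, invoke Theorem \ref{thm:intro-rat} to rule out the transcendental alternative and land in $\JJ^{\top,\a}_{1,m}$, then use Theorem \ref{thm:intro-firstmainthm} to identify each component as an $F$-multiple of the integral form $\f^{(\ell)}$. Your explicit justification that $P^\a$ preserves $F$-rationality (via the $\pm1$-valuedness of the characters of $O_m$) and your comparison of $C_\psi(1,1)$ with $C_{\f^{(\ell)}}(1,1)=-2$ are small elaborations of steps the paper leaves implicit.
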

\begin{proof}
Let $\f \in \JJ^\opt_{1,m}(F)$. Then for $\a\in \widehat{O}_m$ the projected function $P^\a\f$ (cf. (\ref{eqn:class:proof-Palpha})) is optimal, belongs to $\JJ^{\wk,\a}_{1,m}$, and has coefficients in $F$. So $P^\a\f\in \JJ^{\top,\a}_{1,m}(F)$ according to Theorem \ref{thm:intro-rat}. Let $\ell$ be the lambency of $\Gamma_0(m)+\ker(\a)$. Then $P^\a\f$ is an $F$-multiple of $\f^{(\ell)}$ according to Theorem \ref{thm:intro-firstmainthm}. 
So $P^\a\f\in \JJ^\top_{1,m}(\ZZ)\otimes F$.
We have $\f=\sum_\a P^\a\f$ so the corollary is proved.
\end{proof}

\subsection{Principal Moduli}\label{sec:class:pm}

We now develop some consequences of the genus zero classification of optimal mock Jacobi theta functions that has been obtained in \S\ref{sec:class:proof}. The first of these is a concrete construction of the shadow 
of $\f^{(\ell)}$, for each of the $39$ lambencies $\ell\in \gt{L}_1$. We achieve this by expressing $\xi(\f^{(\ell)})$ (cf. (\ref{eqn:jac:mock-xi})) in terms of a specific principal modulus (a.k.a. Hauptmodul) $T^{(\ell)}$ for the corresponding genus zero group. The theorem in this section relates the principal modulus $T^{(\ell)}$ directly to the mock modular form $\f^{(\ell)}$ via the generalized Borcherds product construction of Bruinier--Ono \cite{MR2726107}. This gives us---in principle---a construction of $\f^{(\ell)}$ in terms of the apparently simpler object $T^{(\ell)}$.

To explain the definition of the $T^{(\ell)}$, suppose that $\Gamma<\SL_2(\RR)$ is commensurable with $\SL_2(\ZZ)$, and recall that a $\Gamma$-invariant holomorphic function $f$ on $\HH$ such that $f(\gamma\tau)=O(e^{C\Im(\tau)})$ as $\Im(\tau)\to \infty$ for some $C>0$, for every $\gamma\in \SL_2(\ZZ)$, is called a {\em principal modulus} for $\Gamma$ if the induced function $X_\Gamma\to \CC\cup\{\infty\}$ is an isomorphism of Riemann surfaces.
Evidently, a group $\Gamma$ admits principal moduli if and only if it has genus zero.

The following lemma characterizes the functions $T^{(\ell)}$ abstractly.
They are specified explicitly in Table \ref{tab:intro-pm}, where a symbol of the form $n_1^{d_1}\cdots n_l^{d_l}$ is used as a shorthand for the eta product $\eta(n_1\tau)^{d_1}\cdots \eta(n_l\tau)^{d_l}$. 
\begin{lem}\label{lem:class:pm-chrTell}
Let $\ell\in \gt{L}_1$ and let $\Gamma=\Gamma_0(m)+K$ be the corresponding genus zero group. Then $T^{(\ell)}$ is the unique principal modulus for $\Gamma$ such that $T^{(\ell)}(\tau)=q^{-1}+O(1)$ as $\Im(\tau)\to \infty$, and such that the product $T^{(\ell)}(T^{(\ell)}|W_m)$ is constant. 
\end{lem}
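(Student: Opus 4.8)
The plan is to verify directly that the eta quotient $T^{(\ell)}=\prod_{d\mid m}\eta(d\tau)^{r_d}$ recorded in Table \ref{tab:intro-pm} is a principal modulus for $\Gamma=\Gamma_0(m)+K$ satisfying the two stated normalizations, and then to deduce uniqueness. First I would note that in every row $\sum_{d\mid m}r_d=0$, so $T^{(\ell)}$ has weight zero, and that the exponents meet the Ligozat-type congruences $\sum_d d\,r_d\equiv 0$ and $\sum_d (m/d)r_d\equiv 0 \pmod{24}$, together with $\prod_d d^{r_d}$ being a rational square; these ensure that $T^{(\ell)}$ is a genuine character-free modular function for $\Gamma_0(m)$. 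Since $\eta$ is non-vanishing on $\HH$, the divisor of $T^{(\ell)}$ is supported on cusps, and I would compute the order at each cusp $a/c$ from the standard eta-quotient order formula. The conclusion to extract is that $T^{(\ell)}$ is holomorphic (and non-vanishing) at every cusp except $\infty$, where $\tfrac1{24}\sum_d d\,r_d=-1$ gives $T^{(\ell)}=q^{-1}+O(1)$; the leading coefficient is automatically $1$ because each factor begins $\eta(d\tau)^{r_d}=q^{d r_d/24}(1+\cdots)$. As $\infty$ has width $1$ for $\Gamma_0(m)$, this is a simple pole.

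Next I would promote invariance from $\Gamma_0(m)$ to $\Gamma$ and settle both normalizations using the symmetry of the exponents. For each exact divisor $n$ with $a(n)\in K$ the involution $W_n$ permutes the factors by the $n$-swap on divisors (inverting the $n$-part), and in every row the exponents are invariant under that swap; together with the weight-zero normalization this yields $T^{(\ell)}|W_n=T^{(\ell)}$, once one checks that the residual automorphy constant is $1$. Hence $T^{(\ell)}$ descends to $X_\Gamma$, and since $X_\Gamma$ is genus zero and $T^{(\ell)}$ has a single simple pole, the induced map $X_\Gamma\to\PP^1$ has degree one, i.e. $T^{(\ell)}$ is a principal modulus. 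For the Fricke product I would instead use that the exponents are \emph{anti}-symmetric under the full $m$-swap, $r_{m/d}=-r_d$ (read off from the table). Applying $\eta(-1/w)=\sqrt{-iw}\,\eta(w)$ factor by factor, and cancelling the $\sqrt{-i\tau}$ terms because $\sum_d r_d=0$, gives $T^{(\ell)}|W_m=c/T^{(\ell)}$ for the nonzero constant $c=\prod_d (m/d)^{r_d/2}$; thus $T^{(\ell)}\,(T^{(\ell)}|W_m)=c$ is constant.

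For uniqueness, suppose $S$ is any principal modulus for $\Gamma$ with $S=q^{-1}+O(1)$ and $S\,(S|W_m)$ constant. Then $S$ is a Möbius function of $T:=T^{(\ell)}$, say $S=(aT+b)/(cT+d)$. Since both $S$ and $T$ have a pole at $\infty$ we must have $c=0$, and matching the coefficient of $q^{-1}$ forces $S=T+\beta$ for a constant $\beta$. Imposing that $S\,(S|W_m)=(T+\beta)(T|W_m+\beta)$ be constant, and using $T\,(T|W_m)=c$, reduces to requiring that $\beta\,(T+T|W_m)$ be constant. But $T|W_m=c/T\to 0$ as $\Im(\tau)\to\infty$ while $T$ has a pole there, so $T+T|W_m$ is non-constant; hence $\beta=0$ and $S=T^{(\ell)}$.

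The step I expect to be the main obstacle is the first, case-by-case verification: checking for all $39$ lambencies simultaneously that the tabulated eta quotient has exactly the required divisor (a single simple pole at $\infty$, holomorphic and non-vanishing at all remaining cusps) \emph{and} is honestly invariant—with trivial character and automorphy constant equal to $1$—under each Atkin--Lehner involution indexed by $K$. This is a finite but bookkeeping-heavy computation with the eta-quotient order formula and the Atkin--Lehner transformation laws; the exponent symmetry under $K$ and anti-symmetry under the $m$-swap are what make the outcome uniform, but confirming non-negativity of the orders at the other cusps and the vanishing of the spurious root-of-unity constants is where the work lies.
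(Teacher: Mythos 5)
Your proof is correct and follows essentially the same route as the paper, whose entire argument is that the table entries ``can be verified directly'' and that uniqueness holds because the pole condition determines $T^{(\ell)}$ up to an additive constant which the Fricke product condition then fixes; your write-up simply supplies the eta-quotient bookkeeping and spells out the M\"obius-transformation uniqueness step. One small slip: the parenthetical claim that $T^{(\ell)}$ is non-vanishing at the cusps other than $\infty$ is false (e.g.\ $T^{(2)}=\eta(\tau)^{24}\eta(2\tau)^{-24}$ has a simple zero at the cusp $0$), but this is harmless since the degree-one conclusion already follows from the single simple pole at $\infty$.
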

\begin{proof}
It can be verified directly using the expressions in Table \ref{tab:intro-pm} that the $T^{(\ell)}$ satisfy the stated conditions. Uniqueness holds because the first condition determines $T^{(\ell)}$ up to an additive constant, and the second condition determines that constant.
\end{proof}

Let $\ell\in \gt{L}_1$ and let $m$ be the level $\ell$ (cf. (\ref{eqn:class:proof-JJ1mtopK})). 
Define a skew-holomorphic theta type Jacobi form $\s^{(\ell)}\in T^\sk_{2,m}$ by setting 
\begin{gather}\label{eqn:class:pm-sigmaell}
	\s^{(\ell)}(\tau,z):=
	-\frac1{\sqrt{2}}
	\sum_{r,r'\xmod 2m} \overline{\th^1_{m,r}(\tau)}\Omega^{(\ell)}_{r,r'}\th_{m,r'}(\tau,z)
\end{gather}
where $\Omega^{(\ell)}:=\sum_i d_i\Omega_m(n_i)$ in case $T^{(\ell)}=\prod_i \eta(n_i\tau)^{d_i}$.
We will see momentarily that $\s^{(\ell)}$ is the shadow of $\f^{(\ell)}$.
Thus the shadow of $\f^{(\ell)}$ is constructed explicitly in terms of the principal modulus $T^{(\ell)}$.

It can be checked that $\s^{(\ell)}$ corresponds to the logarithmic derivative of $T^{(\ell)}$ under the Skoruppa--Zagier map $\SZ$ (cf. \S\ref{sec:jac:szlifts}). More specifically, we have the following result, which may be checked directly. 
\begin{lem}\label{lem:class:pm-sigmaellprops}
Let $\ell\in \gt{L}_1$. Then
$\mathcal{S}_{1,1}(\s^{(\ell)})=\frac{1}{2\pi i}\frac{\rm d}{{\rm d}\tau}\log T^{(\ell)}(\tau)$. Further, if $\ell=m+n,n',\dots$ then $\s^{(\ell)}$ is invariant under the Eichler--Zagier operators $\W_m(n), \W_m(n'),\dots$. 
\end{lem}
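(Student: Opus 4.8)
My plan is to reduce the first identity to a computation on the single building block $t_{2,m}$ and its Eichler--Zagier translates. Writing $T^{(\ell)}=\prod_i\eta(n_i\tau)^{d_i}$ and using $t_{2,m}=\sum_r\overline{\th^1_{m,r}}\th_{m,r}$ together with the rule $\f|\W_m(n)=h^t\Omega_m(n)\th_m$ for $\f=h^t\th_m$, the definition (\ref{eqn:class:pm-sigmaell}) becomes
\begin{gather*}
\s^{(\ell)}=-\tfrac{1}{\sqrt2}\sum_i d_i\,t_{2,m}\big|\W_m(n_i).
\end{gather*}
By linearity of $\mathcal{S}_{1,1}$ it then suffices to evaluate $\mathcal{S}_{1,1}(t_{2,m}|\W_m(n))$ for each $n\mid m$ and reassemble.

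The building block I would compute directly from Fourier coefficients. Since the $q^N$-coefficient of $\mathcal{S}_{1,1}\f$ is $\sum_{e\mid N}C_\f(e^2,e)$ (cf. (\ref{eqn:jac:szlifts-SDr}), (\ref{eqn:jac:szlifts-SDrfund}) with $D=1$), and the thetanullwerte give $C_{t_{2,m}}(e^2,r)=e\big([r\equiv e]-[r\equiv -e]\big)$ (congruences modulo $2m$), a short calculation with the Omega matrix (\ref{def:OmegaMatrices}) produces $C_{t_{2,m}|\W_m(n)}(e^2,e)=e\big([n\mid e]-[(m/n)\mid e]\big)$. Summing over $e\mid N$ and matching constant terms by modularity for $\Gamma_0(m)$ identifies
\begin{gather*}
\mathcal{S}_{1,1}\big(t_{2,m}|\W_m(n)\big)=\tfrac{1}{24}\big((m/n)E_2((m/n)\tau)-nE_2(n\tau)\big)=\tfrac{1}{2\pi i}\tfrac{\rm d}{{\rm d}\tau}\log\frac{\eta((m/n)\tau)}{\eta(n\tau)}.
\end{gather*}
This Fourier bookkeeping, together with tracking that the prefactor $-1/\sqrt2$ is exactly the constant needed for the stated normalization, is the main obstacle.

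It remains to telescope. Substituting the building block gives $\mathcal{S}_{1,1}(\s^{(\ell)})$ as a constant multiple of $\frac{1}{2\pi i}\frac{\rm d}{{\rm d}\tau}\log P$ with $P=\tilde T/T^{(\ell)}$, where $\tilde T:=\prod_i\eta((m/n_i)\tau)^{d_i}$. Here I would invoke Lemma \ref{lem:class:pm-chrTell}: since $T^{(\ell)}$ has weight $0$ (so $\sum_i d_i=0$) the Fricke involution carries $T^{(\ell)}$ to a constant multiple of $\tilde T$, and the condition that $T^{(\ell)}(T^{(\ell)}|W_m)$ be constant then forces $\tilde T$ to be a constant multiple of $(T^{(\ell)})^{-1}$. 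Hence $P$ is a constant multiple of $(T^{(\ell)})^{-2}$, so $\frac{1}{2\pi i}\frac{\rm d}{{\rm d}\tau}\log P=-2\,\frac{1}{2\pi i}\frac{\rm d}{{\rm d}\tau}\log T^{(\ell)}$, which yields the first claim. (This cancellation works precisely because the Fricke balance forces $d_{m/n}=-d_n$, so no self-dual eta factor obstructs it.)

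For the invariance statement, fix $n\in\Ex_m$ with $a(n)\in K$. Both $\s^{(\ell)}$ and $\s^{(\ell)}|\W_m(n)$ lie in $T^\sk_{2,m}\subset S^\sk_{2,m}$, on which $\SZ$ is injective (Theorem \ref{thm:jac:szlifts-sz}), so by the equivariance $\SZ(\s^{(\ell)}|\W_m(n))=\SZ(\s^{(\ell)})|W_n$ of Lemma \ref{lem:class:proof-ALinv} it is enough to check that $\SZ(\s^{(\ell)})$ is $W_n$-invariant. Applying the first part to $\s^{(\ell)}\cdot a=\s^{(\ell)}|\W_m(n_a)$ identifies each component $\mathcal{S}_{1,a}(\s^{(\ell)})=\mathcal{S}_{1,1}(\s^{(\ell)}\cdot a)$ with the logarithmic derivative of $T^{(\ell)}|W_{n_a}$, where $a=a(n_a)$. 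Because the Atkin--Lehner involutions form an abelian group normalizing $\Gamma=\Gamma_0(m)+K$, each $T^{(\ell)}|W_{n_a}$ is again $\Gamma$-invariant and hence fixed by $W_n$; thus every $\mathcal{S}_{1,a}(\s^{(\ell)})$, and so $\SZ(\s^{(\ell)})$, is $W_n$-invariant. Injectivity of $\SZ$ then gives $\s^{(\ell)}|\W_m(n)=\s^{(\ell)}$, as required.
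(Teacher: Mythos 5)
The paper offers no argument for this lemma beyond the remark that it ``may be checked directly,'' so your reduction is the substance of the proof, and its architecture is right: the identity $\s^{(\ell)}=-\tfrac{1}{\sqrt2}\sum_i d_i\, t_{2,m}|\W_m(n_i)$ follows from (\ref{eqn:class:pm-sigmaell}) and the rule $\f|\W_m(n)=h^t\Omega_m(n)\th_m$; your coefficient formula $C_{t_{2,m}|\W_m(n)}(e^2,e)=e\bigl([n\mid e]-[(m/n)\mid e]\bigr)$ is correct; and the Fricke argument via Lemma \ref{lem:class:pm-chrTell} correctly gives $\tilde T=c\,(T^{(\ell)})^{-1}$ (the $\sum_i d_i=0$ observation and the absence of a self-paired factor $n_i=m/n_i$ are exactly the points to check). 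The problem is the final normalization, which you assert rather than verify, and which does not close up as written: your own telescoping gives $\mathcal{S}_{1,1}(\s^{(\ell)})=-\tfrac{1}{\sqrt2}\cdot(-2)\cdot\tfrac{1}{2\pi i}\tfrac{\rm d}{{\rm d}\tau}\log T^{(\ell)}=\sqrt2\cdot\tfrac{1}{2\pi i}\tfrac{\rm d}{{\rm d}\tau}\log T^{(\ell)}$, which is $\sqrt 2$ times the stated right-hand side. This is not an artifact of the telescoping step; it is visible already in the first Fourier coefficient. Take $\ell=2$, so $m=2$, $T^{(2)}=\eta(\tau)^{24}\eta(2\tau)^{-24}$ and $\Omega^{(2)}=24(\Id-\Omega_2(2))$. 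Since $\Omega_2(2)$ swaps the residues $1$ and $3$ mod $4$, the definition (\ref{eqn:class:pm-sigmaell}) gives $C_{\s^{(2)}}(1,1)=-\tfrac{1}{\sqrt2}\bigl(\Omega^{(2)}_{1,1}-\Omega^{(2)}_{-1,1}\bigr)=-\tfrac{1}{\sqrt2}(24+24)=-24\sqrt2$, and this is the $q^1$-coefficient of $\mathcal{S}_{1,1}(\s^{(2)})$ by (\ref{eqn:jac:szlifts-SDr}) and (\ref{eqn:jac:szlifts-SDrfund}); but the $q^1$-coefficient of $\tfrac{1}{2\pi i}\tfrac{\rm d}{{\rm d}\tau}\log T^{(2)}=E_2(\tau)-2E_2(2\tau)$ is $-24$. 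So the parenthetical claim that ``the prefactor $-1/\sqrt2$ is exactly the constant needed'' is precisely the step that fails. The discrepancy may well originate in the paper's normalization of (\ref{eqn:class:pm-sigmaell}) rather than in your bookkeeping (a prefactor of $-\tfrac12$ would make everything consistent), but a proof of the lemma as stated must confront and resolve this factor of $\sqrt2$ explicitly rather than assert it away.

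Two smaller points. First, your determination of the constant term ``by modularity'' is the right move given the normalization ambiguities surrounding (\ref{eqn:jac:szlifts-SDrcnst}), but you should say explicitly that you are using the fact that two elements of $M_2(\Gamma_0(m))$ with the same positive Fourier coefficients coincide (a nonzero constant is not a weight-two form). Second, for the invariance statement your route through injectivity of $\SZ$ on $S^\sk_{2,m}$ is legitimate, but it silently uses two further facts: that $\SZ$ restricted to theta type forms is a combination of the $\mathcal{S}_{1,a}$ alone (true, since $\mathcal{S}_{D,r}$ kills $T^\sk_{2,m}$ for every fundamental $D\neq 1$, no fundamental discriminant other than $1$ being a perfect square), and that $\prod_i\eta(n_i w_{n_a}\tau)^{d_i}$ is a constant multiple of $\prod_i\eta((n_i\ast n_a)\tau)^{d_i}$, which is what identifies $\mathcal{S}_{1,1}(\s^{(\ell)}\cdot a)$ with the logarithmic derivative of $T^{(\ell)}|W_{n_a}$. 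A more elementary alternative, closer to the spirit of ``checked directly,'' is to verify the matrix identity $\Omega^{(\ell)}\Omega_m(n)=\Omega^{(\ell)}$ for $a(n)\in K$, which by $\Omega_m(n_i)\Omega_m(n)=\Omega_m(n_i\ast n)$ amounts to the finite check, from Table \ref{tab:intro-pm}, that $n_i\mapsto n_i\ast n$ permutes the exponents $d_i$ of the eta product $T^{(\ell)}$.
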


We now compute the shadow of $\f^{(\ell)}$.
\begin{prop}\label{prop:class:pm-xiphisig}
Let $\ell\in \gt{L}_1$. Then $\xi(\f^{(\ell)})=\s^{(\ell)}$.
\end{prop}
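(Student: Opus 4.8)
We must show that the shadow of $\f^{(\ell)}$ equals $\s^{(\ell)}$, where $\f^{(\ell)}$ is the unique (up to the normalization $C_{\f^{(\ell)}}(1,1)=-2$) optimal mock Jacobi theta function attached to $\ell$, and $\s^{(\ell)}\in T^\sk_{2,m}$ is the explicit theta type form built from $T^{(\ell)}$ in (\ref{eqn:class:pm-sigmaell}).

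The plan is to reduce the claimed identity of skew-holomorphic Jacobi forms to an identity of weight-two modular forms via the theta lift $\mathcal{S}_{1,1}$, and then to exploit the genus zero geometry of $\Gamma_0(m)+K$. First I would record that both sides lie in the same space. By definition of mock Jacobi theta function we have $\xi(\f^{(\ell)})\in T^\sk_{2,m}$, and since $\f^{(\ell)}\in\JJ^{\top|K}_{1,m}$ is invariant under $\W_m(n)$ for $a(n)\in K$, the equivariance $\xi(\f|\W_m(n))=\xi(\f)|\W_m(n)$ shows $\xi(\f^{(\ell)})$ is $\W_m(n)$-invariant for such $n$; the same invariance holds for $\s^{(\ell)}$ by Lemma \ref{lem:class:pm-sigmaellprops}, and $\s^{(\ell)}\in T^\sk_{2,m}$ by construction. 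Thus $\psi:=\xi(\f^{(\ell)})-\s^{(\ell)}$ lies in the subspace of $T^\sk_{2,m}$ fixed by the $\W_m(n)$ with $a(n)\in K$, and it suffices to prove $\psi=0$.

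Second, I would reduce to matching theta lifts. Since theta type forms have Fourier support on square discriminants, among the maps $\mathcal{S}_{D,r}$ (for $D$ fundamental) comprising $\SZ$ only those with $D=1$ act nontrivially on $T^\sk_{2,m}$, and on a fixed $O_m$-eigenspace these are all proportional to $\mathcal{S}_{1,1}$ by Lemma \ref{lem:class:proof-ALinv}. As $\SZ$ is injective (Theorem \ref{thm:jac:szlifts-sz}), $\mathcal{S}_{1,1}$ is injective on each eigenspace of $T^\sk_{2,m}$, so it is enough to show $\mathcal{S}_{1,1}(\xi(\f^{(\ell)}))=\mathcal{S}_{1,1}(\s^{(\ell)})$ in $M_2(\Gamma_0(m))$. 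By Lemma \ref{lem:class:pm-sigmaellprops} the right-hand side equals $\frac{1}{2\pi i}\frac{d}{d\tau}\log T^{(\ell)}$.

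Third --- the crux --- I would compute the left-hand side. Because $\f^{(\ell)}$, and hence $\xi(\f^{(\ell)})$, are invariant under the $\W_m(n)$ with $a(n)\in K$, both $\mathcal{S}_{1,1}(\xi(\f^{(\ell)}))$ and $\frac{1}{2\pi i}(\log T^{(\ell)})'$ are weight-two forms for $\Gamma_0(m)+K$, holomorphic on $\HH$ with at worst simple poles at the cusps; equivalently they are meromorphic differentials on the genus zero curve $X_{\Gamma_0(m)+K}\cong\PP^1$. Such a differential is determined by its residues at its poles (which sum to zero), so equality follows once the constant terms at every cusp agree. The constant term of $\mathcal{S}_{1,1}(\xi(\f^{(\ell)}))$ at $\infty$ is the Petersson pairing $\langle\xi(\f^{(\ell)}),t_{2,m}\rangle=\{\f^{(\ell)},t_{2,m}\}$ (cf.\ (\ref{eqn:jac:szlifts-SDrcnst}) and the Bruinier--Funke pairing), and by Proposition \ref{prop:jac:hol-lampol} this is an explicit expression in the polar coefficients $C_{\f^{(\ell)}}(1,r)$, all of which equal $\mp 2$ or $0$ by the normalization $C_{\f^{(\ell)}}(1,1)=-2$ and (\ref{eqn:jac:ez-Foucoeffsym}). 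Applying Atkin--Lehner involutions (Lemma \ref{lem:class:proof-ALinv}) gives the constant terms at the remaining cusps as $\{\f^{(\ell)},t_{2,m}\cdot a(n)\}$, while the residues of $\frac{1}{2\pi i}(\log T^{(\ell)})'$ at the cusps are exactly the orders of vanishing of the eta product $T^{(\ell)}$ there, read off from Table \ref{tab:intro-pm}.

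Finally, the proof concludes by checking that these two sets of cusp data coincide. The main obstacle is precisely this matching: tracking the normalization constants (the factor $-\tfrac1{\sqrt2}$ and the entries of $\Omega^{(\ell)}$ in (\ref{eqn:class:pm-sigmaell}), together with the $\tfrac1{\sqrt{2m}}$ in the Petersson product) and verifying uniformly, across all $39$ lambencies in $\gt{L}_1$, that the polar part of $\f^{(\ell)}$ encodes the same divisor data as $T^{(\ell)}$. For the five non-maximal lambencies $\f^{(\ell)}$ is an average of maximal ones (cf.\ Corollary \ref{cor:class:proof-genK}), so that case follows by linearity from the maximal one; and the orthogonality $S^\sk_{2,m}=T^\sk_{2,m}\oplus P^\sk_{2,m}$ of Proposition \ref{prop:class:tht-orthog} guarantees that no cuspidal contamination can spoil the identification.
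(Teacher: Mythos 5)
Your reduction to an identity of weight-two forms via $\mathcal{S}_{1,1}$ is a genuinely different route from the paper's, and its first half is sound: $\mathcal{S}_{D,r}$ with $D\neq 1$ fundamental kills theta type forms, so $\SZ$ restricted to $T^{\sk,\a}_{2,m}$ is a (necessarily non-zero) multiple of $\mathcal{S}_{1,1}$, whence $\mathcal{S}_{1,1}$ is injective there. The gap is in your third step. The Atkin--Lehner involutions $W_n$, $n\in\Ex_m$, do \emph{not} act transitively on the cusps of $X_0(m)$ unless $m$ is squarefree; for $m=8,9,12,16,18,25,36,\dots$ (all of which occur in $\gt{L}_1$) the orbit of $\infty$ under $\{W_n\}$ is a proper subset of the cusps, so your method produces the constant term of $\mathcal{S}_{1,1}(\xi(\f^{(\ell)}))$ only at those cusps and cannot pin down the Eisenstein series. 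Equivalently, on the eigenspace $T^{\sk,\a}_{2,m}$ the functionals $\varphi\mapsto\lab\varphi,t_{2,m}\cdot a\rab$ for $a\in O_m$ are all proportional, so you are extracting a single linear condition on $\xi(\f^{(\ell)})$, which is insufficient precisely when $\dim T^{\sk,\a}_{2,m}>1$, i.e.\ when $m$ has a square divisor $d^2>1$ contributing a component $\left.\Th^{\new,\a'}_{m/d^2}\right|U_d$.

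The paper closes exactly this hole by working with the Bruinier--Funke pairing against the \emph{full} orthogonal decomposition of $T^{\sk,\a}_{2,m}$ coming from Proposition \ref{prop:class:tht-tht}: for $\varphi$ in an old component (the image of $U_d$ with $d>1$) one has $C_\varphi(1,r)=0$ for all $r$ (since $(r,2m)=1$ forces $d\nmid r$), so $\lab\xi(\f^{(\ell)}),\varphi\rab=\{\f^{(\ell)},\varphi\}=0$ by Proposition \ref{prop:jac:hol-lampol}; this forces $\xi(\f^{(\ell)})$ into the one-dimensional new component and hence to be proportional to $\s^{(\ell)}$. The scalar is then fixed not by cusp data but by a Rankin--Selberg computation of $\lab\s^{(\ell)},\s^{(\ell)}\rab=\tfrac{\sqrt2}{24}\|\Omega^{(\ell)}\|^2$ together with the case-by-case identity $\|\Omega^{(\ell)}\|^2=96\sum_{r\in K}\Omega^{(\ell)}_{1,r}$; the non-maximal lambencies follow by the same averaging you invoke. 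If you want to keep your cusp-theoretic framing, you must supplement the pairings $\{\f^{(\ell)},t_{2,m}\cdot a\}$ with the pairings against $(t_{2,m'}\cdot a')|U_d$ for $d>1$ (which is where the extra cusps hide), at which point your argument collapses into the paper's.
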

\begin{proof}
We first consider the case that $\ell$ corresponds to $\Gamma=\Gamma_0(m)+K$ where $K=\ker(\a)$ for some $\a\in \widehat{O}_m$. 
Then 
$\xi(\f^{(\ell)})\in T^{\sk,\a}_{2,m}$ by (\ref{eqn:class:defn-xiJJtopa}), and $\s^{(\ell)}\in T^{\sk,\a}_{2,m}$ by Lemma \ref{lem:class:pm-sigmaellprops}.
From the proof of Proposition \ref{prop:class:tht-tht} 
we have an isomorphism
\begin{gather}\label{eqn:class:pm-Tskalpha2mId}
	T^{\sk,\a}_{2,m}\simeq 
		\bigoplus_{d^2|m}
		\bigoplus_{\substack{\a'\in \widehat{O}_{m/d^2}\\\a'=\a}}
		\CC \Id_{m^2/d}^{\new,\a'}		
\end{gather} 
where $\Id^{\new,\a}_m$ is the identity 
map $\Th_m^{\new,\a}\to \Th_m^{\new,\a}$, and the condition $\a'=\a$ in the second direct sum means equality as elements of $\widehat{O}_m$ (cf. (\ref{eqn:class:proof-alphaprimelift})). Observe that if $\varphi\in T^{\sk,\a}_{2,m}$ corresponds under (\ref{eqn:class:pm-Tskalpha2mId}) to an element of $\CC\Id^{\new,\a'}_{m/d^2}$ then $\varphi$ is in the image of $U_d$ (cf. (\ref{eqn:jac:szlifts-Ud})). If $d>1$ then we have $C_{\varphi}(1,r)=0$ for all $r$, so $\{\phi^{(\ell)},\varphi\}=0$ according to Proposition \ref{prop:jac:hol-lampol}. But $\{\phi^{(\ell)},\varphi\}=\lab \xi(\f^{(\ell)}),\varphi\rab$ by definition (cf. (\ref{eqn:jac:mock-BFpairing})), so it must be that $\xi(\f^{(\ell)})$ corresponds to an element of $\CC\Id^{\new,\a}_m$ under (\ref{eqn:class:pm-Tskalpha2mId}). The same is true of $\s^{(\ell)}$ by inspection, so $\xi(\f^{(\ell)})$ is proportional to $\s^{(\ell)}$. 

Suppose $\xi(\f^{(\ell)})=c\s^{(\ell)}$. Then 
we have
$\{\f^{(\ell)},\s^{(\ell)}\}=c\lab \s^{(\ell)},\s^{(\ell)}\rab$ by definition. Using the Rankin--Selberg formula (cf. \S3 of \cite{Dabholkar:2012nd}), we compute
\begin{gather}
	\int_{\mathcal{F}}\th^1_{m,r}(\t)\overline{\th^1_{m,r'}(\t)}v^{-\frac12}{\rm d}u{\rm d}v
	=\frac{\sqrt{m}}{12}(\delta_{r,r'\xmod 2m}-\delta_{r,-r'\xmod 2m}).
\end{gather}
Applying this to (\ref{eqn:jac:hol-PetIP}) 
we obtain $\lab \s^{(\ell)},\s^{(\ell)}\rab=\frac{\sqrt{2}}{24}\|\Omega^{(\ell)}\|^2$, where $\|\Omega^{(\ell)}\|^2=\sum_{r,r'}|\Omega^{(\ell)}_{r,r'}|^2$. So $\{\f^{(\ell)},\s^{(\ell)}\}={c}\frac{\sqrt{2}}{24}\|\O^{(\ell)}\|^2$. On the other hand, we find 
\begin{gather}
	\{\f^{(\ell)},\s^{(\ell)}\}=
	-4\sum_{r\in K}\overline{C_{\s^{(\ell)}}(1,r)}=4\sqrt{2}\sum_{r\in K}\O^{(\ell)}_{1,r},
\end{gather}
using Proposition \ref{prop:jac:hol-lampol}. Inspection reveals that $\|\Omega^{(\ell)}\|^2=96\sum_{r\in K}\O^{(\ell)}_{1,r}$, for all $\ell\in\gt{L}_1$, so $c=1$, as was claimed.

In the remaining cases we have $\f^{(\ell)}=\frac12(\f^{(\ell_1)}+\f^{(\ell_2)})$ for $\ell_i\in \gt{L}_1$ corresponding to $\Gamma_0(m)+\ker(\a_i)$ for some $\a_i\in \widehat{O}_m$. Inspecting Table \ref{tab:intro-pm} again we find $(T^{(\ell)})^2=T^{(\ell_1)}T^{(\ell_2)}$, so $\s^{(\ell)}=\frac12(\s^{(\ell_1)}+\s^{(\ell_2)})$. So $\xi(\f^{(\ell)})=\s^{(\ell)}$ since we have already checked that $\xi(\f^{(\ell_i)})=\s^{(\ell_i)}$. This completes the proof.
\end{proof}

At this point we note a beautiful counterpoint between the mock Jacobi theta functions $\f^{(\ell)}$ and the dual defining modular properties of the corresponding genus zero groups $\Gamma$. Namely, if $\ell\in \gt{L}_1$ and $\Gamma=\Gamma_0(m)+K$ is the corresponding group, then the existence of $\f^{(\ell)}$ implies the vanishing of $S_2(\Gamma)$, according to Corollary \ref{cor:class:proof-genK}. Conversely, the vanishing of $S_2(\Gamma)$ implies the existence of $T^{(\ell)}$, which in turn determines the shadow of $\f^{(\ell)}$, according to Proposition \ref{prop:class:pm-xiphisig} and the definition (\ref{eqn:class:pm-sigmaell}).

Our final goal in this section is to demonstrate a constructive relationship between the principal moduli $T^{(\ell)}$ and their corresponding mock Jacobi forms $\f^{(\ell)}$. More specifically, we will show that if $\ell\in \gt{L}_1$ then certain generalized Borcherds products, obtained from $\f^{(\ell)}$ by applying results \cite{MR2726107} of Bruinier--Ono, are explicitly computable rational functions in the principal modulus $T^{(\ell)}$. This result extends Theorem 1.1 of \cite{MR3357517}, and demonstrates that the Fourier coefficients of $\f^{(\ell)}$ may be expressed in terms of the singular moduli of $T^{(\ell)}$. Thus, in principle, the mock Jacobi form $\f^{(\ell)}$ may be constructed directly from its principal modulus $T^{(\ell)}$.

To formulate this precisely, let $\ell\in \gt{L}_1$ and let $\Gamma=\Gamma_0(m)+K$ be the corresponding genus zero group.
For $D$ a negative fundamental discriminant and $r\xmod 2m$ such that $D=r^2\xmod 4m$ define 
a formal product $\Psi^{(\ell)}_{D,r}$ 
by setting
\begin{gather}\label{eqn:class:pm-Psiell}
	\Psi^{(\ell)}_{D,r}(\tau):=\prod_{n>0}\prod_{b\xmod D}\left(1-\ex\left(\tfrac b D\right)q^n\right)^{\left(\frac D b\right)C^{(\ell)}(Dn^2,rn)},
\end{gather}
where $C^{(\ell)}(D,r):=C_{\phi^{(\ell)}}(D,r)$ and $\left(\frac D b\right)$ is the Kronecker symbol (cf. \S\ref{sec:notn}). 
Set $Z^{(\ell)}_{D,r}:=Z_{D,r}(\phi^{(\ell)})$ (cf. (\ref{eqn:jac:szlifts-ZDrDprimerprime})) and write 
$\check{Z}^{(\ell)}_{D,r}$ for the image of $Z^{(\ell)}_{D,r}$ under the natural map $X_0(m)\to X_\Gamma$.

\begin{thm}
\label{thm:hauptmodul_Psi}
Let $\ell\in \gt{L}_1$ and let $\Gamma=\Gamma_0(m)+K$ be the corresponding genus zero group. Let $D$ be a negative fundamental discriminant and suppose that $D=r^2\xmod 4m$. Assume also that $D\neq -3$ if $m\in \{7,13,21\}$. 
Then $\Psi^{(\ell)}_{D,r}(\tau)$ descends to a meromorphic function on $X_\Gamma$ whose divisor is $\check{Z}^{(\ell)}_{D,r}$. In particular, $\Psi^{(\ell)}_{D,r}$ is a rational function in $T^{(\ell)}$.
\end{thm}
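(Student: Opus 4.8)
The plan is to realize $\Psi^{(\ell)}_{D,r}$ as a meromorphic modular form of weight $0$ for $\Gamma=\Gamma_0(m)+K$ carrying a unitary multiplier, and then to exploit the genus zero hypothesis. First I would apply Theorem \ref{thm:jac:szlifts-SDrreg} to $\f^{(\ell)}\in\JJ^\opt_{1,m}\subset\JJ^\wk_{1,m}$. Comparing the Fourier expansion recorded there with the defining product (\ref{eqn:class:pm-Psiell}), and using $\ex(n\tau+\tfrac bD)=\ex(\tfrac bD)q^n$, gives the identity $\log|\Psi^{(\ell)}_{D,r}(\tau)|=-\tfrac14(\mathcal{S}^\reg_{D,r}\f^{(\ell)})(\tau)$ wherever $\Im(\tau)$ is large; this already shows the product converges there. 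The content of Theorem \ref{thm:jac:szlifts-SDrreg} (i.e. Proposition 5.2 and Theorem 5.3 of \cite{MR2726107}) then supplies the meromorphic continuation of $\Psi^{(\ell)}_{D,r}$ to $\HH$, its $\Gamma_0(m)$-invariance up to a unitary character (since $|\Psi^{(\ell)}_{D,r}|$ is literally $\Gamma_0(m)$-invariant), and the identification of its divisor on $X_0(m)$ with the $\QQ$-divisor $Z^{(\ell)}_{D,r}=Z_{D,r}(\f^{(\ell)})$ of (\ref{eqn:jac:szlifts-ZDrDprimerprime}).

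Next I would upgrade the invariance from $\Gamma_0(m)$ to $\Gamma_0(m)+K$. The kernel (\ref{eqn:jac:szlifts-ThetaCreg}) of the regularized lift has the same quadratic-form structure as the kernel (\ref{eqn:jac:szlifts-ThetaC}) appearing in Lemma \ref{lem:class:proof-ALinv}, so the very same computation---based on the Atkin--Lehner invariance of $\chi_D$ and the bijection $Q\mapsto Q|w_n$ from $\mathcal{Q}(m,D,r)$ onto $\mathcal{Q}(m,D,ra(n))$---yields $(\mathcal{S}^\reg_{D,r}\f)|W_n=\mathcal{S}^\reg_{D,r}(\f|\W_m(n))$ for $n\in\Ex_m$. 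Because $\f^{(\ell)}\in\JJ^{\top|K}_{1,m}$ satisfies $\f^{(\ell)}|\W_m(n)=\f^{(\ell)}$ whenever $a(n)\in K$ (by (\ref{eqn:class:proof-JJ1mtopK}) together with (\ref{eqn:jac:ez-Wmndota})), the function $\mathcal{S}^\reg_{D,r}\f^{(\ell)}$, and hence $|\Psi^{(\ell)}_{D,r}|$, is invariant under every $W_n$ with $a(n)\in K$, and so descends to $X_\Gamma$. Consequently $\Psi^{(\ell)}_{D,r}$ transforms under $\Gamma=\Gamma_0(m)+K$ with a unitary character $\nu$, and the pushforward of $Z^{(\ell)}_{D,r}$ to $X_\Gamma$, namely $\check Z^{(\ell)}_{D,r}$, records its orders as a $\QQ$-divisor.

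The remaining, and most delicate, step is to show that $\check Z^{(\ell)}_{D,r}$ is an honest integral divisor on $X_\Gamma$, equivalently that $\nu$ is trivial. At a point of $X_\Gamma$ that is not an elliptic fixed point of $\Gamma$ the local order of $\Psi^{(\ell)}_{D,r}$ is an integer, so the only possible obstruction to integrality occurs at the elliptic points, where the order of $\Psi^{(\ell)}_{D,r}$ in $\HH$ must be divisible by the ramification index. The order-three fixed points arising from the Atkin--Lehner extension are exactly the source of the hypothesis $D\neq-3$ for $m\in\{7,13,21\}$: these are the only lambencies for which a Heegner point of discriminant $-3$ lands on an order-three fixed point of $\Gamma$. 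Using the explicit description of the divisor coming from \cite{MR2726107}, I would verify under the stated restriction that at each elliptic point the local order of $\Psi^{(\ell)}_{D,r}$ is divisible by the corresponding ramification index, so that $\check Z^{(\ell)}_{D,r}$ has integer coefficients. I expect this elliptic-point bookkeeping to be the main obstacle.

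Finally I would invoke the genus zero hypothesis. Since $X_\Gamma\cong\PP^1$, the integral divisor $\check Z^{(\ell)}_{D,r}$, which has degree zero because $\Psi^{(\ell)}_{D,r}$ has weight $0$ with unitary multiplier, is principal: there is a meromorphic function $F$ on $X_\Gamma$ with $\operatorname{div}(F)=\check Z^{(\ell)}_{D,r}$. Then $\log|\Psi^{(\ell)}_{D,r}|-\log|F|$ is harmonic on the compact surface $X_\Gamma$ once the matching logarithmic singularities cancel, hence constant; therefore $\Psi^{(\ell)}_{D,r}/F$ has constant modulus and, being meromorphic, is itself constant. Thus $\nu$ is trivial and $\Psi^{(\ell)}_{D,r}$ descends to a meromorphic function on $X_\Gamma$ with divisor $\check Z^{(\ell)}_{D,r}$. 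Because $T^{(\ell)}$ is a principal modulus for $\Gamma$ by Lemma \ref{lem:class:pm-chrTell}, it generates the function field of $X_\Gamma$, and hence $\Psi^{(\ell)}_{D,r}$ is a rational function of $T^{(\ell)}$, as claimed.
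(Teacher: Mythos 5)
Your proposal follows essentially the same route as the paper's proof: the regularized lift $\mathcal{S}^\reg_{D,r}\f^{(\ell)}$ via Theorem \ref{thm:jac:szlifts-SDrreg}, extension of the invariance from $\Gamma_0(m)$ to $\Gamma_0(m)+K$ by the argument of Lemma \ref{lem:class:proof-ALinv}, and the genus zero plus harmonicity argument to trivialize the character. The one step you flag as the main obstacle---integrality of $\check{Z}^{(\ell)}_{D,r}$---is dispatched in the paper by a direct observation rather than elliptic-point bookkeeping: the hypothesis on $D$ guarantees that either the divisor vanishes or $\#\Gamma_0(m)_Q\in\{2,4\}$ for every relevant $Q$ (the excluded cases $D=-3$ with $m\in\{7,13,21\}$ being exactly those where a stabilizer of order $6$ occurs), so since $C^{(\ell)}(1,1)=-2$ the coefficients $-4\chi_D(Q)/\#\Gamma_0(m)_Q$ appearing in (\ref{eqn:class:pm-ZellDr}) are already integers.
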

\begin{proof}
Set $\Phi^{(\ell)}_{D,r}:=\mathcal{S}_{D,r}^\reg\f^{(\ell)}$ (cf. (\ref{eqn:jac:szlifts-SDrreg})), and write $\widehat{Z}^{(\ell)}_{D,r}$ for the pullback of $Z^{(\ell)}_{D,r}$ along the natural map $\HH\to X_0(m)$.
Then Theorem \ref{thm:jac:szlifts-SDrreg} tells us that $\Phi^{(\ell)}_{D,r}$ is a $\Gamma_0(m)$-invariant function on $\HH\setminus \widehat{Z}^{(\ell)}_{D,r}$ with a logarithmic singularity on $-4\widehat{Z}^{(\ell)}_{D,r}$, and 
\begin{gather}
\Phi^{(\ell)}_{D,r}(\tau)=-4\sum_{n>0}\sum_{b\xmod D}\left(\frac{D}{b}\right)C^{(\ell)}(Dn^2,rn)\log\left|1-\ex\left(n\tau+\frac b D\right)\right|
\end{gather}
for $\Im(\t)$ sufficiently large.
From this we conclude that the product defining $\Psi^{(\ell)}_{D,r}$ converges for $\Im(\t)$ sufficiently large, and admits an analytic continuation satisfying $\Phi^{(\ell)}_{D,r}(\tau)=-4\log\left|\Psi^{(\ell)}_{D,r}\right|$ on $\HH\setminus\widehat{Z}^{(\ell)}_{D,r}$. In particular, $\Psi^{(\ell)}_{D,r}$ inherits modular invariance from $\Phi^{(\ell)}_{D,r}$. Specifically, we have 
\begin{gather}\label{eqn:class:pm-Psiellxfm}
\Psi^{(\ell)}_{D,r}(\gamma \tau)=\sigma(\gamma)\Psi^{(\ell)}_{D,r}(\tau)
\end{gather}
for $\gamma\in \Gamma_0(m)$, for some unitary character $\sigma:\Gamma_0(m)\to \CC^\times$. 
Actually, the formula (\ref{eqn:class:pm-Psiellxfm}) holds for $\gamma\in \Gamma=\Gamma_0(m)+K$, for some extension of $\sigma$ to $\Gamma$. For arguing as in the proof of Lemma \ref{lem:class:proof-ALinv}, using the identities (\ref{eqn:class:proof_Qwnids}), and the $\mathcal{W}_m(n)$-invariance of $\phi^{(\ell)}$ for $n$ such that $a(n)\in K$, we conclude that $\Phi^{(\ell)}_{D,r}$ and $\widehat{Z}^{(\ell)}_{D,r}$ are also $W_n$-invariant, when $a(n)\in K$. 

We now show that the character in (\ref{eqn:class:pm-Psiellxfm}) is trivial. 
Note that the restriction on $D$ implies that either $\Psi^{(\ell)}_{D,r}$ is constant, and $Z^{(\ell)}_{D,r}$ vanishes, or the cardinality of $\Gamma_0(m)_Q$ is $2$ or $4$ for all $Q\in \mathcal{Q}(m,D,ra)$ (for all $a\in K$). We have 
\begin{gather}\label{eqn:class:pm-ZellDr}
	Z^{(\ell)}_{D,r}=\sum_{a\in K}\sum_{Q\in \mathcal{Q}(m,D,ra)/\Gamma_0(m)}
	\frac{-4}{\# \Gamma_0(m)_Q}
	\chi_D(Q)
	\a_Q
\end{gather}
since $C^{(\ell)}(1,1)=-2$, so the hypotheses imply that the divisor $Z^{(\ell)}_{D,r}$ is integral. Also, ${Z}^{(\ell)}_{D,r}$ has degree zero, so $\check{Z}^{(\ell)}_{D,r}$ has degree zero too. Since $\Gamma$ has genus zero $\check{Z}^{(\ell)}_{D,r}$ is a principal divisor on $X_\Gamma$, and we may consider a meromorphic function $f$ on $X_\Gamma$ whose associated divisor is $\check{Z}^{(\ell)}_{D,r}$. Then the expression $|\Psi^{(\ell)}_{D,r}/f|$ defines a harmonic function on $X_\Gamma$ with no singularities, which must therefore be constant. So $\Psi^{(\ell)}_{D,r}/f$ is a holomorphic function on $\HH$ with constant modulus, and must therefore also be constant. We conclude that the character $\sigma$ in (\ref{eqn:class:pm-Psiellxfm}) is trivial, as required. The claimed results follow from this. 
\end{proof}

\subsection{Positivity and Moonshine} 
\label{sec:class:rpm}

In this final section we 
identify two sign conditions on Fourier coefficients which, independently, single out the mock Jacobi forms that appear in umbral moonshine.

Let $\gt{L}_1^+$ be the subset of $\gt{L}_1$ composed of the 23 lambencies that have a root system $X$ attached to them by Table \ref{tab:intro-pm}. We call these the {\em Niemeier} lambencies because 
they are in correspondence with the Niemeier lattices, according to the analysis in \S2.3 of \cite{MUM}. 
As explained in \S\ref{sec:intro:moon}, it is the $\phi^{(\ell)}$ for $\ell\in \gt{L}_1^+$ that appear in umbral moonshine. 
But they may be characterized more abstractly in terms of positivity conditions on Fourier coefficients, as we now demonstrate. 

To formulate these results, define $\varepsilon_m:\ZZ\to \{0,\pm1\}$ by setting $\varepsilon_m(r):=\pm 1$ when $\pm r\xmod 2m$ is represented by an integer in the range $0<r'<m$, and $\varepsilon_m(r)=0$ when $r=0\xmod m$. For convenience we define $\sgn(0):=0$.

\begin{prop}\label{prop:class:rpm-possig}
Let $\ell\in \gt{L}_1$ and let $m$ be the level of $\ell$. Then 
\begin{gather}\label{eqn:class:rpm-possig}
\sgn(C_{\sigma^{(\ell)}}(k^2,r))=\varepsilon_m(k)\varepsilon_m(r)
\end{gather}
for all pairs $(k,r)$ such that $k\in \ZZ^+$ and $r\xmod 2m$ satisfies $k^2=r^2\xmod 4m$ if and only if $\ell\in\gt{L}_1^+$.
\end{prop}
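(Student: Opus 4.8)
The plan is to render the coefficients $C_{\sigma^{(\ell)}}(k^2,r)$ completely explicit from the definition (\ref{eqn:class:pm-sigmaell}) and then read off their signs. Writing $\sigma^{(\ell)}=\sum_{r'}\overline{g_{r'}}\theta_{m,r'}$ and using that the coefficient of $q^{k^2/4m}$ in the thetanullwert $\theta^1_{m,r}$ equals $k(\delta_{k\equiv r}-\delta_{-k\equiv r})$ (all congruences modulo $2m$), a direct computation shows that $C_{\sigma^{(\ell)}}(k^2,r)$ is a fixed nonzero scalar multiple of $k(\Omega^{(\ell)}_{k,r}-\Omega^{(\ell)}_{-k,r})$, the scalar being pinned down by the normalization in Lemma \ref{lem:class:pm-sigmaellprops}. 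Since $\Omega_m(n)$ is the permutation matrix implementing $r\mapsto r\,a(n)$ (cf. (\ref{def:OmegaMatrices}), (\ref{eqn:jac:ez-Wmndota})), this gives
\[
\Omega^{(\ell)}_{k,r}=\sum_i d_i\,[\,r\equiv k\,a(n_i)\,]
\]
whenever $T^{(\ell)}=\prod_i\eta(n_i\tau)^{d_i}$. Thus the sign of $C_{\sigma^{(\ell)}}(k^2,r)$ is governed entirely by the signed count $\sum_i d_i([\,r\equiv k\,a(n_i)\,]-[\,r\equiv -k\,a(n_i)\,])$, a finite piece of data read off from the eta product recorded in Table \ref{tab:intro-pm}.

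Next I would use symmetry to shrink the verification. Oddness of $\theta^1_{m,r}$ in $r$ gives $C_{\sigma^{(\ell)}}(k^2,-r)=-C_{\sigma^{(\ell)}}(k^2,r)$, which matches $\varepsilon_m(-r)=-\varepsilon_m(r)$, while the sign is plainly unchanged under $k\mapsto k+2m$; so it suffices to treat $0<r<m$ and $k$ modulo $2m$. The vanishing loci also align: if $k\equiv 0,m$ then $-k\equiv k$ and the two $\Omega$-terms cancel, whereas if $r\equiv 0,m$ then antisymmetry forces $C_{\sigma^{(\ell)}}(k^2,r)=0$, exactly reproducing the locus $\varepsilon_m(k)\varepsilon_m(r)=0$. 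With $\varepsilon_m(r)=1$ fixed, (\ref{eqn:class:rpm-possig}) thus reduces to a single combinatorial assertion: that for every admissible $k$ the signed count above has the sign dictated by $\varepsilon_m(k)$.

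The heart of the proof is then a finite check over the $39$ lambencies of $\gt{L}_1$, carried out directly from the eta products $T^{(\ell)}$. For the $23$ Niemeier lambencies $\ell\in\gt{L}_1^+$ the exponent pattern of $T^{(\ell)}$ records the balanced (Frame) cycle shape of a Coxeter element of the attached root system, and I would show that this balancing---encoded by the constancy of $T^{(\ell)}(T^{(\ell)}|W_m)$ in Lemma \ref{lem:class:pm-chrTell} together with the $\W_m(n)$-invariances of $\sigma^{(\ell)}$ from Lemma \ref{lem:class:pm-sigmaellprops}---forces the signed count to keep a single sign as $k$ varies, namely the one demanded by (\ref{eqn:class:rpm-possig}). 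For each of the remaining $16$ lambencies I would instead exhibit admissible pairs on which the sign is wrong. For example, at $\ell=6+2$ with $T^{(6+2)}=\tfrac{1^42^4}{3^46^4}$ and $m=6$, the pairs $(k,r)=(1,1)$ and $(k,r)=(5,1)$---both having $\varepsilon_6(k)\varepsilon_6(r)=+1$---produce coefficients of opposite sign, whereas for the Niemeier group $\ell=6$, with $T^{(6)}=\tfrac{1^53^1}{2^16^5}$, the same two pairs give coefficients of equal sign. This contrast is the prototype of the general dichotomy.

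The principal obstacle will be to keep this verification uniform rather than a brute tabulation, and it is concentrated in the cases with $\gcd(k,2m)>1$. There the residues $k\,a(n_i)$ for distinct $i$ can collide onto a single $r$, so that $\Omega^{(\ell)}_{k,r}$ is a genuine sum of several exponents $d_{n_i}$ rather than a single one; controlling the sign of such sums is precisely where the arithmetic special to the Niemeier Frame shapes, as opposed to the generic eta products attached to the other genus zero groups, must be used. Isolating the structural feature that cleanly separates $\gt{L}_1^+$ from its complement in $\gt{L}_1$---rather than merely recording the dichotomy case by case---is the delicate step, and I expect it to rest on the interaction between the balancing relation of Lemma \ref{lem:class:pm-chrTell} and the multiplicativity implicit in the identity $\mathcal{S}_{1,1}(\sigma^{(\ell)})=\frac{1}{2\pi i}\frac{{\rm d}}{{\rm d}\tau}\log T^{(\ell)}$ of Lemma \ref{lem:class:pm-sigmaellprops}.
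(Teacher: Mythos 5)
Your proposal follows essentially the same route as the paper: the explicit formula $C_{\sigma^{(\ell)}}(k^2,r)\propto k\bigl(\Omega^{(\ell)}_{k,r}-\Omega^{(\ell)}_{-k,r}\bigr)$ together with the antisymmetry in $r$ and the periodicity/antisymmetry in $k$ is exactly how the paper reduces (\ref{eqn:class:rpm-possig}) to the finite check that $C_{\sigma^{(\ell)}}(D,r)\geq 0$ for $0<D<m^2$ and $0<r<m$ precisely when $\ell\in\gt{L}_1^+$. The paper then carries out that finite check ``by inspection'' from the eta products in Table \ref{tab:intro-pm}, which is the step you correctly isolate as the remaining content; your worked comparison of $\ell=6+2$ against $\ell=6$ is a valid instance of it.
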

\begin{proof}
Let $\ell$ and $m$ be as in the statement of the proposition. Inspecting the definition (\ref{eqn:class:pm-sigmaell}) of $\s^{(\ell)}$ we see that 
\begin{gather}
	\begin{split}
\sgn\left( C_{\sigma^{(\ell)}}(k^2,r) \right) &= \pm \sgn\left( C_{\sigma^{(\ell)}}(k^2,r') \right) \text{ if } r =\pm r'\xmod 2m,\\ 
\label{sign_equivalences}
\sgn\left( C_{\sigma^{(\ell)}}(k^2,r) \right) &= \pm \sgn\left( C_{\sigma^{(\ell)}}((k')^2,r) \right) \text{ if }  k,k'>0 \text{ and }k =\pm k'\xmod 2m.
	\end{split}
\end{gather}
Using this we can reduce the sign condition 
to checking that $C_{\sigma^{(\ell)}}(D,r)\geq 0$ for all $0<D<m^2$ and $0<r<m$ if and only if $\ell\in\gt{L}_1^+$.
This, in turn, can be verified by inspection, using the definition (\ref{eqn:class:pm-sigmaell}) of $\s^{(\ell)}$, and the descriptions in Table \ref{tab:intro-pm} of the principal moduli $T^{(\ell)}$. 
\end{proof}

The second positivity property that distinguishes the twenty-three lambencies in $\gt{L}_1^+$ 
is the positivity of the coefficients of the corresponding optimal mock Jacobi theta functions themselves. 
\begin{prop}\label{prop:class:rpm-posphi}
Let $\ell\in \gt{L}_1$ and let $m$ be the level of $\ell$. Then 
\begin{gather}\label{eqn:class:rpm-posphi}
\sgn(C_{\f^{(\ell)}}(D,r))=\varepsilon_m(r)
\end{gather}
for all $D<0$ if and only if $\ell\in\gt{L}_1^+$.
\end{prop}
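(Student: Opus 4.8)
The plan is to first reduce the two-sided sign condition to a one-sided strict positivity statement, and then to treat the two directions of the equivalence separately. Since $\f^{(\ell)}\in\JJ^\wk_{1,m}$ has weight $1$, the symmetry (\ref{eqn:jac:ez-Foucoeffsym}) gives $C_{\f^{(\ell)}}(D,-r)=-C_{\f^{(\ell)}}(D,r)$. If $r\equiv 0\xmod m$ then $r\equiv 0$ or $r\equiv m\xmod 2m$, and in both cases $-r\equiv r\xmod 2m$ forces $C_{\f^{(\ell)}}(D,r)=0=\varepsilon_m(r)$; while for $-m<r<0$ the required sign $\varepsilon_m(r)=-1$ follows from the $0<r<m$ case by the same antisymmetry. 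Thus (\ref{eqn:class:rpm-posphi}) is equivalent to the assertion that $C_{\f^{(\ell)}}(D,r)>0$ for every $r$ with $0<r<m$ and every $D<0$ with $D\equiv r^2\xmod 4m$, and it is exactly this strict positivity that I would show holds precisely when $\ell\in\gt{L}_1^+$.

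For the forward (``if'') direction I would invoke the identification of $\f^{(\ell)}$, for $\ell\in\gt{L}_1^+$, with a mock modular form of umbral moonshine. Indeed $\gt{L}_1^+$ consists of the $23$ lambencies carrying a Niemeier root system $X$ in Table \ref{tab:intro-pm}, and for each the construction in the proof of Theorem \ref{thm:intro-firstmainthm} (for the maximal lambencies) together with Corollary \ref{cor:class:proof-genK} (for the five non-maximal lambencies $6,10,12,18,30+15$) identifies $\f^{(\ell)}$ with $\phi^X=\sum_r H^X_r\th_{m,r}$. The coefficients $C^{(\ell)}(D,r)$ for $D<0$ and $0<r<m$ are then twice the graded dimensions of the umbral moonshine module attached to $X$ in \cite{UM,MUM}, consistent with the normalisation $C^{(\ell)}(1,1)=-2$, so the positivity asserted in (\ref{eqn:class:rpm-posphi}) is exactly the positivity property built into umbral moonshine; strictness over the full range $D<0$ is visible in the explicit $q$-series recorded in \cite{MUM} (cf. also \S B.3 of \cite{umrec}).

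For the reverse (``only if'') direction I would argue by contraposition, producing for each of the $16$ lambencies in $\gt{L}_1\setminus\gt{L}_1^+$---the non-Niemeier maximal lambencies listed in (\ref{eqn:class:proof-gzgps-emum})---an explicit pair $(D,r)$ with $0<r<m$ and $D<0$ for which $C_{\f^{(\ell)}}(D,r)\le 0$, so that $\sgn(C_{\f^{(\ell)}}(D,r))\neq\varepsilon_m(r)=1$ and the sign condition fails. Such wrong-sign coefficients can be read directly off the low-order Fourier expansions of the $\f^{(\ell)}$ tabulated in Tables \ref{tab:desc:mjt-6+2}--\ref{tab:desc:mjt-78+6,26,39} of \S\ref{sec:desc:mjt}; together with the forward direction this yields the equivalence.

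The main obstacle is the strict positivity needed in the forward direction, which concerns infinitely many coefficients and so cannot be settled by the finite inspection that sufficed for Proposition \ref{prop:class:rpm-possig}: there the relations (\ref{sign_equivalences}) collapsed the shadow signs to finitely many cases, whereas here $D$ ranges without any such periodicity. One could instead attempt a self-contained argument by realising $\f^{(\ell)}$ as a regularised Poincar\'e/Rademacher sum (as in \S B.4 of \cite{umrec}) and extracting asymptotic positivity of $C^{(\ell)}(D,r)$ as $|D|\to\infty$, thereby reducing to a finite check at small $|D|$; the delicate point is that the leading asymptotics are governed by several polar coefficients $C^{(\ell)}(1,a)$ coupled through the Weil representation $\varrho_m$, so positivity of the dominant term is not manifest and would have to be argued. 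For this reason the cleanest route is to lean on the established positivity of the umbral moonshine forms attached to the $23$ Niemeier lambencies.
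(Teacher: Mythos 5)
Your proposal follows essentially the same route as the paper: for $\ell\in\gt{L}_1^+$ the positivity is imported from umbral moonshine, and for the $16$ lambencies in $\gt{L}_1\setminus\gt{L}_1^+$ the failure is read off the explicit low-order coefficients in Tables \ref{tab:desc:mjt-6+2}--\ref{tab:desc:mjt-78+6,26,39}; your preliminary reduction via the antisymmetry $C_\f(D,-r)=-C_\f(D,r)$ of (\ref{eqn:jac:ez-Foucoeffsym}) is a harmless (and correct) addition.

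The one point to repair is in the forward direction: your claim that strictness ``over the full range $D<0$ is visible in the explicit $q$-series recorded in \cite{MUM}'' is not a proof, since it concerns infinitely many coefficients --- as you yourself concede in your final paragraph. The correct resolution is not an asymptotic argument of your own but a citation: the positivity (\ref{eqn:class:rpm-posphi}) for $\ell\in\gt{L}_1^+$ was \emph{conjectured} in \cite{MUM} and subsequently \emph{proven} in \cite{MR3539377,mnstmlts}, and the paper's proof simply invokes those theorems. With that substitution your argument coincides with the paper's.
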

\begin{proof}
The fact that (\ref{eqn:class:rpm-posphi}) holds for $\ell\in \gt{L}_1^+$ was conjectured in \cite{MUM} and proven in  \cite{MR3539377,mnstmlts}. That this sign condition is not satisfied for any $\ell\in \gt{L}_1$ that is not in $\gt{L}_1^+$ 
can be checked explicitly from the descriptions of the $\f^{(\ell)}$ given in \S\ref{sec:desc:mjt}. 
\end{proof}

Note that the positivity properties (\ref{eqn:class:rpm-possig}) and (\ref{eqn:class:rpm-posphi}) do not a priori imply one another and should be regarded as independent. 
In contrast to the condition (\ref{eqn:class:rpm-possig}) on the coefficients of the shadow functions, the condition (\ref{eqn:class:rpm-posphi})
on the Fourier coefficients of the mock theta functions is a condition on infinitely many terms, that cannot easily be reduced to a finite check (cf. \S4 in \cite{MR3539377}).

The positivity conditions (\ref{eqn:class:rpm-possig}) and (\ref{eqn:class:rpm-posphi}) for $\ell\in \gt{L}_1^+$ play important roles in the umbral moonshine conjectures. The former leads to an assignment of root systems to each $\ell\in \gt{L}_1^+$ (cf. \cite{MUM}), and thereby to finite groups $G^{(\ell)}$, whereas the the latter condition allows one to hypothesize that the coefficients $C_{\f^{(\ell)}}(D,r)$ are dimensions of representations of $G^{(\ell)}$. Although the coefficients $C_{\f^{(\ell)}}(D,r)$ fail to satisfy (\ref{eqn:class:rpm-posphi}) when $\ell\in \gt{L}_1$ does not belong to $\gt{L}_1^+$, the descriptions in \S\ref{sec:desc:mjt} suggest that an interpretation of these coefficients as dimensions of modules for some finite groups $G^{(\ell)}$ is still possible, subject to a suitable modification of (\ref{eqn:class:rpm-posphi}).

\section*{Acknowledgements}

We thank Barry Mazur, Ken Ono, Nils-Peter Skoruppa and Don Zagier for discussions on related topics. The work of M.C. was supported by ERC starting grant H2020 ERC StG 2014. J.D. acknowledges support from the U.S. National Science Foundation (DMS 1203162, DMS 1601306), and the Simons Foundation (\#316779), and thanks the University of Tokyo for hospitality during the middle stages of this work.

\appendix

\section{Descriptions}\label{sec:desc}

In \S\ref{sec:desc:mjt} we give explicit constructions for the optimal mock Jacobi theta functions that have not already appeared in umbral moonshine. In \S\ref{sec:mockthetafunctions} we present concrete expressions for the mock theta functions of Ramanujan, and also those identified by Andrews and Gordon--McIntosh, in terms of the theta-coefficients of optimal mock Jacobi theta functions.

\subsection{Optimal Mock Jacobi Theta Functions}\label{sec:desc:mjt}

In this section we describe the optimal mock Jacobi theta functions $\f^{(\ell)}$ that have not already been discussed in the context of umbral moonshine. Denote the theta-coefficients of $\f^{(\ell)}$ by $H^{(\ell)}_r$, so that $\f^{(\ell)}(\tau,z) = \sum_{r\xmod 2m}H^{(\ell)}_r(\tau)\th_{m,r}(\tau,z)$, where $m$ is the level of $\ell$. The $H^{(\ell)}_r$ are described explicitly for all $\ell\in\gt{L}_1^+$ in \S B.3 of \cite{umrec}, so it remains to treat $\f^{(\ell)}$ for $\ell\in \gt{L}_1\setminus\gt{L}_1^+$. These are the lambencies appearing in (\ref{eqn:class:proof-gzgps-emum}).

\begin{table}[ht]
\centering
\scalebox{.9}{
\begin{tabular}{CCCL}\toprule
\ell&\ell'&[g']&{\text{ Relations}} \\\midrule
15+5 & 3& 5A &   \sum_{n\in\ZZ/5} \ H^{(15+5)}_{r+6n}(5\t) = 2 H^{(3)}_{5A,r}(\t),\text{ $r\xmod 6$}  \\\midrule
\multirow{2}*{20+4}& \multirow{2}*{5} & \multirow{2}*{2C} 
&(H^{(20+4)}_{r}+H^{(20+4)}_{r+10} )(4\t) =  H^{(5)}_{g,r}(\t),\text{ $r=\pm1,\pm 3\xmod 10$} 
\\
&&&   (H^{(20+4)}_{2r})(\t) = \ex({\frac{r^2}{160}}) H^{(5)}_{g,r}(\t+\tfrac{1}{2}), \text{ $r=\pm2,\pm 4\xmod 10$}
 \\\midrule
 21+3& 7& 3AB& \sum_{n\in\ZZ/3} H^{(21+3)}_{r+14n}(3\t) = 2 H^{(7)}_{3AB,r}(\t),\text{ $r\xmod 14$}\\\midrule
 24+8 & 3& 8CD & \sum_{n\in\ZZ/4} H^{(24+8)}_{r+12n}(8\t) =  H^{(3)}_{8CD,r}(\t),\text{ $r\xmod 6$}\\\midrule
 28+7 & 4 & 7AB&  \sum_{n\in\ZZ/7} H^{(28+7)}_{r+8n}(7\t) = 2 H^{(4)}_{7AB,r}(\t),\text{ $r\xmod 8$}\\\midrule
 33+11& 3&11AB&  \sum_{n\in\ZZ/11} H^{(33+11)}_{r+6n}(11\t) = 2 H^{(3)}_{11AB,r}(\t),\text{ $r\xmod 6$}\\\midrule 
 \multirow{4}*{36+4} & \multirow{2}*{3} &\multirow{2}*{6C}&  \sum_{n\in\ZZ/6} H^{(36+4)}_{r+12n}(12\t) = H^{(3)}_{6C,r}(\t),\text{ $r=\pm1\xmod 6$}\\
 &&& \ex({\frac16}) \sum_{n=0}^{2}H^{(36+4)}_{2+12n}(3\t+\tfrac{3}{2}) = H^{(3)}_{6C,2}(\t)\\  \cmidrule(l){2-4}
 &\multirow{2}*{9}&\multirow{2}*{2B}&\sum_{n\in\ZZ/4} H^{(36+4)}_{r+18n}(4\t) = H^{(9)}_{2B,r}(\t),\text{ $r=\pm3\xmod 18$}\\
 &&&H^{(36+4)}_{12}(\t) = -H^{(9)}_{2B,6}(\t+\tfrac{1}{2})\\\midrule
 60+12,15,20& 30+6,10,15& 2A &  \sum_{n\in\ZZ/2}H^{(60+12,15,20)}_{r+60n}(2\t) = H^{(30+6,10,15)}_{2A,r}(\t),\text{ $r\xmod 60$} \\
 \bottomrule
\end{tabular}
}
\caption{Multiplicative relations.}
\label{tab:desc:mjt-mltrel}
\end{table}

First consider the five lambencies appearing in the first line of (\ref{eqn:class:proof-gzgps-emum}). For each such $\ell$ there is an $\ell'$ in $\gt{L}_1^+$ such that $\f^{(\ell)}=\frac12\left(\f^{(\ell')}+\f^{(\ell')}|\W_m(n)\right)$ for some $n\in \Ex_m$, where $m$ is the level of $\ell$. Specifically, 
\begin{gather}\label{eqn:desc:mjt:firstfive}
\begin{split}
\f^{(6+2)} &= \frac12\left(\f^{(6)} + \f^{(6)}\lvert{\cal W}_6(2)\right),\\
\f^{(10+2)} &= \frac12\left(\f^{(10)} + \f^{(10)}\lvert{\cal W}_{10}(2)\right),\\
\f^{(12+3)} &= \frac12\left(\f^{(12)} + \f^{(12)}\lvert{\cal W}_{12}(3)\right),\\
\f^{(18+2)} &= \frac12\left(\f^{(18)} + \f^{(18)}\lvert{\cal W}_{18}(2)\right),\\
\f^{(30+3,5,15)} &= \frac12\left(\f^{(30+15)} + \f^{(30+15)}\lvert{\cal W}_{30}(3)\right).
\end{split}
\end{gather}
We can also write $\f^{(\ell)}=P^\a\f^{(\ell')}$ where $\a$ is the character of $O_m$ such that $\Gamma_0(m)+\ker(\a)$ is the genus zero group corresponding to $\ell$.
The theta-coefficients $H^{(\ell')}_r$ of these $\f^{(\ell')}$ are described explicitly in \S B.3 of \cite{umrec}, so the $\f^{(\ell)}$ for $\ell$ in the first line of (\ref{eqn:class:proof-gzgps-emum}) are determined by (\ref{eqn:desc:mjt:firstfive}). Some low order Fourier coefficients of these $\f^{(\ell)}$ are given in Tables \ref{tab:desc:mjt-6+2} through \ref{tab:desc:mjt-30+3,5,15}.

Next consider the $\ell$ appearing in the second line of (\ref{eqn:class:proof-gzgps-emum}). Once again the $H^{(\ell)}_r$ are determined by mock modular forms attached to a lambency $\ell'\in\gt{L}_1^+$, but with level greater than one. Specific relations are given in Table \ref{tab:desc:mjt-mltrel} which are sufficient to determine the $H^{(\ell)}_r$ in question completely, in terms of corresponding functions $H^{(\ell')}_{g',r}$, which are in turn described explicitly in \cite{umrec}. Low order Fourier coefficients of these mock modular forms are given in Tables \ref{tab:desc:mjt-15+5} through \ref{tab:desc:mjt-60+12,15,20}.

\begin{table}[ht]
\begin{small}
\centering
\begin{tabular}{c|rrrrrrrrrrrrrrrr}\toprule
$r, n$&0&1&2&3&4&5&6&7&8&9&10&11&12&13&14&15\\\midrule
1 & -2& -2& 4& -6& 6& -6& 10& -14& 12& -12&20&-24&22&-26&34&-40\\ 
2& &16& 32& 64& 112& 176& 288& 448& 656& 976& 1408&1984&2800&3872&5280&7168\\
4&&8& 24& 48& 80& 144& 232& 352& 544& 808& 1168&1680&2368&3280&4528&6176\\\bottomrule
\end{tabular}
\caption{Fourier coefficients $C^{(6+2)}(r^2-24n,r)$.}
\label{tab:desc:mjt-6+2}
\end{small}
\end{table}

\begin{table}[ht]
\begin{small}
\centering
\begin{tabular}{c|rrrrrrrrrrrrrrrr}\toprule
$r,n$&0&1&2&3&4&5&6&7&8&9&10&11&12&13&14&15\\\midrule
1 & -2 & 2 & -2 & 0 & -2 & 4 & -2 & 2 & -2 & 4 & -6 & 2 & -4 & 8 & -6 & 4 \\
 2 &  & 4 & 8 & 16 & 16 & 28 & 40 & 48 & 72 & 96 & 120 & 160 & 208 & 256 & 328 & 416 \\
 3 &  & -2 & 2 & -2 & 4 & -4 & 2 & -4 & 6 & -6 & 6 & -6 & 8 & -8 & 8 & -10 \\
 4 &  & 8 & 12 & 16 & 28 & 40 & 56 & 80 & 104 & 136 & 184 & 240 & 304 & 392 & 496 & 624 \\
 6 &  & 4 & 8 & 16 & 24 & 32 & 48 & 64 & 88 & 124 & 160 & 208 & 272 & 348 & 440 & 560 \\
 8 &  &  & 4 & 8 & 8 & 16 & 24 & 32 & 44 & 64 & 80 & 104 & 144 & 176 & 232 & 296 \\
\bottomrule
\end{tabular}
\caption{Fourier coefficients $C^{(10+2)}(r^2-40n,r)$.}
\label{tab:desc:mjt-10+2}
\end{small}
\end{table}

\begin{table}[ht]
\begin{small}
\centering
\begin{tabular}{c|rrrrrrrrrrrrrrrr}\toprule
$r,n$&0&1&2&3&4&5&6&7&8&9&10&11&12&13&14&15\\\midrule
 1 & -2 & -4 & -2 & -6 & -8 & -12 & -10 & -22 & -22 & -34 & -38 & -52 & -58 & -84 & -92 & -120 \\
 2 &   & 4 & 4 & 8 & 12 & 12 & 20 & 28 & 32 & 44 & 56 & 68 & 88 & 112 & 132 & 164 \\
 3 &   & 4 & 8 & 8 & 16 & 20 & 28 & 32 & 48 & 60 & 80 & 92 & 124 & 148 & 188 & 224 \\
 5 &   & 2 & 2 & 6 & 6 & 8 & 12 & 18 & 20 & 30 & 32 & 44 & 58 & 72 & 84 & 110 \\
 6 &   & 4 & 8 & 12 & 16 & 24 & 32 & 44 & 56 & 72 & 96 & 120 & 152 & 188 & 232 & 288 \\
 9 &   &  & 4 & 4 & 12 & 8 & 20 & 20 & 36 & 36 & 56 & 64 & 92 & 100 & 144 & 160 \\
\bottomrule
\end{tabular}
\caption{Fourier coefficients $C^{(12+3)}(r^2-48n,r)$.}
\label{tab:desc:mjt-12+3}
\end{small}
\end{table}

\begin{table}[ht]
\begin{small}
\centering
\begin{tabular}{c|rrrrrrrrrrrrrrrr}\toprule
$r,n$&0&1&2&3&4&5&6&7&8&9&10&11&12&13&14&15\\\midrule
1 & -2 & 0 & -2 & 0 & -2 & 2 & 0 & 0 & -2 & 0 & -2 & 2 & -2 & 2 & -2 & 2 \\
 2 &      & 4 & 0 & 4 & 4 & 4 & 4 & 8 & 8 & 8 & 12 & 12 & 16 & 20 & 20 & 28 \\
 3 &      & -2 & 2 & 0 & 0 & -2 & 2 & -2 & 2 & -2 & 2 & 0 & 2 & -4 & 2 & -2 \\
 4 &      & 0 & 4 & 4 & 4 & 8 & 8 & 8 & 16 & 16 & 20 & 24 & 28 & 32 & 40 & 48 \\
 5 &      & 2 & 0 & 0 & 0 & 2 & 0 & 0 & 0 & 2 & -2 & 0 & 0 & 2 & 0 & 0 \\
 6 &      & 4 & 4 & 4 & 8 & 8 & 12 & 16 & 16 & 20 & 24 & 32 & 36 & 44 & 52 & 60 \\
 7 &      & -2 & 0 & -2 & 2 & -2 & 0 & -2 & 2 & -2 & 2 & -2 & 2 & -4 & 2 & -2 \\
 8 &      & 4 & 4 & 8 & 8 & 8 & 12 & 16 & 16 & 24 & 28 & 32 & 40 & 48 & 52 & 64 \\
 10 &      &  & 4 & 4 & 4 & 8 & 12 & 12 & 16 & 20 & 24 & 28 & 36 & 40 & 52 & 60 \\
 12 &      &  & 2 & 4 & 4 & 8 & 8 & 8 & 12 & 16 & 20 & 24 & 28 & 32 & 40 & 48 \\
 14 &      &  &  & 4 & 4 & 4 & 4 & 8 & 8 & 12 & 12 & 12 & 20 & 24 & 24 & 32 \\
 16 &      &  &  &  & 0 & 0 & 4 & 0 & 4 & 4 & 4 & 8 & 8 & 8 & 12 & 16 \\\bottomrule
\end{tabular}
\caption{Fourier coefficients $C^{(18+2)}(r^2-72n,r)$.}
\label{tab:desc:mjt-18+2}
\end{small}
\end{table}

\begin{table}[ht]
\begin{small}
\centering
\begin{tabular}{c|rrrrrrrrrrrrrrrrrr}\toprule
$r,n$&0&1&2&3&4&5&6&7&8&9&10&11&12&13&14&15\\\midrule
  1 & -2 & -2 & -2 & -4 & -2 & -2 & -4 & -6 & -6 & -6 & -6 & -8 & -10 & -10 & -10 & -14 \\
 3 & & 0 & 4 & 0 & 4 & 0 & 4 & 4 & 4 & 4 & 8 & 4 & 8 & 4 & 12 & 8 \\
 5 & & 4 & 0 & 4 & 4 & 4 & 4 & 8 & 4 & 8 & 8 & 8 & 12 & 16 & 12 & 16 \\
 7 & & -2 & 0 & 0 & -2 & -2 & 0 & -4 & 0 & -2 & -2 & -4 & -2 & -6 & -4 & -4 \\
 9 & & 4 & 4 & 4 & 4 & 8 & 8 & 8 & 8 & 12 & 12 & 16 & 16 & 20 & 20 & 24 \\
 15 & & 0 & 4 & 4 & 8 & 4 & 8 & 8 & 12 & 8 & 16 & 16 & 20 & 16 & 24 & 24 \\\bottomrule
\end{tabular}
\caption{Fourier coefficients $C^{(30+3,5,15)}(r^2-120n,r)$.}
\label{tab:desc:mjt-30+3,5,15}
\end{small}
\end{table}

\begin{table}[ht]
\begin{small}
\centering
\begin{tabular}{c|rrrrrrrrrrrrrrrr}\toprule
$r,n$&0&1&2&3&4&5&6&7&8&9&10&11&12&13&14&15\\\midrule
 1 & -2 & -2 & -2 & -2 & -4 & -4 & -6 & -10 & -8 & -10 & -16 & -18 & -22 & -28 & -32 & -36 \\
 2 &     & 4 & 4 & 8 & 8 & 12 & 16 & 20 & 24 & 32 & 36 & 48 & 56 & 72 & 80 & 100 \\
 4 &     & 0 & 4 & 0 & 4 & 4 & 8 & 4 & 12 & 8 & 16 & 16 & 24 & 20 & 36 & 32 \\
 5 &     & 4 & 4 & 8 & 12 & 12 & 16 & 24 & 28 & 36 & 44 & 52 & 68 & 80 & 92 & 116 \\
 7 &     & 2 & 4 & 6 & 6 & 10 & 14 & 16 & 20 & 26 & 34 & 40 & 48 & 60 & 72 & 88 \\
 10 &     &  & 4 & 4 & 8 & 8 & 16 & 12 & 24 & 24 & 36 & 40 & 56 & 56 & 80 & 88 \\
\bottomrule
\end{tabular}
\caption{Fourier coefficients $C^{(15+5)}(r^2-60n,r)$.}
\label{tab:desc:mjt-15+5}
\end{small}
\end{table}

\begin{table}[ht]
\begin{small}
\centering
\begin{tabular}{c|rrrrrrrrrrrrrrrr}\toprule
$r,n$&0&1&2&3&4&5&6&7&8&9&10&11&12&13&14&15\\\midrule
 1 & -2 & -2 & -2 & -2 & -2 & -6 & -4 & -6 & -6 & -8 & -10 & -14 & -12 & -18 & -18 & -22 \\
 3 &    & 2 & 4 & 2 & 6 & 6 & 8 & 8 & 12 & 12 & 18 & 16 & 24 & 26 & 32 & 32 \\
 4 &    & 2 & 2 & 4 & 4 & 4 & 8 & 8 & 8 & 12 & 14 & 16 & 20 & 22 & 24 & 32 \\
 7 &    & 2 & 2 & 4 & 4 & 6 & 6 & 8 & 10 & 14 & 14 & 16 & 20 & 26 & 26 & 34 \\
 8 &    & 2 & 4 & 4 & 6 & 8 & 8 & 12 & 14 & 16 & 20 & 24 & 28 & 32 & 40 & 44 \\
 11 &    &  & 2 & 0 & 4 & 2 & 4 & 2 & 8 & 4 & 10 & 8 & 12 & 10 & 18 & 14 \\
\bottomrule
\end{tabular}
\caption{Fourier coefficients $C^{(20+4)}(r^2-80n,r)$.}
\label{tab:desc:mjt-20+4}
\end{small}
\end{table}

\begin{table}[ht]
\begin{small}
\centering
\begin{tabular}{c|rrrrrrrrrrrrrrrr}\toprule
$r,n$&0&1&2&3&4&5&6&7&8&9&10&11&12&13&14&15\\\midrule
 1 & -2 & -2 & 0 & -2 & -4 & -2 & -4 & -6 & -6 & -8 & -8 & -8 & -12 & -14 & -14 & -18 \\
 2 &    & 2 & 2 & 4 & 2 & 4 & 4 & 8 & 6 & 10 & 8 & 12 & 14 & 18 & 18 & 24 \\
 3 &    & 2 & 0 & 2 & 4 & 4 & 4 & 4 & 8 & 8 & 8 & 12 & 12 & 14 & 16 & 20 \\
 4 &    & -2 & 0 & -2 & -2 & -2 & 0 & -4 & -2 & -4 & -4 & -6 & -4 & -10 & -8 & -10 \\
 5 &    & 2 & 2 & 2 & 4 & 4 & 4 & 6 & 6 & 8 & 10 & 12 & 14 & 16 & 18 & 20 \\
 6 &    & 2 & 4 & 4 & 6 & 4 & 8 & 8 & 12 & 12 & 16 & 16 & 22 & 24 & 32 & 32 \\
 8 &    & 2 & 0 & 2 & 2 & 2 & 4 & 6 & 4 & 6 & 8 & 8 & 10 & 14 & 12 & 16 \\
 9 &    & 2 & 4 & 4 & 4 & 8 & 8 & 10 & 12 & 14 & 20 & 20 & 24 & 28 & 32 & 40 \\
 11 &    &  & 0 & 2 & 2 & 0 & 2 & 2 & 2 & 4 & 2 & 4 & 6 & 6 & 6 & 8 \\
 12 &    &  & 4 & 2 & 6 & 4 & 8 & 8 & 12 & 12 & 16 & 16 & 24 & 24 & 32 & 32 \\
 15 &    &  &  & 2 & 4 & 4 & 4 & 6 & 8 & 8 & 8 & 12 & 16 & 18 & 20 & 24 \\
 18 &    &  &  & & 2 & 0 & 4 & 0 & 4 & 2 & 8 & 4 & 8 & 6 & 12 & 8 \\
\bottomrule
\end{tabular}
\caption{Fourier coefficients $C^{(21+3)}(r^2-84n,r)$.}
\label{tab:desc:mjt-21+3}
\end{small}
\end{table}

\begin{table}[ht]
\begin{small}
\centering
\begin{tabular}{c|rrrrrrrrrrrrrrrrrr}\toprule
$r,n$&0&1&2&3&4&5&6&7&8&9&10&11&12&13&14&15\\\midrule
 1 & -2 & -2 & 0 & -2 & -2 & -2 & -2 & -4 & -2 & -6 & -4 & -6 & -6 & -10 & -6 & -10 \\
 2 & & 2 & 2 & 2 & 4 & 4 & 4 & 6 & 6 & 8 & 10 & 10 & 12 & 14 & 16 & 18 \\
 5 & & 2 & 2 & 4 & 4 & 4 & 6 & 8 & 8 & 10 & 10 & 14 & 16 & 18 & 20 & 24 \\
 7 & & 0 & 2 & 0 & 2 & 2 & 4 & 0 & 4 & 4 & 6 & 4 & 8 & 4 & 10 & 8 \\
 8 & & 2 & 2 & 4 & 4 & 4 & 6 & 8 & 8 & 10 & 12 & 12 & 16 & 20 & 20 & 24 \\
 13 & & & 2 & 2 & 4 & 2 & 6 & 4 & 8 & 6 & 10 & 10 & 14 & 12 & 18 & 18 \\
\bottomrule
\end{tabular}
\caption{Fourier coefficients $C^{(24+8)}(r^2-96n,r)$.}
\label{tab:desc:mjt-24+8}
\end{small}
\end{table}

\begin{table}[ht]
\begin{small}
\centering
\begin{tabular}{c|rrrrrrrrrrrrrrrrrr}\toprule
$r,n$&0&1&2&3&4&5&6&7&8&9&10&11&12&13&14&15\\\midrule
 1 & -2 & 0 & -2 & -2 & -2 & -2 & -2 & -2 & -4 & -4 & -6 & -4 & -6 & -6 & -8 & -8 \\
 2 &  & 0 & 0 & 2 & 0 & 2 & 2 & 0 & 2 & 2 & 2 & 2 & 4 & 2 & 4 & 6 \\
 3 &  & 2 & 2 & 2 & 2 & 4 & 2 & 4 & 6 & 6 & 6 & 8 & 8 & 10 & 10 & 12 \\
 5 &  & -2 & 0 & 0 & 0 & -2 & 0 & -2 & 0 & -2 & -2 & -2 & 0 & -4 & -2 & -4 \\
 6 &  & 2 & 2 & 2 & 2 & 2 & 4 & 4 & 4 & 6 & 6 & 8 & 8 & 8 & 10 & 12 \\
 7 &  & 2 & 2 & 0 & 4 & 4 & 4 & 4 & 6 & 4 & 8 & 6 & 8 & 12 & 12 & 12 \\
 9 &  & 0 & 0 & 2 & 0 & 0 & 0 & 2 & 0 & 2 & 2 & 2 & 2 & 2 & 2 & 4 \\
 10 &  & 2 & 2 & 2 & 4 & 4 & 4 & 6 & 6 & 6 & 8 & 10 & 10 & 12 & 14 & 16 \\
 13 &  &  & 2 & 0 & 2 & 0 & 4 & 2 & 4 & 2 & 4 & 2 & 6 & 4 & 8 & 6 \\
 14 &  &  & 2 & 4 & 2 & 4 & 4 & 4 & 6 & 8 & 8 & 8 & 12 & 12 & 14 & 16 \\
 17 &  &  &  & 2 & 2 & 2 & 2 & 2 & 4 & 4 & 4 & 6 & 6 & 6 & 8 & 10 \\
 21 &  &  &  &  & 2 & 0 & 4 & 2 & 4 & 2 & 4 & 4 & 8 & 4 & 8 & 8 \\
\bottomrule
\end{tabular}
\caption{Fourier coefficients $C^{(28+7)}(r^2-112n,r)$.}
\label{tab:desc:mjt-28+7}
\end{small}
\end{table}

\begin{table}[ht]
\begin{small}
\centering
\begin{tabular}{c|rrrrrrrrrrrrrrrrrr}\toprule
$r,n$&0&1&2&3&4&5&6&7&8&9&10&11&12&13&14&15\\\midrule
   1 & -2 & 0 & 0 & -2 & -2 & -2 & 0 & -2 & -2 & -2 & -2 & -2 & -4 & -4 & -4 & -4 \\
 2 &   & 2 & 0 & 2 & 2 & 2 & 2 & 4 & 2 & 4 & 4 & 4 & 4 & 6 & 6 & 8 \\
 4 &   & -2 & 0 & 0 & 0 & 0 & 0 & -2 & 0 & -2 & 0 & -2 & 0 & -2 & 0 & -2 \\
 5 &   & 2 & 2 & 2 & 2 & 2 & 2 & 4 & 4 & 4 & 6 & 6 & 6 & 8 & 8 & 8 \\
 7 &   & 0 & 0 & 0 & 0 & 2 & 0 & 0 & 2 & 0 & 0 & 2 & 0 & 0 & 2 & 2 \\
 8 &   & 2 & 2 & 2 & 2 & 2 & 4 & 4 & 4 & 6 & 4 & 6 & 8 & 8 & 8 & 10 \\
 10 &   & 0 & 2 & 0 & 2 & 0 & 2 & 0 & 2 & 2 & 4 & 2 & 4 & 2 & 4 & 4 \\
 11 &   & 2 & 0 & 2 & 4 & 2 & 4 & 4 & 4 & 4 & 4 & 6 & 8 & 8 & 8 & 10 \\
 13 &   & & 2 & 2 & 0 & 2 & 2 & 2 & 2 & 4 & 4 & 4 & 4 & 4 & 6 & 6 \\
 16 &   &  & 2 & 0 & 2 & 2 & 4 & 2 & 4 & 2 & 4 & 4 & 6 & 4 & 8 & 6 \\
 19 &   &  &  & 2 & 2 & 2 & 2 & 2 & 4 & 4 & 4 & 4 & 6 & 6 & 6 & 8 \\
 22 &   &  &  & & 2 & 2 & 4 & 0 & 4 & 2 & 4 & 4 & 6 & 4 & 8 & 8  \\\bottomrule
\end{tabular}
\caption{Fourier coefficients $C^{(33+11)}(r^2-132n,r)$.}
\label{tab:desc:mjt-33+11}
\end{small}
\end{table}

\begin{table}[ht]
\begin{small}
\centering
\begin{tabular}{c|rrrrrrrrrrrrrrrrrr}\toprule
$r,n$&0&1&2&3&4&5&6&7&8&9&10&11&12&13&14&15\\\midrule
 1 & -2 & -2 & -2 & 0 & -2 & -2 & -2 & -2 & -2 & -2 & -4 & -2 & -4 & -4 & -4 & -4 \\
 3 & & 2 & 0 & 2 & 2 & 2 & 2 & 2 & 2 & 4 & 4 & 4 & 4 & 6 & 4 & 6 \\
 4 & & 0 & 2 & 0 & 0 & 2 & 2 & 0 & 2 & 2 & 2 & 2 & 2 & 2 & 4 & 4 \\
 5 & & 0 & 0 & 0 & 0 & -2 & 0 & 0 & 0 & -2 & -2 & -2 & 0 & -2 & -2 & -2 \\
 7 & & 0 & 2 & 0 & 2 & 2 & 2 & 2 & 4 & 2 & 4 & 4 & 6 & 4 & 6 & 6 \\
 8 & & 2 & 0 & 2 & 2 & 2 & 2 & 2 & 2 & 4 & 4 & 4 & 4 & 6 & 4 & 6 \\
 11 & & 2 & 2 & 2 & 2 & 2 & 2 & 4 & 2 & 4 & 4 & 4 & 4 & 6 & 6 & 6 \\
 12 & & 1 & 2 & 2 & 2 & 2 & 2 & 4 & 4 & 4 & 4 & 4 & 6 & 6 & 8 & 8 \\
 15 & &  & 2 & 0 & 2 & 2 & 2 & 0 & 4 & 2 & 4 & 2 & 4 & 4 & 6 & 4 \\
 16 & &  & 2 & 2 & 2 & 2 & 4 & 2 & 4 & 4 & 4 & 6 & 6 & 6 & 8 & 8 \\
 19 & &  &  & 0 & 0 & 2 & 0 & 0 & 2 & 2 & 2 & 2 & 2 & 2 & 2 & 4 \\
 23 & &  &  &  & 2 & 0 & 2 & 0 & 2 & 0 & 2 & 0 & 2 & 2 & 2 & 0 \\\bottomrule
\end{tabular}
\caption{Fourier coefficients $C^{(36+4)}(r^2-144n,r)$.}
\label{tab:desc:mjt-36+4}
\end{small}
\end{table}

\begin{table}[ht]
\begin{small}
\centering
\begin{tabular}{c|rrrrrrrrrrrrrrrrrr}\toprule
$r,n$&0&1&2&3&4&5&6&7&8&9&10&11&12&13&14&15\\\midrule
 1 & -2 & -2 & -2 & 0 & -2 & -2 & -2 & -2 & -2 & -2 & -2 & -2 & -4 & -4 & -4 & -2 \\
 2 &   & 2 & 0 & 2 & 2 & 0 & 2 & 2 & 2 & 2 & 2 & 2 & 2 & 4 & 2 & 4 \\
 7 &   & 0 & 2 & 0 & 0 & 2 & 2 & 0 & 2 & 0 & 2 & 2 & 2 & 2 & 2 & 2 \\
 11 &   & 2 & 0 & 2 & 2 & 2 & 0 & 2 & 2 & 4 & 2 & 2 & 2 & 4 & 2 & 4 \\
 13 &   & -2 & 0 & -2 & 0 & -2 & 0 & -2 & 0 & -2 & -2 & -2 & 0 & -4 & 0 & -2 \\
 14 &   & 2 & 2 & 2 & 2 & 2 & 2 & 4 & 2 & 2 & 4 & 4 & 4 & 4 & 4 & 4 \\\bottomrule
\end{tabular}
\caption{Fourier coefficients $C^{(60+12,15,20)}(r^2-240n,r)$.}
\label{tab:desc:mjt-60+12,15,20}
\end{small}
\end{table}

\clearpage

It remains to discuss the three lambencies in the last line of (\ref{eqn:class:proof-gzgps-emum}). For each of these we have $\f^{(\ell)}=2\mathcal{Q}_m$ where $m$ is the level of $\ell$, and $\mathcal{Q}_m$ is as discussed in \cite{Dabholkar:2012nd}. Low order Fourier coefficients of these mock modular forms are given in Tables \ref{tab:desc:mjt-42+6,14,21} through \ref{tab:desc:mjt-78+6,26,39}.

\begin{table}[ht]
\begin{small}
\centering
\begin{tabular}{c|rrrrrrrrrrrrrrrrrr}\toprule
$r,n$&0&1&2&3&4&5&6&7&8&9&10&11&12&13&14&15\\\midrule
 1 & -2 & -2 & 0 & -2 & -2 & -2 & 0 & -4 & -2 & -4 & -2 & -4 & -2 & -6 & -4 & -6 \\
 5 & & 2 & 2 & 2 & 4 & 2 & 4 & 4 & 4 & 6 & 6 & 4 & 8 & 8 & 8 & 8 \\
 11 &  & 2 & 2 & 4 & 2 & 4 & 4 & 6 & 4 & 6 & 6 & 8 & 8 & 10 & 8 & 12 \\\bottomrule
\end{tabular}
\caption{Fourier coefficients $C^{(42+6,14,21)}(r^2-168n,r)$.}
\label{tab:desc:mjt-42+6,14,21}
\end{small}
\end{table}

\begin{table}[ht]
\begin{small}
\centering
\begin{tabular}{c|rrrrrrrrrrrrrrrrrr}\toprule
$r,n$&0&1&2&3&4&5&6&7&8&9&10&11&12&13&14&15\\\midrule
 1 & -2 & 0 & -2 & -2 & 0 & 0 & -2 & 0 & -2 & -2 & -2 & -2 & -2 & 0 & -2 & -2 \\
 3 &  & 0 & 2 & 0 & 2 & 0 & 0 & 2 & 2 & 0 & 2 & 2 & 2 & 0 & 2 & 2 \\
 9 &  & 2 & 0 & 2 & 0 & 2 & 2 & 2 & 0 & 2 & 2 & 2 & 2 & 2 & 2 & 4 \\
 11 &  & -2 & 0 & 0 & 0 & -2 & 0 & -2 & 0 & 0 & 0 & -2 & 0 & -2 & 0 & -2 \\
 13 &  & 2 & 2 & 0 & 2 & 2 & 2 & 2 & 2 & 2 & 2 & 2 & 2 & 4 & 4 & 2 \\
 23 &  &  & 2 & 2 & 2 & 2 & 2 & 2 & 2 & 2 & 4 & 2 & 4 & 2 & 4 & 4 \\\bottomrule
\end{tabular}
\caption{Fourier coefficients $C^{(70+10,14,35)}(r^2-280n,r)$.}
\label{tab:desc:mjt-70+10,14,35}
\end{small}
\end{table}

\begin{table}[ht]
\begin{small}
\centering
\begin{tabular}{c|rrrrrrrrrrrrrrrrrr}\toprule
$r,n$&0&1&2&3&4&5&6&7&8&9&10&11&12&13&14&15\\\midrule
 1 & -2 & 0 & 0 & -2 & -2 & 0 & 0 & -2 & 0 & 0 & -2 & 0 & -2 & -2 & -2 & -2 \\
 5 &  & 2 & 0 & 2 & 0 & 2 & 0 & 2 & 2 & 2 & 0 & 2 & 2 & 2 & 2 & 2 \\
 7 &  & -2 & 0 & 0 & 0 & 0 & 0 & -2 & 0 & -2 & 0 & 0 & 0 & -2 & 0 & 0 \\
 11 &  & 2 & 2 & 0 & 2 & 0 & 2 & 2 & 2 & 2 & 2 & 2 & 2 & 2 & 2 & 2 \\
 17 &  & 2 & 0 & 2 & 2 & 2 & 2 & 2 & 0 & 2 & 2 & 2 & 2 & 4 & 2 & 4 \\
 23 &  & 0 & 2 & 2 & 2 & 0 & 2 & 2 & 2 & 2 & 2 & 2 & 4 & 2 & 2 & 2 \\\bottomrule
\end{tabular}
\caption{Fourier coefficients $C^{(78+6,26,39)}(r^2-312n,r)$.}
\label{tab:desc:mjt-78+6,26,39}
\end{small}
\end{table}

\subsection{Mock Theta Functions of Ramanujan}
\label{sec:mockthetafunctions} 

Essentially all of the mock theta functions of Ramanujan \cite{MR947735,MR2280843} can be recovered  from the theta-coefficients of optimal mock Jacobi forms with rational coefficients. In this section we give explicit expressions for every such function, in terms of the $H^{(\ell)}_r$ for $\ell\in \gt{L}_1$. We also consider the even order mock theta functions introduced by Andrews \cite{MR654518} and Gordon--McIntosh \cite{Gordon_Mcintosh}.

Most of the expressions are given in Tables \ref{tbl:mock theta ramunujan} and \ref{tbl:mock theta A GM}. To explain the notation, Recall the {\em $q$-Pochhamer symbol}
\begin{gather}
(a;x)_n := \prod_{k=0}^{n-1}(1-ax^k),\quad(a;x)_\infty:=\prod_{k\geq 0}(1-ax^k),
\end{gather}
and note that we omit the summation symbol $\sum_{n\geq 0}$ from the entries in the ``Eulerian Series'' columns. So, for instance, the first row of Table \ref{tbl:mock theta ramunujan} indicates that Ramanujan's order 3 mock theta function $\psi(q)$ may be defined by setting
\begin{gather}
	\psi(q):=\sum_{n\geq 0}\frac{q^{(n+1)^2}}{(q;q^2)_{n+1}}.
\end{gather}

In the ``Mock Jacobi Theta Functions'' columns we use operators $f\mapsto f|[a;b]$, for $a,b\in \QQ$. Assuming that $f$ is a holomorphic function on $\HH$ satisfying $f(\tau+N)=f(\tau)$ for some $N\in\ZZ$ we define 
\be
f\lvert[a;b](\t):=   \sum_{n={a}\xmod{b\ZZ}}  c_f(n) \, q^{n-a}
\ee 
when $f(\tau)=\sum_{n}c_f(n)q^n$.
To avoid clutter we set $f|[a]:=f|[a;1]$. For $\ell\in \gt{L}_1$ we write $H^{(\ell)}_r$ for the theta-coefficients of $\phi^{(\ell)}$, as in \S\ref{sec:desc:mjt}, so that $\phi^{(\ell)}=\sum_r H^{(\ell)}_r\theta_{m,r}$.

\vspace{-4pt}

\begin{center}
\begin{small}
\begin{longtable}{LLLL}
\toprule 
\rm{Order} & \rm{Name} & \text{Eulerian Series} & \text{Mock Jacobi Theta Functions} \\\midrule
\multirow{11}*{3} &\psi(q)&\displaystyle\frac{q^{(n+1)^2}}{(q;q^2)_{n+1}}& {\frac12}\pwrm{H_2^{(24+8)}}{1}{24}(\t)\\[5pt]
& \nu(q) &\displaystyle\frac{q^{n(n+1)}}{(-q;q^2)_{n+1}} & {\frac12 }\pwr{H_8^{(24+8)}}{1}{3} (\t-\tfrac{1}{2})
 \\[5pt] 
&  f(q)& \displaystyle\frac{q^{n^2}}{(-q;q)^2_{n}}  &  {\frac12 }\pwrm{(H_{5}^{(6)}-H_{1}^{(6)})}{1}{24}(\t)
\\[5pt] 
&\phi(q) &    \displaystyle\frac{q^{n^2}}{(-q^2;q^2)_{n}} &{\frac12 } \sum_{k\in\ZZ/4}(-1)^{k+1}H_{1+12k}^{(24+8)}\lvert[-\tfrac{1}{96},\tfrac{1}{4}] (4\t) \\[5pt]
&\chi(q) &   q^{n^2}\displaystyle\frac{(-q;q)_{n}}{(-q^3;q^3)_{n}} &-{\frac12 }\sum_{k\in\ZZ/3}(H_{1+12k}^{(18)}+H_{7+12k}^{(18)})\lvert[-\tfrac{1}{72};\tfrac{1}{3}] (3\t)
\\[5pt]
&\omega(q) &  \displaystyle\frac{q^{2n(n+1)}}{(q;q^2)^2_{n+1}} &{\frac14} (H^{(6)}_2+H^{(6)}_4)\lvert[\tfrac{1}{3};\tfrac{1}{2}](2\t)
 \\[5pt]
&\rho(q) &   q^{2n(n+1)}\displaystyle\frac{(q;q^2)_{n+1}}{(q^3;q^6)_{n+1}}& -{\frac12 }\sum_{k\in\ZZ/3}(H_{2+12k}^{(18)}+H_{4+12k}^{(18)})\lvert[\tfrac{1}{9};\tfrac{1}{6}] (6\t)
\\[7pt]\midrule
\multirow{11}*{5} &\psi_0(q) & (-q;q)_n\, q^{(n+1)(n+2)/2}  & {\frac12}\pwrm{ H^{(60+12,15,20)}_2}{1}{60}(\t)\\[5pt]
&\psi_1(q) & (-q;q)_n\, q^{n(n+1)/2} & {\frac12}\pwr{H^{(60+12,15,20)}_{14}}{11}{60}(\t)
   \\[5pt]
  &\chi_0(q)&  \displaystyle\frac{q^{n} }{(q^{n+1};q)_n}& {\frac12}\pwrm{H^{(30+6,10,15)}_{1}}{1}{120}(\t)+2 \\[5pt]
&\chi_1(q)& \displaystyle\frac{q^{n} }{(q^{n+1};q)_{n+1}} &{\frac12} \pwr{H^{(30+6,10,15)}_{7}}{71}{120}(\t)\\[5pt]
&\phi_0(q)&   q^{n^2} {(-q;q^2)_n}&
 -{\frac12}(H^{(60+12,15,20)}_1-H^{(60+12,15,20)}_{11})\lvert[-\tfrac{1}{240};\tfrac{1}{2}](2\t)  \\[5pt]
&\phi_1(q)& q^{(n+1)^2} {(-q;q^2)_n}&
{\frac12} (H^{(60+12,15,20)}_7-H^{(60+12,15,20)}_{13})\lvert[-\tfrac{49}{240};\tfrac{1}{2}](2\t)\\[5pt]
&F_0(q)&\displaystyle\frac{q^{2n^2}}{(q;q^2)_n}& 1+{\frac12} H^{(60+12,15,20)}_{2}\lvert[-\tfrac{1}{60};2](\tfrac{\t}{2})\\[5pt]
&F_1(q)& \displaystyle\frac{q^{2n^2+2n}}{(q;q^2)_{n+1}}&{\frac12} H^{(60+12,15,20)}_{14}\lvert[\tfrac{71}{60};2](\tfrac{\t}{2})\\[7pt]
\midrule
\multirow{6}*{6}& \sigma(q) &  \displaystyle\frac{q^{(n+1)(n+2)/2} (-q;q)_n}{(q;q^2)_{n+1}} & {\frac12 }\pwrm{ H_{2}^{(12)}}{1}{12}(\t)\\[5pt]
&\psi(q)& \displaystyle\frac{(-1)^n q^{(n+1)^2}(q;q^2)_n}{(-q;q)_{2n+1}} & -{\frac12 }(H_{3}^{(12)}-H_{9}^{(12)})\lvert[-\tfrac{3}{8};\tfrac{1}{2}](2\t)\\[5pt]
&\phi(q)& \displaystyle\frac{(-1)^n q^{n^2}(q;q^2)_n}{(-q;q)_{2n}} & -{\frac12 }\sum_{k\in \ZZ/2} (H_{1+12k}^{(12)}+H_{5+12k}^{(12)})\lvert[-\tfrac{1}{48};\tfrac{1}{2}](2\t)
 \\[5pt]
&\gamma(q)&\displaystyle\frac{ q^{n^2}(q;q)_n}{(q^3;q^3)_n} & -{\frac12 }\sum_{k\in \ZZ/3} (H_{1+12k}^{(18)}+H_{5+12k}^{(18)})\lvert[-\tfrac{1}{72};\tfrac{1}{3}]  (3\t)
\\[7pt]\midrule
\multirow{5}*{7}&F_0(q) &\displaystyle\frac{q^{n^2}}{(q^{n+1};q)_n}  &-{\frac12}\pwrm{H^{(42+6,14,21)}_{1}}{1}{168}(\t)
  \\[5pt]
&F_1(q) & \displaystyle\frac{q^{(n+1)^2}}{(q^{n+1};q)_{n+1}}&{\frac12}\pwrm{ H^{(42+6,14,21)}_{5}}{25}{168}(\t)\\[5pt]
&F_2(q) &\displaystyle\frac{q^{n^2+n}}{(q^{n+1};q)_{n+1}} &{\frac12} \pwr{H^{(42+6,14,21)}_{11}}{47}{168}(\t) \\[7pt]\midrule
\multirow{6}*{10}&
\phi(q) & \displaystyle\frac{q^{n(n+1)/2}}{(q;q^2)_{n+1}} &{\frac12}  \sum_{k\in \ZZ/2} (-1)^{k}H^{(10)}_{4+10k}\lvert[\tfrac{1}{10};\tfrac{1}{2}] (2\t)
\\[5pt]
&\psi(q) & \displaystyle\frac{q^{(n+1)(n+2)/2}}{(q;q^2)_{n+1}}&{\frac12} \sum_{k\in \ZZ/2}  (-1)^{k} H^{(10)}_{2+10k}\lvert[-\tfrac{1}{10};\tfrac{1}{2}]  (2\t) 
\\[5pt]
&X(q) & \displaystyle\frac{(-1)^n q^{n^2}}{(-q;q)_{2n}}  & -{\frac12}  \sum_{k\in \ZZ/2} \pwrm{H^{(10)}_{1+10k}}{1}{40} (\t)\\[5pt]
&\chi(q) & \displaystyle\frac{(-1)^n q^{(n+1)^2}}{(-q;q)_{2n+1}} &-{\frac12}  \sum_{k\in \ZZ/2} \pwrm{H^{(10)}_{3+10k}}{9}{40} (\t) \\[7pt]\bottomrule\\[-22pt]
\caption{Mock theta functions of Ramanujan in terms of optimal mock Jacobi theta functions.\label{tbl:mock theta ramunujan}}
\end{longtable}
\end{small}
\end{center}

\vspace{-31pt}

In addition to the mock theta functions given in Table \ref{tbl:mock theta ramunujan}, Ramanujan also considered the series
\begin{gather}
	\begin{split}\label{eqn:desc:ram-fi}
	f_0(q)&:=\sum_{n\geq 0}\frac{q^{n^2}}{(-q;q)_n},\\
	f_1(q)&:=\sum_{n\geq 0}\frac{q^{n(n+1)}}{(-q;q)_n},
	\end{split}
\end{gather}
which he included amongst his mock theta functions of order 5 (cf. \cite{MR2280843}). In \cite{MR947735} the series 
\begin{gather}
\begin{split}
\rho(q)&:= \sum_{n\geq 0}  \frac{q^{n(n+1)/2} (-q;q)_n}{(q;q^2)_{n+1}},\\
\lambda(q)&:= \sum_{n\geq 0}  \frac{(-1)^n q^{n} (q;q^2)_n}{(-q;q)_{n}},\\\label{order6mock}
2\mu(q)&:=1+ \sum_{n\geq 0}  \frac{(-1)^n q^{n+1} (1+q^n)(q;q^2)_n}{(-q;q)_{n+1}}, 
\end{split}
\end{gather}
appear, which have since been assigned the order 6.
For the $f_i$ in (\ref{eqn:desc:ram-fi}) we obtain mock Jacobi theta function expressions by noting that 
\begin{gather}
	\begin{split}\label{eqn:desc:rmt-f0f1id}
f_0(q) &= -\psi_0(-q) + \phi_0(-q^2),\\
f_1(q) &= \psi_1(-q) -q^{-1} \phi_1(-q^2), 
	\end{split}
\end{gather}
according to identities\footnote{These identities are summarized in a single matrix identity in \S1 of \cite{zagier_mock}. Note that the second and sixth columns in the matrix on the left-hand side of that identity should be multiplied by $-1$.} found by Ramanujan \cite{MR2280843} and proven by Watson \cite{MR1577032}, 
where the $\psi_i$ and $\phi_i$ in (\ref{eqn:desc:rmt-f0f1id}) are as in Table \ref{tbl:mock theta ramunujan}. For the order 6 functions in (\ref{order6mock}) we have Ramanujan's identities (cf. \cite{MR947735})
\begin{gather}
\begin{split}
q^{-1}\psi(q^2)+\rho(q)=2q^{-1} \psi(q^2)+\lambda(-q)&= 
	(-q;q^2)_\infty^2(-q;q^6)_\infty(-q^5;q^6)_\infty(q^6;q^6)_\infty
\\  \label{ord6}
\phi(q^2)+2\s(q) = 2 \phi(q^2)-2\mu(-q) &= 
	(-q;q^2)_\infty^2(-q^3;q^6)_\infty^2(q^6;q^6)_\infty
\end{split}
\end{gather}
which were proven by Andrews--Hickerson \cite{MR1123099}. These identities allow us to write $\mu$ in terms of the order 6 functions $\phi$ and $\sigma$, which are treated in Table \ref{tbl:mock theta ramunujan}, and show that $\rho$ and $\lambda$ each differ from the order 6 function $\psi$ (also appearing in Table \ref{tbl:mock theta ramunujan}) only by a theta function.

\vspace{-8pt}

\begin{center}

\begin{small}
\begin{longtable}{LLLL}
\toprule 
\rm{Order} & \rm{Name} & \text{Eulerian Series} & \text{Mock Jacobi Theta Functions} \\\midrule
\multirow{4}*{2} & \m(q)&\displaystyle\frac{(-1)^n\,q^{n^2}(q;q^2)_n}{(-q^2;q^2)_n^2}& -{\frac12 } \sum_{k\in\ZZ/4}{H_{1+4k}^{(8)}}[-\tfrac{1}{32};\tfrac{1}{4}](4\t)\\[5pt]
&A(q) &\displaystyle\frac{q^{(n+1)} (-q^2;q^2)_n}{(q;q^2)_{n+1}} & {\frac14 }\pwrm{H_{2}^{(8)}}{1}{8}(\t)
\\[5pt]
&B(q) & \displaystyle\frac{q^{n} (-q;q^2)_n}{(q;q^2)_{n+1}}&{\frac14 }\pwr{ H_{4}^{(8)}}{1}{2}(\t) \\[7pt]\midrule
\multirow{11}*{8}&S_0(q) &  \displaystyle\frac{q^{n^2} (-q;q^2)_n}{(-q^2;q^2)_n}&-{\frac12}  \sum_{k\in \ZZ/4} H^{(16)}_{1+8k}\lvert[-\tfrac{1}{64};\tfrac{1}{4}](4\t)
 \\[5pt]
&S_1(q) &  \displaystyle\frac{q^{n(n+2)} (-q;q^2)_n}{(-q^2;q^2)_n} &
{\frac12} \sum_{k\in \ZZ/4} H^{(16)}_{3+8k} \lvert[-\tfrac{7}{64};\tfrac{1}{4}](4\t)
\\[5pt]
&T_0(q) &  \displaystyle\frac{q^{(n+1)(n+2)} (-q^2;q^2)_n}{(-q;q^2)_{n+1}}&{\frac12}   \pwrm{H^{(16)}_{2}}{1}{16}(\t+\tfrac{1}{2})  \\[5pt]
&T_1(q) &  \displaystyle\frac{q^{n(n+1)} (-q^2;q^2)_n}{(-q;q^2)_{n+1}} & {\frac12}\pwr{H^{(16)}_{10}}{7}{16}(\t+\tfrac{1}{2})
\\[5pt]
&U_0 (q)&  \displaystyle\frac{q^{n^2}(-q;q^2)_n}{(-q^4;q^4)_n}&-{\frac12} \sum_{k\in \ZZ/8} H^{(16)}_{1+4k}\lvert[-\tfrac{1}{64};\tfrac{1}{8}](8\t) \\[5pt]
&U_1 (q)&  \displaystyle\frac{q^{(n+1)^2}(-q;q^2)_n}{(-q^2;q^4)_n}&{\frac12}\left(H^{(16)}_{2}+\ex(-\tfrac{1}{4})H^{(16)}_{10} \right)\lvert[-\tfrac{1}{16};\tfrac{1}{2}](2\t+\tfrac{1}{2})
\\[5pt]
&1+V_0 (q) &  2q^{n^2} \displaystyle\frac{ (-q;q^2)_n}{(q;q^2)_{n}}&1+ H^{(16)}_{8}(\t)\\[5pt]
&V_1 (q) &   \displaystyle\frac{q^{(n+1)^2} (-q;q^2)_n}{(q;q^2)_{n+1}}&{\frac12} \pwrm{H^{(16)}_{4}}{1}{4}(\t)
\\[7pt]\bottomrule\\[-18pt]
\caption{ Mock theta functions of Andrews and Gordon--McIntosh in terms of optimal mock Jacobi theta functions. \label{tbl:mock theta A GM}}
\end{longtable}
\end{small}
\end{center}

\vspace{-26pt}

The order 2 mock theta functions in Table \ref{tbl:mock theta A GM} were introduced by Andrews in \cite{MR654518}, and subsequently studied by McIntosh \cite{MR2317449}. The particular function $\mu$ appears a number of times in Ramanujan's work \cite{MR947735}. The order 8 mock theta functions were introduced by Gordon--McIntosh \cite{Gordon_Mcintosh}. Subsequently it was noticed that $U_0$ and $V_1$ also appear in \cite{MR947735}. We refer to \cite{GorMcI_SvyMckTht} for more background on mock theta functions.


\addcontentsline{toc}{section}{References}

\end{document}